\theoremstyle{plain}
\newtheorem{theorem}{Theorem}[section]
\newtheorem{remark}[theorem]{Remark}
\newtheorem{lemma}[theorem]{Lemma}
\newtheorem{corollary}[theorem]{Corollary}
\newtheorem{proposition}[theorem]{Proposition}
\theoremstyle{definition}
\newtheorem{assumption}[theorem]{Assumption}
\newtheorem{definition}[theorem]{Definition}
\newtheorem{notation}[theorem]{Notation}
\newtheorem{example}[theorem]{Example}
\numberwithin{equation}{section}
\newcommand{\ca}{\mbox{$C\sp*$-}al\-ge\-bra\xspace}
\newcommand{\cas}{\mbox{$C\sp*$-}al\-ge\-bras\xspace}
\newcommand{\Z}{\ensuremath{\mathbb{Z}}\xspace}
\newcommand{\N}{\ensuremath{\mathbb{N}}\xspace}
\newcommand{\Q}{\ensuremath{\mathbb{Q}}\xspace}
\newcommand{\K}{\ensuremath{\mathbb{K}}\xspace}
\newcommand{\ie}{\emph{i.e.}\xspace}
\newcommand{\cf}{\emph{cf.}\xspace}
\newcommand{\id}{\ensuremath{\operatorname{id}}\xspace}
\newcommand{\cok}{\operatorname{cok}}
\newcommand{\FKRplus}{\ensuremath{\operatorname{FK}^+_\mathcal{R}}\xspace}
\newcommand{\FKR}{\ensuremath{\operatorname{FK}_\mathcal{R}}\xspace}
\newcommand{\A}{\ensuremath{\mathfrak{A}}\xspace}
\newcommand{\B}{\ensuremath{\mathfrak{B}}\xspace}
\newcommand{\Asf}{\mathsf{A}}
\newcommand{\Bsf}{\mathsf{B}}
\newcommand{\Esf}{\mathsf{E}}
\newcommand{\Prim}{\operatorname{Prim}}
\newcommand{\calP}{\ensuremath{\mathcal{P}}\xspace}
\newcommand{\GLZ}[1][n]{\ensuremath{\operatorname{GL}(#1,\Z)}\xspace}
\newcommand{\GL}{\ensuremath{\operatorname{GL}}\xspace}
\newcommand{\SLZ}[1][n]{\ensuremath{\operatorname{SL}(#1,\Z)}\xspace}
\newcommand{\SL}{\ensuremath{\operatorname{SL}}\xspace}
\newcommand{\GLPZ}[1][\mathbf{n}]{\ensuremath{\operatorname{GL}_\calP(#1,\Z)}\xspace}
\newcommand{\GLP}{\ensuremath{\operatorname{GL}_\calP}\xspace}
\newcommand{\SLPZ}[1][\mathbf{n}]{\ensuremath{\operatorname{SL}_\calP(#1,\Z)}\xspace}
\newcommand{\SLP}{\ensuremath{\operatorname{SL}_\calP}\xspace}
\newcommand{\MZ}[1][\mathbf{m}\times\mathbf{n}]{\ensuremath{\mathfrak{M}(#1,\Z)}\xspace}
\newcommand{\MPZ}[1][\mathbf{m}\times\mathbf{n}]{\ensuremath{\mathfrak{M}_\calP(#1,\Z)}\xspace}
\newcommand{\Mplus}[1][m\times n]{\ensuremath{\mathfrak{M}^+(#1,\Z)}\xspace}
\newcommand{\MPplusZ}[1][\mathbf{m}\times\mathbf{n}]{\ensuremath{\mathfrak{M}^+_\calP(#1,\Z)}\xspace}
\newcommand{\ftn}[3]{ #1 \colon #2 \rightarrow #3 }
\newcommand{\setof}[2]{\left\{ #1 \mid #2 \right\}}
\newcommand{\GLPE}{\GLP-equivalent\xspace}
\newcommand{\SLPE}{\SLP-equivalent\xspace}
\newcommand{\GLPEe}{\GLP-equivalence\xspace}
\newcommand{\SLPEe}{\SLP-equivalence\xspace}
\newcommand{\Meq}{\ensuremath{\sim_{M}}\xspace}
\newcommand{\MCeq}{\ensuremath{\sim_{M'}}\xspace}
\newcommand{\OO}{\mbox{\texttt{\textup{(O)}}}\xspace}
\newcommand{\II}{\mbox{\texttt{\textup{(I)}}}\xspace}
\newcommand{\RR}{\mbox{\texttt{\textup{(R)}}}\xspace}
\newcommand{\SSS}{\mbox{\texttt{\textup{(S)}}}\xspace}
\newcommand{\CC}{\mbox{\texttt{\textup{(C)}}}\xspace}
\newcommand{\TT}{\mbox{\texttt{\textup{(T)}}}\xspace}
\newenvironment{smallpmatrix}{\left(\begin{smallmatrix}}{\end{smallmatrix}\right)}
\title[Geometric classification of unital graph $C^*$-algebras]{Geometric classification of \\ unital graph $C^*$-algebras of real rank zero}
\date{\today}
\author{S\o{}ren Eilers}
\address{Department of Mathe\-matical Sciences, University of Copen\-hagen, Universi\-tets\-park\-en~5, DK-2100 Copen\-hagen, Den\-mark}
\email{eilers@math.ku.dk}
\author{Gunnar Restorff}
\address{Department of Science and Technology, University of the Faroe Islands, N\'{o}at\'{u}n~3, FO-100 T\'{o}rshavn, the Faroe Islands}
\email{gunnarr@setur.fo}
\author{Efren Ruiz}
\address{Department of Mathematics, University of Hawaii, Hilo, 200 W.~Kawili St., Hilo, Hawaii, 96720-4091 USA}
\email{ruize@hawaii.edu}
\author{Adam P.~W.~S\o{}rensen}
\address{Department of Mathematics, University of Oslo, PO BOX 1053 Blindern, N-0316 Oslo, Norway}
\email{apws@math.uio.no}
\keywords{Graph $C^*$-algebras, Geometric classification, $K$-theory, Flow equivalence}
\subjclass[2010]{46L35, 46L80 (46L55, 37B10)}
\begin{document}


\begin{abstract}
We generalize the classification result of Restorff (\cite{MR2270572}) on Cuntz-Krieger algebras to cover all unital graph \cas with real rank zero, showing that Morita equivalence in this case is determined 
by ordered, filtered $K$-theory as conjectured by three of the authors. The classification result is \emph{geometric} in the sense that it establishes that any Morita equivalence between $C^*(E)$ and $C^*(F)$ in this class can be realized by a sequence of moves leading from $E$ to $F$ in a way resembling the role of Reidemeister moves on knots. As a key technical step, we prove that the so-called \emph{Cuntz splice} leaves unital graph \cas invariant up to Morita equivalence.

The results of this preprint will be generalized in a forthcoming paper.
\end{abstract}

\maketitle


\tableofcontents


\section{Introduction}

Ever since the inception of graph $C^*$-algebras, it has been a key ambition to classify these objects by their $K$-theory, either up to isomorphism or stable isomorphism.  
With the simple case resolved by appeal to the celebrated classification results of Elliott on one hand and Kirchberg and Phillips on the other, the focus has been on the nonsimple \cas, and in fact this endeavour has evolved in parallel with the gradual realization of what invariants may prove to be complete in the case when the number of ideals is finite and the \cas in question are not stably finite. In this sense, the fundamental results obtained on the classification of certain classes of graph \cas are playing a role parallel to the one played by R\o rdam's classification of simple Cuntz-Krieger algebras as a catalyst for the Kirchberg-Phillips classification mentioned above.

The first two results on the classification problem for nonsimple graph \cas were obtained by R\o rdam in \cite{MR1446202} and by Restorff in \cite{MR2270572} by very different methods. R\o rdam showed the importance of involving the full data contained in six-term exact sequences of the \cas given and proved a very complete classification theorem while restricting the ideal lattice to be as small as possible: only one nontrivial ideal. In Restorff's work, the ideal lattice was arbitrary among the finite ideal lattices, but as his method was to reduce the problem to classification of shifts of finite type and appeal to deep results by Boyle and Huang  from symbolic dynamics (\cite{MR1907894}, \cite{MR1990568}), only graph \cas in the Cuntz-Krieger class were covered.

Subsequent progress has mainly followed the approach in \cite{MR1446202} (see \cite{arXiv:1101.5702v3}, \cite{MR2563693}, \cite{MR3056712}), and hence applies only to restricted kinds of ideal lattices but with few further restrictions on the nature of the underlying graphs. The case of purely infinite graph \cas with finitely many ideals has been resolved (very interestingly, by a different invariant than what was proposed in \cite{MR2666426}) in recent work by Bentmann and Meyer (\cite{arXiv:1405.6512v1}), but as summarized in \cite{MR3142033} there is not at present sufficient technology to take this approach much farther in the mixed cases than to \cas with three or four primitive ideals. 

In the paper at hand we complete the stable  classification of unital graph \cas with real rank zero, following the  strategy from \cite{MR2270572} as generalized by the authors in various constellations over a period of 5 years (\cite{MR3082546}, \cite{MR2666426}, \cite{MR3047630}, \cite{Eilers-Ruiz-Sorensen}).
Our method of proof, a substantial elaboration of key ideas from the authors' earlier work along with key ideas from the papers of Boyle and Huang, leads to a  \emph{geometric} classification, allowing us to conclude from Morita equivalence between a pair of graph \cas $C^*(E)$ and $C^*(F)$ that a sequence  of  basic moves on the graphs may lead from $E$ to $F$ in a way resembling the role of Reidemeister moves on knots.
 
These moves are closely related to those defining flow equivalence for shift spaces, apart from the so-called \emph{Cuntz splice} which has no counterpart in dynamics and also fails to preserve the canonical diagonal Abelian subalgebra of the graph \cas (\cf~\cite{MR3276420}, \cite{arXiv:1410.2308v1}).
In all cases when classification has been established, invariance of the Cuntz splice follows immediately from the fact that it will not change the $K$-theory, and in particular it was observed in \cite{arXiv:1405.6512v1} that Cuntz splice is invariant  in the class of graph \cas which are purely infinite with finitely many ideals. But since our goal is to use the Cuntz splice to establish classification results in classes outside the scope of these results, we must prove here that in the case under investigation, the Cuntz splice leaves the \cas invariant. In fact, this result covers the full case of unital graph \cas without any reference to real rank zero.
 
Although the real rank zero condition is often seen to bear importance in classification theory (\cite{MR1241132}, \cite{MR1402768}, \cite{MR1465599}, \cite{MR2949216}) its role in our proof is of a substantially different nature than in the papers listed. Indeed, since we gave in \cite{Eilers-Ruiz-Sorensen} an example of two finite graphs yielding Morita equivalent graph \cas of real rank one for which no sequence of moves suffices to lead from one to another, we require real rank zero, through its graph algebraic characterization Condition (K), to ensure that the classification result is indeed geometric in the sense of passing through moves.

After posting the first version of this paper, we realized that it was possible to obtain 
classification by $K$-theory in the general unital case by exhibiting a new move which allows us to connect the two examples mentioned above, and we have recently completed the proof that this move leaves the $C^*$-algebra invariant up to Morita equivalence. Consequently, we will present a complete classification result in a forthcoming paper which will contain the results in the present paper as a special case.


\section{Preliminaries for statement of main theorem}

\subsection{Graphs and their matrices}

By a \emph{graph} we mean a directed graph. Formally:

\begin{definition}
A graph $E$ is a four tuple $E = (E^0 , E^1 , r, s)$ where $E^0$ and $E^1$ are sets, and $r$ and $s$ are maps from $E^1$ to $E^0$. 
The elements of $E^0$ are called \emph{vertices}, the elements of $E^1$ are called \emph{edges}, the map $r$ is called the \emph{range map}, and the map $s$ is called the \emph{source map}. 
\end{definition}

All graphs considered will be \emph{countable}, \ie, there are countably many vertices and edges. 
We call a graph \emph{finite}, if there are only finitely many vertices and edges.
As usual, two graphs $E_1 = (E_1^0 , E_1^1 , r_1, s_1)$ and $E_2 = (E_2^0 , E_2^1 , r_2, s_2)$ are called isomorphic if there exist bijections $\phi^i$ from $E_1^i$ to $E_2^i$ such that $s_2\circ\phi^1=\phi^0\circ s_1$ and $r_2\circ\phi^1=\phi^0\circ r_1$. 
We will freely identify graphs up to isomorphism.

\begin{definition}
A \emph{loop} is an edge with the same range and source. 

A \emph{path} $\mu$ in a graph is a finite sequence $\mu = e_1 e_2 \cdots e_n$ of edges satisfying 
$r(e_i)=s(e_{i+1})$, for all $i=1,2,\ldots, n-1$, and we say that the \emph{length} of $\mu$ is $n$. 
We extend the range and source maps to paths by letting $s(\mu) = s(e_1)$ and $r(\mu) = r(e_n)$. 
Vertices in $E$ are regarded as \emph{paths of length $0$} (also called empty paths). 

A \emph{cycle} is a nonempty path $\mu$ such that $s(\mu) = r(\mu)$.
A \emph{return path} is a cycle $\mu = e_1 e_2 \cdots e_n$ such that $r(e_i) \neq r(\mu)$ for $i < n$.

For a loop, cycle or return path, we say that it is based at the source vertex of its path. 
We also say that a vertex supports a certain loop, cycle or return path if it is based at that vertex. 
\end{definition}

\begin{definition}
A vertex $v\in E^0$ in $E$ is called \emph{regular} if $s^{-1}(v)$ is finite and nonempty. 

A vertex $v\in E^0$ in $E$ is called \emph{source} if $r^{-1}(v)=\emptyset$. 
A vertex $v\in E^0$ in $E$ is called a \emph{sink} if $s^{-1}(v)=\emptyset$.
Note that an isolated vertex is both a sink and a source. 
\end{definition}

\begin{notation}
If there exists a path from vertex $u$ to vertex $v$, then we write $u \geq v$ --- this is a preorder on the vertex set, \ie, it is reflexive and transitive, but need not be antisymmetric. 
\end{notation}

It is key to our approach to graph \cas to be able to shift between a graph and its adjacency matrix. 
In what follows, we let \N denote the set of positive integers, while $\N_0$ denotes the set of nonnegative integers.

\begin{definition}
Let $E = (E^0 , E^1 , r, s)$ be a graph.
We define its \emph{adjacency matrix} $\Asf_E$ as a $E^0\times E^0$ matrix with the $(u,v)$'th entry being
$$\left\vert\setof{e\in E^1}{s(e)=u, r(e)=v}\right\vert.$$
As we only consider countable graphs, $\Asf_E$ will be a finite matrix or a countably infinite matrix, and it will have entries from $\N_0\sqcup\{\infty\}$.

Let $X$ be a set.
If $A$ is an $X \times X$ matrix with entries from $\N_0\sqcup\{\infty\}$ we let $\Esf_{A}$ be the graph with vertex set $X$ and between two vertices $x,x' \in X$ we have $\Asf(x,x')$ edges.
\end{definition}

It will be convenient for us to alter the adjacency matrix of a graph in two very specific ways, removing singular rows and subtracting the identity, so we introduce notation for this. 

\begin{notation}
Let $E$ be a graph and $\Asf_E$ its adjacency matrix. 
Denote by $\Asf_{E}^\bullet$ the matrix obtained from $\Asf_{E}$ by removing all rows corresponding to singular vertices of $E$.

Let $\Bsf_E$ denote the matrix $\Asf_{E} - I$, and let $\Bsf_{E}^\bullet$ be $\Bsf_E$ with the rows corresponding to singular vertices of $E$ removed. 
\end{notation}

\subsection{Graph \texorpdfstring{$C^*$}{C*}-algebras}
We follow the notation and definition for graph \cas in \cite{MR1670363}; this is not the convention used in Raeburn's monograph \cite{MR2135030}. 

\begin{definition} \label{def:graphca}
Let $E = (E^0,E^1,r,s)$ be a graph.
The \emph{graph \ca} $C^*(E)$ is defined as the universal \ca generated by
a set of mutually orthogonal projections $\setof{ p_v }{ v \in E^0 }$ and a set $\setof{ s_e }{ e \in E^1 }$ of partial isometries satisfying the relations
\begin{itemize}
	\item $s_e^* s_f = 0$ if $e,f \in E^1$ and $e \neq f$,
	\item $s_e^* s_e = p_{r(e)}$ for all $e \in E^1$,
	\item $s_e s_e^* \leq p_{s(e)}$ for all $e \in E^1$, and,
	\item $p_v = \sum_{e \in s^{-1}(v)} s_e s_e^*$ for all $v \in E^0$ with $0 < |s^{-1}(v)| < \infty$.
\end{itemize}
\end{definition}

It is clear from the definition that an isomorphism between graphs induces a canonical isomorphism between the corresponding graph \cas.

\begin{definition}
Let $E$ be a graph.  
We say that $E$ satisfies \emph{Condition (K)} if for all vertices $v\in E^0$ in $E$, either there is no return path based at $v$ or there are at least two distinct return paths based at $v$. 
\end{definition}

\begin{remark}
The graph \ca $C^*(E)$ is isomorphic to a Cuntz-Krieger algebra if and only if the graph~$E$ is finite with no sinks, see~\cite[Theorem~3.13]{arXiv:1209.4336v3}.
If all vertices in $E$ support two loops, then $C^*(E)$ is purely infinite, see~\cite[Theorem~2.3]{MR1989499}.
In our main result, Theorem~\ref{thm:main}, the graphs are assumed to have finitely many vertices and to satisfy Condition (K) --- for all such graphs the associated graph \cas are separable, unital, of real rank zero \cite[Theorem~2.5]{MR1989499} and have finitely many ideals. 
\end{remark}

\subsection{Filtered \texorpdfstring{$K$}{K}-theory}

\begin{definition}
Let \A be a \ca with finitely many ideals, and let $\Prim\A$ denote the primitive ideal space of \A equipped with the hull-kernel topology. 
A subset of $\Prim\A$ is called \emph{locally closed}, if it is the set difference between two open subsets of $\Prim\A$. 
There is a canonical lattice isomorphism between the open subsets of $\Prim\A$ and the (closed, two sided) ideals of \A{} --- let us denote this correspondence with $O\mapsto\A(O)$. 
If $V\subseteq\Prim\A$ is a difference set, then $V=U\setminus O$ for some open subsets $O\subseteq U\subseteq\Prim\A$. 
If also $V=U'\setminus O'$ for some other open subsets $O'\subseteq U'\subseteq\Prim\A$, then there exists a canonical isomorphism between $\A(U)/\A(O)$ and $\A(U')/\A(O')$. Thus we can let 
$$\A(V)=\A\left(\bigcap_{V=U\setminus O,O\subseteq U \subseteq\Prim\A}U\right) / \A\left(\bigcap_{V=U\setminus O,O\subseteq U \subseteq\Prim\A}O\right)$$
with a slight abuse of notation, since we identify $\A(O)$ with $\A(O)/\{0\}$ whenever $O$ is open. 
Note, that all singletons of $\Prim\A$ are locally closed. 

For each $x\in\Prim\A$ we let $S_x$ denote the smallest open subset that contains $x$, and we let 
$R_x=S_x\setminus\{x\}$, which is an open subset. 
Whenever we have two open subsets $O\subseteq U\subseteq \Prim\A$, we get a cyclic six term exact sequence in $K$-theory:
\begin{equation}\label{eq:sixtermktheory}\vcenter{
\xymatrix{
K_0(\A(O))\ar[r] & K_0(\A(U))\ar[r] & K_0(\A(U\setminus O))\ar[d] \\
K_1(\A(U\setminus O))\ar[u] & K_1(\A(U))\ar[l] & K_1(\A(O)).\ar[l] \\
}}
\end{equation}
In fact, this holds even if $O$ and $U$ are locally closed. 
If \A is a real rank zero algebra, then the map from $K_0$ to $K_1$ will be the zero map. 

Let 
\begin{align*}
I_0(\A)&=\setof{R_x}{x\in\Prim\A,R_x\neq\emptyset}\cup\setof{S_x}{x\in\Prim\A}\cup\setof{\{x\}}{x\in\Prim\A},\\
I_1(\A)&=\setof{\{x\}}{x\in\Prim\A},
\end{align*}
and let $\operatorname{Imm}(x)$ denote the set 
$$\setof{y\in\Prim\A}{S_y\subsetneq S_x\wedge \not\exists z\in\Prim\A\colon S_y\subsetneq S_z\subsetneq S_x}.$$
The \emph{reduced filtered $K$-theory} of \A, $\FKR(\A)$, consists of the families of groups 
$(K_0(\A(V)))_{V\in I_0(\A)}$ and 
$(K_1(\A(O)))_{O\in I_1(\A)}$ together with the maps in the sequences
$$K_1(\A(\{x\}))\to K_0(\A(R_x))\to  K_0(\A(S_x))\to  K_0(\A(\{x\}))$$
originating from the sequence \eqref{eq:sixtermktheory}, for all $x\in\Prim\A$ with $R_x\neq\emptyset$, and 
the maps in the sequences
$$K_0(\A(S_y))\to  K_0(\A(R_x))$$
originating from the sequence \eqref{eq:sixtermktheory}, for all pairs $(x,y)\in\Prim\A$ with $y\in\operatorname{Imm}(x)$ and $\operatorname{Imm}(x)\setminus\{y\}\neq\emptyset$.

Let also $\mathfrak{B}$ be a \ca with finitely many ideals. 
An isomorphism from $\FKR(\A)$ to $\FKR(\B)$ consists of a homeomorphism $\rho\colon\Prim\A\rightarrow\Prim\B$ and families of isomorphisms
$$(\phi_V\colon K_0(\A(V))\rightarrow K_0(\B(\rho(V))))_{V\in I_0(\A)}$$
$$(\psi_O\colon K_1(\A(O))\rightarrow K_1(\B(\rho(O))))_{O\in I_1(\A)}$$ 
such that all the ladders coming from the above sequences commute. 

Analogously, we define the \emph{ordered reduced filtered $K$-theory} of \A, $\FKRplus(\A)$, just as $\FKR(\A)$ where we also consider the order on all the $K_0$-groups --- and for an isomorphism, we demand that the isomorphisms between the $K_0$-groups are order isomorphisms. 
\end{definition}

\subsection{Moves on graphs}
In this section we describe the moves on graphs used in \cite{MR3082546}. We mention that these moves have been considered by other authors, and were previously noted to preserve the Morita equivalence class of the associated graph \ca (see \cite{MR2054048}). 

\begin{definition}[Move \SSS: Remove a regular source] 
Let $E = (E^0 , E^1 , r, s)$ be a graph, and let $w\in E^0$ be a source that is also a regular vertex. 
Let $E_S$ denote the graph $(E_S^0 , E_S^1 , r_S , s_S )$ defined by
$$E_S^0 := E^0 \setminus \{w\}\quad
E_S^1 := E^1 \setminus s^{-1} (w)\quad
r_S := r|_{E_S^1}\quad
s_S := s|_{E_S^1}.$$
We call $E_S$ the \emph{graph obtained by removing the source $w$ from $E$}, and say $E_S$ is formed by performing move \SSS to $E$.
\end{definition}

\begin{definition}[Move \RR: Reduction at a regular vertex] 
Suppose that $E = (E^0 , E^1 , r, s)$ is a graph, and let $w \in E^0$ be a regular vertex with the property that $s(r^{-1} (w)) = \{x\}$, $s^{-1} (w) = \{f \}$, and $r(f ) \neq w$. 
Let $E_R$ denote the graph $(E_R^0, E_R^1, r_R , s_R )$ defined by
\begin{align*}
E_R^0&:= E^0 \setminus \{w\} \\
E_R^1&:= \left(E^1 \setminus (\{f \} \cup r^{-1}(w))\right) \cup \setof{e_f}{e \in E^1 \text{ and } r(e) = w} \\
r_R (e) &:= r(e)\text{ if }e \in E^1 \setminus (\{f \} \cup r^{-1}(w)) \quad\text{and}\quad r_R (e_f ) := r(f ) \\
s_R (e) &:= s(e)\text{ if }e \in E^1 \setminus (\{f \} \cup r^{-1}(w)) \quad\text{and}\quad s_R (e_f ) := s(e) = x.
\end{align*}
We call $E_R$ the \emph{graph obtained by reducing $E$ at $w$}, and say $E_R$ is a reduction
of $E$ or that $E_R$ is formed by performing move \RR to $E$.
\end{definition}

\begin{definition}[Move \OO: Outsplit at a non-sink]
Let $E = (E^0 , E^1 , r, s)$ be a graph, and let $w \in E^0$ be vertex that is not a sink. 
Partition $s^{-1} (w)$ as a disjoint union of a finite number of nonempty sets
$$s^{-1}(w) = \mathcal{E}_1\sqcup \mathcal{E}_2\sqcup \cdots \sqcup\mathcal{E}_n$$
with the property that at most one of the $\mathcal{E}_i$ is infinite. 
Let $E_O$ denote the graph $(E_O^0, E_O^1, r_O , s_O )$ defined by 
\begin{align*}
E_O^0&:= \setof{v^1}{v \in E^0\text{ and }v \neq w}\cup\{w^1, \ldots, w^n\} \\
E_O^1&:= \setof{e^1}{e \in E^1\text{ and }r(e) \neq w}\cup \setof{e^1, \ldots , e^n}{e \in E^1\text{ and }r(e) = w} \\
r_{E_O} (e^i ) &:= 
\begin{cases}
r(e)^1 & \text{if }e \in E^1\text{ and }r(e) \neq w\\
w^i & \text{if }e \in E^1\text{ and }r(e) = w
\end{cases} \\
s_{E_O} (e^i ) &:= 
\begin{cases}
s(e)^1 & \text{if }e \in E^1\text{ and }s(e) \neq w \\
s(e)^j & \text{if }e \in E^1\text{ and }s(e) = w\text{ with }e \in \mathcal{E}_j.
\end{cases}
\end{align*}
We call $E_O$ the \emph{graph obtained by outsplitting $E$ at $w$}, and say $E_O$ is formed by
performing move \OO to $E$.
\end{definition}

\begin{definition}[Move \II: Insplit at a regular non-source]
Suppose that $E = (E^0 , E^1 , r, s)$ is a graph, and let $w \in E^0$ be a regular vertex that is not a source.
Partition $r^{-1} (w)$ as a disjoint union of a finite number of nonempty sets 
$$r^{-1} (w) = \mathcal{E}_1\sqcup \mathcal{E}_2\cdots\sqcup\mathcal{E}_n.$$
Let $E_I$ denote the graph $(E_I^0 , E_I^1 , r_I , s_I )$ defined by
\begin{align*}
E_I^0 &:= \setof{v^1}{v \in E^0\text{ and }v \neq w} \cup \{w^1,\ldots, w^n \} \\
E_I^1 &:= \setof{e^1}{e \in E^1\text{ and }s(e) \neq w} \cup \setof{e^1, \ldots, e^n}{e \in E^1\text{ and }s(e) = w} \\
r_{E_I} (e^i ) &:= 
\begin{cases}
r(e)^1 &\text{if }e \in E^1\text{ and }r(e) \neq w \\
r(e)^j &\text{if }e \in E^1\text{ and }r(e) = w\text{ with }e \in \mathcal{E}_j
\end{cases} \\
s_{E_I} (e^i ) &:= 
\begin{cases}
s(e)^1 &\text{if }e \in E^1\text{ and }s(e) \neq w \\
w^i &\text{if }e \in E^1\text{ and }s(e) = w.
\end{cases}
\end{align*}
We call $E_I$ the \emph{graph obtained by insplitting $E$ at $w$}, and say $E_I$ is formed by performing move \II to $E$.
\end{definition}

\begin{definition}[Move \CC: Cuntz splicing at a regular vertex supporting two return paths] \label{def:cuntzsplice}
Let $E = (E^0 , E^1 , r , s )$ be a graph and let $v \in E^0$ be a regular vertex that supports at least two return paths.
Let $E_C$ denote the graph $(E_C^0 , E_C^1 , r_C , s_C)$ defined by 
\begin{align*}
E_C^0 &:= E^0\sqcup\{u_1 , u_2 \} \\
E_C^1 &:= E^1\sqcup\{e_1 , e_2 , f_1 , f_2 , h_1 , h_2 \},
\end{align*}
where $r_{C}$ and $s_{C}$ extend $r$ and $s$, respectively, and satisfy
$$s_{C} (e_1 ) = v,\quad s_{C} (e_2 ) = u_1 ,\quad s_{C} (f_i ) = u_1 ,\quad s_{C} (h_i ) = u_2 ,$$
and
$$r_{C} (e_1 ) = u_1 ,\quad r_{C} (e_2 ) = v,\quad r_{C} (f_i ) = u_i ,\quad r_{C} (h_i ) = u_i . $$
We call $E_C$ the \emph{graph obtained by Cuntz splicing $E$ at $v$}, and say $E_C$ is formed by performing move \CC to $E$. 

We also use the notation $E_{v,-}$ for this graph --- even in the case where $v$ is not regular or not supporting two return paths. We can also Cuntz splice the vertex $u_1$ in $E_{v,-}$, and the resulting graph we denote $E_{v,--}$. 
See also Notation~\ref{notation:OnceAndTwice} and Example~\ref{example:cuntz-splice} for illustrations of the Cuntz splice.
\end{definition}

\begin{definition}
The equivalence relation generated by the moves \OO, \II, \RR, \SSS together with graph isomorphism is called \emph{move equivalence}, and denoted \Meq. 
The equivalence relation generated by the moves \OO, \II, \RR, \SSS, \CC together with graph isomorphism is called \emph{move prime equivalence}, and denoted \MCeq. 
\end{definition}

The following theorem follows from \cite[Propositions~3.1, 3.2 and 3.3 and Theorem~3.5]{MR3082546}.

\begin{theorem}[\cite{MR3082546}]\label{thm:moveimpliesstableisomorphism}
Let $E_1$ and $E_2$ be graphs such that $E_1\Meq E_2$. 
Then $C\sp*(E_1)\otimes \K\cong C\sp*(E_2)\otimes \K$. 
\end{theorem}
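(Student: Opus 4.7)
The plan is to reduce to the elementary case: it suffices to show that each of the moves \OO, \II, \RR, \SSS individually preserves the stable isomorphism class of the associated graph \ca, since move equivalence is by definition generated by these four moves together with graph isomorphism, and graph isomorphism manifestly yields a canonical isomorphism of graph \cas. Once each move is shown to give stable isomorphism (in fact either outright *-isomorphism or Morita equivalence plus separability), the theorem follows by transitivity.

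For the symbolic-dynamics moves \OO (outsplit) and \II (insplit), I would prove the stronger statement $C^*(E)\cong C^*(E_O)$ and $C^*(E)\cong C^*(E_I)$, following the Bates--Pask strategy. Inside $C^*(E_O)$ one explicitly constructs a Cuntz--Krieger $E$-family: for the split vertex $w$, set $P_w:=\sum_{i=1}^n p_{w^i}$; keep the unchanged vertex projections intact; and reassemble each old edge $e$ with $r(e)=w$ from its copies $e^1,\ldots,e^n$ appearing in $E_O^1$. The universal property from Definition~\ref{def:graphca} then produces a *-homomorphism $C^*(E)\to C^*(E_O)$, and injectivity follows from the gauge-invariant uniqueness theorem once one checks that this family is equivariant for the natural circle action. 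The inverse is built symmetrically, and the case of \II is dual.

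For \RR and \SSS I would pursue a full-corner Morita equivalence argument. Let $P:=\sum_{v\in E^0\setminus\{w\}}p_v$ in the multiplier algebra of $C^*(E)$. One shows $PC^*(E)P\cong C^*(E_S)$ (respectively $C^*(E_R)$) by exhibiting the relevant Cuntz--Krieger family inside the corner and again applying the gauge-invariant uniqueness theorem, and then establishes that $P$ is a full projection. For \SSS, fullness uses the defining relation at the regular source: $p_w=\sum_{e\in s^{-1}(w)} s_e s_e^*$, and each summand $s_e s_e^*$ is Murray--von Neumann equivalent via $s_e$ to $s_e^* s_e=p_{r(e)}\leq P$, placing $p_w$ in the ideal generated by $P$. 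For \RR one uses that the composition $s_e s_f$ for the unique outgoing edge $f$ at $w$ realizes the new edges of $E_R$ inside the corner, and a similar Murray--von Neumann argument yields fullness. Since the graphs are countable, $C^*(E)$ is separable, so a full corner in $C^*(E)$ yields stable isomorphism by Brown's theorem.

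The main obstacle I anticipate is the careful bookkeeping required for \OO and \II when verifying the summation Cuntz--Krieger relation at the split vertex without double-counting, together with the gauge-equivariance check needed to invoke the uniqueness theorem. For \RR and \SSS the technical difficulty lies in confirming that the natural assignment of generators really lands in the corner and induces a *-isomorphism onto it rather than merely a *-homomorphism with nontrivial kernel; identifying the correct Murray--von Neumann equivalences to prove fullness of $P$ is the conceptual heart of these two cases, and is where one really uses that the vertex $w$ is regular (so that the CK-relation is available at $w$).
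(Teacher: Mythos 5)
Your overall reduction is exactly what the paper relies on: it simply cites \cite[Propositions~3.1, 3.2, 3.3 and Theorem~3.5]{MR3082546}, whose content is precisely that each of \OO, \II, \RR, \SSS individually preserves the (stable) isomorphism class, by the Bates--Pask-style constructions you describe (explicit Cuntz--Krieger families plus the gauge-invariant uniqueness theorem for the splits; full-corner Morita equivalences plus Brown's theorem for \RR and \SSS). Your treatments of \OO, \RR, and \SSS are sound, and in particular your fullness arguments are the right ones.

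There is, however, a genuine gap in the \II case: it is \emph{not} dual to \OO in the naive sense, and the symmetric construction fails. In $C^*(E_O)$ one sets $S_e := \sum_{i} s_{e^i}$ for $e$ with $r(e)=w$; this works because the copies $e^1,\dots,e^n$ have pairwise distinct ranges $w^1,\dots,w^n$, so $S_e^*S_e = \sum_i p_{w^i} = P_w$. In the insplit graph $E_I$, by contrast, all copies $e^1,\dots,e^n$ of an edge $e$ with $s(e)=w$ share the \emph{same} range (namely $r(e)^1$, or $r(e)^j$ if $r(e)=w$ and $e\in\mathcal E_j$), so the analogous candidate $S_e := \sum_i s_{e^i}$ has $S_e^*S_e = n\,P_{r(e)}$, which is not even a projection for $n\geq 2$; choosing a single copy $S_e := s_{e^k}$ instead fails the summation relation at $w$. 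The actual proof of invariance of \II (Bates--Pask, and Sørensen for the non-row-finite setting) does not dualize \OO directly; it either passes through the dual/drawing graph so as to turn the insplit into an outsplit, or else builds a Cuntz--Krieger family in which the partial isometries are sums of $s_{e^i}$ and $s_{f^j}$ across \emph{different} original edges $e\neq f$, carefully matched so that initial and final projections are orthogonal. You would need to supply one of these arguments; a literal transposition of the outsplit construction will not close the case.
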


We also extend the notation of move equivalence to adjacency matrices. 

\begin{definition}
If $A,A'$ are square matrices with entries in $\N_0\sqcup\{\infty\}$ we define them to be \emph{move equivalent}, and write $A \Meq A'$ if $\Esf_A \Meq \Esf_{A'}$. 
We define \emph{move prime equivalence} similarly. 
\end{definition}


\section{Main result}
\label{sec:main}

\begin{theorem}\label{thm:main}
Let $E_1$ and $E_2$ be graphs with finitely many vertices satisfying Condition~(K).
Then the following are equivalent:
\begin{enumerate}
\item\label{thm:main-item-1} 
$E_{1} \MCeq E_{2}$, 
\item\label{thm:main-item-2} 
$C^{*} (E_{1} )\otimes\K \cong C^{*} ( E_{2} )\otimes\K$, and,
\item\label{thm:main-item-3} 
$\FKRplus( C^{*} (E_{1} ) ) \cong \FKRplus( C^{*} ( E_{2} ) )$.
\end{enumerate}
\end{theorem}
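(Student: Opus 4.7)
The plan is to establish the cycle $(1) \Rightarrow (2) \Rightarrow (3) \Rightarrow (1)$. For $(1) \Rightarrow (2)$, Theorem~\ref{thm:moveimpliesstableisomorphism} already handles invariance under the four moves \OO, \II, \RR, \SSS, so the task reduces to proving that move \CC also preserves stable isomorphism of the associated graph \ca. As indicated in the introduction, this cannot be deduced from any $K$-theoretic classification --- it is needed \emph{as input} to such a classification --- so it must be established by a direct $C^*$-algebraic construction, presumably by producing explicit elements of $C^*(E_{v,-})$ that generate a full hereditary subalgebra stably isomorphic to $C^*(E)$. This argument should go through for all unital graph \cas without using real rank zero.

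For $(2) \Rightarrow (3)$: A stable $*$-isomorphism $C^*(E_1)\otimes\K \cong C^*(E_2)\otimes\K$ induces a lattice isomorphism of ideals, hence a homeomorphism $\rho\colon \Prim C^*(E_1) \to \Prim C^*(E_2)$, together with compatible $*$-isomorphisms of every subquotient $\A(V)$. Applying $K_0$ and $K_1$ yields group isomorphisms commuting with the boundary maps of the six-term sequences \eqref{eq:sixtermktheory}, and they preserve the positive cones in $K_0$ because projections map to projections. Restricting to the families $I_0(\A)$ and $I_1(\A)$ gives the required isomorphism of $\FKRplus$.

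For $(3) \Rightarrow (1)$: This is the heart of the paper, following the strategy introduced in \cite{MR2270572} and extended in the authors' earlier works. First I would use moves \OO, \II, \RR, \SSS to reduce $E_1$ and $E_2$ to a convenient normal form in which the modified adjacency matrix $\Bsf^\bullet$ admits a block-triangular presentation indexed by the primitive ideal space. Second, I would translate the $\FKRplus$-isomorphism into a $\GLPZ$-equivalence, and ideally an $\SLPZ$-equivalence, between the two such matrices; this captures all the filtered $K$-theoretic data along the primitive ideal space. Third, I would apply the appropriate generalization of the Boyle--Huang classification of reducible shifts of finite type (\cite{MR1907894}, \cite{MR1990568}) to realize the matrix equivalence as a composition of elementary row/column operations corresponding to moves \OO, \II, \RR, and \SSS. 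Finally, any residual sign or parity obstruction --- which is invisible to $K$-theory but prevents bare flow equivalence --- is absorbed by a controlled application of move \CC at vertices supporting two return paths.

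The main obstacles are twofold. First, the Cuntz-splice invariance in $(1) \Rightarrow (2)$ demands a genuine $C^*$-algebraic proof with no recourse to $K$-theory, which is delicate because the Cuntz splice is intrinsically asymmetric (two extra vertices, six extra edges) and because the $K_1$ groups must survive the construction unchanged. Second, in $(3) \Rightarrow (1)$ the bridge between algebraic $K$-theoretic data and concrete graph moves is intricate: Condition~(K) enters precisely here, guaranteeing that every nontrivial component of $E_i$ carries enough return paths for both the Boyle--Huang machinery and move \CC to be legally applicable, and the inductive matching of the block-triangular structures must be arranged so that alterations performed to correct one block do not disturb the blocks already handled.
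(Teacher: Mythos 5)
The overall architecture $(1) \Rightarrow (2) \Rightarrow (3) \Rightarrow (1)$ and the $(3) \Rightarrow (1)$ machinery (standard form, Boyle--Huang lifting of the $K$-web isomorphism to a $\GLP$-equivalence, Boyle positive factorization to convert an $\SLP$-equivalence into moves, Cuntz splice to absorb the sign obstruction) match the paper. One small inaccuracy: in the paper the move $\CC$ is not applied as a final corrective after the factorization, but rather in the intermediate step (Section~\ref{sec:crelle-trick}, Proposition~\ref{p:GLtoSL2}) that upgrades the $\GLP$-equivalence to an $\SLP$-equivalence; Boyle's positive factorization then only sees $\SLP$-equivalent matrices. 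This ordering matters because the positive factorization machinery needs $\SLP$ from the start.

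The genuine gap is in your guess about $(1) \Rightarrow (2)$, specifically the Cuntz splice. You propose finding ``explicit elements of $C^*(E_{v,-})$ that generate a full hereditary subalgebra stably isomorphic to $C^*(E)$.'' This is the natural strategy for the other moves, but for the Cuntz splice there is no obvious inclusion $C^*(E)\hookrightarrow C^*(E_{v,-})$ as a corner: the Cuntz--Krieger relation at $v$ in $E_{v,-}$ involves the new edge into $u_1$, so cutting down by $\sum_{w\in E^0}p_w$ does not reproduce the $E$-relations, and no hereditary subalgebra picked out in an elementary way gives back $C^*(E)$. What the paper actually does is a two-pronged indirection. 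First, it shows by pure matrix manipulations that $E\Meq E_{v,--}$, i.e.\ splicing \emph{twice} is already an $\Meq$-equivalence requiring no new move (Proposition~\ref{prop:cuntzsplicetwice}). Second, it shows $C^*(E_{u,-})\cong C^*(E_{u,--})$ (Theorem~\ref{t:cuntz-splice-1}) by embedding both algebras unitally into $\mathcal{O}_2$ via Kirchberg's embedding theorem, using the isomorphism of \emph{relative} graph $C^*$-algebras $C^*(\mathbf{E}_*,\{v_2\})\cong C^*(\mathbf{E}_{**},\{w_2,w_3,w_4\})$ (Proposition~\ref{prop:csdouble}, a consequence of R\o rdam/Kirchberg--Phillips for the one-ideal case plus stable weak cancellation), and then invoking Szyma\'{n}ski's General Cuntz--Krieger Uniqueness Theorem to identify the two images. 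Combining the two prongs yields $C^*(E)\otimes\K\cong C^*(E_{v,--})\otimes\K\cong C^*(E_{v,-})\otimes\K$. So the ``delicacy'' you correctly anticipated is resolved not by a hereditary-subalgebra construction but by exchanging the single splice for the double splice at the $C^*$-level inside $\mathcal{O}_2$, where the loosening of the Cuntz--Krieger relation at the splice point (the relative algebra) is exactly what makes the two spliced pieces interchangeable.
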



\subsection{Strategy of proof and structure of the paper}
The proof of the main theorem above, Theorem~\ref{thm:main}, is structured as follows. 

Section~\ref{sec:cuntzsplice} is devoted to show that the move \CC gives stable isomorphism. 
Thus, \eqref{thm:main-item-1} implies \eqref{thm:main-item-2} follows from Theorem~\ref{thm:moveimpliesstableisomorphism} and Proposition~\ref{prop:cuntzspliceinvariant} --- a variant for finite graphs is in \cite{Eilers-Ruiz-Sorensen}. 

That \eqref{thm:main-item-2} implies \eqref{thm:main-item-3} is clear. 

The rest of the paper is devoted to proving that \eqref{thm:main-item-3} implies \eqref{thm:main-item-1}.
Mainly we emulate the previous proofs that go from filtered K-theory data to stable isomorphism or flow equivalence, as in \cite{MR1990568, MR1907894, MR2270572}.
A key component of those proofs is manipulation of the matrix $\Bsf_E^{\bullet}$, in particular that we can perform basic row and column operations without changing stable isomorphism class or flow equivalence class, depending on context. 
We prove in Section~\ref{subsec:movesonmatrices} that these matrix manipulation are allowed.
Once we understand matrix manipulations our proof that \eqref{thm:main-item-3} implies \eqref{thm:main-item-1} goes through 5 steps.

\begin{enumerate}
 \item[Step 1] \label{mainthm-step-standardform}  First we find graphs $F_1$ and $F_2$ in a certain standard form such that $F_i \MCeq E_i$.
 This standard form will ensure that the adjacency matrices $\Bsf^{\bullet}_{F_i}$ have the same size and block structure, and that they satisfy certain additional technical conditions. This will be done in Section~\ref{sec:standardform}. 
 
 \item[Step 2] \label{mainthm-step-FKtoGL} In Section~\ref{sec:boylehuang} we generalize a result of Boyle and Huang (\cite{MR1990568}), to show that the isomorphism $\FKRplus( C^{*} (F_{1} ) ) \cong \FKRplus( C^{*} ( F_{2} ) )$ is induced by a \GLPEe from $\Bsf^{\bullet}_{F_1}$ to $\Bsf^{\bullet}_{F_2}$. 
 
 \item[Step 3] \label{mainthm-step-GLtoSL} In Section~\ref{sec:crelle-trick} we find graphs $G_1, G_2$ such that $G_i \MCeq F_i$ and $\Bsf^{\bullet}_{G_1}$ and $\Bsf^{\bullet}_{G_2}$ are \SLPE{}. 
 
 \item[Step 4] \label{mainthm-step-SLtoMoves} Then, in Section~\ref{sec:boyle}, we generalize Boyle's positive factorization result from \cite{MR1907894} to show that there exists a positive \SLPEe between $\Bsf^{\bullet}_{G_1}$ and $\Bsf^{\bullet}_{G_2}$.
 
 \item[Step 5] It now follows from the results of Section~\ref{subsec:movesonmatrices} that $G_1 \MCeq G_2$ and hence that $E_1 \MCeq E_2$. 
\end{enumerate}

In Section~\ref{sec:notation}, we introduce some notation and concepts about block matrices needed in the proof.
In Section~\ref{sec:proof}, we combine the results of the previous sections to prove the main theorem. 


\section{Derived moves}
\label{sec:derivedmoves}

\subsection{Moves on graphs}

Here we introduce the derived moves from \cite[Section 5]{MR3082546}.
These are shown not to change the move equivalence class, but using them simplifies working with $\Meq$. 

\begin{definition}[Collapse a regular vertex that does not support a loop] \label{def:collapse}
Let $E = (E^0 , E^1 , r , s )$ be a graph and let $v$ be a regular vertex in $E$ that does not support a loop. 
Define a graph $E_{COL}$ by 
\begin{align*}
E_{COL}^0 &= E^0 \setminus \{v\}, \\
E_{COL}^1 &= E^1 \setminus (r^{-1}(v) \cup s^{-1}(v))
\sqcup \setof{[ef ]}{e \in r^{-1}(v)\text{ and }f \in s^{-1}(v)},
\end{align*}
the range and source maps extends those of $E$, and satisfy $r_{E_{COL}}([ef ]) = r (f )$
and $s_{E_{COL}} ([ef ]) = s (e)$.
\end{definition}

According to \cite[Theorem~5.2]{MR3082546} $E\Meq E_{COL}$ --- in fact, the collapse move can be obtained using move \OO and move \RR.

We also introduce move \TT. 

\begin{definition}
Let $E = (E^0 , E^1 , r , s )$ be a graph and let $\alpha = \alpha_1 \alpha_2 \cdots \alpha_n$ be a path such that $\Asf_E(s(\alpha_1), r(\alpha_1)) = \infty$. 
Define a graph $E_{T}$ by 
\begin{align*}
E_{T}^0 &= E^0, \\
E_{T}^1 &= E^1 \cup \{ \alpha^m \mid m \in \N \}
\end{align*}
the range and source maps extends those of $E$, and satisfy $r_{E_{T}}(\alpha^m) = r(\alpha)$
and $s_{E_{T}}(\alpha^m) = s(\alpha)$.
\end{definition}

By \cite[Theorem 5.4]{MR3082546} $E\Meq E_{T}$. 

\subsection{Moves on matrices}
\label{subsec:movesonmatrices}

Let $E$ be a graph with finitely many vertices. 
In this section we perform row and column additions on $\Bsf_E$ without changing move equivalence class of the associated graphs. 
Our setup is slightly different from what was considered in \cite[Section 7]{MR3082546}, so we redo the proofs from there in our setting. 
There are no substantial changes in the proof technique. 

\begin{lemma} \label{lem:oneStepColumnAdd}
Let $E = (E^0, E^1, r,s)$ be a graph with finitely many vertices. 
Let $u,v \in E^0$ be distinct vertices. 
Suppose the $(u,v)$'th entry of $\Bsf_E$ is nonzero (\ie, there is an edge from $u$ to $v$), and that the sum of the entries in the $u$'th row of $\Bsf_E$ is strictly greater than 0 (\ie, $u$ emits at least two edges). 
If $B'$ is the matrix formed from $\Bsf_E$ by adding the $u$'th column into the $v$'th column, then 
$$\Asf_E \Meq B' + I. $$
\end{lemma}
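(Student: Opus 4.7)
The plan is to realize the column operation by one outsplit at $u$ followed by one collapse. Since the $(u,v)$-entry of $\Bsf_E$ is nonzero and $u \neq v$, there exists an edge $e \in E^1$ with $s(e) = u$ and $r(e) = v$. The row-sum hypothesis is equivalent to $u$ emitting at least two edges, so the partition $s^{-1}(u) = \mathcal{E}_1 \sqcup \mathcal{E}_2$ with $\mathcal{E}_1 = \{e\}$ and $\mathcal{E}_2 = s^{-1}(u) \setminus \{e\}$ has both parts nonempty and at most one part infinite, hence defines a valid move \OO at $u$. Let $E_O$ denote the resulting graph and write $u^1, u^2$ for the two copies of $u$.

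In $E_O$ the vertex $u^1$ has exactly one outgoing edge $e^1$ with $r_{E_O}(e^1) = v \neq u^1$, so it is regular and supports no loop. The collapse of Definition~\ref{def:collapse} therefore applies at $u^1$, and by \cite[Theorem~5.2]{MR3082546} we have $E \Meq E_O \Meq (E_O)_{COL}$. Identifying $u^2$ with $u$, the vertex set of $(E_O)_{COL}$ equals $E^0$, and it remains to verify that its adjacency matrix equals $B' + I$. The edge bookkeeping is direct: each of the $\Asf_E(a,u)$ edges into $u$ from a vertex $a \neq u$ produces a copy landing at $u^1$ in $E_O$, and its composition with $e^1$ under the collapse yields one new edge from $a$ to $v$. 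Each self-loop at $u$ lies in $\mathcal{E}_2$ (since $e$ is not a loop), so in $E_O$ its copies originate at $u^2$ with ranges $u^1$ and $u^2$; the collapse converts the copy aimed at $u^1$ into an edge from $u^2$ to $v$, while the copy aimed at $u^2$ remains as a self-loop at $u^2$. The edge $e$ itself is absorbed by the collapse. All other entries are untouched, and using $\Bsf_E = \Asf_E - I$ one checks entry by entry that the resulting matrix is $B' + I$.

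The only delicate point is tracking how the subtracted $1$ produced by the diagonal term $\Bsf_E(u,u) = \Asf_E(u,u) - 1$ in the column addition manifests on the graph side. In the computation above it is precisely accounted for by the removal of the chosen edge $e$ under the collapse, which is why the choice $\mathcal{E}_1 = \{e\}$ (rather than a larger set of edges out of $u$) is essential. Once this bookkeeping is verified, the conclusion $\Asf_E \Meq B' + I$ follows from the move-invariance of the outsplit and collapse operations.
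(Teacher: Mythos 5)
Your proof is correct and follows essentially the same route as the paper's: both choose a single edge $e$ from $u$ to $v$, outsplit at $u$ according to the partition $\{e\} \sqcup (s^{-1}(u)\setminus\{e\})$, and then collapse the resulting regular vertex with a single outgoing edge, verifying by direct edge-bookkeeping that the adjacency matrix of the resulting graph is $B'+I$. The paper merely phrases it in the reverse direction (first constructing the target graph $G$ with $\Bsf_G = B'$, then exhibiting the same outsplit-and-collapse sequence to show $E \Meq G$), but the moves and the bookkeeping are identical.
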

\begin{proof}
Fix an edge $f$ from $u$ to $v$. 
Form a graph $G$ from $E$ by removing $f$ but adding for each edge $e \in r^{-1}(u)$ an edge $\bar{e}$ with $s(\bar{e}) = s(e)$ and $r(\bar{e}) = v$. 
We claim that $B' = \Bsf_G$. 
At any entry other than the $(u,v)$'th entry the two matrices have the same values, since we in both cases add entries into the $v$'th column that are exactly equal to the number of edges in $E$. 
At the $(u,v)$'th entry of $\Bsf_G$ we have
\begin{align*}
	(|s_E^{-1}(u) \cap r_E^{-1}(v)| - 1) + |s_E^{-1}(u) \cap r_E^{-1}(u)| &= \Bsf_E(u,v) + \Bsf_E(u,u) = B'(u,v).
\end{align*}
Thus to prove this lemma it suffices to show $E \Meq G$.

Partition $s^{-1}(u)$ as $\mathcal{E}_1 = \{ f \}$ and $\mathcal{E}_2 = s^{-1}(u) \setminus \{ f \}$. 
By assumption $\mathcal{E}_2$ is not empty, so we can use move \OO. 
Doing so yields a graph just as $E$ but where $u$ is replaced by two vertices, $u_1$ and $u_2$.
The vertex $u_1$ receives a copy of everything $u$ did and it emits only one edge.  
That edge has range $v$. 
The vertex $u_2$ also receives a copy of everything $u$ did, and it emits everything $u$ did, except $f$. 
Since $u_1$ is regular and not the base of a loop, we can collapse it.
The resulting graph is $G$ (after we relabel $u_2$ as $u$), so $G \Meq E$.
\end{proof}

We can also add columns along a path. 

\begin{proposition} \label{prop:columnAdd}
Let $E = (E^0, E^1, r,s)$ be a graph with finitely many vertices. 
Suppose $u,v \in E^0$ are distinct vertices with a path from $u$ to $v$ going through distinct vertices $u = u_0, u_1, u_2, \ldots, u_n = v$ (labelled so there is an edge from $u_i$ to $u_{i+1}$ for $i=0,1,2,\ldots,n-1$). 
Suppose further that for each $i = 0,1,2,\ldots, n-1$ the vertex $u_i$ emits at least two edges. 
If $B'$ is the matrix formed from $\Bsf_E$ by adding the $u$'th column into the $v$'th column, then 
\[
	\Asf_E \Meq B' + I. 
\]
\end{proposition}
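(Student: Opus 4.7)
My plan is to proceed by induction on the path length $n$. The base case $n=1$ is exactly Lemma~\ref{lem:oneStepColumnAdd}, since the hypothesis that $u_0$ emits at least two edges together with the existence of the edge $u_0\to u_1$ are precisely what that lemma requires.

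For the inductive step (assume the result for paths of length at most $n-1$, with $n\geq 2$), I would chain four move equivalences linking $E$ to the target graph. Let $E^{(1)}$ be obtained from $E$ by adding column $u_0$ of $\Bsf_E$ into column $u_1$; Lemma~\ref{lem:oneStepColumnAdd} gives $E\Meq E^{(1)}$. Let $E^{(2)}$ be obtained from $E^{(1)}$ by adding its column $u_1$ (now equal to the sum of the original columns $u_1$ and $u_0$) into column $u_n$; the inductive hypothesis applies to the sub-path $u_1\to u_2\to\cdots\to u_n$ in $E^{(1)}$, since the sub-path edges persist (step~1 only modified column $u_1$) and the emission conditions at $u_1,\ldots,u_{n-1}$ are preserved (column addition with non-negative entries only increases row sums), so $E^{(1)}\Meq E^{(2)}$. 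At this point column $u_1$ of $\Bsf_{E^{(2)}}$ is $(\text{orig }u_1)+(\text{orig }u_0)$ and column $u_n$ is $(\text{orig }u_n)+(\text{orig }u_1)+(\text{orig }u_0)$.

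To strip off the two unwanted pieces I introduce two auxiliary graphs: $E^{(3)}$ is defined to have the same $\Bsf$-matrix as $E^{(2)}$ except that column $u_1$ is reverted to the original, and $E^{(4)}$ is the target with $\Bsf_{E^{(4)}}=B'$. Applying Lemma~\ref{lem:oneStepColumnAdd} to $E^{(3)}$ along the edge $u_0\to u_1$ yields $E^{(2)}$ (the edge $u_0\to u_1$ is present in $E^{(3)}$ because column $u_1$ is the original, and a row-sum check shows $u_0$ still emits at least two edges in $E^{(3)}$), and symmetry of $\Meq$ gives $E^{(2)}\Meq E^{(3)}$. Analogously, the inductive hypothesis applied to $E^{(4)}$ with the sub-path $u_1\to\cdots\to u_n$ yields $E^{(3)}$, so $E^{(3)}\Meq E^{(4)}$. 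Concatenating the four equivalences gives $E\Meq E^{(4)}$, i.e., $\Asf_E\Meq B'+I$.

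The main obstacle will be the careful bookkeeping: at each of the four steps one must verify both that the intermediate adjacency matrix is non-negative (so that the intermediate object is honestly a graph) and that the specific hypotheses of Lemma~\ref{lem:oneStepColumnAdd} or of the inductive hypothesis (path edge present, emitting vertex has row sum at least $1$ in $\Bsf$) really hold. Non-negativity of $E^{(1)},E^{(2)},E^{(3)}$ follows from lower bounds that use the presence of the path edge $u_0\to u_1$, while non-negativity of $E^{(4)}$ is the implicit standing requirement that $B'+I$ be a valid adjacency matrix. The edge-preservation and emission-condition checks all reduce to the general observation that column addition with non-negative entries never decreases any row sum of $\Bsf$.
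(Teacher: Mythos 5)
There is a genuine gap in the inductive step: the object $E^{(2)}$ (equivalently $E^{(3)}$) that you introduce need not exist as a graph, so you cannot invoke the inductive hypothesis or Lemma~\ref{lem:oneStepColumnAdd} at those stages. Concretely, the $(u_1,u_n)$-entry of $\Bsf_{E^{(2)}}+I$ (and of $\Bsf_{E^{(3)}}+I$) equals $\Asf_E(u_1,u_n)+\Asf_E(u_1,u_0)+\Asf_E(u_1,u_1)-1$, which is $-1$ whenever $n\geq 3$ and $u_1$ supports no loop and has no edge to $u_0$ or to $u_n$. None of this is excluded by the hypotheses of the proposition, and it is perfectly compatible with $B'+I\geq 0$: the latter constrains the $(u_0,u_n)$-entry, not the $(u_1,u_n)$-entry. (For example, take $u_0\to u_0$, $u_0\to u_1$, $u_1\to u_2$, $u_1\to w$, $u_2\to u_3$, $u_2\to w$, $w\to u_0$; here $B'+I\geq 0$ but $\Bsf_{E^{(2)}}(u_1,u_3)=-1$.) Your claim that non-negativity of $E^{(2)}$ and $E^{(3)}$ "follows from lower bounds that use the presence of the path edge $u_0\to u_1$" is therefore not correct: that edge controls the $(u_0,u_1)$- and $(u_0,u_n)$-entries but says nothing about row $u_1$ in column $u_n$.

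The paper's proof is designed to avoid exactly this. It first accumulates \emph{all} of the original columns $u_{n-1},u_{n-2},\ldots,u_0$ into column $u_n$, one at a time by Lemma~\ref{lem:oneStepColumnAdd} (so the intermediates are graphs by construction), reaching a matrix $C$, and only then subtracts $u_1,u_2,\ldots,u_{n-1}$. The key is that when $u_j$ is removed, the original columns $u_{j+1},\ldots,u_{n-1}$ are still present in the accumulated sum (or, for $j=n-1$, the original column $u_n$ is), so the positive entry $\Asf_E(u_j,u_{j+1})\geq 1$ covers the $-1$ coming from $\Asf_E(u_j,u_j)-1$; one can check this keeps every intermediate $D_k+I$ non-negative as long as $B'+I\geq 0$. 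Your recursion never puts $u_2,\ldots,u_{n-1}$ into column $u_n$, so nothing compensates for that $-1$, and the chain breaks. To repair the argument you would need to restructure the induction so that the partial sums always contain a column supplying a compensating positive entry in row $u_1$ --- at which point you essentially reproduce the paper's build-up-then-subtract chain.
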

\begin{proof}
By repeated applications of Lemma \ref{lem:oneStepColumnAdd}, we first add the $u_{n-1}$'th column into the $u_n$'th column, which we can since there is an edge from $u_{n-1}$ to $u_n$.
Then we add the $u_{n-2}$'th column into the $u_n$'th column, which we can since there now is an edge from $u_{n-2}$ to $u_n$. 
Continuing this way, we end up with a matrix $C$ which is formed from $\Bsf_E$ by adding all the columns $u_i$, for $i = 0,1,2,\ldots, n-1$, into the the $u_n$'th column. 
We have that $\Asf_E \Meq C + I$

Consider the matrix $D$ that is formed from $\Bsf_E$ by adding all the columns $u_0$ and $u_i$, for $i = 2,3\ldots, n-1$, into the the $u_n$'th column. 
Adding the $u_1$'th column in $D$ into the $u_n$'th column yields $C$. 
So by Lemma \ref{lem:oneStepColumnAdd}, which applies since in $\Esf_{D+I}$ there is an edge from $u_1$ to $u_n$, we get that $D + I \Meq C + I \Meq \Asf_E$. 
Similarly we see that $D + I$ is move equivalent to the matrix formed from $\Bsf_E$ by adding all the columns $u_0$ and $u_i$, for $i = 3\ldots, n-1$, into the the $u_n$'th column. 
Continuing to subtract columns in this fashion, we get that $\Asf_E \Meq B' + I$.
\end{proof}

\begin{remark} \label{rmk:columnAdd}
Similar to how we used Lemma \ref{lem:oneStepColumnAdd} in the above proof, we can use Proposition \ref{prop:columnAdd} ``backwards'' to subtract columns in $\Bsf_E$ as long as the addition that undoes the subtraction would be legal. 
\end{remark}

We now turn to row additions. 

\begin{lemma} \label{lem:oneStepRowAdd}
Let $E = (E^0, E^1, r,s)$ be a graph with finitely many vertices. 
Let $u,v \in E^0$ be distinct vertices. 
Suppose the $(v,u)$'th entry of $\Bsf_E$ is nonzero (\ie, there is an edge from $v$ to $u$), that the sum of the entries in the $u$'th column of $\Bsf_E$ is strictly greater than 0 (\ie, $u$ receives at least two edges), and that $u$ is a regular vertex. 
If $B'$ is the matrix formed from $\Bsf_E$ by adding the $u$'th row into the $v$'th row, then 
\[
	\Asf_E \Meq B' + I. 
\]
\end{lemma}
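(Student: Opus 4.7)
The plan is to imitate the proof of Lemma~\ref{lem:oneStepColumnAdd}, exchanging the outsplit--collapse pair used there for an insplit--collapse pair so that the induced matrix manipulation lands on rows rather than columns. Concretely, fix an edge $f$ from $v$ to $u$ (available since $\Bsf_E(v,u) \neq 0$) and partition $r^{-1}(u)$ as $\mathcal{E}_1 = \{f\}$ and $\mathcal{E}_2 = r^{-1}(u) \setminus \{f\}$. The hypothesis that the $u$-th column sum of $\Bsf_E$ is positive forces $|r^{-1}(u)| \geq 2$, so $\mathcal{E}_2$ is nonempty; and since $u$ is regular by hypothesis and receives at least the edge $f$, it is a regular non-source, so move \II applies with this partition. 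The resulting graph $G$ replaces $u$ by a pair $u_1, u_2$, where $u_1$ receives only the insplit image of $f$ (coming from $v$), $u_2$ receives the images of every other edge originally entering $u$, and each of $u_1, u_2$ emits exactly one copy of every edge originally emitted by $u$ (with ranges distributed according to the partition).

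The next step is to collapse $u_1$ in $G$ using Definition~\ref{def:collapse}, which requires $u_1$ to be regular and to support no loop. Regularity is inherited from $u$: the vertex $u_1$ emits $|s_E^{-1}(u)|$ edges, a finite and nonempty set. For the loop condition, a loop at $u_1$ in $G$ would need to arise from an edge $e \in s_E^{-1}(u) \cap r_E^{-1}(u)$ with $e \in \mathcal{E}_1 = \{f\}$; but $s(f) = v \neq u$ rules this out, and every $E$-loop at $u$ sits in $\mathcal{E}_2$ and becomes a $u_1 \to u_2$ edge (together with a $u_2 \to u_2$ loop) in $G$. Thus $u_1$ may be collapsed.

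Identifying $u_2$ with $u$ in the collapsed graph $G_{COL}$, the final step is to verify that $\Asf_{G_{COL}} = B' + I$ by a row-by-row inspection: rows indexed by vertices other than $v$ agree with $\Asf_E$, while the $v$-row picks up exactly $\Asf_E(u,w)$ new edges into each column $w \neq u$ (produced by the collapse, which replaces $f$ together with an outgoing edge of $u_1$ by a direct $v \to r(e)$ edge) and $\Asf_E(u,u) - 1$ new edges into column $u$ (the $-1$ recording the removal of $f$). The main obstacle I foresee is precisely this accounting: one must check that the single $-1$ from deleting $f$ interacts correctly with the $\Bsf = \Asf - I$ shift at the $(v,u)$ and $(v,v)$ entries, so that the new adjacency matrix is exactly $B' + I$ rather than differing from it by stray diagonal corrections. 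This is the transposed analogue of the arithmetic at the end of the proof of Lemma~\ref{lem:oneStepColumnAdd}, and carries through in the same way.
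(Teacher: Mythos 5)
Your proof is correct and takes essentially the same route as the paper: fix an edge $f$ from $v$ to $u$, insplit at $u$ with partition $\mathcal{E}_1 = \{f\}$, $\mathcal{E}_2 = r^{-1}(u)\setminus\{f\}$, collapse the regular loop-free vertex $u_1$, and relabel $u_2$ as $u$. The paper leaves the final matrix bookkeeping implicit (``Arguing as in the proof of Lemma~\ref{lem:oneStepColumnAdd}''), whereas you carry it out explicitly; the arithmetic checks out, including the $-1$ at the $(v,u)$ entry from deleting $f$ offsetting against the $\Asf_E(u,u)$ composite edges landing on $u_2$.
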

\begin{proof}
Fix an edge $f$ from $v$ to $u$.
Form a graph $G$ from $E$ by removing $f$ but adding for each edge $e \in s^{-1}(u)$ an edge $\bar{e}$ with $s(\bar{e}) = v$ and $r(\bar{e}) = r(e)$. 
We claim that $E \Meq G$. 
Arguing as in the proof of Lemma \ref{lem:oneStepColumnAdd} we see that this is equivalent to proving $\Asf_E \Meq B' + I$.

Partition $r^{-1}(u)$ as $\mathcal{E}_1 = \{ f \}$ and $\mathcal{E}_2 = r^{-1}(u) \setminus \{ f \}$. 
By our assumptions on $u$, $\mathcal{E}_2$ is nonempty, and $u$ is regular, so we can use move \II.  
Doing so replaces $u$ with two new vertices, $u_1$ and $u_2$. 
The vertex $u_1$ only receives one edge, and that edge comes from $v$, the vertex $u_2$ receives the edges $u$ received except $f$.
Since $u_1$ is regular and not the base of a loop of length one we can collapse it.
The resulting graph is $G$ (after we relabel $u_2$ as $u$), so $G \Meq E$.
\end{proof}

Naturally we can also add rows along a path. 

\begin{proposition} \label{prop:rowAdd}
Let $E = (E^0, E^1, r,s)$ be a graph with finitely many vertices. 
Suppose $u,v \in E^0$ are distinct vertices with a path from $v$ to $u$ going through distinct vertices $v = v_0, v_1, v_2, \ldots, v_n = u$ (labelled so there is an edge from $v_i$ to $v_{i+1}$ for $i=0,1,2,\ldots,n-1$). 
Suppose further that for each $i = 1,2,\ldots, n$ the vertex $v_i$ is regular and receives at least two edges. 
If $B'$ is the matrix formed from $\Bsf_E$ by adding the $u$'th row into the $v$'th row, then 
\[
	\Asf_E \Meq B' + I. 
\]
\end{proposition}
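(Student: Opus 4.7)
The plan is to mirror the proof of Proposition~\ref{prop:columnAdd} verbatim, but using Lemma~\ref{lem:oneStepRowAdd} in place of Lemma~\ref{lem:oneStepColumnAdd}. In particular, I will first add rows $v_1,v_2,\dots,v_n$ successively into row $v_0$ to obtain a matrix $C$ with $\Asf_E\Meq C+I$, and then peel off the unwanted row additions, one at a time in reverse order, to arrive at $B'$.

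For the additive phase, I apply Lemma~\ref{lem:oneStepRowAdd} $n$ times in the order $i=1,2,\dots,n$ to add the current $v_i$-th row into the current $v_0$-th row. At each step I need to verify the three hypotheses of the lemma. The edge from $v_0$ to $v_i$ is present: for $i=1$ by assumption on the path, and for $i>1$ because the previous addition of row $v_{i-1}$ into row $v_0$ augmented the $(v_0,v_i)$ entry by $\Bsf_E(v_{i-1},v_i)\geq 1$. Regularity of $v_i$ is untouched, since row operations on row $v_0$ never modify row $v_i$. Finally, the $v_i$-column sum of the current matrix is at least as large as the $v_i$-column sum of $\Bsf_E$ (which is strictly positive by hypothesis), since every intermediate row addition replaces the $(v_0,v_i)$ entry with a larger nonnegative number. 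This produces a matrix $C$ satisfying $\Asf_E\Meq C+I$.

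For the subtractive phase, I use the row analog of Remark~\ref{rmk:columnAdd}: a row subtraction is permissible provided the addition that undoes it satisfies the hypotheses of Lemma~\ref{lem:oneStepRowAdd}. Subtract rows $v_{n-1},v_{n-2},\dots,v_1$ from row $v_0$ in that order. At the stage when row $v_k$ is about to be subtracted, the current $(v_0,v_k)$ entry equals
\[
\Bsf_E(v_0,v_k)+\sum_{i=1}^{k-1}\Bsf_E(v_i,v_k)+\Bsf_E(v_n,v_k)\geq \Bsf_E(v_{k-1},v_k)\geq 1,
\]
so the undoing addition has the required nonzero $(v_0,v_k)$ entry; regularity of $v_k$ and the positivity of its column sum are preserved for the same reasons as in the additive phase. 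After all $n-1$ subtractions the row $v_0$ entry at column $j$ is $\Bsf_E(v_0,j)+\Bsf_E(v_n,j)=\Bsf_E(v_0,j)+\Bsf_E(u,j)$, which is precisely row $v_0$ of $B'$, while every other row is unchanged from $\Bsf_E$. Hence $B'+I\Meq C+I\Meq \Asf_E$, as desired.

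The only real obstacle is the bookkeeping: one must track how each intermediate row addition or subtraction affects the entries needed in the next step and confirm that the hypotheses of Lemma~\ref{lem:oneStepRowAdd} continue to hold. Both the presence of the requisite edges and the positivity of column sums follow cleanly from the path hypothesis $v_{i-1}\to v_i$ and from the standing assumption that each $v_i$ receives at least two edges, so no genuinely new input is required beyond careful imitation of the column-addition argument.
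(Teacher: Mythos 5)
Your argument is correct and matches what the paper means by ``completely analogous to the proof of Proposition~\ref{prop:columnAdd}'': accumulate the intermediate rows along the path via Lemma~\ref{lem:oneStepRowAdd}, then undo the unwanted additions. In particular you have chosen the right orderings --- add $v_1,\dots,v_n$ forward along the path and then subtract $v_{n-1},\dots,v_1$ in reverse, so that at every stage the path edge $v_{k-1}\to v_k$ keeps the relevant $(v_0,v_k)$ entry positive --- which is the precise dual of the column proof's orderings ($u_{n-1},\dots,u_0$ added, $u_1,\dots,u_{n-1}$ removed) and is the one genuinely subtle point in transcribing the argument.
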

\begin{proof}
The proof is completely analogous to the proof of Proposition \ref{prop:columnAdd}.
\end{proof}

\begin{remark} \label{rmk:rowAdd}
We can also use Proposition \ref{prop:rowAdd} ``backwards'' to subtract rows in $\Bsf_E$ (\cf Remark \ref{rmk:columnAdd}). 
\end{remark}


\section{Cuntz splice implies stable isomorphism}
\label{sec:cuntzsplice}

In this section we prove that \eqref{thm:main-item-1} implies \eqref{thm:main-item-2} in Theorem~\ref{thm:main}. 
We know that the moves \OO, \II, \RR, \SSS imply stable isomorphism, \cf\ Theorem~\ref{thm:moveimpliesstableisomorphism}.
What is missing is to prove that if $E_1$ and $E_2$ are graphs with finitely many vertices satisfying Condition (K) and $E_1$ is the Cuntz splice of $E_2$ on a vertex that supports at least two distinct return paths then $C^*(E_1)\otimes \K\cong C^*(E_2)\otimes \K$, which is what we prove in Proposition~\ref{prop:cuntzspliceinvariant}. 
This is also an important result needed in Section~\ref{sec:crelle-trick}.

First we reduce to the case where we perform a Cuntz splice on a regular vertex that supports at least two loops. 

\begin{proposition} \label{prop:cuntzSpliceSetup}
Let $E$ be a graph with finitely many vertices, and let $u \in E^0$ be a vertex that supports at least two distinct return paths. 
Then there exists a graph $F$ and a regular vertex $v \in F^0$ such that 
\begin{enumerate}
	\item $E \Meq F$,
	\item $E_{u,-} \Meq F_{v,-}$ and $E_{u,--} \Meq F_{v,--}$, 
	\item $v$ supports at least two loops, and, 
	\item for all $w \in F^0$ with $v \geq w \geq v$ we have that $w$ supports at least one loop, there is a path from $v$ to $w$ through regular vertices, and there is a path from $w$ to $v$ through regular vertices (we say that a path $e_1e_2\cdots e_n$ goes through regular vertices if $s(e_i)$ is regular for all $i=2,3,\ldots,n$). 
\end{enumerate}
\end{proposition}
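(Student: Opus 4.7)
The plan is to build $F$ by successively applying the basic moves \OO, \II, \RR, \SSS together with the derived moves of Section~\ref{sec:derivedmoves} (collapsing a regular vertex not supporting a loop, and move \TT), all applied at vertices different from the distinguished one, so that the transformations lift to move equivalences on the Cuntz spliced graphs as well. Pick two distinct return paths $\mu_1$ and $\mu_2$ based at $u$. If $u$ emits infinitely many edges, first outsplit at $u$ using a partition of $s^{-1}(u)$ that isolates the first edge of $\mu_1$, the first edge of $\mu_2$, and lumps the remaining edges; the two singleton classes yield regular copies of $u$ carrying the two return paths. Now walk along each $\mu_i$: at every intermediate vertex, use outsplitting or insplitting to isolate the single edge of $\mu_i$ passing through that vertex, collapse the resulting regular, non-loop-supporting auxiliary vertex, and proceed. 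After finitely many such steps both return paths have been shortened to loops based at a single regular vertex which we designate $v$, thereby establishing items (1) and (3).

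For condition (4), consider the strongly connected component $C$ of $v$. Iterate over $w \in C$. If $w$ is regular and does not yet support a loop, apply the collapse move of Definition~\ref{def:collapse}; this removes $w$, strictly shrinking the number of offending vertices in $C$ without touching $v$ or its two loops. If instead $w$ is singular, then $w$ is not a sink (as it lies on a cycle through $v$), so $w$ emits infinitely many edges to some $x$, and there is a path $\beta$ from $x$ back to $w$; applying move \TT to the cycle $e\beta$ (where $e$ is one of the edges in the infinite entry from $w$ to $x$) introduces loops at $w$. Iterating this procedure yields a graph in which every vertex in the strongly connected component of $v$ supports a loop. Using these loops (each of which is a regular edge at a vertex that, after addition, certainly emits a regular edge) one can route any path from $v$ to $w$ or back through regular vertices, settling the remaining part of (4).

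For item (2), the observation is that every move invoked above is performed at a vertex distinct from $u$ (in $E$) or from $v$ (in $F$), and in particular distinct from the Cuntz-splice vertices $u_1, u_2$ of $E_{u,-}$ and $F_{v,-}$ and from $u_1, u_2, u_3, u_4$ of $E_{u,--}$ and $F_{v,--}$. Each such local move therefore applies verbatim to the Cuntz spliced graphs, showing that the same sequence of moves carries $E_{u,-}$ to $F_{v,-}$ and $E_{u,--}$ to $F_{v,--}$.

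The main obstacle is the singular case in the second stage: one must verify both that an appropriate cycle $e\beta$ through an infinitely-represented edge entry is actually available (which requires tracking that the infinite multiplicities persist through the earlier moves) and that the resulting application of \TT does not destroy the structure already built around $v$. Careful bookkeeping of which multiplicities are preserved by outsplitting, insplitting, and collapsing, and choosing moves whose supports are disjoint from the edges needed for \TT, is the delicate technical point.
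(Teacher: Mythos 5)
Your proposal has a genuine gap at the crux of the proposition: it both relies on and denies the need to treat an outsplit at $u$ carefully. In the first paragraph you propose, when $u$ is an infinite emitter, to "first outsplit at $u$ using a partition of $s^{-1}(u)$\dots", but in the third paragraph you justify item (2) by asserting that "every move invoked above is performed at a vertex distinct from $u$\dots". These are contradictory. And the contradiction hides the real difficulty: outsplitting at $u$ does \emph{not} commute with Cuntz splicing, because the splice attaches two new vertices precisely at $u$. In $E_{u,-}$ the set $s^{-1}(u)$ has an extra edge $d_1$ going to the first splice vertex $v_1$, so the same partition cannot be used, and choosing any extension of the partition produces a graph that differs from $(E_O)_{v,-}$ (Cuntz splice of the outsplit graph) by an extra or missing edge between $v_1$ and one of the outsplit copies of $u$. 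The paper's proof confronts this head-on: it postpones any move at $u$ until the very last step, then splits into Cases 2 and 3 according to whether $u$ emits infinitely many edges into its strongly connected component, chooses the partition so that $d_1$ can be lumped with the edges staying in that component, and corrects the remaining discrepancy by a column addition in the Cuntz-splice block using Proposition~\ref{prop:columnAdd}. Your proposal contains no mechanism for this reconciliation, and without one item (2) fails for singular $u$.

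There is a second, independent problem in your first paragraph. After outsplitting $u$ with the first edges of $\mu_1$ and $\mu_2$ as singleton classes, those two edges now emanate from two \emph{different} new vertices $u^1$ and $u^2$. Walking along each $\mu_i$ and collapsing intermediate vertices will (at best) produce a loop at $u^1$ and a loop at $u^2$, not two loops at a single designated vertex $v$. The paper avoids this entirely by achieving two loops at $u$ before any outsplit at $u$: it applies Proposition~\ref{prop:columnAdd} to add the column of a vertex $w$ with $u\to w\to u$ into the $u$'th column twice, which is a move performed away from the Cuntz-splice vertices and hence commutes. Your "shorten the return paths to loops" route also runs into trouble when $\mu_1$ and $\mu_2$ share intermediate vertices, or when an intermediate vertex is visited more than once (a return path need not be vertex-simple off of $u$), since then isolating "the edge of $\mu_i$ passing through $w$" does not yield a single-emitter, non-loop-supporting vertex that can be collapsed. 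In short: the first and second stages of your plan work (they are essentially what the paper does away from $u$), but the parts that touch $u$ — the early outsplit, the path-shortening to loops, and the commutation claim — all need to be rethought, and the missing ingredient is exactly the case analysis and column-addition correction that the paper uses in Cases 2 and 3.
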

\begin{proof}
Let $w \in E^0\setminus\{u\}$ be a regular vertex such that $u \geq w \geq u$. 
If $w$ does not support a loop we can use the collapse move (Definition \ref{def:collapse}) to remove it. 
The resulting graph will be move equivalent to $E$ and have fewer regular vertices $z$ with $u \geq z \geq u$ that do not support a loop. 
So by repeatedly collapsing regular vertices that do not support loops we arrive at a graph $E_1$ such that $E \Meq E_1$, and since the Cuntz splice has no bearing on the collapse move we also see that $E_{u,-} \Meq (E_1)_{u,-}$ and $E_{u,--} \Meq (E_1)_{u,--}$.

For each infinite emitter in $w \in E_1^0\setminus\{u\}$ with $u \geq w \geq u$, we can apply move \TT to assure that there is at least one loop based at $w$. 
Call the resulting graph $E_2$. 
Again we have that $E \Meq E_2$ and again the Cuntz splice is irrelevant for our move so $E_{u,-} \Meq (E_2)_{u,-}$ and $E_{u,--} \Meq (E_2)_{u,--}$. 
Thus we have now found a graph where every vertex $w \neq u$ with $w \geq u \geq w$ supports at least one loop. 

We will now modify $E_2$ to get the desired paths to and from $u$ through regular vertices. 
Let $w\in E_2^0\setminus\{u\}$ be a vertex with $u \geq w \geq u$. 
Suppose every path from $u$ to $w$ goes through an infinite emitter and pick a path $e_1e_2 \cdots e_n$ from $u$ to $w$ of minimal length (in particular it does not contain any loops nor does it visit $u$ again). 
Let $l$ be the first index such that $s(e_l)$ is an infinite emitter, and note that $e_l$ is not a loop. 
Partition $s^{-1}(s(e_l))$ into two sets, one of them $\{e_l\}$, and then outsplit according to this partition. 
After the outsplit we can collapse the vertex that emits $e_l$, since $e_l$ is the only edge it emits. 
Notice that in the post-collapse graph, the singular vertices are the same, and all the paths that were in the graph are still present, and each vertex $z\neq u$ with $u \geq z \geq u$ still supports at least one loop. 
We now have an edge from $s(e_{l-1})$ to $r(e_l)$, so we can change our path to avoid $s(e_l)$. 
Continuing in this fashion we eventual modify $E_2$ in such a way that there is a path from $u$ to $w$ using only regular vertices. 
Now we continue to do this for every such vertex $w$.

Exactly the same strategy lets us assure that there is a path from $w$ to $u$ through regular vertices when $u \geq w \geq u$. 
Call the graph that emerges after all these moves $E_3$.

Since we only did outsplits and collapses on vertices in $E_2^0 \setminus \{u\}$, we see that these moves are unaffected by the Cuntz splice. 
Thus we have $E \Meq E_3$, $E_{u,-} \Meq (E_3)_{u,-}$ and $E_{u,--} \Meq (E_3)_{u,--}$.

Now we want to modify $E_3$ such that $u$ has at least two loops. 
If not, then since $u$ supports two distinct return paths there exists some vertex $w\neq u$ such that $w \geq u$ and $|s^{-1}(u) \cap r^{-1}(w)| \geq 1$. 
As every vertex $z\neq u$ with $u \geq z \geq u$ supports a loop, we can use Proposition~\ref{prop:columnAdd} to add the $w$'th column of $\Bsf_{E_3}$ into the $u$'th column twice. 
Call the resulting matrix $B'$, and let $E_4 = \Esf_{B' + I}$.
In $E_4$, $u$ will support (at least) two loops and all the other properties are preserved, since $w$ supports a loop. 
The column addition is also valid in $(E_3)_{u,-}$ and $(E_3)_{u,--}$, so we have $E \Meq E_4$, $E_{u,-} \Meq (E_4)_{u,-}$ and $E_{u,--} \Meq (E_4)_{u,--}$.

We will do the proof in cases. 

{\textbf{Case 1:}} If $u$ is regular, then we can end Case~1 by letting $F = E_4$ and $v = u$. 

{\textbf{Case 2:}} $u$ is an infinite emitter and there exists $w_0 \in E^0$ such that $w_0 \geq u$ and $|s^{-1}(u) \cap r^{-1}(w_0)| = \infty$.

Doing what we did above and using move $\TT$ we can find a graph $E_5$ such that 
\begin{enumerate}[(i)]
	\item $E \Meq E_5$,
	\item $E_{u,-} \Meq (E_5)_{u,-}$ and $E_{u,--} \Meq (E_5)_{u,--}$, 
	\item $u$ supports infinitely many loops,
	\item if $u \geq w \geq u$ then there are infinitely many edges from $u$ to $w$, and,
	\item for all $w \in E_5^0$ with $u \geq w \geq u$ we have that $w$ supports at least one loop, there is a path from $u$ to $w$ through regular vertices, and there is a path from $w$ to $u$ through regular vertices. 
\end{enumerate}

Pick two edges $e_1,e_2 \in s^{-1}(u) \cap r^{-1}(u)$, and pick for each $u \geq w \geq u $, $w \neq u$, one edge $e_w \in s^{-1}(u) \cap r^{-1}(w)$.
Partition $s^{-1}(u)$ as into two sets, one which is 
\[
	\mathcal{E}_1 = \{ e_1, e_2 \} \cup \{ e_w \mid u \geq w \geq u, w \neq u \}.
\]
Out-splitting according to this partition we get a graph $F$ with $E \Meq F$.
We will show that $F_{u_1,-} \Meq (E_5)_{u,-}$ and $F_{u_1,--} \Meq (E_5)_{u,--}$. 
Hence putting $v = u_1$ will complete the proof of this case. 

In $(E_5)_{u,-}$ we call the two vertices in the Cuntz splice $v_1$ and $v_2$, and let $f$ be the edge from $u$ to $v_1$. 
If we outsplit at $u$ by partitioning $s^{-1}(u)$ into two sets, one of which is 
\[
	\mathcal{F}_1 = \{ e_1, e_2, f \} \cup \{ e_w \mid u \geq w \geq u, w \neq u \}
\]
we get a graph $F_1 \Meq (E_5)_{u,-}$, which is just like $F_{u_1,-}$, except that in $F_1$, there is an edge from $v_1$ to $u_2$, while there is no such edge in $F_{u_1,-}$. 
But Proposition~\ref{prop:columnAdd} lets us add the $v_2$'th column in $\Bsf_{F_{u_1,-}}$ to the $u_2$'th, to show that $F_1 \Meq F_{u_1,-}$. 

A completely analogue argument shows that $F_{u_1,--} \Meq (E_5)_{u,--}$. Letting $v=u_1$ finishes Case~2. 

{\textbf{Case 3:}} $u$ is an infinite emitter and for all $w \in E_4^0$ with $|s^{-1}(u) \cap r^{-1}(w)| = \infty$ we have $w \ngeq u$.

We will perform an outsplit at $u$, by partitioning $s^{-1}(u)$ into two sets, one of which is 
\[
	\mathcal{E}_1 = \{ e \in s^{-1}(u) \mid r(e) \geq u \}. 
\]
Similarly to Case 2, we see that the only difference between outsplitting according to this partition before or after we perform the Cuntz splice is as edge from $v_1$ to $u_2 $ (notation as above).
Hence, we see as above that if we let $F$ be the outsplit graph coming from $E_4$, then $E_4 \Meq F$, $(E_4)_{u,-} \Meq F_{u_1,-}$ and $(E_4)_{u,--} \Meq F_{u_1,--}$. 
Letting $v=u_1$ finishes Case 3. 
\end{proof}

We now show that performing the Cuntz splice twice is a legal move. 

\begin{proposition} \label{prop:cuntzsplicetwice}
Let $E$ be a graph with finitely many vertices, and let $v$ be a vertex that supports at least two distinct return paths. 
Then $E \Meq E_{v,--}$.  
\end{proposition}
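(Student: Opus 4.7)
My plan is to first reduce to the well-structured case using Proposition~\ref{prop:cuntzSpliceSetup}: applied to $E$ at $v$, it yields a graph $F$ and a regular vertex $w \in F^0$ such that $F \Meq E$, $F_{w,--} \Meq E_{v,--}$, and $w$ supports at least two loops with the additional local structure in item~(4). Hence it suffices to prove $F \Meq F_{w,--}$, and in particular we may assume throughout that $w$ is regular, supports two loops, and that every vertex $x$ with $w \geq x \geq w$ is reached both from and to $w$ along paths through regular vertices and supports at least one loop.

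With this in hand, the strategy is to build the ``double Cuntz splice gadget'' inside $F$ by an explicit sequence of basic moves rather than to invoke any $K$-theoretic invariant. Comparing $\Bsf_F$ and $\Bsf_{F_{w,--}}$, the latter differs by appending a $4\times 4$ block for $\{u_1,u_2,u_3,u_4\}$ whose internal entries are entirely determined by the definition of the Cuntz splice, together with a single $1$ in positions $(w,u_1)$ and $(u_1,w)$. I plan to introduce $u_1$ and $u_2$ by performing an outsplit at $w$ separating one of the two loops, followed by a carefully chosen insplit/outsplit pair that carves off a vertex isomorphic (locally) to $u_2$; after this the $\{u_1,u_2\}$ gadget of the Cuntz splice is present. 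Repeating the construction based at $u_1$ (which, after the first stage, itself is regular and carries two loops) produces the $\{u_3,u_4\}$ gadget.

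The auxiliary edges produced by splits (in particular the spurious connections between the newly created vertices and vertices adjacent to $w$ in $F$) are cleaned up using Propositions~\ref{prop:columnAdd} and~\ref{prop:rowAdd}: the column and row addition moves allow us to cancel or reshape the entries of $\Bsf$ at the new vertices, while Remarks~\ref{rmk:columnAdd} and~\ref{rmk:rowAdd} let us perform the corresponding subtractions. Any temporary regular vertices introduced along the way that do not support a loop are eliminated via the collapse move (Definition~\ref{def:collapse}), which is legitimate by \cite[Theorem~5.2]{MR3082546}. The hypotheses of Propositions~\ref{prop:columnAdd} and~\ref{prop:rowAdd} --- emission/reception of at least two edges, regularity, and the existence of suitable paths --- are precisely what condition~(4) of Proposition~\ref{prop:cuntzSpliceSetup} guarantees.

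The main obstacle will be the combinatorial bookkeeping: since $w$ may have arbitrarily many edges to and from other vertices of $F$, each outsplit or insplit at $w$ redistributes those edges among the various copies of $w$, and we must verify that after all the compensating row/column operations the resulting adjacency matrix agrees on the nose with $\Bsf_{F_{w,--}}$ rather than merely up to some general equivalence. I expect the verification to split into cases depending on whether $w$ carries exactly two loops or more, and on the distribution of edges in and out of $w$, but in every case the moves are constrained enough by the hypotheses of Proposition~\ref{prop:cuntzSpliceSetup} that the required sequence can be written down explicitly.
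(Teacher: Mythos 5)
Your reduction to Proposition~\ref{prop:cuntzSpliceSetup} is exactly what the paper does, and you correctly identify the toolbox: row/column additions (Propositions~\ref{prop:columnAdd} and~\ref{prop:rowAdd}, and their subtraction variants), the collapse move, and the out/in-splits. But from that point on the proposal is a plan, not a proof: the entire content of Proposition~\ref{prop:cuntzsplicetwice} is the explicit sequence of moves, and this is precisely what you defer to the future (``the required sequence can be written down explicitly''). Saying the constraints are strong enough that a sequence must exist is a conjecture, not an argument.

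Beyond the missing execution, the sketched mechanism looks doubtful. If you outsplit at $w$, both offspring inherit copies of \emph{every} edge that pointed to $w$ in $F$, whereas the Cuntz splice vertex $u_1$ in $F_{w,-}$ receives edges only from $w$, $u_1$ and $u_2$. Getting rid of those surplus incoming edges requires subtracting rows, and the subtraction direction of the row/column move is only legal when the corresponding addition would be (Remarks~\ref{rmk:columnAdd}--\ref{rmk:rowAdd}); nothing in your setup guarantees this holds after an outsplit. The paper avoids this by using an \emph{insplit} at $v$ that isolates one loop into its own receptor, followed by an \emph{inverse collapse} to insert a middle vertex; and it then matches adjacency matrices by running explicit elementary operations from \emph{both ends} --- reducing $\Bsf_{E_{v,--}}$ (the block $B_1$ of size $n+4$) down to $B_{10}$ of size $n+2$, and independently transforming $\Bsf_F$ to $C_6$, checking $C_6 = B_{10}$ after relabelling. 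Your proposal does not mention the inverse collapse at all, and it processes the two splices sequentially, which is delicate because the second splice is glued to $u_1$, not to $v$. Relatedly, the assertion that after the first stage $u_1$ ``itself is regular and carries two loops'' is false: $u_1$ has a single loop together with a $2$-cycle through $u_2$, so the hypotheses for the ``repeat'' step are not met without first re-running the reduction of Proposition~\ref{prop:cuntzSpliceSetup} at $u_1$, which you do not discuss. As it stands there is a genuine gap: the core matrix bookkeeping that the paper carries out in detail is absent, and the particular outsplit-first strategy you sketch is not obviously repairable.
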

\begin{proof}
According to Proposition~\ref{prop:cuntzSpliceSetup}, we can assume that $E$ satisfies the conditions of that proposition --- so we assume that $v$ is a regular vertex that supports at least two loops. Moreover, for convenience, we let $n$ be the number of vertices in $E$ and we label the vertices by the numbers $1,2,\ldots,n$ in such a way that $v$ gets the label $n$. 

For a given matrix size $N$ and $i,j\in\{1,2,\ldots,N\}$, we let $E_{(i,j)}$ denote the $N\times N$ matrix that is equal to the identity matrix everywhere except for the $(i,j)$'th entry, that is $1$. 
If $B$ is a $N\times N$ matrix, then $E_{(i,j)}B$ is the matrix obtained from $B$ by adding $j$'th row into the $i$'th row, and $BE_{(i,j)}$ is the matrix obtained from $B$ by adding $i$'th column into the $j$'th column. 
Using $E_{(i,j)}^{-1}$ instead will yield subtraction. 
In what follows we will make extensive use of Propositions \ref{prop:columnAdd} and \ref{prop:rowAdd} and Remarks \ref{rmk:columnAdd} and \ref{rmk:rowAdd}, we feel it will only muddle the exposition if we add all the references in.

Note that $\Bsf_{E_{v,--}}$ can be written as 
$$B_1=
\begin{pmatrix} 
\Bsf_E & 
\begin{pmatrix}
0 & 0 & 0 & 0 \\
\vdots &\vdots &\vdots &\vdots \\
0 & 0 & 0 & 0 \\
0 & 1 & 0 & 0 
\end{pmatrix} \\ 
\begin{pmatrix} 
0 & \cdots & 0 & 0 \\
0 & \cdots & 0 & 1 \\
0 & \cdots & 0 & 0 \\
0 & \cdots & 0 & 0 
\end{pmatrix} & 
\begin{pmatrix}
0 & 1 & 0 & 0 \\
1 & 0 & 1 & 0 \\
0 & 1 & 0 & 1 \\
0 & 0 & 1 & 0 
\end{pmatrix} 
\end{pmatrix}.$$
Now let 
$B_2=E_{(n+2,n+3)}B_1$ and 
$B_3=B_2E_{(n+3,n+4)}^{-1}$. 
Then $B_1 + I \Meq B_2 + I \Meq B_3 + I$. 
We have that 
$$B_3=
\begin{pmatrix} 
\Bsf_E & 
\begin{pmatrix}
0 & 0 & 0 & 0 \\
\vdots &\vdots &\vdots &\vdots \\
0 & 0 & 0 & 0 \\
0 & 1 & 0 & 0 
\end{pmatrix} \\ 
\begin{pmatrix} 
0 & \cdots & 0 & 0 \\
0 & \cdots & 0 & 1 \\
0 & \cdots & 0 & 0 \\
0 & \cdots & 0 & 0 
\end{pmatrix} & 
\begin{pmatrix}
0 & 1 & 0 & 0 \\
1 & 1 & 1 & 0 \\
0 & 1 & 0 & 1 \\
0 & 0 & 1 & -1
\end{pmatrix} 
\end{pmatrix}$$
The $n+4$'th vertex in $\Esf_{B_3 + I}$ does not support a loop, so it can be collapsed yielding
$$B_4=
\begin{pmatrix} 
\Bsf_E & 
\begin{pmatrix}
0 & 0 & 0 \\
\vdots &\vdots &\vdots \\
0 & 0 & 0 \\
0 & 1 & 0  
\end{pmatrix} \\ 
\begin{pmatrix} 
0 & \cdots & 0 & 0 \\
0 & \cdots & 0 & 1 \\
0 & \cdots & 0 & 0 
\end{pmatrix} & 
\begin{pmatrix}
0 & 1 & 0 \\
1 & 1 & 1 \\
0 & 1 & 1 
\end{pmatrix} 
\end{pmatrix}.$$
With $B_4 + I \Meq B_3 + I$. 
Now we let 
$B_5=E_{(n+2,n+1)}^{-1}B_4$, 
$B_6=E_{(n,n+3)}B_5$, 
$B_7=E_{(n,n+1)}^{-1}E_{(n,n+1)}^{-1}B_6$, 
$B_8=E_{(n+3,n+2)}B_7$ and 
$B_9=B_8E_{(n+2,n+1)}^{-1}$. 
We then have 
$B_4 + I \Meq B_5 + I \Meq B_6 + I \Meq B_7 + I \Meq B_8 + I\Meq B_9 + I$. 
We have that 
$$B_9=
\begin{pmatrix} 
\Bsf_E & 
\begin{pmatrix}
0 & 0 & 0 \\
\vdots &\vdots &\vdots \\
0 & 0 & 0 \\
0 & 0 & 1  
\end{pmatrix} \\ 
\begin{pmatrix} 
0 & \cdots & 0 & 0 \\
0 & \cdots & 0 & 1 \\
0 & \cdots & 0 & 1 
\end{pmatrix} & 
\begin{pmatrix}
-1 & 1 & 0 \\
1 & 0 & 1 \\
0 & 1 & 2 
\end{pmatrix} 
\end{pmatrix}$$
In $\Esf_{B_9 + I}$ the $n+1$'th vertex does not support a loop, so it can be collapsed to yield
$$B_{10}=
\begin{pmatrix} 
\Bsf_E & 
\begin{pmatrix}
0 & 0 \\
\vdots &\vdots \\
0 & 0 \\
0 & 1  
\end{pmatrix} \\ 
\begin{pmatrix} 
0 & \cdots & 0 & 1 \\
0 & \cdots & 0 & 1 
\end{pmatrix} & 
\begin{pmatrix}
1 & 1 \\
1 & 2 
\end{pmatrix} 
\end{pmatrix}.$$
with $B_9 + I \Meq B_{10} + I$. 

Now we look at the graph $E$ again, and and let $\Bsf_E = (b_{ij})$. 
Since the vertex $v$ (number $n$) has at least two loops, we have $b_{nn}\geq 1$. 
Now we can insplit by partitioning $r^{-1}(v)$ into two sets, one with a single set consisting of a loop based at $v$, and the other the rest. 
In the resulting graph, $v$ is split into two vertices $v^1$ and $v^2$, and let $E'$ denote the rest of the graph. The vertex $v^1$ has the same edges in and out of $E'$ as $v$ had, but it has only $b_{nn}$ loops. There is one edge from $v^1$ to $v^2$ and $v^2$ has one loop and there are $b_{nn}$ edges from $v^2$ to $v^1$ as well as all the same edges going from $v^2$ into $E'$ as originally from $v$. 
Use the inverse collapse move to add a new vertex $u$ to the middle of the edge from $v^1$ to $v^2$ and call the resulting graph $F$. 
Label the vertices such that  $v^1$, $u$ and $v^2$ are the $n$'th, $n+1$'st and $n+2$'nd vertex, then $\Bsf_F$ is:
$$\Bsf_F =
\begin{pmatrix} 
\widetilde{B} & 
\begin{pmatrix}
0 & 0 \\
\vdots &\vdots \\
0 & 0 \\
1 & 0  
\end{pmatrix} \\ 
\begin{pmatrix} 
0 & \cdots & 0 & 0 \\
b_{n1} & \cdots & b_{n,n-1} & b_{nn} 
\end{pmatrix} & 
\begin{pmatrix}
-1 & 1 \\
0 & 0 
\end{pmatrix} 
\end{pmatrix},$$
where $\widetilde{B}$ is $\Bsf_E$ except for on the $(n,n)$'th entry, which is $b_{nn}-1$. 
Note that $b_{nn}-1\geq 0$, so that there is still a loop based at the $n$'th vertex.
This is important since it allows us to do the following matrix manipulations. 
Let $C_2=\Bsf_F E_{(n+2,n+1)}E_{(n+2,n+1)}$, 
$C_3=E_{(n+2,n+1)}C_2$, 
$C_4=E_{(n+2,n)}^{-1}C_3$,
$C_5=C_4E_{(n+1,n)}$ and
$C_6=C_5E_{(n+2,n+1)}$. 
We have that
$C_1 + I \Meq C_2 + I \Meq C_3 + I \Meq C_4 + I \Meq C_5 + I \Meq C_6 + I$. 
The matrix $C_6$ is in fact equivalent to $B_{10}$ upon relabelling of the last two vertices, thus it follows, that $E\Meq E_{v,--}$. 
\end{proof}

We now show that Cuntz splicing once and twice yields isomorphic graph $C^{*}$-algebras.  To do this, we first set up some notation.  

\begin{notation}\label{notation:OnceAndTwice}
Let $\mathbf{E}_*$ and $\mathbf{E}_{**}$ denote the graphs: 
\begin{align*}
\mathbf{E}_* \  = \ \ \ \ \xymatrix{
  \bullet^{v_1} \ar@(ul,ur)[]^{e_{1}} \ar@/^/[r]^{e_{2}} & \bullet^{v_2} \ar@(ul,ur)[]^{e_{4}} \ar@/^/[l]^{e_{3}}
}
\end{align*}
\begin{align*}
\mathbf{E}_{**} \  =  \ \ \ \ \xymatrix{
	\bullet^{ w_{4} } \ar@(ul,ur)[]^{f_{10}}  \ar@/^/[r]^{ f_{9} } & \bullet^{ w_{3} } \ar@(ul,ur)[]^{f_{7}} \ar@/^/[r]^{ f_{6} }  \ar@/^/[l]^{f_{8}} &  \bullet^{w_1} 				\ar@(ul,ur)[]^{f_{1}} \ar@/^/[r]^{f_{2}} \ar@/^/[l]^{f_{5}}
	& \bullet^{w_2} \ar@(ul,ur)[]^{f_{4}} \ar@/^/[l]^{f_{3}}
	}
\end{align*}
The graph $\mathbf{E}_*$ is what we attach when we Cuntz splice, if we instead attach the graph $\mathbf{E}_{**}$ it is like we Cuntz spliced twice. 

Let $E = ( E^{0}, E^{1} , r_{E}, s_{E} )$ be a graph and let $u$ be a vertex of $E$.
Then $E_{u, -}$ can be described as follows (up to canonical isomorphism):
\begin{align*}
E_{u,-}^{0} &= E^{0} \sqcup \mathbf{E}_{*}^{0} \\
E_{u,-}^{1} &= E^{1} \sqcup \mathbf{E}_{*}^{1} \sqcup \{ d_1, d_2 \}
\end{align*}
with $r_{E_{u,-}} \vert_{E^{1}} = r_{E}$, $s_{E_{u,-}} \vert_{ E^{1} } = s_{E}$, $r_{E_{u,-}} \vert_{\mathbf{E}_{*}^{1}} = r_{\mathbf{E}_{*}}$, $s_{E_{u,-}} \vert_{\mathbf{E}_{*}^{1}} = s_{\mathbf{E}_{*}}$, and
\begin{align*}
	s_{E_{u,-}}(d_1) &= u	& r_{E_{u,-}}(d_1) &= v_{1} \\
	s_{E_{u,-}}(d_2) &= v_1	& r_{E_{u,-}}(d_2) &= u.
\end{align*}
Moreover, $E_{u,--}$ can be described as follows (up to canonical isomorphism):
\begin{align*}
E_{u,--}^{0} &= E^{0} \sqcup \mathbf{E}_{**}^{0} \\
E_{u,--}^{1} &= E^{1} \sqcup \mathbf{E}_{**}^{1} \sqcup \{ d_1, d_2 \}
\end{align*}
with $r_{E_{u,--}} \vert_{E^{1}} = r_{E}$, $s_{E_{u,--}} \vert_{ E^{1} } = s_{E}$, $r_{E_{u,--}} \vert_{\mathbf{E}_{**}^{1}} = r_{\mathbf{E}_{**}}$, $s_{E_{u,--}} \vert_{\mathbf{E}_{**}^{1}} = s_{\mathbf{E}_{**}}$, and
\begin{align*}
	s_{E_{u,--}}(d_1) &= u		& r_{E_{u,--}}(d_1) &= w_{1} \\
	s_{E_{u,--}}(d_2) &= w_1	& r_{E_{u,--}}(d_2) &= u.
\end{align*}
\end{notation}

\begin{example}\label{example:cuntz-splice}
Consider the graph 
\begin{align*}
E \  = \ \ \ \ \xymatrix{
  \bullet_{u} \ar@(dl,ul) \ar@(ur,dr)
}
\end{align*}
Then 
\begin{align*}
E_{u,-} \  = \ \ \ \ \xymatrix{
  \bullet^{v_1} \ar@/^/[d]^{d_2} \ar@(ul,ur)[]^{e_{1}} \ar@/^/[r]^{e_{2}} & \bullet^{v_2} \ar@(ul,ur)[]^{e_{4}} \ar@/^/[l]_{e_{3}} \\
  \bullet_{u} \ar@/^/[u]^{d_1} \ar@(dl,ul) \ar@(ur,dr) & 
}
\end{align*}
and 
\begin{align*}
E_{u,--} \  = \ \ \ \ \xymatrix{
	\bullet^{ w_{4} } \ar@(ul,ur)[]^{f_{10}}  \ar@/^/[r]^{ f_{9} } &
	\bullet^{ w_{3} } \ar@(ul,ur)[]^{f_{7}} \ar@/^/[r]^{ f_{6} }  \ar@/^/[l]_{f_{8}} &
	\bullet^{w_1} \ar@/^/[d]^{d_2} \ar@(ul,ur)[]^{f_{1}} \ar@/^/[r]^{f_{2}} \ar@/^/[l]_{f_{5}} &
	\bullet^{w_2} \ar@(ul,ur)[]^{f_{4}} \ar@/^/[l]_{f_{3}} \\
	& & \bullet_{u} \ar@/^/[u]^{d_1} \ar@(dl,ul) \ar@(ur,dr) & 
}
\end{align*}
\end{example}

By classification of simple purely infinite graph \cas, \ie, by Kirchberg-Phillips classification, the graph \cas $C^*(\mathbf{E}_*)$ and $C^*(\mathbf{E}_{**})$ are isomorphic (this important case is actually due to R{\o}rdam, \cf\ \cite{MR1340839}). 
To show that $C^*(E_{u,-})$ is isomorphic to $C^*(E_{u,--})$ we would like to know that $C^*(\mathbf{E}_*)$ and $C^*(\mathbf{E}_{**})$ are still isomorphic if we do not enforce the summation relation at $v_1$ and $w_1$ respectively. 

\begin{proposition} \label{prop:csdouble}
The relative graph \cas (in the sense of Muhly-Tomforde \cite{MR2054981}) $C^*(\mathbf{E}_{*}, \{v_2\})$ and $C^*(\mathbf{E}_{**}, \{ w_2,w_3,w_4 \})$ are isomorphic. 
\end{proposition}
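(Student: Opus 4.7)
My plan is to realize both relative graph \cas as full corners of ordinary graph \cas via Muhly-Tomforde's construction, and then invoke a classification result for unital essential extensions of $\mathcal{O}_2$. Specifically, form the graph $F_*$ by attaching to $\mathbf{E}_*$ a sink $v_0$ together with a single edge from $v_1$ to $v_0$; by the Muhly-Tomforde theorem, $C^*(\mathbf{E}_*, \{v_2\})$ is canonically isomorphic to $pC^*(F_*)p$ where $p = p_{v_1} + p_{v_2}$, and this corner is full. Analogously form $F_{**}$ from $\mathbf{E}_{**}$ by attaching a sink $w_0$ via an edge from $w_1$, and realize $C^*(\mathbf{E}_{**}, \{w_2, w_3, w_4\}) \cong p'C^*(F_{**})p'$ for $p' = p_{w_1} + p_{w_2} + p_{w_3} + p_{w_4}$, again a full corner. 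Under these identifications, the two gap projections $p_{v_1} - s_{e_1}s_{e_1}^* - s_{e_2}s_{e_2}^*$ and $p_{w_1} - s_{f_1}s_{f_1}^* - s_{f_2}s_{f_2}^* - s_{f_5}s_{f_5}^*$ correspond to the range projections of the edges feeding the new sinks.

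Next, I would argue that $C^*(F_*) \cong C^*(F_{**})$. A direct computation gives that both algebras are unital with $K_0 \cong \Z$, $K_1 = 0$, a unique nontrivial ideal isomorphic to $\K$ (spanned by matrix units indexed by finite paths ending at the new sink), and simple quotient isomorphic to $\mathcal{O}_2$. Since $\mathcal{O}_2$ is a UCT Kirchberg algebra with vanishing $K$-theory, $\operatorname{Ext}(\mathcal{O}_2, \K) \cong KK^1(\mathcal{O}_2, \C) = 0$, so any two unital essential extensions of $\mathcal{O}_2$ by $\K$ are isomorphic. Consequently $C^*(F_*) \cong C^*(F_{**})$. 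To pass this isomorphism down to the corners, I would compare the classes of $p$ and $p'$ in $K_0$ (each computed from the cokernel of $(\Bsf^\bullet)^T$ of the augmented graph) and, using that both target algebras are properly infinite, arrange for the image of $p$ to be Murray-von Neumann equivalent to $p'$, yielding an isomorphism of the corners.

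The main obstacle I foresee is this final matching of the corners, since the uniqueness-of-extensions theorem only delivers \emph{some} isomorphism between $C^*(F_*)$ and $C^*(F_{**})$ without specifying how it acts on the distinguished projections $p, p'$. An alternative route, more in keeping with the spirit of the preceding sections, would be to prove $F_* \Meq F_{**}$ directly, by adapting the matrix manipulations from Proposition~\ref{prop:cuntzsplicetwice} to the augmented graphs (essentially performing a Cuntz-splice-like operation at $v_1$ with the sink already attached, and carefully verifying that the row/column addition moves of Section~\ref{subsec:movesonmatrices} remain valid in the presence of the sink row). This would yield stable isomorphism of $C^*(F_*)$ and $C^*(F_{**})$, and by tracking the effect of each explicit move on the vertex projections in the corner, one could read off the desired isomorphism of relative graph algebras.
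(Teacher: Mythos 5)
Your proposal diverges from the paper's proof at the very first step, and the divergence introduces a gap that you yourself flag but do not close. The Muhly--Tomforde theorem you cite does not realize $C^*(\mathbf{E}_*, \{v_2\})$ as a corner of anything: \cite[Theorem~3.7]{MR2054981} gives an \emph{isomorphism} of the relative graph algebra onto the full graph algebra $C^*((\mathbf{E}_*)_{\{v_2\}})$ of a specific modified graph, obtained by adding a new sink $v_1'$ together with one new edge into $v_1'$ for \emph{each} edge into $v_1$ (so two new edges $e_1'\colon v_1\to v_1'$ and $e_3'\colon v_2\to v_1'$, not a single edge). Your construction of $F_*$ (one sink, one edge from $v_1$) is a different graph, and the claim that $C^*(\mathbf{E}_*, \{v_2\}) \cong pC^*(F_*)p$, while probably true, is not Muhly--Tomforde and would need its own argument (essentially the generalized Cuntz--Krieger uniqueness theorem applied to the Cuntz--Krieger $(\mathbf{E}_*, \{v_2\})$-family $\{p_{v_1}, p_{v_2}, s_{e_1},\dots, s_{e_4}\}$ in the corner). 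The paper deliberately avoids corners precisely to sidestep the corner-matching problem you identify: with the Muhly--Tomforde isomorphism in hand, any isomorphism $C^*((\mathbf{E}_*)_{\{v_2\}}) \cong C^*((\mathbf{E}_{**})_{\{w_2,w_3,w_4\}})$ is automatically an isomorphism of the two relative algebras, with no projections to track.

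Having obtained two honest (unital, finite-vertex) graph algebras, the paper computes the primitive ideal lattice, the six-term exact sequence, and the class of the unit in $K_0$, verifies these data agree, and then applies the classification result of Brown--Dadarlat \cite[Theorem~2]{MR1396721} (equivalently \cite[Corollary~4.20]{arXiv:1301.7695v1}) for unital extensions with one nontrivial ideal. Your alternative route via $\operatorname{Ext}(\mathcal{O}_2,\K)=0$ is conceptually close to this, but as written it conflates the vanishing of the (weak) Ext group with the statement that all unital essential extensions are isomorphic; one must at minimum track the class of the unit and use the absorption/Voiculescu machinery for the unital Ext semigroup. Brown--Dadarlat packages exactly this, which is why the paper invokes it rather than re-deriving it.

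The more serious issue remains the one you name: a uniqueness theorem for the ambient extensions $C^*(F_*) \cong C^*(F_{**})$ gives you \emph{some} isomorphism, but nothing forces it to carry $p$ to $p'$, so the corners need not match under it. Comparing $[p]$ and $[p']$ in $K_0$ and invoking proper infiniteness yields Murray--von Neumann equivalence of $p$ with \emph{some} projection in the class of $p'$, but to conclude $pC^*(F_*)p \cong p'C^*(F_{**})p'$ you would need $p$ to be sent to a projection unitarily equivalent to $p'$, and in a unital algebra Murray--von Neumann equivalence alone does not give that without also controlling the complements. Your fallback idea of proving $F_* \Meq F_{**}$ by matrix moves runs into the same difficulty in a different guise: move equivalence only gives stable isomorphism, and recovering an isomorphism of the unital corners from a sequence of moves requires the kind of explicit bookkeeping you gesture at but do not carry out. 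The paper's choice of the Muhly--Tomforde isomorphism over a corner realization is precisely what dissolves this difficulty, so you should adopt that framing rather than fight it.
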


\begin{proof}
Following \cite[Definition~3.6]{MR2054981} we define a graph 
\begin{align*}
(\mathbf{E}_*)_{\{v_2\}} \ = \ \ \ \ \xymatrix{
  \bullet^{v_1} \ar[d]_{e_{1}'} \ar@(ul,ur)[]^{e_{1}} \ar@/^/[r]^{e_{2}} & \bullet^{v_2} \ar@(ul,ur)[]^{e_{4}} \ar@/^/[l]_{e_{3}} \ar@/^/[dl]^{e_{3}'} \\
  \bullet_{v_1'} & 
}
\end{align*} 
Then by \cite[Theorem~3.7]{MR2054981} we have that $C^*(\mathbf{E}_{*}, \{v_2\}) \cong C^*((\mathbf{E}_*)_{\{v_2\}})$.
Similarly we define a graph 
\begin{align*}
(\mathbf{E}_{**})_{\{w_2,w_3,w_4\}} \ = \ \ \ \ \xymatrix{
	\bullet^{w_4} \ar@(ul,ur)[]^{f_{10}}  \ar@/^/[r]^{ f_{9} } &
	\bullet^{w_3} \ar@/_/[dr]_{f_6'} \ar@(ul,ur)[]^{f_{7}} \ar@/^/[r]^{ f_{6} }  \ar@/^/[l]_{f_{8}} &
	\bullet^{w_1} \ar[d]_{f_1'} \ar@(ul,ur)[]^{f_{1}} \ar@/^/[r]^{f_{2}} \ar@/^/[l]_{f_{5}} &
	\bullet^{w_2} \ar@/^/[dl]^{f_3'} \ar@(ul,ur)[]^{f_{4}} \ar@/^/[l]_{f_{3}} \\
	& & \bullet_{w_1'} & 
}
\end{align*}
Using \cite[Theorem~3.7]{MR2054981} again, we have that $C^*(\mathbf{E}_{**}, \{ w_2,w_3,w_4 \})$ is isomorphic to $C^*((\mathbf{E}_{**})_{\{w_2,w_3,w_4\}})$.

Both the graphs $(\mathbf{E}_*)_{\{v_2\}}$ and $(\mathbf{E}_{**})_{\{w_2,w_3,w_4\}}$ satisfy Condition~(K). 
Using the well developed theory of ideal structure and $K$-theory for graph \cas, we see that both have exactly one nontrivial ideal, that this ideal is the compact operators, and that their six-term exact sequences are 
\begin{align*}
\xymatrix{
	\Z \langle v_1' \rangle \ar[r] & \Z \ar[r] & 0 \ar[d] \\
	0 \ar[u] & \ar[l] 0 & \ar[l] 0
}
\ \ \ \ &  \ \ \ \
\xymatrix{
	\Z \langle w_1' \rangle \ar[r] & \Z \ar[r] & 0 \ar[d] \\
	0 \ar[u] & \ar[l] 0 & \ar[l] 0
}
\end{align*}

Furthermore, in $K_0(C^*((\mathbf{E}_*)_{\{v_2\}}))$ we have 
\begin{align*}
	[p_{v_1}] &= -[p_{v_1'}] = [p_{v_2}],
\end{align*}
and in $K_0(C^*((\mathbf{E}_{**})_{\{w_2,w_3,w_4\}}))$ we have 
\begin{align*}
	[p_{w_1}] &= -[p_{w_1'}] = [p_{w_2}], \\
	[p_{w_3}] &= 0 = [p_{w_4}].
\end{align*}
Therefore the class of the unit is $-[p_{v_1'}]$ and $-[p_{w_1'}]$, respectively. 
It now follows from \cite[Theorem~2]{MR1396721} (see also \cite[Corollary~4.20]{arXiv:1301.7695v1}) that $C^*((\mathbf{E}_*)_{\{v_2\}}) \cong C^*((\mathbf{E}_{**})_{\{w_2,w_3,w_4\}})$ and hence that $C^*(\mathbf{E}_{*}, \{v_2\}) \cong C^*(\mathbf{E}_{**}, \{ w_2,w_3,w_4 \})$.
\end{proof}

We also need a technical result about the projections in $\mathcal{E} = C^*(\mathbf{E}_{*}, \{v_2\})$.

\begin{lemma} \label{lem:csmurray}
Let $\mathcal{E} = C^*(\mathbf{E}_{*}, \{v_2\})$ and choose an isomorphism between $\mathcal{E}$ and $C^*(\mathbf{E}_{**}, \{ w_2,w_3,w_4 \})$, which exists according to the previous proposition. 
Let $p_{v_1}$, $p_{v_2}$, $s_{e_1}$, $s_{e_2}$, $s_{e_3}$, $s_{e_4}$ be the canonical generators of $C^*(\mathbf{E}_{*}, \{v_2\})=\mathcal{E}$ and let $p_{w_1}$, $p_{w_2}$, $p_{w_3}$, $p_{w_4}$, $s_{f_1}$, $s_{f_2}, \ldots, s_{f_{10}}$ denote the image of the canonical generators of $C^*(\mathbf{E}_{**}, \{ w_2,w_3,w_4 \})$ in $\mathcal{E}$ under the chosen isomorphism. 
Then 
\begin{align*}
	s_{e_1} s_{e_1}^* + s_{e_2} s_{e_2}^* &\sim s_{f_1} s_{f_1}^* + s_{f_2} s_{f_2}^* +s_{f_5} s_{f_5}^*, \\
	p_{v_1} - \left( s_{e_1} s_{e_1}^* + s_{e_2} s_{e_2}^* \right) &\sim p_{w_1} - \left( s_{f_1} s_{f_1}^* + s_{f_2} s_{f_2}^* +s_{f_5} s_{f_5}^* \right),
\end{align*}
in $\mathcal{E}$, where $\sim$ denotes Murray-von Neumann equivalence. 
\end{lemma}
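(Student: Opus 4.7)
Set $p := s_{e_1}s_{e_1}^* + s_{e_2}s_{e_2}^*$ and $q := s_{f_1}s_{f_1}^* + s_{f_2}s_{f_2}^* + s_{f_5}s_{f_5}^*$, and denote by $\mathcal{I} \cong \K$ the unique non-trivial ideal of $\mathcal{E}$ (determined in the proof of Proposition~\ref{prop:csdouble}). I would treat the two Murray--von Neumann equivalences separately, using techniques appropriate to the relevant subquotient.

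For the second equivalence $p_{v_1}-p \sim p_{w_1}-q$, observe that the quotient $\mathcal{E}/\mathcal{I}$ is canonically isomorphic to $C^*(\mathbf{E}_*)$ and (via the chosen isomorphism) to $C^*(\mathbf{E}_{**})$. Since $v_1$ is regular in $\mathbf{E}_*$ and $w_1$ is regular in $\mathbf{E}_{**}$, the Cuntz--Krieger summation relations at these vertices hold in $\mathcal{E}/\mathcal{I}$, so both $p_{v_1}-p$ and $p_{w_1}-q$ vanish there and hence lie in $\mathcal{I}$. A direct $K_0$-computation using the identities $[p_{v_1}] = [p_{v_2}] = -[p_{v_1'}]$ and their analogues on the $\mathbf{E}_{**}$ side, recalled in the proof of Proposition~\ref{prop:csdouble}, gives $[p_{v_1}-p] = [p_{v_1'}]$ and $[p_{w_1}-q] = [p_{w_1'}]$, each generating $K_0(\mathcal{I}) \cong \Z$. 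The two projections are therefore both rank-one in $\K$ and consequently Murray--von Neumann equivalent there.

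For the first equivalence $p \sim q$, I would invoke Cuntz's classical result that two full, properly infinite projections in a unital \ca with the same $K_0$-class are Murray--von Neumann equivalent. Equality of the $K_0$-classes is immediate: $[p] = [p_{v_1}] + [p_{v_2}] = -2[p_{v_1'}]$ and $[q] = [p_{w_1}] + [p_{w_2}] + [p_{w_3}] = -2[p_{w_1'}]$, and since the chosen isomorphism preserves $[1_\mathcal{E}]$, we have $[p_{v_1'}] = [p_{w_1'}] = -[1_\mathcal{E}]$, so $[p] = [q]$. For fullness of $p$, the identity $s_{e_1}^* p s_{e_1} = p_{v_1}$ shows the ideal generated by $p$ contains $p_{v_1}$; this ideal surjects onto the simple quotient $\mathcal{E}/\mathcal{I}$ and also contains $p_{v_1} - p \in \mathcal{I}$, hence equals all of $\mathcal{E}$. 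For proper infiniteness of $p$, the partial isometry $s_{e_1} + s_{e_2}$ implements $p \sim p_{v_1} + p_{v_2}$ in $\mathcal{E}$, and each of $p_{v_1}, p_{v_2}$ is properly infinite because $v_i$ supports two distinct return paths in $\mathbf{E}_*$ (the loop at $v_i$ and the two-edge return through the other vertex); the orthogonal sum of two properly infinite projections is properly infinite. The identical argument handles $q$, since every vertex of $\mathbf{E}_{**}$ supports at least two distinct return paths.

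The main obstacle is that $\mathcal{E}$ is not itself purely infinite (it contains $\K$ as an ideal), so proper infiniteness of $p$ and $q$ cannot be read off from the ambient algebra but must be established by hand via the graph geometry. Once this is in place, the dichotomy $\mathcal{I}$ versus $\mathcal{E}/\mathcal{I}$ cleanly splits the statement into a rank argument in $\K$ and a $K_0$-plus-proper-infiniteness argument for the Kirchberg-type quotient.
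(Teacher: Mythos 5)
Your proof is correct but takes a genuinely different route from the paper's. The paper first transports the problem, via the Muhly--Tomforde identification $\mathcal{E}\cong C^*((\mathbf{E}_*)_{\{v_2\}})$, to a statement about vertex projections $q_{v_1}\sim q_{w_1}$ and $q_{v_1'}\sim q_{w_1'}$ in an honest graph $C^*$-algebra, and then invokes a single tool --- Ara--Moreno--Pardo's stable weak cancellation for row-finite graph algebras --- for both pairs: it suffices to check that each pair generates the same ideal and agrees in $K_0$, which is exactly what the paper does. You instead split according to the ideal/quotient structure: for the gap projections you observe directly that they lie in $\mathcal{I}\cong\K$, compute their $K_0$-classes and deduce that they are both rank one (no cancellation theorem needed), and for $p\sim q$ you establish proper infiniteness by hand from the two return paths at each vertex (via $p\sim p_{v_1}+p_{v_2}$, $q\sim p_{w_1}+p_{w_2}+p_{w_3}$, and closure of proper infiniteness under orthogonal sums and Murray--von Neumann equivalence) and then apply Cuntz's theorem that full properly infinite projections are classified by $K_0$. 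The $K_0$ bookkeeping, including the crucial normalization $[p_{v_1'}]=[p_{w_1'}]=-[1_{\mathcal{E}}]$ coming from unitality of the chosen isomorphism, is the same in both. Your approach buys independence from the Ara--Moreno--Pardo weak cancellation result at the cost of verifying proper infiniteness; the paper's approach treats both pairs uniformly and shorter, at the cost of invoking that specialized theorem. Both are valid proofs. (One small refinement: since $p\sim p_{v_1}+p_{v_2}=1_{\mathcal{E}}$ directly, the Cuntz argument for the full case is only really needed on the $q$ side; also, the fact that $q_{v_1'}$ is literally the vertex projection of a sink makes the ``rank one'' conclusion immediate without appealing to the six-term sequence, but your $K_0$ route is also correct.)
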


\begin{proof}
By \cite[Corollary~7.2]{MR2310414}, row-finite graph \cas have stable weak cancellation, so by \cite[Theorem~3.7]{MR2054981}, $\mathcal{E}$ has stable weak cancellation. 
Hence any two projections in $\mathcal{E}$ are Murray-von Neumann equivalent if they generate the same ideal and have the same $K$-theory class. 

As in the proof of Proposition \ref{prop:csdouble}, we will use \cite[Theorem~3.7]{MR2054981} to realize our relative graph \cas as graph \cas of the graphs $(\mathbf{E}_*)_{\{v_2\}}$ and $(\mathbf{E}_{**})_{\{w_2,w_3,w_4\}}$. 
Denote the image of the vertex projections of $C^*((\mathbf{E}_*)_{\{v_2\}})$ inside $\mathcal{E}$ under this isomorphism by $q_{v_1}, q_{v_2}, q_{v_1'}$ and denote the image of the vertex projections of $(\mathbf{E}_{**})_{\{w_2,w_3,w_4\}}$ inside $\mathcal{E}$ under the isomorphisms $(\mathbf{E}_{**})_{\{w_2,w_3,w_4\}}\cong C^*(\mathbf{E}_{**}, \{ w_2,w_3,w_4 \}) \cong\mathcal{E}$ by $q_{w_1}, q_{w_2}, q_{w_3}, q_{w_4}, q_{v_1'}$. 
Using the description of the isomorphism in \cite[Theorem~3.7]{MR2054981}, we see that we need to show that $q_{v_1} \sim q_{w_1}$ and $q_{v_1'} \sim q_{w_1'}$.

Since $(\mathbf{E}_*)_{\{v_2\}}^0$ satisfies Condition~(K) and the smallest hereditary and saturated subset containing $v_1$ is all of $(\mathbf{E}_*)_{\{v_2\}}^0$ we have that $q_{v_1}$ is a full projection (\cite[Theorem~4.4]{MR1988256}).
Similarly $q_{w_1}$ is full. 
In $K_0(\mathcal{E})$ we have, using our calculations from the proof of Proposition \ref{prop:csdouble}, that 
\begin{align*}
	[q_{v_1}] = [1] = [q_{w_1}].
\end{align*}
So by weak stable cancellation $q_{v_1} \sim q_{w_1}$.

Both $q_{v_1'}$ and $q_{w_1'}$ generate the only nontrivial ideal $\mathfrak{I}$ of $\mathcal{E}$ (\cite[Theorem~4.4]{MR1988256}). 
Since that ideal is isomorphic to the compact operators and both $[q_{v_1'}]$ and $[q_{w_1'}]$ are positive generators of  $K_0(\mathfrak{I})\cong K_0(\mathbb{K})\cong \Z$, they must both represent the same class in $K_0(\mathfrak{I})$, and thus also in $K_0(\mathcal{E})$. 
Therefore $q_{v_1'} \sim q_{w_1'}$.
\end{proof}

If $E$ is a graph and we have a set of mutually orthogonal projections $\setof{ p_v }{ v \in E^0 }$ and a set $\setof{ s_e }{ e \in E^1 }$ of partial isometries in a \ca satisfying the relations of Definition~\ref{def:graphca}, then we call these elements a \emph{Cuntz-Krieger $E$-family}. 
In a graph $E$, we call a cycle $e_1e_2\cdots e_n$ a \emph{vertex-simple cycle} if $r(e_i)\neq r(e_j)$ for all $i\neq j$. A vertex-simple cycle $e_1e_2\cdots e_n$ is said to have an \emph{exit} if there exists an edge $f$ such that $s(f)=s(e_k)$ for some $k=1,2,\ldots,n$ with $e_k\neq f$. 
Note that in \cite{MR1914564}, the author uses the term \emph{loop} where we use \emph{cycle}.

\begin{theorem}\label{t:cuntz-splice-1}
Let $E$ be a graph with finitely many vertices and let $u$ be a vertex of $E$.  Then $C^{*}(E_{u,-}) \cong C^{*}(E_{u,--})$.
\end{theorem}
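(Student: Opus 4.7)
The plan is to establish the isomorphism by constructing an explicit Cuntz-Krieger $E_{u,-}$-family inside $C^*(E_{u,--})$ (and a symmetric family for the inverse), so that the universal property yields mutually inverse $*$-homomorphisms. The essential technical input is the isomorphism of relative graph algebras given by Proposition~\ref{prop:csdouble} together with the Murray-von Neumann equivalences of Lemma~\ref{lem:csmurray}, which together let me transplant the $\mathbf{E}_*$-part of $E_{u,-}$ into $C^*(E_{u,--})$ and implement the connecting edges $d_1, d_2$.

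Inside $C^*(E_{u,--})$ the canonical generators $\{p_{w_i}\} \cup \{s_{f_j}\}$ form a Cuntz-Krieger $(\mathbf{E}_{**}, \{w_2, w_3, w_4\})$-family, since the summation relation at $w_1$ is not imposed in the relative algebra (and in $C^*(E_{u,--})$ the sum at $w_1$ picks up the extra term $s_{d_2}s_{d_2}^*$). By the universal property this yields a $*$-homomorphism $\iota_{**} \colon C^*(\mathbf{E}_{**}, \{w_2, w_3, w_4\}) \to C^*(E_{u,--})$, and composing with the isomorphism $\theta$ of Proposition~\ref{prop:csdouble} gives $\Psi := \iota_{**} \circ \theta \colon C^*(\mathbf{E}_*, \{v_2\}) \to C^*(E_{u,--})$.

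Set $P_v := p_v$ and $S_e := s_e$ for $v \in E^0$, $e \in E^1$, and $P_{v_i} := \Psi(p_{v_i})$, $S_{e_j} := \Psi(s_{e_j})$. To construct $S_{d_1}, S_{d_2}$ I push the two equivalences of Lemma~\ref{lem:csmurray} through $\Psi$ and combine them with the identities $p_{w_1} - s_{f_1}s_{f_1}^* - s_{f_2}s_{f_2}^* - s_{f_5}s_{f_5}^* = s_{d_2}s_{d_2}^* \sim p_u$ and $s_{d_1}s_{d_1}^* \sim p_{w_1}$ holding in $C^*(E_{u,--})$; this yields $P_{v_1} \sim s_{d_1}s_{d_1}^*$ and $P_{v_1} - S_{e_1}S_{e_1}^* - S_{e_2}S_{e_2}^* \sim p_u$ in $C^*(E_{u,--})$. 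Let $S_{d_1}, S_{d_2}$ be partial isometries implementing these equivalences. A routine verification of the Cuntz-Krieger relations --- using that the image of $\Psi$ lies in the corner cut by $\sum_i p_{w_i}$, which is orthogonal to every $p_v$ with $v \in E^0$, so the ``$E$-part'' and the ``$\mathbf{E}_*$-part'' are automatically orthogonal --- then shows that $\{P_v, P_{v_i}, S_e, S_{e_j}, S_{d_k}\}$ is a Cuntz-Krieger $E_{u,-}$-family. The universal property provides $\Phi \colon C^*(E_{u,-}) \to C^*(E_{u,--})$, and a completely symmetric argument with the roles of $\mathbf{E}_*$ and $\mathbf{E}_{**}$ interchanged gives $\Phi' \colon C^*(E_{u,--}) \to C^*(E_{u,-})$.

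The main obstacle I anticipate is to promote the pair $(\Phi, \Phi')$ to mutually inverse isomorphisms, since the partial isometries $S_{d_k}$ and their counterparts $T_{d_k}$ are chosen non-canonically (only determined up to a unitary at their initial projection), so the compositions $\Phi' \circ \Phi$ and $\Phi \circ \Phi'$ need not be identities on generators. I would address this by invoking the gauge-invariant uniqueness theorem for graph $C^*$-algebras: arranging $\theta$ and the implementing partial isometries to be compatible with the natural gauge actions makes $\Phi$ gauge-equivariant, and since $\Phi$ clearly sends each vertex projection to a nonzero projection, gauge-invariant uniqueness forces $\Phi$ to be injective; surjectivity is automatic because the image already contains every $p_{w_i}$ and $s_{f_j}$ (as they lie in $\iota_{**}(C^*(\mathbf{E}_{**}, \{w_2, w_3, w_4\}))$, which is contained in the image of $\Phi$ via $\Psi$).
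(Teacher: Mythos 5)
Your plan correctly identifies the two essential inputs (Proposition~\ref{prop:csdouble} and Lemma~\ref{lem:csmurray}), and the Murray--von Neumann equivalences you derive for $P_{v_1}$ and $P_{v_1}-S_{e_1}S_{e_1}^*-S_{e_2}S_{e_2}^*$ are in fact available by pushing the lemma through $\Psi$. The orthogonality observation is also fine, since the image of $\iota_{**}$ lies in the corner cut by $p_{w_1}+p_{w_2}+p_{w_3}+p_{w_4}$. But the final step, where you try to upgrade $\Phi$ to an isomorphism, has a genuine gap, and the device you propose to close it is precisely the one that cannot work here.

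The gauge-invariant uniqueness theorem requires the new Cuntz--Krieger family to carry a compatible circle action, and this is exactly what the isomorphism $\theta$ of Proposition~\ref{prop:csdouble} obstructs: $\theta$ is produced by a Brown--Dadarlat/Kirchberg--Phillips classification argument and there is no reason for $\theta(s_{e_j})$ to be homogeneous of degree one for the gauge action on $C^*(\mathbf{E}_{**},\{w_2,w_3,w_4\})$. Indeed, the introduction flags that the Cuntz splice fails to preserve the canonical diagonal, which is a strong signal that no gauge-equivariant choice of $\theta$ exists in general. Your surjectivity claim also does not close: the image of $\Phi$ contains $p_{w_i}$, $s_{f_j}$, and the non-canonical $S_{d_1},S_{d_2}$, but it is not apparent that it contains $s_{d_1}$ and $s_{d_2}$ themselves, since $S_{d_k}$ and $s_{d_k}$ have a priori different initial projections and there is no reason for the connecting partial isometry $S_{d_k}^*s_{d_k}$ to lie in $C^*(\Phi)$. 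The paper sidesteps both problems at once: it embeds $C^*(E_{u,-})\oplus\mathcal{E}$ unitally into $\mathcal{O}_2$ by Kirchberg's embedding theorem, builds a Cuntz--Krieger $E_{u,-}$-family $\mathcal{S}$ and a Cuntz--Krieger $E_{u,--}$-family $\mathcal{T}$ inside $\mathcal{O}_2$ so that $\mathcal{T}$ is obtained from $\mathcal{S}$ by conjugating the relevant partial isometries with a unitary $z$ lying in the subalgebra $\mathfrak{A}$ generated by $\{p_v\}_{v\in E^0}$ and $\mathcal{E}$, and then uses Szyma{\'n}ski's General Cuntz--Krieger Uniqueness Theorem (which avoids gauge invariance by a spectral condition on vertex-simple cycles without exit) to identify $C^*(E_{u,-})\cong C^*(\mathcal{S})$ and $C^*(E_{u,--})\cong C^*(\mathcal{T})$. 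The isomorphism is then obtained not by producing mutually inverse homomorphisms but by proving the stronger fact that $C^*(\mathcal{S})=C^*(\mathcal{T})$ as subalgebras of $\mathcal{O}_2$, which follows from $z\in\mathfrak{A}\subseteq C^*(\mathcal{S})\cap C^*(\mathcal{T})$. If you want to salvage your approach, you should replace the appeal to gauge-invariant uniqueness with Szyma{\'n}ski's theorem and rework the surjectivity argument along these lines, most naturally by moving the whole construction into $\mathcal{O}_2$ as the paper does.
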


\begin{proof}
As above, we let $\mathcal{E}$ denote the \ca $C^*(\mathbf{E}_{*}, \{v_2\})$, and we choose an isomorphism between $\mathcal{E}$ and $C^*(\mathbf{E}_{**}, \{ w_2,w_3,w_4 \})$, which exists according to Proposition~\ref{prop:csdouble}.

Since $C^*(E_{u,-})$ and $\mathcal{E}$ are unital, separable, nuclear \cas, it follows from Kirchberg's embedding theorem that there exists a unital embedding 
\[
	C^*(E_{u,-}) \oplus \mathcal{E} \hookrightarrow \mathcal{O}_2. 	
\]
We will suppress this embedding in our notation. 
In $\mathcal{O}_2$, we denote the vertex projections and the partial isometries coming from $C^*(E_{u,-})$ by $p_v, v\in E_{u,-}^0$ and $s_e,e\in E_{u,-}^1$, respectively, and we denote the vertex projections and the partial isometries coming from $\mathcal{E}=C^*(\mathbf{E}_*, \{v_2\})$ by $p_1,p_2$ and $s_1, s_2, s_3, s_4$, respectively.
Since we are dealing with an embedding, it follows from Szyma{\'n}ski's General Cuntz-Krieger Uniqueness Theorem (\cite[Theorem~1.2]{MR1914564}) that for any vertex-simple cycle $\alpha_1 \alpha_2 \cdots \alpha_n$ in $E_{u,-}$ without any exit, we have that the spectrum of $s_{\alpha_1} s_{\alpha_2} \cdots s_{\alpha_n}$ contains the entire unit circle.

We will define a new Cuntz-Krieger $E_{u,-}$-family. 
For each vertex $v \in E^0$ we let $q_v = p_v$, we let $q_{v_1} = p_1$ and $q_{v_2} = p_2$.
Since any two nonzero projections in $\mathcal{O}_2$ are Murray-von Neumann equivalent, we can choose partial isometries $x_1, x_2 \in \mathcal{O}_2$ such that 
\begin{align*}
	x_1 x_1^* &= s_{d_1} s_{d_1}^*				& x_1^* x_1 &= p_1 \\
	x_2 x_2^* &= p_1 - (s_1 s_1^* + s_2 s_2^*)	& x_2^* x_2 &= p_u.
\end{align*} 
We let $t_{d_1} = x_1$ and $t_{d_2} = x_2$. 
Finally we let $t_e = s_e$ for $e \in E^1$ and put $t_{e_i} = s_i$ for $i=1,2,3,4$. 

By construction $\{ q_v \mid v \in E_{u,-}^0\}$ is a set of orthogonal projections, and $\{ t_e \mid e \in E_{u,-}^1\}$ a set of partial isometries.
Furthermore, by choice of $\{ t_e \mid e \neq d_1,d_2 \}$ the relations are clearly satisfied at all vertices other than $v_1$ and $u$. 
The choice of $x_1, x_2$ ensures that the relations hold at $u$ and $v_1$ as well. 
Hence $\{q_v, t_e\}$ does indeed form a Cuntz-Krieger $E_{u,-}$ family. 
Denote this family by $\mathcal{S}$.

Using the universal property of graph \cas, we get a $*$-homomorphism from $C^*(E_{u,-})$ onto $C^*(\mathcal{S}) \subseteq \mathcal{O}_2$.
Let $\alpha_1 \alpha_2 \cdots \alpha_n$ be a vertex-simple cycle in $E_{u,-}$ without any exit. 
Since $u$ is where the Cuntz splice is glued on, no vertex-simple cycle without any exit uses edges connected to $u, v_1$ or $v_2$. 
Hence $t_{\alpha_1} t_{\alpha_2} \cdots t_{\alpha_n} = s_{\alpha_1} s_{\alpha_2} \cdots s_{\alpha_n}$ and so its spectrum contains the entire unit circle. 
It now follows from Szyma{\'n}ski's General Cuntz-Krieger Uniqueness Theorem (\cite[Theorem~1.2]{MR1914564}) that $C^*(E_{u,-}) \cong C^*(\mathcal{S})$.

Let $\mathfrak{A}$ be the subalgebra of $\mathcal{O}_2$ generated by $\{ p_v \mid v \in E^0 \}$ and $\mathcal{E}$.
Note that $\mathfrak{A}$ has a unit, and although it does not coincide with the unit of $\mathcal{O}_2$ it does coincide with the unit of $C^*(\mathcal{S})$. 
In fact $\mathfrak{A}$ is  a unital subalgebra of $C^*(\mathcal{S})$.
Let us denote by $\{ r_{w_i}, y_{f_j} \mid i=1,2,3,4, j = 1,2,\ldots, 10 \}$ the image of the canonical generators of $C^*(\mathbf{E}_{**}, \{ w_2,w_3,w_4 \})$ in $\mathcal{O}_2$ under the chosen isomorphism between $C^*(\mathbf{E}_{**}, \{ w_2,w_3,w_4 \})$ and $\mathcal{E}$ composed with the embedding into $\mathcal{O}_2$. 
By Lemma \ref{lem:csmurray}, certain projections in $\mathcal{E}$ are Murray-von Neumann equivalent, hence we can find a unitary $z \in \mathfrak{A}$ such that 
\begin{align*}
	z q_v z^* &= q_v, \text{ for all } v \in E^0, \\
	z \left( t_{e_1} t_{e_1}^* + t_{e_2} t_{e_2}^* \right) z^* &= y_{f_1} y_{f_1}^* + y_{f_2} y_{f_2}^* +y_{f_5} y_{f_5}^*, \\
	z \left( q_{v_1} - \left( t_{e_1} t_{e_1}^* + t_{e_2} t_{e_2}^* \right) \right) z^* &= r_{w_1} - \left( y_{f_1} y_{f_1}^* + y_{f_2} y_{f_2}^* +y_{f_5} y_{f_5}^* \right).
\end{align*}
Note that this implies that $z q_{v_1} z^* = r_{w_1}$.

We will now define a Cuntz-Krieger $E_{u,--}$-family in $\mathcal{O}_2$.
For $v \in E^0$, we let $P_v = q_v$, and we let $P_{w_i} = r_{w_i}$, for $i = 1,2,3,4$.
For $e \in E^1 \cup \{ d_1, d_2 \}$, we let $S_e = z t_e z^*$, and we let $S_{f_i} = y_{f_i}$ for $i=1,2,\ldots,10$.
Denote this family by $\mathcal{T}$.

By construction $\{ P_v \mid v \in E_{u,--}^0\}$ is a set of orthogonal projections, and $\{ S_e \mid e \in E_{u,--}^1\}$ a set of partial isometries.
Since $z$ is a unitary in $C^*(\mathcal{S})$ and since $\mathcal{S}$ is a Cuntz-Krieger $E_{u,-}$-family, $\mathcal{T}$ will satisfy the Cuntz-Krieger relations at all vertices in $E^0$.
Similarly, we see that since $\{ r_{w_i}, y_{f_j} \mid i=1,2,3,4, j = 1,2,\ldots, 10 \}$ is a Cuntz-Krieger $(\mathbf{E}_{**}, \{ w_2,w_3,w_4 \})$-family, $\mathcal{T}$ will satisfy the relations at the vertices $w_2, w_3, w_4$. 
It only remains to check the summation relation at $w_1$, for that we compute
\begin{align*}
	\smash{\sum_{s_{E_{u,--}}(e) = w_1} S_e S_e^*}	&= S_{f_1} S_{f_1}^* + S_{f_2} S_{f_2}^* + S_{f_5} S_{f_5}^* + S_{d_2} S_{d_2}^* \\
											&= y_{f_1} y_{f_1}^* + y_{f_2} y_{f_2}^* +y_{f_5} y_{f_5}^* + z t_{d_2} t_{d_2}^* z^* \\
											&= z \left( t_{e_1} t_{e_1}^* + t_{e_2} t_{e_2}^* \right) z^* + z t_{d_2} t_{d_2}^* z^* \\
											&= z \left( t_{e_1} t_{e_1}^* + t_{e_2} t_{e_2}^* + t_{d_2} t_{d_2}^* \right) z^* \\
											&= z q_{v_1} z^* = r_{w_1} = P_{w_1}.
\end{align*}
Hence $\mathcal{T}$ is a Cuntz-Krieger $E_{u,--}$-family. 

The universal property of $C^*(E_{u,--})$ provides a surjective $*$-homomorphism from $C^*(E_{u,--})$ to $C^*(\mathcal{T}) \subseteq \mathcal{O}_2$. 
Let $\alpha_1 \alpha_2 \cdots \alpha_n$ be a vertex-simple cycle in $E_{u,--}$ without any exit. 
We see that all the edges $\alpha_i$ must be in $E^1$, and hence we have
\begin{align*}
	S_{\alpha_1} S_{\alpha_2} \cdots S_{\alpha_n} = z t_{\alpha_1} z^* z t_{\alpha_2} z^* \cdots z t_{\alpha_n} z^* = z s_{\alpha_1} s_{\alpha_2} \cdots s_{\alpha_n} z^* 
\end{align*} 
and so its spectrum contain the entire unit circle. 
It now follows from Szyma{\'n}ski's General Cuntz-Krieger Uniqueness Theorem (\cite[Theorem~1.2]{MR1914564}) that $C^*(E_{u,--})$ is isomorphic to $C^*(\mathcal{T})$.

Since $\mathfrak{A} \subseteq C^*(\mathcal{S})$ and since $\{ r_{w_i}, y_{f_j} \mid i=1,2,3,4, j = 1,2,\ldots, 10 \} \subseteq \mathcal{E} \subseteq C^*(\mathcal{S})$, we have that $\mathcal{T} \subseteq C^*(\mathcal{S})$. 
So $C^*(\mathcal{T}) \subseteq C^*(\mathcal{S})$. 
But since $\mathfrak{A}$ is also contained in $C^*(\mathcal{T})$ and $\mathcal{E} \subseteq C^*(\mathcal{T})$, we have that $\mathcal{S} \subseteq C^*(\mathcal{T})$, and hence $C^*(\mathcal{S}) \subseteq C^*(\mathcal{T})$. 
Therefore 
\[
	C^*(E_{u,-}) \cong C^*(\mathcal{S}) = C^*(\mathcal{T}) \cong C^*(E_{u,--}).
\qedhere\]
\end{proof}

Thus we have the following fundamental result.

\begin{proposition}\label{prop:cuntzspliceinvariant}
Let $E$ be a graph with finitely many vertices, and let $v$ be a vertex that supports at least two distinct return paths. 
Then $C^*(E)\otimes\K\cong C^*(E_{v,-})\otimes\K$.
\end{proposition}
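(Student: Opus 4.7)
The plan is to assemble this proposition from the three main ingredients already laid out in this section. The hypotheses on $E$ and $v$ — finitely many vertices and $v$ supports at least two distinct return paths — are exactly the hypotheses of Proposition~\ref{prop:cuntzsplicetwice}, so the first step is to invoke it to obtain $E \Meq E_{v,--}$.

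Next, since move equivalence is generated by moves \OO, \II, \RR, \SSS together with graph isomorphism, Theorem~\ref{thm:moveimpliesstableisomorphism} immediately yields $C^*(E) \otimes \K \cong C^*(E_{v,--}) \otimes \K$. Meanwhile, Theorem~\ref{t:cuntz-splice-1}, applied to the graph $E$ at the vertex $v$, gives $C^*(E_{v,-}) \cong C^*(E_{v,--})$; tensoring with $\K$ yields $C^*(E_{v,-}) \otimes \K \cong C^*(E_{v,--}) \otimes \K$. Composing the two stable isomorphisms, one obtains $C^*(E) \otimes \K \cong C^*(E_{v,-}) \otimes \K$, which is the desired conclusion.

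So the proposition is essentially a corollary — all the genuine work has already been done in Proposition~\ref{prop:cuntzsplicetwice} (the purely combinatorial/matrix-manipulation argument that doing the Cuntz splice twice returns the graph to its move-equivalence class) and in Theorem~\ref{t:cuntz-splice-1} (the $C^*$-algebraic argument, using the Muhly--Tomforde relative graph algebra construction, Kirchberg--Phillips classification applied to the gadgets $\mathbf{E}_*, \mathbf{E}_{**}$, and Szyma\'nski's general Cuntz--Krieger uniqueness theorem, to see that one Cuntz splice and two Cuntz splices produce isomorphic graph $C^*$-algebras). The only subtlety worth flagging is that Proposition~\ref{prop:cuntzsplicetwice} concludes $E \Meq E_{v,--}$ under the hypothesis that $v$ supports at least two distinct return paths — but this is precisely what we are given — so nothing further needs to be arranged before invoking it. No genuine obstacle remains at this stage; the proof is just the chain of identifications above.
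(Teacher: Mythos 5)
Your proof is correct and is essentially identical to the paper's: both chain together Theorem~\ref{t:cuntz-splice-1} (so that $C^*(E_{v,-})\otimes\K\cong C^*(E_{v,--})\otimes\K$), Proposition~\ref{prop:cuntzsplicetwice} and Theorem~\ref{thm:moveimpliesstableisomorphism} (so that $C^*(E)\otimes\K\cong C^*(E_{v,--})\otimes\K$), and then compose.
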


\begin{proof}
By Theorem~\ref{t:cuntz-splice-1}, $C^{*} ( E_{v, -} ) \otimes \K \cong C^{*} ( E_{v,- -} ) \otimes \K)$.  By Proposition~\ref{prop:cuntzsplicetwice} and Theorem~\ref{thm:moveimpliesstableisomorphism}, $C^{*} (E) \otimes \K \cong C^{*} ( E_{v, - - } ) \otimes \K$.  Thus, $C^*(E)\otimes\K\cong C^*(E_{v,-})\otimes\K$.
\end{proof}


\section{Notation needed for the proof}
\label{sec:notation}

\subsection{Block matrices and equivalences}

\begin{notation}
For $m,n\in\N_0$, we let $\MZ[m\times n]$ denote the set of group homomorphisms from $\Z^n$ to $\Z^m$. When $m,n\geq 1$, we can equivalently view this as the $m\times n$ matrices over $\Z$, where composition of group homomorphisms corresponds to matrix multiplication --- the (zero) group homomorphisms for $m=0$ or $n=0$ we will also call empty matrices with zero rows or columns, respectively. 

For $m,n\in\N$, we let \Mplus denote the subset of $\MZ[m\times n]$, where all entries in the corresponding matrix are positive. For a $m\times n$ matrix, we will also write $B>0$ whenever $B\in\Mplus$.

For a $m\times n$ matrix $B$, where $m,n\in\N$, we let $B(i,j)$ denote the $(i,j)$'th entry of the corresponding matrix, \ie, the entry in the $i$'th row and $j$'th column.\end{notation}

\begin{definition} 
Let $m,n\in\N$. 
For a $m\times n$ matrix $B$ over \Z, we let $\gcd B$ be the greatest common divisor of the entries $B(i,j)$, for $i=1,\ldots,m$, $j=1,\ldots,n$, if $B$ is nonzero, and zero otherwise. 
\end{definition}

\begin{assumption} \label{ass:preorder}
Let $N\in\N$. 
For the rest of the paper, we let $\calP=\{1,2,\ldots,N\}$ denote a partially ordered set with order $\preceq$ satisfying
$$i\preceq j\Rightarrow i\leq j,$$
for all $i,j\in\calP$, where $\leq$ denotes the usual order on \N. 
We denote the corresponding irreflexive order by $\prec$.
\end{assumption}

\begin{definition}
Let $\mathbf{m}=(m_i)_{i=1}^{N},\mathbf{n}=(n_i)_{i=1}^{N}\in\N_{0}^N$ be \emph{multiindices}. 
We write $\mathbf{m}\leq\mathbf{n}$ if $m_i\leq n_i$ for all $i=1,2,\ldots,N$, 
and in that case, we let $\mathbf{n}-\mathbf{m}$ be $(n_i-m_i)_{i=1}^N$. 

We let $\MZ$ denote the set of group homomorphisms from $\Z^{n_1}\oplus\Z^{n_2}\oplus\cdots\oplus\Z^{n_N}$ to $\Z^{m_1}\oplus\Z^{m_2}\oplus\cdots\oplus\Z^{m_N}$, and for such a homomorphism $B$, we let $B\{ i,j\}$ denote the component of $B$ from the $j$'th direct summand to the $i$'th direct summand. 
We also use the notation $B\{i\}$ for $B\{i,i\}$. 
Using composition of homomorphisms we get in a natural way a category $\mathfrak{M}_N$ with objects $\N_0^N$ and with the morphisms from $\mathbf{n}$ to $\mathbf{m}$ being $\MZ$. 
Moreover, 
$$(BC)\{ i,j\}=\sum_{k=1}^N B \{ i,k\} C\{ k,j\},$$
whenever $B\in\MZ$ and $C\in\MZ[\mathbf{n}\times\mathbf{r}]$ for a multiindex $\mathbf{r}$. 

A morphism $B\in\MZ$ is said to be in $\MPZ$, if 
$$B\{i,j\}\neq 0\Longrightarrow i\preceq j,$$
for all $i,j\in\calP$. 
It is easy to verify, that this gives a subcategory $\mathfrak{M}_\calP$ with the same objects but $\MPZ$ as morphisms. 

Moreover, for a subset $s$ of \calP, we let --- with a slight misuse of notation --- $B\{s\}\in\mathfrak{M}_s((m_i)_{i\in s}\times (n_i)_{i\in s},\Z)$ denote the component of $B$ from $\bigoplus_{i\in s}\Z^{n_i}$ to $\bigoplus_{i\in s}\Z^{m_i}$.

We let $\MZ[\mathbf{n}]$ denote $\MZ[\mathbf{n}\times\mathbf{n}]$, and $\MPZ[\mathbf{n}]$ denote $\MPZ[\mathbf{n}\times\mathbf{n}]$.

For $\mathbf{n}$, we let $\GLPZ$ denote the automorphisms in $\MPZ[\mathbf{n}]$. 
Then $U\in\GLPZ$ if and only if $U\in\MPZ[\mathbf{n}]$ and $U\{ i\}$ is a group automorphism (meaning that the determinant as a matrix is $\pm 1$ whenever $n_i\neq 0$, for every $i\in\calP$). 

An automorphism $U\in\GLPZ$ is in $\SLPZ$ if the determinant of $U\{ i\}$ is $1$ for all $i\in\calP$ with $n_i\neq 0$. 
\end{definition}

\begin{remark}
Let $\mathbf{m},\mathbf{n}\in\N_{0}^N$ be \emph{multiindices}. 
Set $k_1 = m_1 + \cdots + m_N$ and $k_2 = n_1 + \cdots + n_N$. 
If $k_1\neq 0$ and $k_2\neq 0$, we can equivalently view the elements $B\in\MZ$ as block matrices
$$B = 
\begin{pmatrix}
B\{1,1\} & \dots & B\{1,N\} \\
\vdots &  & \vdots \\
B\{N,1\} & \dots & B\{N,N\}
\end{pmatrix}$$
where $B\{i,j\} \in \MZ[m_i\times n_j]$ with $B\{i,j\}$ the empty matrix if $m_{i} = 0$ or $n_{j} = 0$.

Note that from this point of view, the matrices in \MPZ are upper triangular matrices with a certain zero block structure dictated by the order on \calP, and the matrices in \GLPZ (respectively \SLPZ) are matrices in \MPZ with all nonempty diagonal blocks having determinant $\pm 1$ (respectively $1$). 

Note that if $B\in\MZ$ and $C\in\MZ[\mathbf{n}\times\mathbf{r}]$ for a multiindex $\mathbf{r}$, 
then the matrix product makes sense, and --- as matrices --- we have that 
$$(BC)\{ i,j\}=\sum_{k\in\calP, n_k\neq 0}^N B \{ i,k\} C\{ k,j\},$$
for all $i,j\in\calP$ with $m_i\neq 0$ and $r_j\neq 0$. 

We will therefore also allow ourselves to talk about matrices with zero rows or columns (by considering it as an element of $\MZ[m\times n]$ ); and then $B\{s\}$ for a subset $s$ of \calP as defined above is just the principal submatrix corresponding to indices in $s$ (remembering the block structure). 
\end{remark}

\begin{definition}
Let $\mathbf{m}$ and $\mathbf{n}$ be multiindices. 
Two matrices $B$ and $B'$ in \MPZ are said to be \emph{\GLPE} (respectively \emph{\SLPE}) if there exist 
$U \in\GLPZ[\mathbf{m}]$ and $V \in\GLPZ$ (respectively $U \in\SLPZ[\mathbf{m}]$ and $V \in\SLPZ$) such that 
$$U B V = B'.$$
\end{definition}

Note that this is a generalization of the definitions in \cite{MR1907894,MR1990568} (in the finite matrix case) to the cases with rectangular diagonal blocks or vacuous blocks. 

\subsection{\texorpdfstring{$K$}{K}-web and induced isomorphisms}
We define the $K$-web, $K(B)$, of a matrix $B \in \MPZ$ and describe how a \GLPEe	 $\ftn{(U,V)}{B}{B'}$ induces an isomorphism $\ftn{ \kappa_{(U,V)} }{ K(B) }{K(B')}$.

For an element $B\in\MZ[m\times n]$ (\ie, a group homomorphism $\ftn{B}{\Z^n}{\Z^m}$), we define as usual $\cok B$ to be the abelian group $\Z^m/B\Z^n$ and $\ker B$ to be the abelian group $\setof{x\in\Z^n}{Bx=0}$. Note, that if $m=0$, then $\cok B=\{0\}$ and $\ker B=\Z^n$, and if $n=0$, then $\cok B=\Z^m$ and $\ker B=\{0\}$. 

For $m,n\in\N_0$, $B,B'\in\MZ[m\times n]$, $U\in\GLZ[m]$ and $V\in\GLZ$ with $UBV=B'$, it is now clear that this equivalence induces isomorphisms
$$
\xymatrix{\cok B \ar[rr]_{\xi_{(U,V)}}^{ [x] \mapsto [Ux] } & &\cok B'} \quad \text{and} \quad \xymatrix{\ker B \ar[rr]_{\delta_{(U,V)}}^{ [x] \mapsto [V^{-1}x] } & & \ker B'.}
$$

\begin{lemma}\label{lem: Kweb 2 components}
Let $\calP=\calP_2 = \{1,2\}$ be a partially ordered set and let $B\in\MPZ$.
Then the following sequence 
\[
\xymatrix@C=40pt{
\cok B\{1\} \ar[r]^-{[v] \mapsto \left[ \begin{smallpmatrix} v \\ 0 \end{smallpmatrix} \right] } & 
\cok B \ar[r]^-{\left[ \begin{smallpmatrix} v \\ w \end{smallpmatrix} \right] \mapsto [w] } & 
\cok B\{2\} \ar[d]^0 \\ 
\ker B\{ 2 \} \ar[u]^-{ v \mapsto [ A\{1,2\}v ] } & 
\ker B \ar[l]^-{ w \mapsfrom \begin{smallpmatrix}v \\ w\end{smallpmatrix} } & 
\ker B\{ 1 \} \ar[l]^-{ \begin{smallpmatrix}v \\ 0\end{smallpmatrix}\mapsfrom v  } }
\]
is exact.

Moreover, if $B$ and $B'$ are elements of \MPZ and $\ftn{ (U,V) }{ B }{ B' }$ is a \GLPEe, then $(U,V)$ induces an isomorphism 
$$(\xi_{(U\{1\},V\{1\})},\xi_{(U,V)},\xi_{(U\{2\},V\{2\})},\delta_{(U\{1\},V\{1\})},\delta_{(U,V)},\delta_{(U\{2\},V\{2\})})$$
of (cyclic six-term) exact sequences. 
\end{lemma}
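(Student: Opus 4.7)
The plan is to recognize the claimed cyclic six-term sequence as the long exact homology sequence arising from the snake lemma applied to a short exact sequence of two-term complexes. Since $\calP=\{1,2\}$ with $i\preceq j\Rightarrow i\leq j$, the condition $B\in\MPZ$ forces $B\{2,1\}=0$, so $B$ has the block upper-triangular form
\[
B = \begin{pmatrix} B\{1\} & B\{1,2\} \\ 0 & B\{2\} \end{pmatrix}.
\]
Viewing $B$, $B\{1\}$, and $B\{2\}$ as two-term chain complexes (in degrees $0$ and $-1$), this block shape says exactly that inclusion into the first block summand (in both degrees) and projection onto the second block summand (in both degrees) assemble into a short exact sequence of chain complexes $0\to B\{1\}\to B\to B\{2\}\to 0$.

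Applying the snake lemma yields an exact sequence
\[
0\to \ker B\{1\}\to \ker B\to \ker B\{2\}\xrightarrow{\partial} \cok B\{1\}\to \cok B\to \cok B\{2\}\to 0,
\]
and inserting a zero map from $\cok B\{2\}$ to $\ker B\{1\}$ produces the cyclic six-term sequence displayed in the lemma. Exactness at the two ``corners'' $\ker B\{1\}$ and $\cok B\{2\}$ reflects, respectively, injectivity of the inclusion of first summands and surjectivity of the projection onto second summands. To identify $\partial$ explicitly, I would trace the snake recipe: for $v\in\ker B\{2\}$, lift to $(0,v)\in\Z^n$, apply $B$ to get $(B\{1,2\}v,\,0)\in\Z^m$, and take the unique preimage in $\Z^{m_1}$ passed to $\cok B\{1\}$; this gives $[B\{1,2\}v]$, matching the formula in the diagram (correcting the evident typo ``$A\{1,2\}$'').

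For the second assertion, write a \GLPEe $(U,V)\colon B\to B'$ as $UBV=B'$ with $U,V$ block upper-triangular invertibles; their diagonal restrictions satisfy $U\{i\}B\{i\}V\{i\}=B'\{i\}$, producing the outer pairs $(\xi_{(U\{i\},V\{i\})},\delta_{(U\{i\},V\{i\})})$. Block upper-triangularity of $U$ (resp.\ $V$) is exactly the statement that $U$ preserves the first block summand of $\Z^m$ and descends to $U\{2\}$ on the quotient, so $(U,V)$ is a morphism of short exact sequences of chain complexes. By naturality of the snake lemma, the induced diagram of six-term sequences commutes, and the six vertical maps are precisely the $\xi$'s and $\delta$'s in the statement. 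The only nontrivial commutativity is the square containing $\partial$, which I expect to be the main place requiring a direct computation: it reduces to showing $B'\{1,2\}V\{2\}^{-1}v - U\{1\}B\{1,2\}v \in B'\{1\}\Z^{n_1}$ for $v\in\ker B\{2\}$, and this follows by expanding the $(1,2)$-block of $UBV=B'$ and using $B\{2\}v=0$ to kill the term involving $U\{1,2\}$.
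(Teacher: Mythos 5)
Your proposal is correct and takes essentially the same route as the paper: the paper also applies the Snake lemma to the short exact sequence of two-term complexes $0\to B\{1\}\to B\to B\{2\}\to 0$ given by inclusion of the first block summand and projection onto the second, and then notes the second part is a direct verification. (One small remark: in your final computation, expanding the $(1,2)$-block of $UBV=B'$ and using $B\{2\}v=0$ kills the $U\{1,2\}$ term, but also leaves a term $U\{1\}B\{1\}V\{1,2\}V\{2\}^{-1}v$; this lies in $B'\{1\}\Z^{n_1}$ because $U\{1\}B\{1\}=B'\{1\}V\{1\}^{-1}$, which you should note explicitly.)
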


\begin{proof}
The first part of the lemma follows directly from the Snake lemma applied to the diagram
$$\xymatrix{ 
0\ar[r] & \Z^{n_1}\ar[r]\ar[d]^{B\{1\}} & \Z^{n_1}\oplus\Z^{n_2}\ar[r]\ar[d]^{B} & \Z^{n_2}\ar[d]^{B\{2\} }\ar[r] & 0 \\ 
0\ar[r] & \Z^{m_1}\ar[r] & \Z^{m_1}\oplus\Z^{m_2}\ar[r] & \Z^{m_2}\ar[r] & 0
}$$
The second part of the proof is a straightforward verification.
\end{proof}

Completely analogous to \cite{MR1990568}, we make the following definitions.

\begin{definition}
A subset $c$ of $\calP$ is called \emph{convex} if $c$ is nonempty and for all $k \in \calP$, 
$$\text{$\{i,j\} \subseteq c$ and $i \preceq k \preceq j \ \implies \ k \in c$.}$$

A subset $d$ of $\calP$ is called a \emph{difference set} if $d$ is convex and there are convex sets $r$ and $s$ in $\calP$ with $r \subseteq s$ such that $d =s \setminus r$ and 
\[
\text{$i \in r$ and $j \in d \ \implies \ j \npreceq i$.}
\]
Whenever we have such set $r$, $s$ and $d=s\setminus r$, we get a canonical functor from $\mathfrak{M}_\calP$ to $\mathfrak{M}_{\calP_2}$, where $\calP_2=\{1,2\}$ with the usual order if there exist $i\in r$ and $j\in d$ such that $i\preceq j$, and the trivial order otherwise. 
Thus such sets will also give a canonical (cyclic six-term) exact sequence as above. 
\end{definition}

\begin{definition}
Let $B \in \MPZ$. 
The \emph{(reduced) $K$-web} of $B$, $K(B)$, consists of a family of abelian groups together with families of group homomorphisms between these, as described below. 

For each $i \in \calP$, let $r_i = \setof{j\in\calP}{j \prec i}$ and $s_{i} = \setof{ j\in\calP }{ j \preceq i }$. 
Note that if $r_{i}$ in the above definition is nonempty, then $\{ i \} = s_{i} \setminus r_{i}$ is a difference set.  
We let $\mathrm{Imm}(i)$ denote the set of immediate predecessors of $i$ (we say that $j$ is an \emph{immediate predecessor of $i$} if $j \prec i$ and there is no $k$ such that $j \prec k \prec i$).  

For each $i \in \calP$ with $r_i\neq \emptyset$, we get an exact sequence from Lemma~\ref{lem: Kweb 2 components},
\begin{equation}\label{eq:exact-seq-Kweb}
\ker B\{i\}\rightarrow \cok B\{r_i\}\rightarrow \cok B\{s_i\}\rightarrow \cok B\{i\}
\end{equation}
Moreover, for every pair $(i, j ) \in \calP \times \calP$ satisfying $j \in \mathrm{Imm}(i)$ and $\mathrm{Imm}(i) \setminus \{ j \} \neq \emptyset$ is $s_{j} \subsetneq r_{i}$; consequently we have a homomorphism 
\begin{equation}\label{eq: Kweb hom}
\cok B\{s_j\} \to \cok B\{r_i\} 
\end{equation}
originating from the exact sequence above (\cf\ Lemma~\ref{lem: Kweb 2 components} used on the division into the sets $r_i$, $s_j$ and $r_i\setminus s_j$). 

Set 
\begin{align*}
I_0^{\calP} &= 
\setof{ r_i }{ i \in \calP \text{ and }r_{i} \neq \emptyset } \cup \setof{ s_i }{ i \in \calP } \cup \setof{ \{ i \} }{ i \in \calP }, \\
I_1^{\calP} &= \setof{i\in\calP}{r_i\neq \emptyset}.
\end{align*}

The \emph{$K$-web of $B$}, denoted by $K(B)$, consists of the families $\left( \cok B\{c\} \right)_{ c \in I_0^{\calP}}$ and $\left( \ker B\{i\} \right)_{ i \in I_1^\calP}$ together with all the homomorphisms from the sequences \eqref{eq:exact-seq-Kweb} and \eqref{eq: Kweb hom}.  Let $B'$ be an element of \MPZ[\mathbf{m}'\times\mathbf{n}'].  By a \emph{$K$-web isomorphism}, $\ftn{\kappa}{ K(A) }{ K(B) }$, we mean families 
$$\left( \ftn{ \phi_{c} }{ \cok B\{c\} }{ \cok B'\{c\} } \right)_{ c \in I_0^\calP}$$ 
and $$\left( \ftn{ \psi_i }{ \ker B\{i\} }{ \ker B'\{i\} } \right)_{ i \in I_1^\calP }$$ of isomorphisms satisfying that the ladders coming from the sequences in $K(B)$ and $K(B')$ commute.

By Lemma~\ref{lem: Kweb 2 components}, any \GLPEe $\ftn{ (U,V) }{ B }{ B' }$ induces a $K$-web isomorphism from $B$ to $B'$.  We denote this induced isomorphism by $\kappa_{(U,V)}$. 
\end{definition}

\begin{remark}
The definitions above are completely analogous to the definitions in \cite{MR1990568}, and are the same in the case $m_{i} = n_{i} \neq 0$ for all $i\in\calP$. 
Note that the last homomorphism in \eqref{eq:exact-seq-Kweb} is really not needed, because commutativity with this map is automatic.
\end{remark}


\section{Standard form}
\label{sec:standardform}

In this section, we prove that every graph with finitely many vertices is move equivalent to a graph in canonical form (see Definition~\ref{def: canonical form}).
This will allow us to reduce the proof of our classification result to graphs in canonical form.  
In fact, we will do even better.
We will reduce the proof of our classification result to graphs whose adjacency matrices are in the same block form.
  
The first result of this type is the following that allows us to remove breaking vertices (see \cite{MR1988256} for a definition) and regular vertices that do not support a loop. 

\begin{lemma}\label{lem:  removing breaking vertices}
Let $E$ be a graph with finitely many vertices.
Then $E \sim_{M} E'$, where $E'$ is a graph with finitely many vertices such that every vertex of $E'$ is either a regular vertex that is the base point of a loop or a singular vertex $v$ satisfying the property that if there exists a path of positive length from $v$ to $w$, then $| s^{-1}(v) \cap r^{-1} (w) | = \infty$. 
\end{lemma}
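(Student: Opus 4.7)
The plan is to handle the two possible failure modes in sequence: first eliminate regular vertices that do not support a loop; then force every infinite emitter to satisfy the infinite-multiplicity emission property.

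\textbf{Stage 1 (regular loopless vertices).} I iteratively apply the collapse move (Definition~\ref{def:collapse}) at any regular vertex that does not support a loop. Each collapse strictly decreases the number of vertices and only removes edges incident to the collapsed vertex while installing new ``shortcut'' edges; it cannot destroy a self-loop at any other vertex, so it does not introduce any new loopless regular vertex. Since the graph has finitely many vertices the process terminates, and every regular vertex in the resulting graph supports a loop.

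\textbf{Stage 2 (infinite emitters).} For each infinite emitter $v$ in the current graph I partition
\[
s^{-1}(v) = \mathcal{E}_1 \sqcup \mathcal{E}_2 \sqcup \bigsqcup_{u} \mathcal{E}_u,
\]
where $\mathcal{E}_1$ consists of all $e\in s^{-1}(v)$ with $|s^{-1}(v)\cap r^{-1}(r(e))|=\infty$ (so any infinite-multiplicity self-loops lie in $\mathcal{E}_1$), $\mathcal{E}_2$ consists of the remaining, necessarily finitely many, self-loops, and each $\mathcal{E}_u$ collects the finitely many edges from $v$ to a single non-self target $u$. Only $\mathcal{E}_1$ can be infinite, so move~\OO~applies with this partition. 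The outsplit replaces $v$ by $v^1$ (inheriting $\mathcal{E}_1$), $v^2$ (when $\mathcal{E}_2\ne\emptyset$; a direct check with the definition of move~\OO~shows that the copies of the finite self-loops produce a self-loop at $v^2$, so $v^2$ is regular and supports a loop), and one regular loopless copy $v^j$ for each $\mathcal{E}_u$. I collapse each such loopless $v^j$. Finally, for every vertex $w$ reachable from $v^1$ by a path of positive length, I choose such a path $\alpha$ starting with an edge in an infinite family and apply move~\TT~to install infinitely many direct edges from $v^1$ to $w$. Iterating Stage~2 over each originally infinite emitter yields a graph $E'\Meq E$ in which every vertex is either a regular vertex supporting a loop or a singular vertex with the desired infinite-emission property.

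\textbf{Main obstacle.} The delicate point is verifying, after the outsplit and subsequent collapses, that every outgoing edge of $v^1$ lies in an infinite-multiplicity family, so that move~\TT~is indeed applicable for each reachable $w$. This relies on the partition: the outgoing edges of $v^1$ coming directly from the outsplit are copies of edges in $\mathcal{E}_1$ and hence infinite; any further outgoing edges of $v^1$ introduced when a $v^j$ is collapsed must enter $v^j$ from $v^1$, hence arise from a self-loop in $\mathcal{E}_1$, and therefore also lie in an infinite family. A secondary point is that processing different infinite emitters in turn does not spoil earlier work: the moves only add infinite-multiplicity emissions and preserve reachability, so the property already established at an earlier $v^1$ persists under subsequent processing.
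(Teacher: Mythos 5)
Your proof is correct, and it genuinely reorders and refines the paper's argument rather than reproducing it. The paper first performs a two-piece outsplit at each infinite emitter $v$, separating $\mathcal{E}_1 = \{e : |s^{-1}(v)\cap r^{-1}(r(e))| < \infty\}$ from $\mathcal{E}_2 = \{e : |s^{-1}(v)\cap r^{-1}(r(e))| = \infty\}$, then applies move~\TT, and only \emph{then} does a single final pass collapsing all remaining loopless regular vertices, together with a short claim that this last collapse pass preserves the already-established infinite-multiplicity property at the infinite emitters. You invert this order: collapse loopless regular vertices first, then for each infinite emitter do a strictly finer outsplit (isolating the infinite families, the finite self-loops, and each finite non-self family into its own piece), collapse the loopless pieces $v^j$ immediately, and only then apply \TT. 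The advantage of your finer partition is that the loopless pieces are already isolated inside the outsplit and can be disposed of at once, so you never need to argue that a later global collapse pass preserves infinite multiplicities. The cost is that you must keep track of what happens to an already-processed $v^1$ when the next infinite emitter $v'$ is handled — this is your ``secondary point,'' and while it is correct, it is stated a bit tersely: one really needs to observe that the new vertex $v'^2$ produced when processing $v'$ is reachable from $v^1$ precisely when $v'$ was, and inherits the $\infty$-family from $v^1$ to $v'$, and that the shortcut edges created by collapsing the $v'^j$'s out of $v^1$ are $\infty\cdot(\text{finite positive})$, hence still infinite and pointing at already-reachable targets. With that caveat filled in, both routes are valid, and yours is a reasonable alternative arrangement of the same three moves (\OO, \TT, and collapse).
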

\begin{proof}
First we show how to modify $E$ to get a graph with the property that if $v$ is an infinite emitter, then $v$ emits infinitely many edges to any vertex it emits any edges to. 
Let $v \in E^0$ be an infinite emitter. 
If there exists a vertex $u \in E^0$ such that $v$ emits only finitely many edges to $u$, we partition $s^{-1}(v)$ into two sets, $\mathcal{E}_1 = \{ e \in s^{-1}(v) \mid |s^{-1}(v) \cap r^{-1}(r(e))| < \infty  \}$ and $\mathcal{E}_2 = \{ e \in s^{-1}(v) \mid |s^{-1}(v) \cap r^{-1}(r(e))| = \infty  \}$, \ie $\mathcal{E}_1$ consists of the edges out of $v$ that only have finitely many parallel edges. 
Note that since $E^0$ is finite, $\mathcal{E}_1$ is a finite set. 
Hence we can perform move \OO according to this partition, resulting in a graph $F'$ that is move equivalent to $E$. 
Call the vertices $v$ got split into $v_1$ and $v_2$. 
In $F'$, $v_2$ is an infinite emitter with the property that it emits infinitely many edges to any vertex it emits any edges to, and any infinite emitter in $E$ that already had that property keeps it. 
On the other hand $v_1$ is a finite emitter. 

Since $E^0$ is finite, we can do the above process a finite number of times, ending with a graph $F$ that is move equivalent to $E$, and with the property that if $v$ is an infinite emitter, then $v$ emits infinitely many edges to any vertex it emits any edges to. 
Now we can use move \TT a finite number of times to get a graph $G$ that is move equivalent to $F$ and satisfies that for every infinite emitter $v \in G^0$ and every $w \in G^0$ for which there exists a path of positive length from $v$ to $w$ we have $| s^{-1}(v) \cap r^{-1} (w) | = \infty$.  
Finally we use the collapse move (Definition \ref{def:collapse}) on each regular vertex of $F$ that does not support a loop to produce a new graph, $E'$ say, with $E' \Meq G \Meq E$ and such that every regular vertex in $E'$ supports a loop. 
Because of the way the collapse move adds edges this process maintains the property that $| s^{-1}(v) \cap r^{-1} (w) | = \infty$ for any infinite emitter $v$ and any vertex $w$ with a path of positive length from $v$ to $w$.
\end{proof}

Assume $E$ satisfies Condition~(K) and satisfies the conclusion of Lemma~\ref{lem:  removing breaking vertices}.  Then every hereditary subset of $E^{0}$ is saturated and $E$ has no breaking vertices.  Moreover, every ideal of $C^{*} (E)$ is gauge invariant.  In particular, there is a lattice isomorphism from the ideal lattice of $C^{*} (E)$ to the lattice of hereditary subsets of $E^{0}$ with ordering given by set containment. 

Therefore, $\Bsf_{E}^{\bullet} \in \MPZ[\mathbf{m}_{E} \times \mathbf{n}_{E}]$ (in a canonical way) for a partially ordered set $\calP = ( \{1, \dots, N \} , \preceq )$, where $N$ is the number of points in $\mathrm{Prim}(C^*(E))$, and ``$\preceq$'' is chosen so that it satisfies Assumption \ref{ass:preorder}. 
More formally we have:

\begin{lemma}\label{lem: block structure of adjacency matrix}
Let $E$ be a graph with finitely many vertices such that every vertex of $E$ is either a regular vertex that is the base point of a loop or a singular vertex $v$ satisfying the property that if there exists a path of positive length from $v$ to $w$, then $| s^{-1}(v) \cap r^{-1} (w) | = \infty$.  Suppose $E$ satisfies Condition~(K).  Then $\Bsf_{E}^{\bullet} \in \MPZ[\mathbf{m}_{E} \times \mathbf{n}_{E}]$, where 
\[
n_{E, i} = | H_{i,1}^{E} \setminus H_{i,0}^{E} | 
\]
with $I_{H_{i,1}^{E}}$ the prime ideal corresponding to $i$ and $I_{H_{i,0}^{E}}$ the maximal proper ideal of $I_{H_{i,1}^{E}}$, and 
\[
m_{E, i} = n_{ E, i } - | \setof{ v \in H_{i,1}^{E} \setminus H_{i,0}^{E} }{ \text{$v$ is a singular vertex in $H_{i,1}^{E} \setminus H_{i,0}^{E}$}} |. 
\]
\end{lemma}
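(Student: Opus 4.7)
The plan is to make explicit the block decomposition of $\Bsf_E^\bullet$ induced by the ideal-theoretic structure of $C^*(E)$ developed in the paragraph preceding the lemma. Under the present hypotheses, every ideal of $C^*(E)$ is gauge-invariant, giving a lattice isomorphism between hereditary subsets of $E^0$ and (closed, two-sided) ideals of $C^*(E)$. The primitive ideal space of $C^*(E)$ is finite, and I will index its points by $\calP = \{1,\dots,N\}$; for each $i$, the corresponding primitive ideal is realized by a hereditary subset $H_{i,1}^E$ which, being join-irreducible in the finite distributive lattice of hereditary subsets, possesses a unique maximal proper hereditary sub-ideal $H_{i,0}^E$. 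A standard fact about join-irreducibles in finite distributive lattices then shows that the difference sets $X_i := H_{i,1}^E \setminus H_{i,0}^E$ partition $E^0$, with $H_{j,1}^E$ characterized as the smallest hereditary subset meeting $X_j$.

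Next I would equip $\calP$ with the partial order $i \preceq j \iff H_{j,1}^E \subseteq H_{i,1}^E$, and relabel using a linear extension so that $i \preceq j \Rightarrow i \leq j$, as required by Assumption~\ref{ass:preorder}. Decomposing the row index set of $\Bsf_E^\bullet$ (the regular vertices of $E$) and the column index set (all vertices of $E$) along the partition $E^0 = \bigsqcup_{i} X_i$ gives block sizes $n_{E,i} = |X_i| = |H_{i,1}^E \setminus H_{i,0}^E|$ and $m_{E,i} = n_{E,i} - |\{v \in X_i : v \text{ is singular}\}|$ directly from the definition of $\Bsf_E^\bullet$, which is precisely the asserted formula.

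It remains to verify the block triangularity, namely $\Bsf_E^\bullet\{i,j\} \neq 0 \Rightarrow i \preceq j$. A nonzero entry in block $(i,j)$ either arises from a $-1$ on the diagonal (forcing $i=j$, hence $i \preceq i$) or from an edge $u \to v$ in $E$ with $u \in X_i$ regular and $v \in X_j$. In the latter case, hereditariness of $H_{i,1}^E$ forces $v \in H_{i,1}^E$, and combining $v \in X_j \cap H_{i,1}^E$ with the characterization of $H_{j,1}^E$ as the smallest hereditary set meeting $X_j$ yields $H_{j,1}^E \subseteq H_{i,1}^E$, that is, $i \preceq j$. The main thing to be careful about is the direction of the partial order (so that the block structure is ``upper triangular'' rather than lower triangular), and the one non-routine ingredient is the ``smallest hereditary set'' characterization, which is where the finite distributive structure of the ideal lattice is essential; the remainder is bookkeeping.
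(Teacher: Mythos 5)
Your proof is correct and follows the same route the paper takes: the paper itself states the lemma as a formalization of the preceding paragraph (lattice isomorphism between ideals and hereditary subsets, giving the partition of $E^0$ into the sets $H_{i,1}^E\setminus H_{i,0}^E$) and leaves the verification implicit, which is exactly what you spell out, including the correct orientation of $\preceq$ matching the remark after the lemma. The only ingredient you make explicit that the paper leaves unsaid is the join-irreducibility argument showing $H_{j,1}^E$ is the hereditary closure of any vertex in $H_{j,1}^E\setminus H_{j,0}^E$; that is a correct and useful clarification.
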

Note that the hereditary subsets of vertices --- as usually defined for graphs, when we consider graph \cas{} --- correspond to subsets $S$ of $\calP$ satisfying that $i\preceq j$ implies that $j\in S$ whenever $i\in S$. 
This is due to that fact that we generally do not work with the transposed matrix in this paper, since we find it more convenient to work with the non-transposed matrix (see also the proof of Theorem~\ref{thm:putting-it-all-together}). 

We now expand on the conditions we can put on graphs. 
To turn $K$-theory isomorphisms into \GLPEe{s} or \SLPEe{s}, the matrices $\Bsf^{\bullet}_E$ and $\Bsf^{\bullet}_{E'}$ must have sufficiently big diagonal blocks, this requirement is captured in (\ref{thm:canonical-item-size3}) and (\ref{thm:canonical-item-rowrank}) below. 
The positivity condition, (\ref{thm:canonical-item-positive}), is also critical when dealing with matrix manipulations. 
Condition (\ref{thm:canonical-item-paths}) ensures that we can apply Propositions \ref{prop:columnAdd} and \ref{prop:rowAdd} to actually do matrix manipulations.

\begin{theorem}\label{thm: canonical form}
Let $E$ be a graph with finitely many vertices that satisfies Condition~(K). 
Then there exists a graph $E'$ with finitely many vertices such that $E \sim_{M} E'$ and $E'$ satisfies the following properties:
\begin{enumerate}
\item \label{thm:canonical-item-loops-inf} every vertex of $E'$ is either a regular vertex that is the base point of a loop or a singular vertex $v$ satisfying the property that if there exists a path of positive length from $v$ to $w$, then $| s^{-1}(v) \cap r^{-1} (w) | = \infty$;
\item \label{thm:canonical-item-paths} for all regular vertices $v, w$ of $E'$ with $v \geq w$, there exists a path in $E'$ from $v$ to $w$ through regular vertices in $E$;
\item \label{thm:canonical-item-size3} $m_{E',i} \geq 3$ whenever there exists a cycle in the graph
\[
\left( H_{i,1}^{E'} \setminus H_{i, 0}^{E'} , r^{-1}( H_{i,1}^{E'} \setminus H_{i, 0}^{E'} ) \cap s^{-1} ( H_{i,1}^{E'} ), r, s \right)\text{;} 
\]
\item \label{thm:canonical-item-positive} if $i \preceq j$ and $\Bsf^{\bullet}_{E'} \{ i, j \}$ is not the empty matrix, then $\Bsf^{\bullet}_{E'} \{ i , j \} > 0$; and
\item \label{thm:canonical-item-rowrank}  if $\Bsf^{\bullet}_{E'} \{ i \}$ is not the empty matrix, then the Smith normal form  of $\Bsf^{\bullet}_{E'} \{ i \}$ has at least two 1's.
\end{enumerate}
\end{theorem}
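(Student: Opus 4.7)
The plan is to achieve the five conditions in order, checking at each stage that the chosen moves preserve the conditions already established. As a starting point, Lemma~\ref{lem:  removing breaking vertices} applied to $E$ directly yields a graph $E_1 \Meq E$ satisfying condition (\ref{thm:canonical-item-loops-inf}).

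For condition (\ref{thm:canonical-item-paths}), given regular vertices $v \geq w$ in $E_1$ for which every path must pass through a singular vertex, I would pick the first singular vertex $z$ on a minimal such path and apply the outsplit-then-collapse technique from the proof of Proposition~\ref{prop:cuntzSpliceSetup}: partitioning $s^{-1}(z)$ with $\mathcal{E}_1 = \{e\}$, where $e$ is the edge leaving $z$ along the path, the new vertex $z^1$ is a single-emitter with no loop (since $r(e) \neq z$), and may be collapsed. Iterating yields $E_2 \Meq E_1$ satisfying (\ref{thm:canonical-item-loops-inf}) and (\ref{thm:canonical-item-paths}). For condition (\ref{thm:canonical-item-size3}), for each primitive component $i$ containing a cycle with $m_{E_2,i} < 3$, I would use Proposition~\ref{prop:columnAdd} along a cycle of regular vertices inside the component (available by (\ref{thm:canonical-item-paths})) to build up multiple loops at a chosen regular vertex $v$, then outsplit $v$ using a partition separating its loops into distinct classes; this produces two regular loop-supporting vertices still lying in the same component via the induced cross edges. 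Iterating yields $E_3$ satisfying (\ref{thm:canonical-item-loops-inf})--(\ref{thm:canonical-item-size3}).

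For condition (\ref{thm:canonical-item-positive}), whenever $i \prec j$ and $\Bsf_{E_3}^\bullet\{i,j\}$ has a zero entry at a position $(u,v)$, condition (\ref{thm:canonical-item-paths}) supplies a path of regular vertices from $u$ reaching some vertex $v'$ with an edge to $v$, and Proposition~\ref{prop:columnAdd} then adds the corresponding column into column $v$ to produce a positive entry; the emission hypotheses of Proposition~\ref{prop:columnAdd} follow from (\ref{thm:canonical-item-loops-inf}) and (\ref{thm:canonical-item-size3}). Iterating over all zero entries produces $E_4$ in which every non-empty off-diagonal block is strictly positive. For condition (\ref{thm:canonical-item-rowrank}), wherever the Smith normal form of $\Bsf_{E_4}^\bullet\{i\}$ has fewer than two $1$'s, I would outsplit a regular vertex $v$ in component $i$ supporting multiple loops (available from Step 3): the resulting pair of rows and columns contributes a $2 \times 2$ subblock that, after row/column manipulations via Propositions~\ref{prop:columnAdd} and \ref{prop:rowAdd}, produces two $1$'s in the Smith normal form, following the pattern of \cite{MR1990568}. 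All moves in Steps 4 and 5 leave the vertex set and the loop/singularity status of every vertex unchanged, so conditions (\ref{thm:canonical-item-loops-inf})--(\ref{thm:canonical-item-size3}) are preserved, and the block-triangular structure dictated by $\calP$ remains intact throughout.

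The principal obstacle is Step 3: when a primitive component has a cycle but very few regular loop-supporting vertices (e.g., a single vertex $v$ carrying a single loop), growing the number of such vertices requires a delicate interleaving of column additions (to raise loop multiplicity at a vertex) with outsplits (to split the loops apart), together with possibly further column additions to establish the emission conditions needed for Proposition~\ref{prop:columnAdd}. Once this foundation is in place, Steps 4 and 5 reduce to comparatively mechanical matrix manipulations via the tools of Section~\ref{subsec:movesonmatrices}, and the block structure is easy to track.
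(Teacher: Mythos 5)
Your overall strategy matches the paper's: start with Lemma~\ref{lem:  removing breaking vertices} for condition~(\ref{thm:canonical-item-loops-inf}), establish~(\ref{thm:canonical-item-paths}) by the outsplit-then-collapse maneuver borrowed from Proposition~\ref{prop:cuntzSpliceSetup}, then bootstrap conditions~(\ref{thm:canonical-item-size3})--(\ref{thm:canonical-item-rowrank}) through column/row additions and outsplits. Steps~1--2 and~4 are essentially the paper's argument. However, there is a genuine gap in Step~3 that you acknowledge as the ``principal obstacle'' but do not actually resolve.

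Your plan for~(\ref{thm:canonical-item-size3}) is to ``use Proposition~\ref{prop:columnAdd} along a cycle of regular vertices inside the component (available by~(\ref{thm:canonical-item-paths})) to build up multiple loops at a chosen regular vertex $v$.'' This presupposes the component has at least one (and really, a cycle of) regular vertices. But condition~(\ref{thm:canonical-item-paths}) is only a statement about pairs of regular vertices that already exist; it provides no regular vertices at all. The hard case, which your example of ``a single vertex carrying a single loop'' does not even reach, is $m_{E_2,i}=0$: a cyclic component composed entirely of infinite emitters. There is no regular vertex to start from, so neither Proposition~\ref{prop:columnAdd} nor any outsplit of a loop-supporting regular vertex applies. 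The paper's proof handles this by observing that, once~(\ref{thm:canonical-item-loops-inf}) holds, every infinite emitter on a cycle supports infinitely many loops, so one may use move \OO to split off two of those loops, manufacturing a regular vertex with a loop out of thin air; only then does the casework for $m_{F,i}=1,2$ via column addition and further outsplits make sense. Without this seed step, your construction cannot get off the ground in the $m=0$ case.

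Two smaller points. First, in Step~5 you write that ``row/column manipulations via Propositions~\ref{prop:columnAdd} and~\ref{prop:rowAdd} produce two $1$'s in the Smith normal form.'' Unimodular row and column operations leave the Smith normal form invariant, so they cannot create $1$'s; it is the outsplit itself that does the work, by enlarging $\Bsf^{\bullet}_{E'}\{i\}$ from $m_i\times n_i$ to $(m_i+1)\times(n_i+1)$ while (since move equivalence preserves $K$-theory) leaving both $\cok$ and $\ker$ unchanged, which forces the extra invariant factor to be $1$. The paper's choice of partition with $d_1,d_2\ge 2$ loops on each side, producing rows beginning $(d_1-1,d_1,\ldots)$ and $(d_2,d_2-1,\ldots)$, is precisely arranged so one can verify this directly. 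Second, your closing remark that ``all moves in Steps~4 and~5 leave the vertex set\ldots unchanged'' is false for Step~5: outsplits change the vertex set, and you need a separate (routine) check that the earlier conditions survive them, as the paper does via the explicit descriptions of the rows that appear.
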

\begin{proof}
Lemma \ref{lem:  removing breaking vertices} lets us find a graph $F$ such that $F \Meq E$ and $F$ satisfies (\ref{thm:canonical-item-loops-inf}). 
Using the same technique as described in the proof Proposition \ref{prop:cuntzSpliceSetup}, we can guarantee that $F$ also satisfies (\ref{thm:canonical-item-paths}). 

Suppose now $i$ is such that $m_{F,i} < 3$ and there exist a cycle in 
\[
  \left( H_{i,1}^{F} \setminus H_{i, 0}^{F} , r^{-1}( H_{i,1}^{F} \setminus H_{i, 0}^{F} ) \cap s^{-1} ( H_{i,1}^{F} ), r, s \right),
\]
We want to reduce to the case where $m_{F,i} = 2$. 

If $m_{F,i} = 0$ then all the vertices in $H_{i,1}^{F} \setminus H_{i, 0}^{F}$ are infinite emitters.
Since the subgraph has a cycle and $F$ satisfies (\ref{thm:canonical-item-loops-inf}) each of the vertices in $H_{i,1}^{F} \setminus H_{i, 0}^{F}$ supports an infinite number of loops. 
By using move \OO to split two loops of an infinite emitter, we get a graph $F'$ that is move equivalent to $F$, satisfies (\ref{thm:canonical-item-loops-inf}) and  (\ref{thm:canonical-item-paths}) and where $m_{F', i} > 0$.
Hence we may assume that $1 \leq m_{F,i} < 3$.

If $m_{F,i} = 1$ there are two cases. 
Case one is that $H_{i,1}^{F} \setminus H_{i, 0}^{F}$ only consists of one vertex. 
In this case, that vertex must support at least two loops (since $F$ satisfies Condition~(K)) and we can use move \OO to split the vertex in to two, thus giving us a move equivalent graph, $F'$, that satisfies (\ref{thm:canonical-item-loops-inf}) and  (\ref{thm:canonical-item-paths}) and where $m_{F', i} = 2$.
The other case is that $H_{i,1}^{F} \setminus H_{i, 0}^{F}$  also contains an infinite emitter. 
The regular vertex $v$ has to emit at least one edge to one such infinite emitter $w$. 
By the construction of $H_{i,1}^{F} \setminus H_{i, 0}^{F}$ $w$ must emit an edge to $v$, and therefore we can use column addition (Proposition \ref{prop:columnAdd}) to add the $w$'th column of $\Bsf_F$ into the $v$'th column. 
The result will be a graph $F'$ that satisfies (\ref{thm:canonical-item-loops-inf}) and  (\ref{thm:canonical-item-paths}) and where $v$ supports at least two loops. 
Outsplitting, as in case one, we reduce to the case where $m_{F,i} = 2$.

Suppose now that $m_{F,i} = 2$. 
Then there are two regular vertices $u,v \in H_{i,1}^{F} \setminus H_{i, 0}^{F}$ and there is at least one edge from $u$ to $v$ and at least one from $v$ to $u$. 
Hence we can add the $v$'th column of $\Bsf_F$ into the $u$'th, using Proposition \ref{prop:columnAdd}, to ensure that $u$ supports at least $2$ loops.
We can now use move \OO to outsplit $u$, by dividing the outgoing edges into two in such a way that each partition has a loop, to yield a graph move equivalent to $F$ that satisfies (\ref{thm:canonical-item-loops-inf}), (\ref{thm:canonical-item-paths}) and (\ref{thm:canonical-item-size3}). 
Hence we can assume that $F$ also satisfies (\ref{thm:canonical-item-size3}).

By (\ref{thm:canonical-item-size3}) each nonempty diagonal block of $\Bsf_{F}^{\bullet}$ will have a nonzero entry.
Hence we may use row and column additions (Propositions \ref{prop:rowAdd} and \ref{prop:columnAdd}), which are legal because of (\ref{thm:canonical-item-paths}), to make sure that all entries in the diagonal blocks are nonzero. 
Then we can use column addition to guarantee that all offdiagonal blocks (that are not forced to be zero by the block structure) are strictly positive. 
Since adding rows and columns together will keep conditions (\ref{thm:canonical-item-loops-inf}), (\ref{thm:canonical-item-paths}) and (\ref{thm:canonical-item-size3}), we may assume that $F$ also satisfies (\ref{thm:canonical-item-positive}).

Note, that by the above reasoning, we can assume that any entry in $\Bsf^{\bullet}_E$ is not only positive, but greater than or equal to  any  natural number we see fit. 
Hence, for each $i$ with $\Bsf^{\bullet}_F \{i\}$ nonempty, we can find a regular vertex, $v$ say, in $H_{i,1}^{F}$ such that $v$ emits at least $4$ edges to each vertex $v$ reaches. 
Partition the outgoing edges of $v$ into two sets in such a way that each partition contains at least one edges to each vertex $v$ can reach, and at least two loops.  Let $d_1$ be the number of loops in the first partition, and let $d_2$ be the number in the second (then $d_1 + d_2 = d$). 
Outsplitting according to this partition will yield a graph $F'$ such that $F \Meq F'$ and $F$ satisfies (\ref{thm:canonical-item-loops-inf}), (\ref{thm:canonical-item-paths}), (\ref{thm:canonical-item-size3}) and (\ref{thm:canonical-item-positive}). $\Bsf^{\bullet}_{F'} \{i\}$ will contain the following two rows (corresponding to the vertices $v$ was split into)
\[
\begin{pmatrix}
 d_1 - 1 & d_1 & * & * & \cdots \\
 d_2  & d_2 -1 & * & * & \cdots
\end{pmatrix},
\]
where the asterisks can be any positive numbers, with $m_{F',i} = m_{F,i} +1$. Repeating this process we can increase the size of the relevant block so much that the Smith normal form must contain at least two 1's.

Continuing in this fashion for each diagonal block we can construct $E'$ such that $E \Meq E'$ and $E'$ satisfies (\ref{thm:canonical-item-loops-inf}), (\ref{thm:canonical-item-paths}), (\ref{thm:canonical-item-size3}), (\ref{thm:canonical-item-positive}) and (\ref{thm:canonical-item-rowrank}). 
\end{proof}

\begin{remark}
Suppose that $E$ is a graph with finitely many vertices that satisfies (\ref{thm:canonical-item-loops-inf}) and (\ref{thm:canonical-item-size3}) of Theorem~\ref{thm: canonical form}.  Then $E$ satisfies Condition~(K). 
Moreover, if $H_{i,1}^{E} \setminus H_{i, 0}^{E} = \{ v_{i} \}$, then either $v_{i}$ is an infinite emitter that does not support a cycle or is a sink. 
\end{remark}

\begin{remark} \label{rmk:  canonical form and SL equivalence}
Let $E$ be a graph with finitely many vertices that satisfies Condition~(K).
It follows from the proof of Theorem \ref{thm: canonical form} that if $E$ satisfies (\ref{thm:canonical-item-loops-inf}), (\ref{thm:canonical-item-paths}) and (\ref{thm:canonical-item-size3}) from the theorem, then there exists a graph $E'$ that is move equivalent to $E$ and satisfies (\ref{thm:canonical-item-loops-inf}), (\ref{thm:canonical-item-paths}),  (\ref{thm:canonical-item-size3}) and (\ref{thm:canonical-item-positive}), furthermore $\Bsf^{\bullet}_E$ and $\Bsf^{\bullet}_{E'}$ have the same block form and are \SLPE.

Since the Smith normal form of a matrix is invariant under \SL-equivalence $E'$ will satisfy condition (\ref{thm:canonical-item-rowrank}) if $E$ does. 
\end{remark}

\begin{definition}\label{def:  canonical form}
A graph $E$ with finitely many vertices is in \emph{canonical form} if $E$ satisfies the properties (\ref{thm:canonical-item-loops-inf}), (\ref{thm:canonical-item-paths}),  (\ref{thm:canonical-item-size3}), (\ref{thm:canonical-item-positive}), and (\ref{thm:canonical-item-rowrank}) of Theorem~\ref{thm: canonical form}. 
A pair of graphs $(E,F)$ with finitely many vertices are in \emph{standard form} if $E$ and $F$ are in canonical form with $\mathbf{m}_{E} = \mathbf{m}_{F}$, and $\mathbf{n}_{E} = \mathbf{n}_{F}$.
\end{definition}

The notion of a standard form is of course only useful if we can assume that our graphs have the standard form, the next proposition shows that we can indeed assume that, if the corresponding \cas have isomorphic ordered reduced filtered $K$-theory. 

\begin{proposition}\label{prop:  standard form}
Let there be given graphs $E_{1}$ and $E_{2}$ with finitely many vertices. 
If $\FKRplus ( C^{*} (E_{1}) ) \cong \FKRplus ( C^{*} (E_{2}) )$, then there exists a pair of graphs $( F_{1} , F_{2} )$ with finitely many vertices such that the pair $( F_{1} , F_{2} )$ is in standard form and $E_{i} \sim_{M} F_{i}$.
\end{proposition}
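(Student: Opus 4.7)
The plan is to first put each graph into canonical form independently using Theorem~\ref{thm: canonical form}, and then to adjust the two canonical forms so that their block structures agree, exploiting the hypothesis that their ordered reduced filtered $K$-theories are isomorphic.

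First, I would apply Theorem~\ref{thm: canonical form} to obtain graphs $G_i \Meq E_i$ in canonical form. By Theorem~\ref{thm:moveimpliesstableisomorphism} the filtered $K$-theory is preserved, so $\FKRplus(C^*(G_1)) \cong \FKRplus(C^*(G_2))$. In particular, the isomorphism supplies a homeomorphism $\rho\colon\Prim C^*(G_1)\to\Prim C^*(G_2)$. Using Lemma~\ref{lem: block structure of adjacency matrix}, the primitive ideal space of $C^*(G_i)$ is in order-preserving bijection with the indexing set $\calP_{G_i}$, so by relabeling vertices we may arrange $\calP_{G_1} = \calP_{G_2} = \calP$ with the same partial order satisfying Assumption~\ref{ass:preorder}.

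Next, for each $i\in\calP$, the subquotient $\A_i := C^*(G_j)(\{i\})$ has $K$-theory computable from the diagonal block $\Bsf_{G_j}^\bullet\{i\}$: namely $K_0(\A_i) \cong \cok \Bsf_{G_j}^\bullet\{i\}$ and $K_1(\A_i) \cong \ker \Bsf_{G_j}^\bullet\{i\}$. If $\Bsf_{G_j}^\bullet\{i\}$ is $m_{G_j,i}\times n_{G_j,i}$, then from the rank-nullity identities $\operatorname{rank}\cok\Bsf_{G_j}^\bullet\{i\} - \operatorname{rank}\ker\Bsf_{G_j}^\bullet\{i\} = m_{G_j,i}-n_{G_j,i}$. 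Since $\FKRplus$ contains $K_0(\A_i)$ and $K_1(\A_i)$ up to isomorphism, the difference $n_{G_j,i}-m_{G_j,i}$ (which equals the number of singular vertices in the component $H_{i,1}^{G_j}\setminus H_{i,0}^{G_j}$) is the same for $j=1$ and $j=2$. If $i$ indexes a trivial component (no cycle in the subquotient), condition~(\ref{thm:canonical-item-size3}) is vacuous, and in canonical form such a component consists of a single singular vertex, forcing $(m_{G_j,i},n_{G_j,i})=(0,1)$ for both $j$.

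It remains to match $m_{G_1,i}$ and $m_{G_2,i}$ on the components containing a cycle. For each such $i$, condition~(\ref{thm:canonical-item-size3}) guarantees $m_{G_j,i}\geq 3$, and by the positivity condition~(\ref{thm:canonical-item-positive}) together with condition~(\ref{thm:canonical-item-paths}) there is always a regular vertex in this block supporting at least two loops with outgoing edges reaching every vertex the block reaches (possibly after further column additions as in the proof of Theorem~\ref{thm: canonical form}). Outsplitting such a vertex along a partition of its outgoing edges into two sets, each containing a loop and at least one edge to every vertex the original vertex reached, yields a move-equivalent graph that is again in canonical form and whose $i$-th block sizes have been incremented by one (both $m_{G_j,i}$ and $n_{G_j,i}$ increase by $1$, preserving $n_{G_j,i}-m_{G_j,i}$). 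Iterating on whichever of $G_1,G_2$ has the smaller value of $m_{\cdot,i}$ until the two agree, and doing this for every $i$ indexing a nontrivial component, produces the desired pair $(F_1,F_2)$ in standard form.

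The main obstacle is verifying that the incremental outsplitting in the final step actually preserves \emph{all} five clauses of the canonical form simultaneously. Clauses (\ref{thm:canonical-item-loops-inf})--(\ref{thm:canonical-item-positive}) follow by the same bookkeeping as in the proof of Theorem~\ref{thm: canonical form} (the new vertices each inherit a loop and receive/emit everything needed). The delicate point is clause~(\ref{thm:canonical-item-rowrank}) on the Smith normal form: the outsplit-and-adjust procedure used in the last paragraph of the proof of Theorem~\ref{thm: canonical form} was designed precisely to produce an additional elementary divisor equal to $1$, and the same construction applied here increases the number of $1$'s in the Smith normal form by one per iteration, so the ``at least two $1$'s'' requirement is preserved. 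Once this is in place, the resulting pair $(F_1,F_2)$ has $\mathbf{m}_{F_1}=\mathbf{m}_{F_2}$ and $\mathbf{n}_{F_1}=\mathbf{n}_{F_2}$ over the common $\calP$, which is exactly the condition for being in standard form.
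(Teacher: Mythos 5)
Your argument is essentially the same as the paper's: put both graphs in canonical form via Theorem~\ref{thm: canonical form}, match the block structure through the primitive ideal space homeomorphism supplied by $\FKRplus$, observe that the singular vertex count $n_{G_j,i}-m_{G_j,i}$ per block is a $K$-theoretic invariant, and then iteratively outsplit (using the construction from the end of the proof of Theorem~\ref{thm: canonical form}) to grow the smaller blocks. The paper chooses to equalize the $n_i$'s and deduce equality of the $m_i$'s from the invariant difference, whereas you equalize the $m_i$'s and deduce the $n_i$'s—an immaterial bookkeeping choice—and your rank-nullity derivation of the invariance of the singular count replaces the paper's citation to \cite[Lemma~9.2]{MR3082546}, but otherwise the argument is identical.
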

\begin{proof}
It follows from Theorem \ref{thm: canonical form} that we can find graphs $G_1, G_2$ such that $G_i \Meq E_i$ and $G_i$ are in canonical form, $i = 1,2$. 
The $K$-theory condition gives a specific isomorphism between the primitive ideal spaces of $C^{*} (E_{1})$ and $C^{*} (E_{2})$, hence $\Bsf^{\bullet}_{G_1}$ and $\Bsf^{\bullet}_{G_2}$ can be chosen to have the same same block structure according to this isomorphism. 
Furthermore, the number of singular vertices in $H_{i,1}^{E_1} \setminus H_{i,0}^{E_1}$ is determined by its $K$-theory, since $C^*(H_{i,1}^{E_1} \setminus H_{i,0}^{E_1})$ is simple (see \cite[Lemma 9.2]{MR3082546}). 
The same holds for $E_2$ so $n_{E_1,i} - m_{E_1,i} = n_{E_2,i} - m_{E_2,i}$ for all $i$, and therefore $n_{G_1,i} - m_{G_1,i} = n_{G_2,i} - m_{G_2,i}$ for all $i$.

The only potential problem is now that the we may not have $n_{G_1,i} = n_{G_2, i}$ for all $i$. 
Since all the entries in $\Bsf^{\bullet}_{G_1}$ are positive, unless forced to be zero by the block structure, we may use row and column additions to ensure that all nonzero entries in $\Bsf^{\bullet}_{G_1}$ are at least $4$.
Similarly we can assume that all nonzero entries of $\Bsf^{\bullet}_{G_2}$ are at least $4$. 
So if $n_{G_1,i} < n_{G_2,i}$ for some $i$, we can use an outsplit (similar to what is described at the end of the proof of Theorem \ref{thm: canonical form}) to grow $n_{G_1,i}$ by $1$ while keeping it in canonical form. 
Proceeding this way, we construct graphs $F_1, F_2$ in canonical form such that $F_1 \Meq E_1$, $F_2 \Meq E_2$ and $n_{F_1,i} = n_{F_2,i}$ for all $i$.
Since we also have $n_{F_1,i} - m_{F_1,i} = n_{F_2,i} - m_{F_2,i}$ we must have $m_{F_1,i} = m_{F_2,i}$ for all $i$. 
\end{proof}

When $E$ is in canonical form, the rows of $\Bsf_E$ that are removed to form $\Bsf^{\bullet}_E$ either have all entries equal to $0$ except on which is $-1$, this is in case the corresponding vertex is a sink, or it only contains $0$ and $\infty$, in case it is an infinite emitter. 
It therefore follows from Proposition \ref{prop:columnAdd} and Remark \ref{rmk:columnAdd} that adding one column in $\Bsf^{\bullet}_E$ into another will preserve move equivalence, so long as it maintains the block structure and similarly for rows by Proposition \ref{prop:rowAdd} and Remark \ref{rmk:rowAdd}. 
Hence we have:

\begin{corollary} \label{cor:rc-in-bullet}
Let $E$ be a graph with finitely many vertices and suppose that $E$ is in canonical form. 
In $\Bsf^{\bullet}_E$ we can add column $l$ into column $k$ without changing the move equivalence class of the associated graph if the diagonal entry of column $l$ is in block $i$, the diagonal entry of column $k$ is in block $j$ and $i \preceq j$. 
Similarly can add row $l$ into row $k$ without changing move equivalence class if the diagonal entry of row $l$ is in block $i$, the diagonal entry of row $k$ is in block $j$ and $j \preceq i$. 
\end{corollary}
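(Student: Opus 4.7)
The plan is to derive the corollary from Propositions \ref{prop:columnAdd} and \ref{prop:rowAdd} applied at the level of $\Bsf_E$, and then verify that the operation descends correctly to the claimed one on $\Bsf^{\bullet}_E$. Throughout I will use the fact that the phrase ``the diagonal entry of column $l$ lies in block $i$'' forces $v_l$ to be a regular vertex---since the rows removed in forming $\Bsf^{\bullet}_E$ are precisely those of singular vertices---and likewise for $v_k$.

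For the column addition I will invoke Proposition \ref{prop:columnAdd}. The hypothesis $i \preceq j$ encodes that block $j$ is reachable from block $i$ in the graph; when $i=j$, the regular vertices of a single block are mutually reachable; in either case $v_l \geq v_k$. Condition (\ref{thm:canonical-item-paths}) of canonical form then supplies a path $v_l = u_0, u_1, \dots, u_n = v_k$ through regular vertices. Each $u_p$ lies in some diagonal block $\Bsf_E\{i_p\}$, which is nonempty and by condition (\ref{thm:canonical-item-positive}) is entrywise positive; in particular the $(u_p,u_p)$-entry of $\Bsf_E$ is at least $1$, so $u_p$ carries at least two self-loops and emits at least two edges. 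Proposition \ref{prop:columnAdd} then yields move equivalence between $E$ and the graph $E'$ associated with the column-modified $\Bsf_E$.

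The step I view as the main subtlety is confirming that the column addition performed on $\Bsf_E$ descends exactly to the column addition on $\Bsf^{\bullet}_E$, i.e.\ that the removed singular-vertex rows of $\Bsf_{E'}$ retain the form prescribed by the canonical shape, so that $\Bsf^{\bullet}_{E'}$ equals the matrix in the statement. Using the description recalled just before the corollary---sink rows carry a single $-1$ on the diagonal and $0$'s elsewhere, while infinite-emitter rows contain only $0$ and $\infty$---together with the regularity of $v_l$ and $v_k$, this reduces to a quick case check. Sink rows have $0$ at columns $l$ and $k$ and are left untouched. For an infinite-emitter row, the value of (column $l$ entry) $+$ (column $k$ entry) is one of $0+0$, $0+\infty$, $\infty+0$, $\infty+\infty$, each of which lies in $\{0,\infty\}$.

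The row addition claim is entirely symmetric: the hypothesis $j \preceq i$ now yields $v_k \geq v_l$, condition (\ref{thm:canonical-item-paths}) supplies a path from $v_k$ to $v_l$ through regular vertices, and each intermediate vertex receives at least two edges---again because its two self-loops contribute two incoming edges---so Proposition \ref{prop:rowAdd} applies. The deleted singular-vertex rows are even more trivially preserved here, since row addition alters only the single row $k$, which is a regular-vertex row.
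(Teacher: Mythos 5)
Your proof is correct and follows essentially the same route as the paper: the paper's justification is the short paragraph preceding the corollary, which invokes Propositions~\ref{prop:columnAdd} and~\ref{prop:rowAdd} (and the subtraction remarks) together with the observation about the structure of the deleted singular-vertex rows, leaving the verification of the hypotheses implicit in the phrase ``so long as it maintains the block structure.'' You fill in exactly that verification: you use condition~(\ref{thm:canonical-item-positive}) to produce the edge from $v_l$ to $v_k$ (hence $v_l\ge v_k$) and the self-loops guaranteeing that each vertex emits (respectively receives) at least two edges, and condition~(\ref{thm:canonical-item-paths}) for the path through regular vertices; you also correctly identify that ``diagonal entry'' only makes sense when the corresponding vertex is regular, and you confirm that the operation on $\Bsf_E$ descends to the stated operation on $\Bsf^\bullet_E$ because the deleted rows keep their shape. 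One small inefficiency worth noting: once condition~(\ref{thm:canonical-item-positive}) gives you $\Bsf^\bullet_E(v_l,v_k)>0$ directly, the single-edge path already suffices for Lemma~\ref{lem:oneStepColumnAdd}, so the appeal to condition~(\ref{thm:canonical-item-paths}) for a longer path is not strictly needed (though it is harmless and is what makes the argument visibly parallel to the paper's setup).
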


For the results in Section \ref{sec:boylehuang} we need a final refinement of our standard form, we also need the diagonal blocks of $\Bsf^{\bullet}_E$ to have has greatest common divisor $1$.
We achieve this in the next proposition by making sure that each diagonal block has an entry that is equal to $1$. 

\begin{proposition}\label{prop:  standard form plus gcd}
Let there be given graphs $E_{1}$ and $E_{2}$ with finitely many vertices. 
If $\FKRplus( C^{*} (E_{1}) ) \cong \FKRplus ( C^{*} (E_{2}) )$, then there exists a pair of graphs $( F_{1} , F_{2} )$ with finitely many vertices such that the pair $( F_{1} , F_{2} )$ is in standard form, $E_{i} \sim_{M} F_{i}$ and each nonempty diagonal block $\Bsf^{\bullet}_{F_i}$ contains a $1$.
\end{proposition}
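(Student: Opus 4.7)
The plan is to apply Proposition~\ref{prop:  standard form} to obtain graphs $G_1, G_2$ in standard form with $E_i \Meq G_i$, and then, for each nonempty diagonal block $i$, to introduce an entry equal to $1$ on the diagonal of block $i$ by a carefully chosen outsplit, performed once in each of $G_1$ and $G_2$ so as to preserve the standard form throughout.

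For a given nonempty block $i$ and each $j \in \{1,2\}$, I would first apply row and column additions within block $i$ of $G_j$ (these are legal by Corollary~\ref{cor:rc-in-bullet} together with conditions (\ref{thm:canonical-item-paths}) and (\ref{thm:canonical-item-positive})) to arrange that some regular vertex $v$ in block $i$ supports at least four loops and emits at least two edges to every vertex it reaches. I would then perform Move~\OO at $v$ with partition $\mathcal{E}_1$ consisting of exactly two of these loops together with exactly one edge from $v$ to each other destination $u$, and $\mathcal{E}_2$ the remaining outgoing edges. A direct calculation using the definition of Move~\OO shows that in the outsplit graph the new vertex $v^1$ emits exactly two loops (plus two edges to $v^2$ and a single edge to each other destination), placing a $1$ on the diagonal of $\Bsf^\bullet$ at position $(v^1, v^1)$; meanwhile $v^2$ still supports at least one loop and still emits at least one edge to each destination that $v$ reached.

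Since each such outsplit adds exactly one regular vertex to block $i$ (both $v^1$ and $v^2$ are regular and remain in the ideal class corresponding to $i$), performing it once in each of $G_1$ and $G_2$ preserves the matching of the multiindices $\mathbf{m}$ and $\mathbf{n}$ required by Definition~\ref{def:  canonical form}. The main technical obstacle is then verifying that every condition of canonical form survives the outsplit. Conditions (\ref{thm:canonical-item-loops-inf}), (\ref{thm:canonical-item-paths}) and (\ref{thm:canonical-item-positive}) follow from the preparatory enlargement together with the careful choice of $\mathcal{E}_1$ (which ensures that both $v^1$ and $v^2$ inherit a full complement of loops and outgoing edges to every destination); condition (\ref{thm:canonical-item-size3}) is only strengthened since block $i$ now contains one more vertex. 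For condition (\ref{thm:canonical-item-rowrank}), I would invoke Theorem~\ref{thm:moveimpliesstableisomorphism} to note that Move~\OO preserves the corresponding subquotient $C^*$-algebra up to Morita equivalence, and hence preserves the cokernel $\cok \Bsf^\bullet \{i\}$; since the new diagonal block is one larger in each dimension, its Smith normal form simply acquires one additional $1$, so the property of having at least two $1$'s is maintained. Iterating the construction across all nonempty blocks $i \in \calP$ produces the desired graphs $F_1, F_2$ in standard form with $E_i \Meq F_i$ and with a $1$ in every nonempty diagonal block.
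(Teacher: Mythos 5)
Your proposal is correct and follows essentially the same route as the paper: start from Proposition~\ref{prop:  standard form}, enlarge entries so a regular vertex in each nonempty block has at least four loops and at least two parallel edges to each destination, then outsplit at that vertex with a cell containing exactly two loops, producing the $2\times 2$ corner $\begin{smallpmatrix} d_1-1 & d_1 \\ d_2 & d_2-1 \end{smallpmatrix}$ with $d_1=2$ and hence a $1$ in $\Bsf^\bullet\{i\}$. The only notable difference is that you spell out why condition~(\ref{thm:canonical-item-rowrank}) survives the outsplit via the invariance of $\cok \Bsf^\bullet\{i\}$, a verification the paper leaves implicit.
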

\begin{proof}
By Proposition \ref{prop:  standard form} we can find $F_1, F_2$ satisfying the conclusion of the proposition, except for the last condition. 
As in the proof of Proposition \ref{prop:  standard form} we may use row and column operations to ensure that all nonzero entries of $\Bsf^{\bullet}_{G_1}$ and $\Bsf^{\bullet}_{G_2}$ are at least $4$. 
For each nonzero diagonal block, we will now do an outsplit similar to what is described at the end of the proof of Theorem \ref{thm: canonical form}, where we find a regular vertex, $v$ say, that supports at least two loops. 
Partition the outgoing edges of $v$ into two sets in such a way that each partition contains at least one edges to each vertex $v$ can reach, but we also insist that one partition only contains two loops. 
Let $d_1$ be the number of loops in the first partition, and let $d_2$ be the number in the second (then $d_1 + d_2 = d$).
Our assumption forces either $d_1$ or $d_2$ to be $2$, for simplicity let us say that $d_1 = 2$.
As noted in the proof of Theorem \ref{thm: canonical form} in the resulting graph, the diagonal block will contain the rows 
\[
\begin{pmatrix}
 d_1 - 1 & d_1 & * & * & \cdots \\
 d_2  & d_2 -1 & * & * & \cdots
\end{pmatrix}
= 
\begin{pmatrix}
 1 & 2 & * & * & \cdots \\
 d_2  & d_2 -1 & * & * & \cdots
\end{pmatrix}.
\]
Hence it contains a $1$. 
Doing this for all nonzero diagonal blocks yields the desired graphs. 
\end{proof}


\section{Generalization of Boyle-Huang's lifting result}
\label{sec:boylehuang}

We aim to prove Theorem~\ref{thm:mainBH} which says that --- in certain cases --- every $K$-web isomorphism is induced by a \GLPEe. This is the main result of this section. 
To prove Theorem~\ref{thm:mainBH}, we first strengthen \cite[Theorem~4.5]{MR1990568}.
The following theorem is a classical well-known theorem, \cf\ \cite[Section~II.15]{MR0340283}. 
\begin{theorem}[Smith normal form]
\label{thm-smith-normal-form}
Suppose $B$ is an $m\times n$ matrix over \Z. 
Then there exist matrices $U\in\GLZ[m]$ and $V\in\GLZ$ such that the matrix 
$D=UBV$ satisfies the following
\begin{itemize}
\item $D(i,j)=0$ for all $i\neq j$,
\item the $\min(m,n)\times\min(m,n)$ principal submatrix of $D$ 
is a diagonal matrix $$\operatorname{diag}(d_1,d_2,\ldots,d_r,0,0,\ldots,0),$$
where $r\in\{0,1,\ldots,\min(m,n)\}$ is the rank of $B$ and $d_1,d_2,\ldots,d_r$ are positive integers such that $d_i|d_{i+1}$ for $i=1,\ldots,r-1$. 
\end{itemize}
For each matrix $B$, the matrix $D$ is unique and is called the \emph{Smith normal form} of $B$. 
\end{theorem}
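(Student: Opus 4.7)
The plan is to prove existence algorithmically by iterating a single reduction step, and uniqueness by identifying the diagonal entries with ratios of determinantal ideals.

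For existence, I would use the following elementary operations, all realized by left/right multiplication by elements of $\GLZ[m]$ and $\GLZ[n]$ respectively: (a) swap two rows or two columns, (b) multiply a row or column by $-1$, and (c) add an integer multiple of one row (resp.\ column) to another. Assume $B\neq 0$ (otherwise $D=B$ and $r=0$). Among all nonzero entries of $B$, pick one of minimum absolute value and, using (a) and (b), move it to position $(1,1)$, making it positive; call this value $a$. Now the key reduction step: for any other entry $B(1,j)$ in the first row, write $B(1,j)=qa+s$ with $0\le s<a$ by Euclidean division, and subtract $q$ times column $1$ from column $j$; this replaces $B(1,j)$ by $s$. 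Do the same for the first column. If any resulting off-diagonal entry of the first row or column is nonzero, it is strictly smaller than $a$, so we restart the whole step with a smaller minimum. Since $a$ is a positive integer, this must terminate with a matrix whose first row and first column have all off-diagonal entries equal to $0$. If some entry $B(i,j)$ with $i,j\geq 2$ is not divisible by $a$, add row $i$ to row $1$ and restart; the new entry in position $(1,j)$ is not divisible by $a$, so the next reduction strictly decreases $a$. Again termination is by well-ordering of $\N$. The result is a matrix whose first row and column are zero outside $(1,1)$, whose $(1,1)$ entry $d_1=a$ divides every remaining entry, and which can be written block-diagonally as $\operatorname{diag}(d_1,B')$. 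Apply the same procedure inductively to $B'$; the divisibility $d_1\mid d_2$ is preserved because all entries of $B'$ were divisible by $d_1$ at the start, and the elementary operations on $B'$ preserve that property. The process terminates after at most $\min(m,n)$ steps, yielding the required form, and the number $r$ of nonzero diagonal entries equals the rank of $B$ since rank is invariant under unimodular multiplication.

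For uniqueness, the standard trick is to use determinantal ideals. For each $k\in\{1,\dots,\min(m,n)\}$, let $\Delta_k(B)$ denote the nonnegative generator of the ideal of $\Z$ generated by all $k\times k$ minors of $B$, i.e.\ the gcd of all $k\times k$ minors (with $\Delta_k(B)=0$ if all such minors vanish). A Cauchy--Binet calculation shows that multiplying $B$ on either side by a unimodular matrix does not change any $\Delta_k$; hence $\Delta_k(B)=\Delta_k(D)$. From the explicit diagonal shape of $D$ and the divisibility $d_1\mid d_2\mid\cdots\mid d_r$, one computes $\Delta_k(D)=d_1 d_2\cdots d_k$ for $k\leq r$ and $\Delta_k(D)=0$ for $k>r$. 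Therefore $r$ is determined as the largest $k$ with $\Delta_k(B)\neq 0$, and the $d_k$ are recovered by $d_k=\Delta_k(B)/\Delta_{k-1}(B)$ (with $\Delta_0(B):=1$), proving uniqueness of $D$.

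The main obstacle is simply organizing the termination argument for the existence half: one must argue that the interleaved process of reducing the first row/column and then forcing divisibility of the remaining block by $d_1$ cannot loop, and the cleanest way is to use that each full pass in which something changes strictly decreases the positive integer occupying the $(1,1)$ slot. Everything else is bookkeeping and an invocation of the Euclidean algorithm on $\Z$.
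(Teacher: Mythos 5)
The paper does not actually prove this theorem; it states it as a classical fact and refers the reader to Section~II.15 of Newman's \emph{Integral Matrices}. Your argument is the standard textbook proof: existence by iteratively driving down the positive integer in the $(1,1)$ position via elementary row and column operations (with termination guaranteed by the well-ordering of $\N$, and with the divisibility $d_1\mid d_2\mid\cdots$ maintained because the block complementary to $d_1$ ends up with all entries divisible by $d_1$ before one recurses), and uniqueness via the determinantal divisors $\Delta_k(B)=\gcd$ of all $k\times k$ minors, which Cauchy--Binet shows are invariant under left and right multiplication by unimodular matrices and which recover $r$ and the $d_k$ as $\Delta_k/\Delta_{k-1}$. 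Both halves are correct; the only small point worth spelling out in a final write-up is the two-sided nature of the Cauchy--Binet step (each $k\times k$ minor of $UBV$ is a $\Z$-linear combination of $k\times k$ minors of $B$, so $\Delta_k(B)\mid\Delta_k(UBV)$, and inverting $U,V$ gives the reverse divisibility, hence equality). Since the paper offers no proof of its own, there is nothing to compare against beyond noting that your argument coincides with the one in the cited reference.
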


We now recall some terminology that was introduced in \cite{MR1990568}.
\begin{definition}
Let $B$ be an element of $\MZ[m\times n]$.  A \emph{\GL self-equivalence} of $B$ is a \GL-equivalence $\ftn{ (U,V) }{B}{B}$.
We say that an automorphism $\phi$ of $\cok B$ is \emph{\GL-allowable} if there exists a \GL self-equivalence, $(U,V)$, of $B$ such that the isomorphism $\kappa_{(U,V)}$ induces $\phi$.
\end{definition}

\begin{lemma}\label{lem-gcd}
Let $B$ be an $m\times n$ matrix over \Z, and let $U\in\GLZ[m]$ and $V\in\GLZ[n]$ be given invertible matrices. 
Then $\gcd B=\gcd (UBV)$. In particular, if $D$ is the Smith normal form of $B$, then $\gcd B=D(1,1)=d_1$. 
\end{lemma}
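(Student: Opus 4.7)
The plan is to observe that the statement is purely elementary and reduces to the fact that invertible integer row and column operations preserve the gcd of all entries.

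First I would argue that $\gcd B$ divides $\gcd(UBV)$. Every entry of $UBV$ is an integer linear combination of entries of $B$ (since $U$ and $V$ have integer entries), so $\gcd B$ divides every entry of $UBV$; hence $\gcd B \mid \gcd(UBV)$. Applying the same reasoning to the identity $B = U^{-1}(UBV)V^{-1}$ --- which is valid because $U^{-1} \in \GLZ[m]$ and $V^{-1} \in \GLZ[n]$ also have integer entries --- yields $\gcd(UBV) \mid \gcd B$. Combining the two divisibilities gives $\gcd B = \gcd(UBV)$.

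For the ``in particular'' statement, let $D$ be the Smith normal form of $B$, so that $D = U_0 B V_0$ for some $U_0 \in \GLZ[m]$, $V_0 \in \GLZ[n]$, and $D$ is diagonal with diagonal entries $d_1 \mid d_2 \mid \cdots \mid d_r$ followed by zeros (where $r$ is the rank of $B$). By the first part, $\gcd B = \gcd D$. The nonzero entries of $D$ are exactly $d_1, \ldots, d_r$, and since $d_1$ divides each $d_i$, we have $\gcd(d_1, \ldots, d_r) = d_1$. Thus $\gcd B = d_1 = D(1,1)$ (with the convention that $\gcd B = 0$ when $B = 0$, corresponding to $r = 0$).

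I do not anticipate any real obstacle: the argument is standard and each step is a one-line verification. The only thing to be mindful of is the edge cases where $B = 0$ (so $r = 0$ and $\gcd B = 0$ by our convention) or $m = 0$ or $n = 0$ (empty matrices), but in all these cases both sides of the asserted equalities are trivially zero.
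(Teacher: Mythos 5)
Your proof is correct and follows essentially the same idea as the paper's: the paper phrases the invariance as a chain of equivalences (a positive integer $d$ divides all entries of $B$ iff it divides all $x^TBy$ iff it divides all entries of $UBV$), while you phrase it as two divisibility relations obtained from $UBV$ and $B = U^{-1}(UBV)V^{-1}$; both rest on the same observation that integer matrix multiplication produces integer linear combinations of entries. Your treatment of the ``in particular'' clause and the edge cases is a fine, slightly more explicit, version of the paper's terse ``Now the lemma follows.''
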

\begin{proof}
We may assume that $B\neq 0$. 
Let $d$ be a positive integer. Then
\begin{align*}
d\text{ divides all entries of }B & \Leftrightarrow \forall i,j\colon d\,|\,e_i^TBe_j \\
& \Leftrightarrow \forall x\in\Z^m,y\in\Z^n\colon d\,|\,x^TBy \\
& \Leftrightarrow \forall x\in\Z^m,y\in\Z^n\colon d\,|\,x^TUBVy \\
& \Leftrightarrow \forall i,j\colon d\,|\,e_i^TUBVe_j \\
& \Leftrightarrow d\text{ divides all entries of }UBV.
\end{align*}
Now the lemma follows.
\end{proof}

\begin{remark}
Let $B$ be an $m\times n$ matrix over \Z. 
Then it follows from the above, that $m$ is greater than the number of generators of $\cok B$ according to the decomposition from the Smith normal form into direct sums of nonzero cyclic groups if and only if $\gcd B=1$. 
\end{remark}

Boyle and Huang show in their paper \cite{MR1990568} the following fundamental theorem. 

\begin{theorem}[{\cite[Theorem~4.4]{MR1990568}}] \label{thm-BH-4.4}
Let $B$ be a $n\times n$ (square) matrix over a PID $\mathcal{R}$, and let $\delta=\gcd B$. 
Let $\phi$ be an automorphism of $\cok B$, and let $M$ be any $n\times n$ matrix over $\mathcal{R}$ defining $\phi$, \ie, $\phi([x])=[Mx]$ for all $x\in\Z^n$. 

Then $\det(M)\equiv 1 \pmod \delta$ if and only if there exist $n\times n$ matrices $U$ and $V$ over $\mathcal{R}$ with determinants $1$ such that $UBV=B$ and $U$ is defining $\phi$.

Then $\det(M)\equiv u \pmod \delta$ for some unit $u$ in $\mathcal{R}$ if and only if there exist $n\times n$ invertible (GL) matrices $U$ and $V$ over $\mathcal{R}$ such that $UBV=B$ and $U$ is defining $\phi$.
\end{theorem}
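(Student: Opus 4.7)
The plan is to reduce to the case where $B$ is in Smith normal form and then construct the self-equivalence explicitly. Write $B = PDQ$ with $P,Q \in \GL_n(\mathcal{R})$ and $D = \operatorname{diag}(d_1,\ldots,d_r,0,\ldots,0)$; by Lemma~\ref{lem-gcd}, $d_1 = \delta$. After adjusting $P$ and $Q$ by a unit on a single diagonal entry we may take $P,Q \in \SL_n(\mathcal{R})$. The isomorphism $\cok B \cong \cok D$ induced by $[x] \mapsto [P^{-1}x]$ transports $\phi$ to an automorphism $\phi'$ defined by $M' := P^{-1}MP$, which has the same determinant as $M$. If we can exhibit $U',V'$ with $U'DV' = D$ realizing $\phi'$, then $(PU'P^{-1},\, Q^{-1}V'Q)$ works for $B$ and $\phi$ and preserves membership in $\SL_n$ or in $\GL_n$, so we reduce to $B = D$.

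The ``only if'' direction is then quick. Suppose $UDV = D$ with $U,V \in \SL_n(\mathcal{R})$ (or in $\GL_n(\mathcal{R})$). Any other matrix defining the same automorphism of $\cok D$ as $U$ must differ from $U$ by a matrix of the form $DK$, and every entry of $DK$ is divisible by $\delta$ because every entry of $D$ is. Hence $M \equiv U \pmod{\delta}$ entrywise, and since $\det$ is a polynomial in the entries we get $\det M \equiv \det U \pmod{\delta}$, which is $1$ (resp.\ a unit) as required. For the ``if'' direction, given $M$ with $\det M \equiv 1 \pmod{\delta}$ (resp.\ a unit) defining $\phi$, write $D = \operatorname{diag}(D_r,0)$ in blocks; the condition that $M$ descend forces the lower-left block of $M$ to vanish and forces the upper-left $r \times r$ block $A$ to satisfy $d_i \mid d_j a_{ij}$ for $i>j$. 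Since we may replace $M$ by $M + DK$ freely without changing $\phi$, the plan is to combine this freedom with elementary $\SL$-row operations (compensated by adjustments of $V$ on the right) to reduce $M$ to a normal form from which $V$ can be read off directly. In the nondegenerate case $r = n$, the identity $V = D^{-1}U^{-1}D$ determines $V$ uniquely, and the congruence on $\det M$ is exactly what is needed to guarantee that $V$ is integral with $\det V = 1$ (resp.\ a unit); an induction on $r$ then handles the general case.

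The main obstacle lies in this last step: one must balance the residue of $U$ modulo the invariant factors of $D$ (essentially prescribed by $\phi$) against the requirement that a partner $V$ exist in $\SL_n(\mathcal{R})$ (resp.\ $\GL_n(\mathcal{R})$) with $UDV = D$. The nondegenerate square case $r = n$ is essentially a direct calculation once the correct elementary reductions are identified, but when $D$ has zero rows and columns the matrix $V$ is no longer pinned down by $U$; the two independent blocks of $V$ must be chosen coherently, and it is here that both the determinant-mod-$\delta$ hypothesis and the inductive setup must work in tandem.
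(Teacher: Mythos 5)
The paper does not actually prove this statement; it is quoted verbatim as \cite[Theorem~4.4]{MR1990568}, and the paper's contribution in this section is the generalization in Theorem~\ref{help} and Theorem~\ref{thm:mainBH}. So there is no in-paper proof to compare against. What can be assessed is whether your argument stands on its own, and as written it does not.

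Your reduction to Smith normal form is correct (the conjugation $M' = P^{-1}MP$ and the recipe $(PU'P^{-1}, Q^{-1}V'Q)$ are right, and determinants are preserved), and your ``only if'' direction is complete: if $U$ and $M$ both induce $\phi$ on $\cok D$ then $U-M=DK$, each entry of $DK$ is a multiple of $d_1=\delta$, hence $\det M \equiv \det U \pmod{\delta}$. The gap is the ``if'' direction, which is the content of the theorem and which you only sketch. Your claim that ``the congruence on $\det M$ is exactly what is needed to guarantee that $V$ is integral with $\det V = 1$'' conflates two separate issues: integrality of $V=D^{-1}U^{-1}D$ in the case $r=n$ is automatic for \emph{any} integral $U$ inducing $\phi$ (since $U^{-1}$ induces the automorphism $\phi^{-1}$ and therefore carries $D\mathcal{R}^n$ into itself, giving $d_i \mid d_j(U^{-1})_{ij}$), so the real problem there is producing some $K$ with $\det(M+DK)=1$ exactly, not just congruent to $1$ modulo $\delta$ --- a nontrivial normal-form argument that you do not supply. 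And in the degenerate case $r<n$, the matrix $V$ is no longer determined by $U$ at all, the zero-row/zero-column block of $V$ must be chosen with the right determinant, and the ``induction on $r$'' is not spelled out. You explicitly flag this (``The main obstacle lies in this last step\ldots''), which is honest, but it means the hard half of the equivalence is asserted rather than proved.
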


\begin{remark}
As we will see, it is possible to generalize the part about \GL-allowance in this theorem to rectangular matrices, the analogous statement to the part about \SL-allowance in  Theorem \ref{thm-BH-4.4} does not hold in general (for rectangular matrices). 
If we consider the matrix
$$B=\begin{pmatrix}
3 \\ 0
\end{pmatrix}$$ 
and the automorphism $-\id$ on $\cok B\cong \Z/3\oplus\Z$ induced by the matrix 
$$M=\begin{pmatrix}
-1 & 0 \\ 0 & -1
\end{pmatrix}$$
it is easy to see that we get a counterexample. 

Although it can be done, we do not investigate this further, since for our purposes we do not need to know when automorphisms can be lifted to \SL-equivalences. 
\end{remark}

In \cite{MR1990568} there is the following useful theorem. 
Note that all $n_i$'s are assumed to be nonzero in \cite{MR1990568}. 

\begin{theorem}[{\cite[Theorem~4.5]{MR1990568}}] \label{thm-BH-4.5}
Suppose $B$ and $B'$ are matrices in $\MPZ[\mathbf{n}]$ with corresponding diagonal
blocks equal, and $\kappa\colon K(B)\rightarrow K(B')$ is a $K$-web isomorphism. 
Then there exist matrices $U,V\in\GLZ[\mathbf{n}]$ such that we have a \GLPEe $(U, V )\colon B\rightarrow B'$ satisfying $\kappa_{(U,V )} = \kappa$ if and only if each of the
automorphisms $d_i\colon \cok B\{i\}\rightarrow\cok B'\{i\}$ defined by $\kappa$ is \GL-allowable.
\end{theorem}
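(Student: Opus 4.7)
The plan is to prove the two implications separately: necessity is essentially formal, and sufficiency proceeds by a reduction followed by induction on $|\calP|$.

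For necessity, suppose $(U, V)$ realizes $\kappa$ as a \GLPEe from $B$ to $B'$. Since $U, V \in \GLPZ[\mathbf{n}]$ have the block-triangular shape enforced by $\preceq$, each diagonal block $(U\{i\}, V\{i\})$ is a \GL self-equivalence of $B\{i\} = B'\{i\}$, and by construction $\kappa_{(U\{i\}, V\{i\})}$ is the diagonal component $d_i$ of $\kappa_{(U,V)} = \kappa$, so every $d_i$ is \GL-allowable.

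For sufficiency, I would first reduce to the case where every $d_i$ is the identity. Using \GL-allowability, pick \GL self-equivalences $(U_i^0, V_i^0)$ of $B\{i\}$ with $\kappa_{(U_i^0, V_i^0)} = d_i$; the block-diagonal pair $(U^0, V^0) := (\operatorname{diag}_i U_i^0, \operatorname{diag}_i V_i^0)$ lies in $\GLPZ[\mathbf{n}]$, and replacing $B$ by $U^0 B V^0$ and $\kappa$ by $\kappa \circ \kappa_{(U^0, V^0)}^{-1}$ preserves the diagonal blocks while making every diagonal map the identity. I would then induct on $|\calP|$. The base case $|\calP|=1$ is trivial. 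For the inductive step, pick a maximal element $i_0 \in \calP$; maximality forces $B\{i_0, j\} = 0 = B'\{i_0, j\}$ for $j \neq i_0$, so $B$ and $B'$ take $2 \times 2$ upper-triangular block form relative to $\calP' := \calP \setminus \{i_0\}$ and $i_0$. The restriction of $\kappa$ to $K(B\{\calP'\})$ satisfies the inductive hypothesis, producing $(U', V') \in \GLPZ[\mathbf{n}']$ with $U' B\{\calP'\} V' = B'\{\calP'\}$ inducing this restriction.

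I would then search for $(U, V)$ in the form
\[
U = \begin{pmatrix} U' & X \\ 0 & I \end{pmatrix}, \qquad V = \begin{pmatrix} V' & Y \\ 0 & I \end{pmatrix},
\]
whereupon $UBV = B'$ collapses to the single affine equation
\[
X B\{i_0\} + U' B\{\calP'\} Y = B'\{\calP', i_0\} - U' B\{\calP', i_0\}
\]
in integer-matrix unknowns $X, Y$. The hard part of the proof, and the main obstacle, is proving that this equation admits an integer solution. The cokernel of the linear map $(X, Y) \mapsto X B\{i_0\} + U' B\{\calP'\} Y$ is naturally identified with a term built from $\cok B\{\calP'\}$ and $\ker B\{i_0\}$ by a Snake-lemma argument patterned on Lemma~\ref{lem: Kweb 2 components} for the two-block partition $\{i_0\}, \calP'$; the class of the right-hand side vanishes in this cokernel precisely because of the commutativity in $\kappa$ of the square joining $\ker B\{i_0\} \to \cok B\{r_{i_0}\} \hookrightarrow \cok B\{s_{i_0}\}$. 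Once $X, Y$ are chosen, checking that the resulting $(U, V)$ induces $\kappa$ on the remaining off-diagonal pieces of the $K$-web is a direct diagram chase. Finally, the extension beyond \cite[Theorem~4.5]{MR1990568} to allow $n_i = 0$ is cost-free, since empty blocks render every identity in which they appear vacuous; Theorem~\ref{thm-BH-4.4} enters substantively only in the initial reduction to the identity case.
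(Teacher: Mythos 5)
The paper does not prove this theorem---it is quoted verbatim from Boyle--Huang \cite{MR1990568}, the paper's only addition being the remark that Boyle--Huang assume all $n_i>0$---so there is no proof of the paper's to compare against, and I assess your sketch on its own. The necessity argument, the reduction to $d_i=\operatorname{id}$, and the two-block set-up with a maximal $i_0$ are all fine. The gap is in the solvability claim for the off-diagonal equation, which as stated does not close.

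First, with your Ansatz $U\{i_0\}=V\{i_0\}=I$, even the $\operatorname{Hom}$-type part of the obstruction need not vanish: for $v\in\ker B\{i_0\}$ the commuting square gives $[U'B\{r_{i_0},i_0\}v]=[B'\{r_{i_0},i_0\}\psi_{i_0}(v)]$ in $\cok B'\{r_{i_0}\}$, where $\psi_{i_0}$ is the kernel component of $\kappa$. Your reduction normalizes the $d_i$ but leaves $\psi_{i_0}$ arbitrary, so the image of the right-hand side $C=B'\{\calP',i_0\}-U'B\{\calP',i_0\}$ in $\operatorname{Hom}(\ker B\{i_0\},\cok B'\{r_{i_0}\})$ is $v\mapsto[B'\{r_{i_0},i_0\}(v-\psi_{i_0}(v))]$, which has no reason to be zero; you would at minimum have to allow the $(i_0,i_0)$ blocks of $(U,V)$ to be a nontrivial self-equivalence of $B\{i_0\}$ chosen (in the spirit of Theorem~\ref{help}) to absorb $\psi_{i_0}$. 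Second, and more seriously, the cokernel of $\Phi\colon (X,Y)\mapsto XB\{i_0\}+U'B\{\calP'\}Y$ is not simply a $\operatorname{Hom}$ group: it sits in an extension
\[
0\to\operatorname{Ext}^1\bigl(\cok B\{i_0\},\cok B'\{r_{i_0}\}\bigr)\to\cok\Phi\to\operatorname{Hom}\bigl(\ker B\{i_0\},\cok B'\{r_{i_0}\}\bigr)\to 0,
\]
and the commuting square you invoke bears only on the right-hand term. You offer no argument that the $\operatorname{Ext}^1$ part of the obstruction vanishes, and I do not see how to dispatch it without using the full commutativity of $\kappa$ along the sequence \eqref{eq:exact-seq-Kweb}, including $\phi_{s_{i_0}}$ and the map into $\cok B\{s_{i_0}\}$, none of which your sketch brings in. This is precisely where the actual Boyle--Huang argument is substantially longer and more delicate than your outline.
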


Together with \cite[Theorem~4.4]{MR1990568} (see Theorem~\ref{thm-BH-4.4} above), this gives us the following useful corollary. 

\begin{corollary}[{\cite[Corollary~4.7]{MR1990568}}] \label{cor-BH-4.6}
Let $B$ and $B'$ be matrices in $\MPZ[\mathbf{n}]$ with $\gcd B\{i\}=1=\gcd B'\{i\}$ for all $i\in\calP$. Then for any $K$-web isomorphism $\kappa\colon K(B)\rightarrow K(B')$ there exist matrices $U,V\in\GLZ[\mathbf{n}]$ such that we have a \GLPEe $(U, V )\colon B\rightarrow B'$ satisfying $\kappa_{(U,V )} = \kappa$.
\end{corollary}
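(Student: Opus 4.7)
The plan is to reduce the corollary to Theorem~\ref{thm-BH-4.5} by first arranging that $B$ and $B'$ share the same diagonal blocks, and then to use Theorem~\ref{thm-BH-4.4} to discharge the resulting \GL-allowability hypothesis for free from the gcd assumption.

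First, I would unpack the $K$-web isomorphism $\kappa$ at each diagonal level: it supplies isomorphisms $\cok B\{i\}\to\cok B'\{i\}$ for every $i\in\calP$. Since $B\{i\}$ and $B'\{i\}$ are both square integer matrices of size $n_i$, and since the Smith normal form of a square integer matrix is completely determined by the isomorphism class of its cokernel (the free rank together with the invariant factors), the two matrices $B\{i\}$ and $B'\{i\}$ must have the same Smith normal form, hence be \GL-equivalent. Picking $U_i,V_i\in\GLZ[n_i]$ with $U_iB'\{i\}V_i=B\{i\}$ and assembling them into block diagonal matrices $\tilde U,\tilde V\in\GLPZ[\mathbf{n}]$ (block diagonals vacuously satisfy the upper-triangular pattern dictated by \calP), I would pass from $B'$ to $B'':=\tilde U B'\tilde V$. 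By construction $B''$ has the same diagonal blocks as $B$, and composition yields a $K$-web isomorphism $\kappa':=\kappa_{(\tilde U,\tilde V)}\circ \kappa\colon K(B)\to K(B'')$.

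At this stage the hypothesis of Theorem~\ref{thm-BH-4.5} is in force for $(B,B'',\kappa')$, and the proof reduces to checking that each diagonal automorphism $d_i\colon\cok B\{i\}\to\cok B\{i\}$ coming out of $\kappa'$ is \GL-allowable. By Theorem~\ref{thm-BH-4.4}, \GL-allowability is equivalent to the congruence $\det M_i\equiv u\pmod{\gcd B\{i\}}$ being satisfied for some unit $u\in\{\pm 1\}$ and some lift $M_i$ of $d_i$; but the hypothesis $\gcd B\{i\}=1$ collapses this congruence to a tautology, so every $d_i$ is automatically \GL-allowable. Theorem~\ref{thm-BH-4.5} then hands back a \GLPEe $(U',V')\colon B\to B''$ with $\kappa_{(U',V')}=\kappa'$, and setting $U:=\tilde U^{-1}U'$ and $V:=V'\tilde V^{-1}$ gives a \GLPEe $(U,V)\colon B\to B'$ satisfying $UBV=B'$ and, by $K$-web functoriality, $\kappa_{(U,V)}=\kappa_{(\tilde U,\tilde V)}^{-1}\circ\kappa'=\kappa$.

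There is no real obstacle to overcome here, since the content of the corollary is already packaged inside Theorems~\ref{thm-BH-4.5} and \ref{thm-BH-4.4}; the only technical responsibility is to justify the reduction to coinciding diagonal blocks. The one point worth flagging is that $\gcd B\{i\}=1$ forces the block $B\{i\}$ to be nonzero, so $n_i\geq 1$ at every relevant index and the reduction step never has to contend with empty diagonal blocks (indices with $n_i=0$, if any appear, contribute only trivial data to $K(B)$ and can be ignored).
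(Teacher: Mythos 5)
Your proof is correct and takes essentially the same route as the paper, which simply cites Boyle--Huang's Corollary~4.7 and notes that it follows from Theorems~\ref{thm-BH-4.4} and~\ref{thm-BH-4.5}; your write-up supplies exactly the two steps this requires (reduce to equal diagonal blocks via a block-diagonal \GL-equivalence determined by the fact that square integer matrices of the same size with isomorphic cokernels share a Smith normal form, then observe that $\gcd B\{i\}=1$ makes every diagonal automorphism \GL-allowable by Theorem~\ref{thm-BH-4.4}).
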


But even more is true.  
We can generalize \cite[Theorem~4.4 (and Proposition~4.1)]{MR1990568} (\cf\ Theorem~\ref{thm-BH-4.4}) as follows (we here only consider the case $\mathcal{R}=\Z$). 

\begin{theorem}\label{help}
Let $B$ be a $n\times n$ (square) matrix over \Z, and let $\delta=\gcd B$. 
Let $\phi$ be an automorphism of $\cok B$, let $\psi$ be an automorphism of $\ker B$, and let $M$ be any $n\times n$ matrix over \Z defining $\phi$, \ie, $\phi([x])=[Mx]$ for all $x\in\Z^n$. 

Then $\det(M)\equiv\pm 1 \pmod \delta$ if and only if there exist $n\times n$ invertible (\GL) matrices $U$ and $V$ over \Z such that $UBV=B$ and $U$ is defining $\phi$ and $V^{-1}$ is defining $\psi$.
\end{theorem}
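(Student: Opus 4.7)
The plan is to deduce this from Boyle--Huang's Theorem~\ref{thm-BH-4.4} by supplementing it with a ``kernel lifting lemma'' that provides free adjustment of the automorphism induced on $\ker B$. The easy direction $(\Leftarrow)$ will be immediate: given $(U,V) \in \GLZ[n] \times \GLZ[n]$ with $UBV = B$ and $U$ defining $\phi$, Theorem~\ref{thm-BH-4.4} directly yields $\det M \equiv u \pmod{\delta}$ for some unit $u$ of $\Z$, so $\det M \equiv \pm 1 \pmod{\delta}$.

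For the forward direction $(\Rightarrow)$, I will first apply Theorem~\ref{thm-BH-4.4} to produce $(U_0, V_0) \in \GLZ[n]^2$ with $U_0 B V_0 = B$ and $U_0$ defining $\phi$. From $U_0 B V_0 = B$ one easily checks that $V_0^{-1}$ preserves $\ker B$ (namely, $B V_0^{-1} = U_0 B$, so $B V_0^{-1} y = 0$ whenever $By = 0$), hence $V_0^{-1}$ restricts to some $\psi_0 \in \Aut(\ker B)$. The task then reduces to correcting $\psi_0$ to the prescribed $\psi$ without disturbing the cokernel data.

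The correction rests on the following kernel lifting lemma, which I will prove using Smith normal form: for every $\theta \in \Aut(\ker B)$ there exists $V_\theta \in \GLZ[n]$ with $B V_\theta = B$ and $V_\theta^{-1}|_{\ker B} = \theta$. To see this, write $B = P D Q^{-1}$ via Theorem~\ref{thm-smith-normal-form}, with $D = \operatorname{diag}(d_1, \ldots, d_r, 0, \ldots, 0)$ and $P, Q \in \GLZ[n]$, so that $\ker B = Q \cdot \ker D$ and $\ker D \cong \Z^{n-r}$ canonically (via the last $n-r$ standard basis vectors). For any $K \in \GLZ[n-r]$ the block-diagonal matrix $W := \operatorname{diag}(I_r, K)$ satisfies $DW = D$, hence $V_\theta := Q W Q^{-1}$ satisfies $B V_\theta = P D W Q^{-1} = B$, and a direct computation identifies $V_\theta^{-1}|_{\ker B}$ with $K^{-1}$ under the canonical isomorphism. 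Varying $K$ over $\GLZ[n-r]$ therefore realizes every automorphism of $\ker B$.

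Given this lemma, I will apply it with $\theta = \psi_0^{-1} \psi$ to obtain $V_1 \in \GLZ[n]$ with $BV_1 = B$ and $V_1^{-1}|_{\ker B} = \psi_0^{-1} \psi$. Setting $U := U_0$ and $V := V_1 V_0$ yields $UBV = U_0(BV_1)V_0 = U_0 B V_0 = B$, the matrix $U$ continues to define $\phi$, and
$$V^{-1}|_{\ker B} = V_0^{-1} \circ \left( V_1^{-1}|_{\ker B} \right) = \psi_0 \circ \psi_0^{-1} \psi = \psi,$$
which is precisely what is required. I expect the main obstacle to be the isolation and proof of the kernel lifting lemma itself; once Smith normal form is brought in, the remainder of the argument is elementary bookkeeping.
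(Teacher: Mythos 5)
Your proof is correct and takes essentially the same approach as the paper: apply Theorem~\ref{thm-BH-4.4} to realize $\phi$ on $\cok B$, then post-compose with a right-multiplier of $B$ built from the Smith normal form to adjust the induced automorphism of $\ker B$. The paper reduces $B$ to its Smith normal form globally before making the correction, whereas you keep the Smith normal form reduction local to a standalone kernel-lifting lemma, but the key mechanism --- a block-diagonal matrix $\operatorname{diag}(I_r,K)$ fixing the Smith form under right multiplication --- is identical.
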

\begin{proof}
The only thing that does not follow from \cite[Theorem~4.4]{MR1990568} is that we can choose the \GL-equivalence $(U,V)$ such that it also induces the right automorphism on $\ker B$. For this, it is clear that we may assume that $B$ is its own Smith normal form (just like in the proof of \cite[Theorem~4.4]{MR1990568}). 
We use \cite[Theorem~4.4]{MR1990568} to get a \GL-equivalence $(U,V)\colon B\rightarrow B$ that induces $\phi$ on $\cok B$. 
The matrix $V^{-1}$ induces an automorphism $\psi'$ of $\ker B$. 
Now we will find a \GL-equivalence $(I,V')\colon B\rightarrow B$ that induces $\psi\circ\psi'^{-1}$ on $\ker B$ --- then $(U,VV')$ is a \GL-equivalence that induces $\phi$ on $\cok B$ and $\psi$ on $\ker B$. Now, the automorphism $\psi\circ\psi'^{-1}$ on $\ker B$ uniquely determines what $V'^{-1}$ should be on the lower right block matrix (where we write the matrices as $2\times 2$ block matrices according to the nonzero respectively zero part of the diagonal of $B$). Let $V'^{-1}$ be the block diagonal matrix that has this matrix as lower right block matrix and the identity as the upper left block matrix.
\end{proof}

Now we let
$$\calP_{\min}=\{i\in\calP\colon j\prec i\Rightarrow i=j\}.$$
Using the above result, we get the following stronger version of Theorem~\ref{thm-BH-4.5}:

\begin{theorem}[{Strengthening of \cite[Theorem~4.5]{MR1990568}}] \label{thm-BH-4.5-B}
Let $\mathbf{n}=(n_i)_{i\in\calP}$ be a multiindex with $n_i\neq 0$, for all $i\in\calP$. 
Suppose $B$ and $B'$ are matrices in \MPZ[\mathbf{n}] with corresponding diagonal
blocks equal, and $\kappa\colon K(B)\rightarrow K(B')$ is a $K$-web isomorphism. 
Suppose that for each $i\in\calP_{\min}$, we have an automorphism $\psi_i\colon\ker B\{i\}\rightarrow\ker B\{i\}$. 
Then there exist matrices $U,V\in\GLZ[\mathbf{n}]$ such that we have a \GLPEe $(U, V )\colon B\rightarrow B'$ satisfying $\kappa_{(U,V )} = \kappa$ if and only if each of the
automorphisms $d_i\colon \cok B\{i\}\rightarrow \cok B'\{i\}$ defined by $\kappa$ are \GL-allowable --- 
moreover, the \GLPEe can always be chosen such that $V^{-1}\{i\}$ induces $\psi_i$ for each $i\in\calP_{\min}$.
\end{theorem}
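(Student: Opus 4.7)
The forward direction is immediate: if $(U,V):B\to B'$ is a \GLPEe, then at each diagonal block we have $U\{i\}B\{i\}V\{i\}=B'\{i\}=B\{i\}$ (using that corresponding diagonal blocks of $B$ and $B'$ coincide), so $(U\{i\},V\{i\})$ is a \GL self-equivalence of $B\{i\}$ and witnesses \GL-allowability of $d_i$.

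For the converse, the plan is to trace through Boyle and Huang's proof of \cite[Theorem~4.5]{MR1990568} (our Theorem~\ref{thm-BH-4.5}) and substitute Theorem~\ref{help} for Theorem~\ref{thm-BH-4.4} at the minimal indices. In their construction of $U,V\in\GLZ[\mathbf{n}]$ with $UBV=B'$ and $\kappa_{(U,V)}=\kappa$, the diagonal blocks $(U\{i\},V\{i\})$ are chosen first as \GL self-equivalences of $B\{i\}$ inducing the prescribed $d_i$ on $\cok B\{i\}$ (these exist by \GL-allowability via Theorem~\ref{thm-BH-4.4}); the off-diagonal blocks are then filled in to ensure $UBV=B'$ and the commutativity of the $K$-web ladders, an argument that depends only on the diagonal equation $U\{i\}B\{i\}V\{i\}=B\{i\}$ and on the induced cokernel automorphisms, not on the particular self-equivalence chosen.

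The key observation for the strengthening is that $i\in\calP_{\min}$ means $r_i=\emptyset$, so $i\notin I_1^\calP$ and $\ker B\{i\}$ appears in none of the exact sequences constituting $K(B)$; consequently the automorphism $\psi_i$ we wish to impose on $\ker B\{i\}$ is entirely unconstrained by $\kappa$. At each such $i$, replace the use of Theorem~\ref{thm-BH-4.4} by Theorem~\ref{help}: the \GL-allowability of $d_i$ translates, via Theorem~\ref{thm-BH-4.4}, into the determinant congruence $\det M\equiv\pm 1\pmod{\gcd B\{i\}}$ for any defining matrix $M$, which is exactly the hypothesis of Theorem~\ref{help}; that theorem then produces $U\{i\},V\{i\}\in\GLZ[n_i]$ with $U\{i\}B\{i\}V\{i\}=B\{i\}$, with $U\{i\}$ inducing $d_i$ on $\cok B\{i\}$, \emph{and} with $V\{i\}^{-1}$ inducing the prescribed $\psi_i$ on $\ker B\{i\}$. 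For $i\notin\calP_{\min}$ we continue to use Theorem~\ref{thm-BH-4.4} unchanged. The remainder of Boyle and Huang's argument, being insensitive to this refined choice, goes through verbatim and produces the required \GLPEe.

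The main conceptual content sits in Theorem~\ref{help}, which has already been established; the only thing one has to worry about is that prescribing $V\{i\}^{-1}$ on $\ker B\{i\}$ for minimal $i$ could clash with the off-diagonal fitting performed further up the partial order. This is precluded by the fact that, for minimal $i$, the group $\ker B\{i\}$ enters none of the $K$-web sequences that impose the downstream commutativity constraints, so the modification is completely transparent to the rest of the construction.
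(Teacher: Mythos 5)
Your approach differs from the paper's, and it contains an unverified step. You propose to re-run Boyle and Huang's proof of their Theorem~4.5, substituting Theorem~\ref{help} for Theorem~\ref{thm-BH-4.4} at the minimal indices, and then assert that ``the remainder of Boyle and Huang's argument, being insensitive to this refined choice, goes through verbatim.'' This is where the argument is incomplete: it asserts, without inspection, that BH's inductive construction of the off-diagonal blocks works uniformly over \emph{every} diagonal self-equivalence compatible with the prescribed cokernel automorphism, not just the ones their proof happens to produce. That claim may well be true, but establishing it requires reopening and re-verifying BH's construction, which your proposal does not actually do.

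The paper avoids this entirely by treating Theorem~\ref{thm-BH-4.5} as a black box and correcting by post-composition. First apply Theorem~\ref{thm-BH-4.5} to obtain a \GLPEe $(U,V)\colon B\to B'$ with $\kappa_{(U,V)}=\kappa$; this induces some automorphism $\psi_i'$ of $\ker B\{i\}$ for each $i\in\calP_{\min}$. Using the proof of Theorem~\ref{help}, choose for each minimal $i$ a \GL self-equivalence $(I,V_i')$ of $B\{i\}=B'\{i\}$ such that $V_i'^{-1}$ induces $\psi_i\circ(\psi_i')^{-1}$ on $\ker B\{i\}$ and the identity on $\cok B\{i\}$. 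Let $V'$ be block-diagonal, equal to the identity except that $V'\{i\}=V_i'$ for $i\in\calP_{\min}$. Because a minimal index admits no nonzero off-diagonal blocks below it in $B'$, one checks directly that $B'V'=B'$; hence $(I,V')$ is a GLP self-equivalence of $B'$ inducing the identity on the entire $K$-web, and $(U,VV')$ is the desired \GLPEe. Your observation that $\ker B\{i\}$ enters none of the $K$-web sequences for $i\in\calP_{\min}$ is precisely the right insight that makes the strengthening possible; the paper exploits it through this short correction rather than by re-examining the internals of BH's proof, which is both cleaner and avoids the gap in your argument.
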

\begin{proof}
The only thing that does not follow from \cite[Theorem~4.4]{MR1990568} (\cf\ Theorem~\ref{thm-BH-4.5}), is that we can choose the \GL-equivalence $(U,V)$ such that it also induces the right automorphisms on $\ker B\{i\}$, $i\in\calP_{\min}$. 
We choose a \GLPEe $(U,V)$ according to Theorem~\ref{thm-BH-4.5}, so that it induces the given $K$-web isomorphism. 
For each $i\in\calP_{\min}$, this gives an automorphism $\psi_i'$ of $\ker B\{i\}$. 
Now choose \GL-eqivalences $(I,V_i')$ of $B\{i\}$ according to (the proof of) Theorem~\ref{help} so that $V_i'^{-1}$ induces $\psi_i\circ\psi_i'^{-1}$ for each $i\in\calP_{\min}$.
Let $V$ be the block matrix that is the identity matrix everywhere except that 
$V\{i\}=V_i'$ for every $i\in\calP_{\min}$. 
It is straight forward to verify that $(I,V')$ is a \GLPEe from $B'$ to $B'$, and that 
$(U,VV')$ induces exactly what we want. 
\end{proof}

Together with \cite[Theorem~4.4 (and Proposition~4.1)]{MR1990568} (see Theorem~\ref{thm-BH-4.4} above), this gives us the following stronger version of Corollary~\ref{cor-BH-4.6}:

\begin{corollary}[{Strengthening of \cite[Corollary~4.7]{MR1990568}}] \label{cor-BH-4.6-B}
Let $\mathbf{n}=(n_i)_{i\in\calP}$ be a multiindex with $n_i\neq 0$, for all $i\in\calP$. 
Suppose $B$ and $B'$ are matrices in $\MPZ[\mathbf{n}]$ with $\gcd B\{i\}=1=\gcd B'\{i\}$ for all $i\in\calP$.
Then for any $K$-web isomorphism $\kappa\colon K(B)\rightarrow K(B')$ together with automorphisms $\psi_i\colon\ker B\{i\}\rightarrow\ker B\{i\}$, for $i\in\calP_{\min}$, 
there exist matrices $U,V\in\GLZ[\mathbf{n}]$ such that we have a \GLPEe $(U, V )\colon B\rightarrow B'$ satisfying $\kappa_{(U,V )} = \kappa$ and $V^{-1}\{i\}$ induces $\psi_i$ for each $i\in\calP_{\min}$.
\end{corollary}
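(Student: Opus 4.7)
The plan is to deduce the corollary directly from Theorem~\ref{thm-BH-4.5-B} via a short reduction to the case of equal diagonal blocks, using Theorem~\ref{thm-BH-4.4} to trivialize the \GL-allowability hypothesis. This mirrors exactly the deduction of Corollary~\ref{cor-BH-4.6} from Theorem~\ref{thm-BH-4.5} in the original Boyle--Huang setting.

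\emph{Reduction to equal diagonal blocks.} For each $i\in\calP$, the isomorphism $\kappa$ supplies $\cok B\{i\}\cong \cok B'\{i\}$, and since $B\{i\}$ and $B'\{i\}$ are both $n_i\times n_i$ matrices, isomorphic cokernels force equal invariant factors and hence identical Smith normal forms. By Theorem~\ref{thm-smith-normal-form} there exist $U_i,V_i\in\GLZ[n_i]$ with $U_i B\{i\} V_i=B'\{i\}$. Assembling these into block-diagonal matrices $U_0,V_0\in\GLPZ[\mathbf{n}]$ and setting $B_1:=U_0 B V_0$ yields a matrix in $\MPZ[\mathbf{n}]$ sharing the diagonal blocks of $B'$, together with a \GLPEe $(U_0,V_0)\colon B\to B_1$.

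\emph{\GL-allowability comes for free, and assembly.} Since $\gcd B'\{i\}=1$ for every $i$, the congruence $\det(M_i)\equiv u\pmod{\gcd B'\{i\}}$ in Theorem~\ref{thm-BH-4.4} is trivially satisfied by any defining matrix $M_i$ and any unit $u=\pm 1$, so every automorphism of $\cok B_1\{i\}=\cok B'\{i\}$ is \GL-allowable. Now set $\kappa_0:=\kappa_{(U_0,V_0)}$ and $\kappa':=\kappa\circ\kappa_0^{-1}\colon K(B_1)\to K(B')$, and for each $i\in\calP_{\min}$ transport $\psi_i$ along the bijection $V_0\{i\}^{-1}\colon\ker B\{i\}\to\ker B_1\{i\}$ to an automorphism $\psi_i'$ of $\ker B_1\{i\}$. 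Theorem~\ref{thm-BH-4.5-B} applied to $B_1$ and $B'$ (both hypotheses now in force) then yields a \GLPEe $(U_1,V_1)\colon B_1\to B'$ with $\kappa_{(U_1,V_1)}=\kappa'$ and $V_1^{-1}\{i\}$ inducing $\psi_i'$. The composition $(U,V):=(U_1U_0,V_0V_1)$ is the desired \GLPEe $B\to B'$: functoriality gives $\kappa_{(U,V)}=\kappa_{(U_1,V_1)}\circ\kappa_0=\kappa$, while $V^{-1}\{i\}=V_1^{-1}\{i\}V_0^{-1}\{i\}$ restricts on $\ker B\{i\}$ to the composition realizing $\psi_i$ through the chosen identification $V_0\{i\}^{-1}$.

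There is no genuine mathematical obstacle here; the corollary is a formal consequence once Theorems~\ref{thm-BH-4.4} and~\ref{thm-BH-4.5-B} are available. The only step requiring care is the bookkeeping needed to transport $\kappa$ and the kernel automorphisms $\psi_i$ through the auxiliary \GLPEe $(U_0,V_0)$, and in particular to see that ``$V^{-1}\{i\}$ induces $\psi_i$'' is preserved under composition of \GLPEe{s}; this is routine diagram-chasing once the identifications of kernels and cokernels across $B$, $B_1$, and $B'$ are written down explicitly.
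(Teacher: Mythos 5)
Your proof is correct and fills in exactly the reduction that the paper leaves implicit. In the paper, Corollary~\ref{cor-BH-4.6-B} is presented as an immediate consequence of Theorem~\ref{thm-BH-4.5-B} combined with Theorem~\ref{thm-BH-4.4}, mirroring how the un-strengthened Corollary~\ref{cor-BH-4.6} follows from Theorems~\ref{thm-BH-4.5} and~\ref{thm-BH-4.4}. The step the paper does not spell out is precisely the one you supply: Theorem~\ref{thm-BH-4.5-B} requires \emph{equal} diagonal blocks, whereas the corollary does not assume that. Your observation that the $K$-web isomorphism on the singletons $\{i\}$ forces $\cok B\{i\}\cong\cok B'\{i\}$, hence identical Smith normal forms for the square blocks $B\{i\}$ and $B'\{i\}$, so that a block-diagonal \GLPEe $(U_0,V_0)$ brings $B$ to a matrix $B_1$ with $B_1\{i\}=B'\{i\}$, is exactly right; and then the $\gcd=1$ hypothesis makes \GL-allowability automatic via Theorem~\ref{thm-BH-4.4}, so Theorem~\ref{thm-BH-4.5-B} applies. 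This is the route the paper intends; you have simply made it explicit, which is a genuine improvement in exposition.

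One bookkeeping remark. The corollary's statement writes $\psi_i\colon\ker B\{i\}\to\ker B\{i\}$, but the subsequent application in Theorem~\ref{thm:mainBH} shows the intended reading is an isomorphism $\psi_i\colon\ker B\{i\}\to\ker B'\{i\}$, and ``$V^{-1}\{i\}$ induces $\psi_i$'' means $\delta_{(U\{i\},V\{i\})}=\psi_i$. Under that reading, what you should feed into Theorem~\ref{thm-BH-4.5-B} is the composite $\psi_i':=\psi_i\circ\delta_{(U_0\{i\},V_0\{i\})}^{-1}$ (an automorphism of $\ker B_1\{i\}=\ker B'\{i\}$), not the conjugate $\delta_{(U_0\{i\},V_0\{i\})}\circ\psi_i\circ\delta_{(U_0\{i\},V_0\{i\})}^{-1}$ that ``transport along $V_0\{i\}^{-1}$'' literally suggests; with the composite, the final chain $\delta_{(U_1,V_1)}\circ\delta_{(U_0,V_0)}=\psi_i$ closes on the nose rather than only ``through the chosen identification.'' This is a one-line fix and does not affect the soundness of the argument.
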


The following theorem is the main result of this section, and allows us --- in certain cases --- to lift $K$-web isomorphisms to \GLPEe{s} for rectangular cases. 
Although it is possible to prove this directly, imitating the proof in \cite{MR1990568}, the present proof is much shorter and reduces the rectangular case to the square case and uses  the results from \cite{MR1990568}. 

\begin{theorem}\label{thm:mainBH}
Let $\mathbf{m}=(m_i)_{i\in\calP},\mathbf{n}=(n_i)_{i\in\calP}\in(\N_0)^N$ be multiindices. 
Suppose $B$ and $B'$ are matrices in \MPZ with $\gcd B\{i\}=1=\gcd B'\{i\}$ for all $i\in\calP$ with $m_i\neq 0$ and $n_i\neq 0$. 

Then for any $K$-web isomorphism $\kappa\colon K(B)\rightarrow K(B')$ there exist matrices $U\in\GLZ[\mathbf{m}]$ and $V\in\GLZ[\mathbf{n}]$ such that we have a \GLPEe $(U, V )\colon B\rightarrow B'$ satisfying $\kappa_{(U,V )} = \kappa$. 

If, moreover, we have given an isomorphism $\psi_i\colon\ker B\{i\}\to\ker B'\{i\}$, for every $i\in\calP_{\min}$, then we can choose the above \GLPEe $(U, V )$ such that --- in addition to the above --- also $V^{-1}\{i\}$ induces the $\psi_i$, for all $i\in\calP_{\min}$.
\end{theorem}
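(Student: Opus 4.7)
The plan is to reduce the rectangular case to the square case already settled in Corollary~\ref{cor-BH-4.6-B}. Set $k_i = \max(m_i, n_i)$ and construct square augmentations $\hat{B}, \hat{B}' \in \MPZ[\mathbf{k}]$ by padding each $B\{i\}, B'\{i\}$ to size $k_i \times k_i$ (appending $k_i - m_i$ zero rows at the bottom when $m_i < n_i$, and $k_i - n_i$ zero columns at the right when $m_i > n_i$); off-diagonal blocks are padded compatibly with zeros so that $\hat{B}\{i,j\}$ has size $k_i \times k_j$. By Lemma~\ref{lem-gcd}, padding with zeros preserves $\gcd$, so $\gcd \hat{B}\{i\} = \gcd \hat{B}'\{i\} = 1$ whenever $m_i, n_i > 0$.

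A direct computation gives $\cok \hat{B}\{i\} \cong \cok B\{i\} \oplus \Z^{k_i - m_i}$ and $\ker \hat{B}\{i\} \cong \ker B\{i\} \oplus \Z^{k_i - n_i}$. I extend $\kappa$ to a $K$-web isomorphism $\hat{\kappa}\colon K(\hat{B}) \to K(\hat{B}')$ and, if given, each $\psi_i$ to an isomorphism $\hat{\psi}_i\colon \ker \hat{B}\{i\} \to \ker \hat{B}'\{i\}$ by declaring both to act as the identity on the new free $\Z$-summands; the $K$-web ladders commute because the new summands sit diagonally in every relevant square. Applying Corollary~\ref{cor-BH-4.6-B} to the tuple $(\hat{B}, \hat{B}', \hat{\kappa}, \{\hat{\psi}_i\})$ yields matrices $\tilde{U}, \tilde{V} \in \GLZ[\mathbf{k}]$ realizing a \GLPEe $\hat{B} \to \hat{B}'$ inducing $\hat{\kappa}$, with $\tilde{V}^{-1}\{i\}$ inducing $\hat{\psi}_i$ for $i \in \calP_{\min}$.

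To extract the desired $(U, V)$, observe that $\hat{\kappa}$ restricting to the identity on the $\Z^{k_i - m_i}$ summand of $\cok \hat{B}\{i\}$ forces the bottom-right $(k_i - m_i) \times (k_i - m_i)$ block of $\tilde{U}\{i\}$ to be the identity and the top-right block to have columns in $B\{i\}\Z^{n_i}$, so that $\tilde{U}\{i\}$ can be written as $\bigl(\begin{smallmatrix} U_{11} & B\{i\}Y \\ U_{21} & I \end{smallmatrix}\bigr)$. Left-multiplying $\tilde{U}$ by the block-unitriangular matrix which is the identity outside block $i$ and $\bigl(\begin{smallmatrix} I & -B\{i\}Y \\ 0 & I \end{smallmatrix}\bigr)$ on block $i$ — this stabilizes $\hat{B}'$ precisely because the padded rows of $\hat{B}'\{i\}$ are zero — brings $\tilde{U}\{i\}$ into the form $\begin{smallpmatrix} U\{i\} & 0 \\ * & I \end{smallpmatrix}$ with $U\{i\} \in \GLZ[m_i]$ (invertibility is forced by $\det \tilde{U}\{i\} = \pm 1$ and the block triangular determinant formula). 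A dual argument modifies $\tilde{V}$ from the right in blocks where $m_i > n_i$. Defining $U\{i, j\}$ (respectively $V\{i, j\}$) to be the natural $m_i \times m_j$ (respectively $n_i \times n_j$) upper-left sub-block of the modified $\tilde{U}\{i, j\}$ (respectively $\tilde{V}\{i, j\}$), the equation $\tilde{U}\hat{B}\tilde{V} = \hat{B}'$ combined with the zero-padding structure of $\hat{B}, \hat{B}'$ forces $UBV = B'$ block by block, and the induced $K$-web action agrees with $\kappa$ by construction of $\hat{\kappa}$.

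The main obstacle is the degenerate case where $m_i = 0$ or $n_i = 0$, since naive zero-padding then collapses block $i$ to an all-zero augmented block with $\gcd = 0$, violating the hypothesis of Corollary~\ref{cor-BH-4.6-B}. For such blocks, however, $B\{i\}$ is empty, so no constraint on $UBV$ arises from within the block, and the required $K$-web action (together with $\psi_i$ when relevant) can be realized by a direct free choice of $U\{i\}$ (when $n_i = 0$) or $V\{i\}$ (when $m_i = 0$). These blocks are treated outside the augmentation, with some inductive bookkeeping along the partial order on $\calP$ to verify that the off-diagonal constraints coupling them to the non-degenerate part remain compatible.
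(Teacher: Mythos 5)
Your proposal follows the same overall route as the paper: pad $B,B'$ by zeros to square matrices $\hat B,\hat B'$ of block sizes $k_i=\max(m_i,n_i)$, extend $\kappa$ (and the $\psi_i$) by the identity on the new free summands, apply Corollary~\ref{cor-BH-4.6-B} to the square data, and cut the resulting \GLPEe back down to the original rectangular shape. The cut-down step is where the two arguments diverge in mechanism. The paper first conjugates so every diagonal block is its own Smith normal form; this puts each padded diagonal block in a clean $3\times 3$ form with an invertible-over-$\Q$ corner, from which the vanishing of the relevant off-corner blocks of $\tilde U\{i\}$ and $\tilde V\{i\}$ follows by elementary linear algebra plus the requirement of inducing the identity on the new summands, and then the extra rows and columns are erased by induction over the blocks. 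You avoid Smith normal form and instead read the block structure of $\tilde U\{i\}$ directly from the requirement that $\hat\kappa$ act as the identity on the new $\Z^{k_i-m_i}$ summand of $\cok\hat B\{i\}$; note, though, that this requirement forces not only the bottom-right block of $\tilde U\{i\}$ to be $I$ and the top-right block's columns to lie in $B\{i\}\Z^{n_i}$, but also the bottom-left block $U_{21}$ to vanish — the induced map on the new summand is genuinely the identity, not the identity up to image, since $\hat B\{i\}\Z^{k_i}$ has zero last coordinates. Once you see $U_{21}=0$, your $W$-cleanup is harmless but unnecessary, and the upper-left $m_i\times m_j$ sub-blocks of $\tilde U\{i,j\}$ and $n_k\times n_l$ sub-blocks of $\tilde V\{k,l\}$ already satisfy $UBV=B'$ by the zero-padding structure; so your extraction step does work, just by a slightly less tidy route than the paper's Smith-normal-form reduction. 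Finally, you are right that the degenerate blocks with $m_i=0$ or $n_i=0$ break the $\gcd=1$ hypothesis after padding, and you deserve credit for flagging this: the paper itself cites Corollary~\ref{cor-BH-4.6} in a case where, read literally, that hypothesis fails. The honest resolution — for either write-up — is that for an all-zero padded block the cokernel is free and the prescribed automorphism (the identity coming from $\hat\kappa$) is trivially \GL-allowable, so one should invoke Theorem~\ref{thm-BH-4.5-B} directly rather than its $\gcd$-specialised corollary; your plan to handle these blocks "outside the augmentation with inductive bookkeeping" is vaguer than this and would need to be spelled out, but it is aiming at a genuine gap, not a phantom one.
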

\begin{proof}
For each $i\in\calP$, choose $U_i,U_i'\in\GLZ[m_i]$ and $V_i,V_i'\in\GLZ[n_i]$ such that $D_i=U_iBV_i$ and $D_i'=U_i'B'V_i'$ are the Smith normal forms of $B$ and $B'$, respectively (\cf\ Theorem~\ref{thm-smith-normal-form}). 
Let $U,U'\in\GLZ[\mathbf{m}]$ and $V,V'\in\GLZ[\mathbf{n}]$ be the block diagonal matrices with $U_i,U_i',V_i$ and $V_i'$ in the diagonals, respectively.

Then $UBV$ and $U'B'V'$ are in $\MPZ$ and $(U,V)\colon B\rightarrow UBV$ and $(U,V)\colon B'\rightarrow U'B'V'$ are \GLPEe{s} inducing $K$-web isomorphisms $\kappa_{(U,V)}$ from $K(B)$ to $K(UBV)$ and $\kappa_{(U',V')}$ from $K(B')$ to $K(U'B'V')$, respectively. 
Moreover, Lemma~\ref{lem-gcd} ensures that we still have $\gcd(UBV)\{i\}=1=\gcd(U'B'V')\{i\}$. 
Thus we can without loss of generality assume that each diagonal block is equal to its Smith normal form. 
Also note, that because we have a $K$-web isomorphism from $K(B)$ to $K(B')$, now the diagonal blocks are necessarily identical. 

Let $\mathbf{r}$ be such that $r_i=\max(m_i,n_i)$ for all $i\in\calP$. 
Let, moreover, $C,C'\in\MPZ[\mathbf{r}]$ denote the matrices $B$ and $B'$ enlarged by putting zeros outside the original matrices. 
Define $\mathbf{r}^c$ and $\mathbf{r}^k$ by $r_i^c=\max(n_i-m_i,0)$ and $r_i^k=\max(m_i-n_i,0)$ for all $i\in\calP$. 
In the (reduced) $K$-web we are considering the modules $C_c(B)=\cok B(c)$ where $c$ is $\{i\}$, $\{j\in\calP\colon j\prec i\}\neq \emptyset$ or $\{j\in\calP\colon j\preceq i\}$ for $i\in\calP$ --- and similarly for $B'$. 
It is clear that when we consider $C$ and $C'$ we just add onto these cokernels 
$$\bigoplus_{j\in c}\Z^{r_j^c},$$
and that the maps between the modules are the obvious ones. 
Similarly for the modules $K_d(B)=\ker B(d)$ where $d$ is $\{i\}$ where $\{j\in\calP\colon j\prec i\}\neq \emptyset$ --- and similarly for $B'$. 
It is clear that when we consider $C$ and $C'$ we just add onto these kernels 
$$\Z^{r_i^k},$$
where $d=\{i\}$. And connecting homomorphism will be the zero maps. 

Thus we can extend the isomorphism $\kappa$ to an isomorphism $\widetilde{\kappa}\colon K(C)\rightarrow K(C')$ by setting it to be the identity on the new groups. 
By Corollary~\ref{cor-BH-4.6}, we see that there exist matrices $U,V\in\GLZ[\mathbf{r}]$ such that we have a \GLPEe $(U, V )\colon C\rightarrow C'$ satisfying $\kappa_{(U,V )} = \widetilde{\kappa}$. We may (according to Theorem~\ref{cor-BH-4.6-B}) actually assume that $(U,V)$ induces $\psi_i$ plus the identity on the new summands of $\ker B\{i\}$ for $i\in\calP_{\min}$ as well. 

Now let us look at the $i$'th diagonal block. We now want to cut $U$ and $V$ down to match the original structure. Naturally there are three cases to consider. The first one, $m_i=n_i$ is trivial. 

Now consider the case $m_i<n_i$. 
In this case, $\cok C\{i\}=\cok B\{i\}\oplus \Z^{n_i-m_i}$ and $\ker C\{i\}=\ker B\{i\}$ --- and similarly for $B'$ and $C'$. 
We write $C\{i\}=C'\{i\}$ as
$$\begin{pmatrix}C_{00} & 0 & 0 \\ 0 & 0 & 0 \\ 0 & 0 & 0\end{pmatrix},$$
where $C_{00}$ is an invertible matrix over \Q and the last diagonal block has size $(n_i-m_i)\times(n_i-m_i)$. 
We write $U$ and $V$ as
$$\begin{pmatrix}U_{11} & U_{12} & U_{13} \\ U_{21} & U_{22} & U_{23} \\ U_{31} & U_{32} & U_{33}\end{pmatrix} \quad\text{and}\quad
\begin{pmatrix}V_{11} & V_{12} & V_{13} \\ V_{21} & V_{22} & V_{23} \\ V_{31} & V_{32} & V_{33}\end{pmatrix},$$
according to the block structure of $C\{i\}$ (and $C'\{i\}$). 

The condition 
$$UCV=C'$$
implies that 
$$U_{11}C_{00}V_{11}=C_{00},\quad
U_{i1}C_{00}V_{1j}=0,\text{ for all }(i,j)\neq(1,1).$$
Since $C_{00}$ is invertible as a matrix over \Q, we see that also 
$U_{11}$ and $V_{11}$ have to be invertible over \Q. 
Thus $V_{12}=0$, $V_{13}=0$, $U_{21}=0$, and $U_{31}=0$. 
Moreover, since we have to get the identity homomorphism on the new direct summand, we need to have $U_{33}=I$, $U_{23}=0$ and $U_{32}=0$. 
So now let $U_0$ be the block matrix where we erase the rows and columns corresponding to change the size of the $i$'th diagonal block from $r_i\times r_i$ to $m_i\times m_i$ --- call the new size $\mathbf{r}'$. 
Moreover, we let $C_0$ and $C_0'$ be the block matrices where we erase the rows corresponding to change the size of the $i$'th diagonal block from $r_i\times r_i$ to $m_i\times n_i$. 
Note that the $i$'th diagonal block now is the matrix $\begin{smallpmatrix} U_{11} & U_{12} \\ 0 & U_{22} \end{smallpmatrix}$. 
This is a \GL matrix that induces the right automorphism of $\cok B\{i\}$. 
Moreover, clearly $U_0\{i\}B\{i\}V\{i\}=B\{i\}$. 
But more is true. 
We have that $U_0$ is a $\GLZ[\mathbf{r}']$ matrix and 
that $U_0C_0V=C_0'$ and the induced $K$-web isomorphism agrees with the original on all parts except for the direct summands we cut out.

Now consider instead the case $m_i>n_i$. 
In this case, $\cok C\{i\}=\cok B\{i\}$ and $\ker C\{i\}=\ker B\{i\}\oplus \Z^{m_i-n_i}$ --- and similarly for $B'$ and $C'$. 
We write $C\{i\}=C'\{i\}$ as
$$\begin{pmatrix}C_{00} & 0 & 0 \\ 0 & 0 & 0 \\ 0 & 0 & 0\end{pmatrix},$$
where $C_{00}$ is an invertible matrix over \Q and the last diagonal block has size $(m_i-n_i)\times(m_i-n_i)$. 
We write $U$ and $V$ as
$$\begin{pmatrix}U_{11} & U_{12} & U_{13} \\ U_{21} & U_{22} & U_{23} \\ U_{31} & U_{32} & U_{33}\end{pmatrix} \quad\text{and}\quad
\begin{pmatrix}V_{11} & V_{12} & V_{13} \\ V_{21} & V_{22} & V_{23} \\ V_{31} & V_{32} & V_{33}\end{pmatrix},$$
according to the block structure of $C\{i\}$ (and $C'\{i\}$). 

The condition 
$$UCV=C'$$
implies that 
$$U_{11}C_{00}V_{11}=C_{00},\quad
U_{i1}C_{00}V_{1j}=0,\text{ for all }(i,j)\neq(1,1).$$
Since $C_{00}$ is invertible as a matrix over \Q, we see that also 
$U_{11}$ and $V_{11}$ have to be invertible over \Q. 
Thus $V_{12}=0$, $V_{13}=0$, $U_{21}=0$, and $U_{31}=0$. 

Moreover, since we have to get the identity homomorphism on the new direct summand, we need to have $V_{33}=I$, $V_{23}=0$ and $V_{32}=0$. 
So now let $V_0$ be the block matrix where we erase the rows and columns corresponding to change the size of the $i$'th diagonal block from $r_i\times r_i$ to $n_i\times n_i$ --- call the new size $\mathbf{r}'$. 
Moreover, we let $C_0$ and $C_0'$ be the block matrices where we erase the rows corresponding to change the size of the $i$'th diagonal block from $r_i\times r_i$ to $m_i\times n_i$. 
Note that the $i$'th diagonal block now is the matrix $\begin{smallpmatrix} V_{11} & 0 \\ V_{21} & V_{22} \end{smallpmatrix}$. 
This is a \GL matrix that induces the right automorphism of $\ker B\{i\}$. 
Moreover, clearly $U\{i\}B\{i\}V_0\{i\}=B\{i\}$. 
But more is true. 
We have that $V_0$ is a $\GLZ[\mathbf{r}']$ matrix and 
that $UC_0V_0=C_0'$ and the induced $K$-web isomorphism agrees with the original on all parts except for the direct summands we cut out.

Induction finishes the proof. 
\end{proof}


\section{\texorpdfstring{$\GLP$}{GLP}-equivalence to \texorpdfstring{$\SLP$}{SLP}-equivalence}
\label{sec:crelle-trick}
 
In this section we are concerned with Step 3 in our proof outline in Section~\ref{sec:main} of the proof of \eqref{thm:main-item-3} implies \eqref{thm:main-item-1} in Theorem \ref{thm:main}. 
It is of course not true in general that any two \GLPE matrices will be \SLPE, so we will need to alter our matrices.
Our first step in that direction is to create a little more room.

\begin{lemma}\label{lem:SLEqexpansion}
Let $E$ be a graph with finitely many vertices and suppose 
\begin{align*}
\Bsf_{E} = \begin{pmatrix} A & X & Y \\ 0 & B & Z \\ 0 & 0 & C \end{pmatrix}
\end{align*}
where $B$ is an $n\times n$ matrix with entries from $\N_0\sqcup\{\infty\}$ for some $n\geq 2$ and the entries of rows $n-1$ and $n$ of $B$ are positive integers and the vertices corresponding to these two rows are regular vertices of $E$.  

Then there exists a graph $E'$ such that $E \sim_{M} E'$, and
\begin{align*}
\Bsf_{E'} = \begin{pmatrix} A & X' & Y' \\ 0 & B' & Z' \\ 0 & 0 & C \end{pmatrix}
\end{align*}
with $B'$ an $(n+2)\times(n+2)$ matrix with entries from $\N_0 \sqcup \{ \infty \}$ and there exists $V \in \MZ[n+2]$ with $\det( V ) = 1$ such that  
\begin{align*}
\begin{pmatrix} A_0 & X_0' & Y_0' \\ 0 & B_0' & Z_0' \\ 0 & 0 & C_0 \end{pmatrix}  \begin{pmatrix} I & 0 & 0 \\ 0 & V & 0 \\ 0 & 0 & I \end{pmatrix} = \begin{pmatrix} A_0 & X_0'' & Y_0 \\ 0 & B_0'' & Z_0'' \\ 0 & 0 & C_0 \end{pmatrix} 
\end{align*}
where 
$$\Bsf_E^\bullet=\begin{pmatrix} A_0 & X_0 & Y_0 \\ 0 & B_0 & Z_0 \\ 0 & 0 & C_0 \end{pmatrix} ,\qquad\Bsf_{E'}^\bullet=\begin{pmatrix} A_0 & X_0' & Y_0' \\ 0 & B_0' & Z_0' \\ 0 & 0 & C_0 \end{pmatrix},$$
$$X_0'' = \begin{pmatrix} X_0 & 0 \end{pmatrix},\qquad 
B_0'' = \begin{pmatrix} B_0 & 0 \\ 0 & I_{2}  \end{pmatrix},\quad \text{and}\quad 
Z_0'' = \begin{pmatrix} Z_0 \\ 0 \end{pmatrix}.$$

Moreover, if $E$ satisfies the property that for all $v, w \in E^{0}_{\mathrm{reg}}$ with $v \geq w$, there exists a path in $E$ from $v$ to $w$ through regular vertices in $E$, then $E'$ also satisfies the same property.
\end{lemma}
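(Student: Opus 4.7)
The idea is to obtain $E'$ from $E$ by performing two applications of move $\OO$ at the regular vertices $v_{n-1}, v_n$ corresponding to rows $n-1, n$ of $B$, followed by a carefully chosen sequence of column additions within the middle block. Since the entries of rows $n-1, n$ of $B$ are positive integers, both $v_{n-1}$ and $v_n$ support at least two loops in $E$ (because $B(n-1,n-1), B(n,n) \geq 1$), which provides the room needed for nontrivial outsplits with well-controlled effect on the matrix.

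Concretely, I would first outsplit $v_n$ using the partition $s^{-1}(v_n) = \{f\} \sqcup (s^{-1}(v_n) \setminus \{f\})$, where $f$ is a chosen loop at $v_n$. This creates a new regular vertex (call it $u_2$) whose only outgoing edges are one loop (from $f$) and a single edge back to the remainder of $v_n$; in particular $u_2$ emits no edges to the right block, so its row in the $Z$-block is zero. I would then apply the analogous outsplit to $v_{n-1}$, creating $u_1$ with the same dangling structure. After reordering the middle-block vertices so that $u_1, u_2$ occupy the last two positions, let $E_2$ denote the resulting graph; $E_2 \Meq E$ by Theorem~\ref{thm:moveimpliesstableisomorphism}, and the outsplits automatically enforce $Z_0' = [Z_0; 0]$ and $Y_0' = Y_0$.

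Next, I would apply a sequence of column additions and subtractions within the middle block of $\Bsf_{E_2}^\bullet$, each legal by Corollary~\ref{cor:rc-in-bullet} together with Remark~\ref{rmk:columnAdd}. These column operations are chosen so that, after all of them, the middle block becomes $B_0'' = \operatorname{diag}(B_0, I_2)$ and the top-middle block becomes $X_0'' = [X_0 \mid 0]$, while $Z_0'$ and $Y_0'$ remain unchanged (since column additions within the middle block do not affect the right columns). The matrix $V \in \MZ[n+2]$ is defined as the product of the elementary matrices encoding these column additions. Since each elementary column addition has determinant $1$, we conclude $\det(V) = 1$, exactly as required.

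The main obstacle will be the combinatorial bookkeeping: verifying that each intended column operation is legal (i.e., the corresponding column addition is available by Proposition~\ref{prop:columnAdd}, which requires a suitable path through vertices each emitting at least two edges) and that the cumulative effect equals the target form. The positivity of rows $n-1$ and $n$ of $B$, combined with the single back-edges from $u_1, u_2$ into the rest of the middle block introduced by the outsplits, ensures the needed edges exist. Finally, preservation of the path-through-regular-vertices property is straightforward: outsplits of regular vertices produce regular vertices, column additions do not destroy paths, and every path in $E$ between regular vertices can be lifted to one in $E'$ between the corresponding regular vertices.
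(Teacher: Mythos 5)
Your proposal has a genuine gap: you omit the row additions that are essential to the construction, and you misinterpret the role of the matrix $V$.

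After your two outsplits, the middle block of $\Bsf_{E_2}^\bullet$ has $B_0-J_0$ in its upper-left $m\times n$ corner (where $J_0$ records the loops consumed by the outsplit), with the new rows $(0,\dots,0,1,0\mid 0,0)$ and $(0,\dots,0,0,1\mid 0,0)$ and new columns $((B_1)_0;0;0)$, $((B_2)_0;0;0)$. You then claim a sequence of \emph{column} operations takes this to $B_0''=\operatorname{diag}(B_0,I_2)$. But column operations preserve the integer column span of the matrix, and these two matrices do not share it in general. Indeed, $B_0''$ has $e_{m+1}$ in its column span (its $(n+1)$'st column), while for $\widehat{B}_0$ any integer column combination with a $1$ in row $m+1$ and a $0$ in row $m+2$ reduces, after straightforward algebra using the fact that columns $n+1$ and $n+2$ duplicate columns $n-1$ and $n$ of $B_0$, to requiring $e_{m-1}$ to lie in the column span of $B_0$ --- which fails whenever, say, all entries of $B_0$ share a common factor. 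The paper gets around this by first performing \emph{row} additions (adding row $n+1$ into row $n-1$ and row $n+2$ into row $n$) as legal moves, which restores $B_0$ in the upper-left block and changes the new rows so that column $n-1$ minus column $n+1$ literally equals $e_{m+1}$; only then does the algebraic column span match.

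Separately, you treat the matrix $V$ as if it must itself encode legal moves (invoking Corollary~\ref{cor:rc-in-bullet} and Remark~\ref{rmk:columnAdd}), but the lemma does not ask for this and in fact it cannot hold: the target form $\begin{pmatrix} A_0 & X_0'' & Y_0 \\ 0 & B_0'' & Z_0'' \\ 0 & 0 & C_0\end{pmatrix}$ corresponds to a graph in which the two new middle vertices are isolated apart from two loops each, which will generally not be move equivalent to $E$. In the paper's proof $V$ is a purely algebraic element of $\SLZ[n+2]$ (a product of two column swaps and two inverse elementary matrices) used for bookkeeping in the later $\GLPZ$/$\SLPZ$ reductions; it is not a sequence of moves, and no sequence of legal moves produces the target matrix. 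So the correct structure is: outsplit, then row-add (both as moves) to produce $E'$, and then exhibit the explicit $V$ with determinant $1$ relating $\Bsf_{E'}^\bullet$ algebraically to the target.
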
 

\begin{proof}
Let $v$ be the vertex in $E$ corresponding to the entry $B(n-1,n-1) + 1$ and $w$ be the vertex in $E$ corresponding to the entry $B(n,n) + 1$. 
Outsplitting the vertices $v$ and $w$ with respect to the partitions $s^{-1} (v) = \{ e \} \sqcup \left( s^{-1} (v) \setminus \{e \} \right)$ where $r( e) = v$ and $s^{-1} (w) = \{ f \} \sqcup \left( s^{-1} (w) \setminus \{ f \} \right)$ where $r(f) = w$, we get a graph $F$ such that $E \sim_{M} F$. 

Let $X_{1}$ and $B_{1}$ be the column vector of $X$ and $B$, respectively, that corresponds to the $v$th-column of $\Bsf_{E}$ and let $X_{2}$ and $B_{2}$ be the column vector of $X$ and $B$, respectively, that corresponds to the $w$th-column of $\Bsf_{E}$. Then 
\begin{align*}
\Bsf_{F} = \begin{pmatrix} A & X' & Y \\ 0 & \widehat{B} & Z' \\ 0 & 0 & C \end{pmatrix}
\end{align*}
where $X' = \begin{pmatrix} X & X_{1} & X_{2} \end{pmatrix}$, $Z' = \begin{pmatrix} Z \\ 0 \end{pmatrix}$, and 
\begin{align*}
\widehat{B} = \begin{pmatrix}
B - J & B_{1} & B_{2} \\
\begin{matrix} 0 & \cdots  & 0 & 1 & 0 \end{matrix}   & 0 & 0 \\
\begin{matrix} 0 & \cdots & 0 & 0 & 1 \end{matrix}   & 0 & 0
\end{pmatrix},    
\end{align*}
where $J$ is the matrix that is zero in all entries except the last two diagonal entries which are $1$.
These account for the loops $v$ and $w$ lost when we did the outsplit. 
We can now use row additions (Corollary \ref{lem:oneStepRowAdd}), adding the last row of $\widehat{B}$ into the third last and the second last into the fourth last, to get a graph $E'$ such that $E' \Meq F \Meq E$ and where  
\begin{align*}
\Bsf_{E'} = \begin{pmatrix} A & X' & Y \\ 0 & B' & Z' \\ 0 & 0 & C \end{pmatrix},
\end{align*}
where 
\begin{align*}
B' = \begin{pmatrix}
B  & B_{1} & B_{2} \\
\begin{matrix} 0 & \cdots  & 0 & 1 & 0 \end{matrix}   & 0 & 0 \\
\begin{matrix} 0 & \cdots & 0 & 0 & 1 \end{matrix}   & 0 & 0
\end{pmatrix}.
\end{align*}

Let $\widetilde{X}_0$ be the matrix obtained from $X_0'$ by replacing the $(n-1)$'st column of $X_0'$ and the $n$'th column of $X_0'$ by the zero column and let $\widetilde{B}_0$ be the matrix obtained from $B_0'$ by replacing the $(n-1)$'st and the $n$'th column of $B_0$ by the zero column but keeping that last two rows intact.
Consider the matrix 
\begin{align*}
\begin{pmatrix}
A_0 & \widetilde{X}_0 & Y_0 \\
0 & \widetilde{B}_0 & Z_0' \\
0 & 0 & C_0
\end{pmatrix}.
\end{align*}
Let $V_{1} \in \MZ[n+2]$ be the matrix obtained from $I_{n+2}$ by switching the $(n-1)$'st and the $(n+1)$'st columns and let $V_{2} \in \MZ[n+2]$ be the matrix obtained from $I_{n+2}$ by switching the $n$'th and the $(n+2)$'nd columns.  Then $\det( V_{1} ) = \det( V_{2} ) = -1$ and 
\begin{align*}
\begin{pmatrix}
A_0 & \widetilde{X}_0 & Y_0 \\
0 & \widetilde{B}_0 & Z_0' \\
0 & 0 & C_0
\end{pmatrix}
\begin{pmatrix} I & 0 &  0 \\ 0 & V_{1} V_{2} & 0 \\ 0 & 0 & I \end{pmatrix} = \begin{pmatrix} A_0 & X_0'' & Y_0 \\ 0 & B_0'' & Z_0'' \\ 0 & 0 & C_0 \end{pmatrix}. 
\end{align*}

As in the proof of Proposition \ref{prop:cuntzsplicetwice} we let $E_{(i,j)}$ denote the elementary matrix that is equal to the identity matrix everywhere except for the $(i,j)$'th entry, that is $1$. 
Let $V_{3} = E_{(n+1,n-1)}\in \MZ[n+2]$, and let $V_{4} = E_{(n+2,n)} \in \MZ[n+2]$.
Then $\det( V_{3} ) = \det( V_{4} )  =1$ and
\begin{align*}
\begin{pmatrix}
A_0 & \widetilde{X}_0 & Y_0 \\
0 & \widetilde{B}_0 & Z_0' \\
0 & 0 & C_0
\end{pmatrix}
 \begin{pmatrix} I & 0 &  0 \\ 0 & V_{3} V_{4} & 0 \\ 0 & 0 & I \end{pmatrix} = \begin{pmatrix} A_0 & X_0' & Y_0 \\ 0 & B_0' & Z_0' \\ 0 & 0 & C_0 \end{pmatrix}.
 \end{align*} 
 
Set $V = V_{4}^{-1} V_{3}^{-1} V_{1} V_{2}$.  Then
\begin{align*}
\begin{pmatrix} A_0 & X_0' & Y_0 \\ 0 & B_0' & Z_0' \\ 0 & 0 & C_0 \end{pmatrix} \begin{pmatrix} I & 0 & 0 \\ 0 & V & 0 \\ 0 & 0 & I \end{pmatrix} =  \begin{pmatrix} A_0 & X_0'' & Y_0 \\ 0 & B_0'' & Z_0'' \\ 0 & 0 & C_0 \end{pmatrix}.
\end{align*}
Since $\det( V _{1}) = \det( V_{2} ) = -1$ and $\det( V _{3}) = \det( V_{4} ) = 1$, we have that $\det( V ) = 1$.

For the last part of the lemma, let $v_{1}$ and $v_{2}$ be the two additional vertices obtained from the outsplitting.  It is clear that if $v , w \in E^{0}_{\mathrm{reg}}$ such that $v \geq w$ in $E'$, then $v \geq w$ in $E$.  Thus, there exists a path in $E'$ from $v$ to $w$ through regular vertices of $E'$.  Suppose $v \in E^{0}_{\mathrm{reg}}$ and $v \geq v_{i}$.  Then by the definition of outsplitting and by the assumption on $E$, there exists an edge $e$ in $E'$ such that $r(e) = v_{i}$, $s( e ) \in E^{0}_{\mathrm{reg}}$, and $v \geq s(e)$.  It is now clear that there exists a path in $E'$ from $v$ to $v_{i}$ through regular vertices of $E'$.  Suppose $v_{i} \geq v$ with $v \in E^{0}_{ \mathrm{reg}}$.  By the definition of the outsplitting, there exists a path $\alpha = \alpha_{1} \cdots \alpha_{m}$ in $E'$ such that $s( \alpha_{1} ) = v_{i}$, $r( \alpha_{m} ) = v$, $r( \alpha_{1} ) \in E^{0}_{\mathrm{reg}}$, and $r( \alpha_{1} ) \geq v_{i}$.  Since $r( \alpha_{1} ) \geq v_{i}$ and $v_{i} \geq v$, we have that $r( \alpha_{1} ) \geq v$.  By the previous cases, we have that there exists a path in $E'$ from $r( \alpha_{1} )$ to $v$ in $E'$ through regular vertices in $E'$.  Hence, there exists a path in $E'$ from $v_{i}$ to $v$ through regular vertices in $E'$. 
For the pair $(v_1,v_2)$ this is clear by construction.
\end{proof}

We now connect the space we have created to our method of changing signs, \ie the Cuntz splice. 

\begin{lemma}\label{lem:GLeqCS}
Let $E$ be a graph with finitely many vertices such that 
 \[
 \Bsf_{E} = \begin{pmatrix} A & X & Y \\ 0 & B & Z \\ 0 & 0 & C \end{pmatrix}
\]
where $B$ is an $n\times n$ matrix with entries from $\N_0\sqcup\{\infty\}$ for some $n\geq 1$ and the entries of row $n$ of $B$ are positive integers and the vertex $v$ corresponding to this row is a regular vertex of $E$.  
Let $E_{v,-}$ be the Cuntz splice of $E$ at the vertex $v$.

Then $\det( U ) = 1$, $\det( V )  = -1$, and 
\begin{align*}
\begin{pmatrix} I & 0 & 0 \\ 0 & U & 0 \\ 0 & 0 & I \end{pmatrix}
 \overbrace{\begin{pmatrix}
A_0 & (X_{-})_0 &  Y_0 \\
0 & (B_{-})_0 & (Z_{-})_0 \\
0 & 0 & C_0
\end{pmatrix}}^{\Bsf^{\bullet}_{E_{v,-}} }
\begin{pmatrix} I & 0 & 0 \\ 0 & V & 0 \\ 0 & 0 & I \end{pmatrix} = \begin{pmatrix} A_0 & X_0'' & Y_0 \\ 0 & B_0'' & Z_0'' \\ 0 & 0 & C_0 \end{pmatrix} 
\end{align*}
where 
$$\Bsf_E^\bullet=\begin{pmatrix} A_0 & X_0 & Y_0 \\ 0 & B_0 & Z_0 \\ 0 & 0 & C_0 \end{pmatrix},$$
$$X_0'' = \begin{pmatrix} X_0 & 0 \end{pmatrix},\qquad 
B_0'' = \begin{pmatrix} B_0 & 0 \\ 0 & I_{2}  \end{pmatrix},\quad \text{and}\quad 
Z_0'' = \begin{pmatrix} Z_0 \\ 0 \end{pmatrix},$$
$V = V_{1} V_{2}$ where $V_{1}$ is the matrix obtained from $I_{n+2}$ by subtracting the $(n+2)$'nd column from the $n$'th column and $V_{2}$ is the matrix obtained from $I_{n+2}$ by switching the $(n+1)$'st and $(n+2)$'nd columns, and $U$ is the matrix obtained from $I_{m+2}$ by subtracting the $(m+2)$'nd row from the $m$'th row, with $m$ being the number of rows of $B_0$. 
\end{lemma}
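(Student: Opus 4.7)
The argument is a direct matrix verification. My plan is to first unpack the form of the three middle blocks of $\Bsf^{\bullet}_{E_{v,-}}$ using the definition of the Cuntz splice, then verify the determinant claims, and finally check the matrix identity block by block.

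From Definition~\ref{def:cuntzsplice}, the two added vertices $u_1, u_2$ are regular, receive no edges from $A$-block vertices, and emit no edges to $C$-block vertices. Consequently the two columns appended to form $(X_-)_0$ from $X_0$ are zero, and the two rows appended to form $(Z_-)_0$ from $Z_0$ are zero; this gives $(X_-)_0 = \begin{pmatrix} X_0 & 0 \end{pmatrix}$ and $(Z_-)_0 = \begin{pmatrix} Z_0 \\ 0 \end{pmatrix}$. After reordering if necessary so that $v$ is the $m$-th row of $B_0$ with $u_1, u_2$ appended as rows $m+1, m+2$ and columns $n+1, n+2$, reading off the Cuntz splice data yields
\[
(B_-)_0 = \begin{pmatrix} B_0^{(1)} & 0 & 0 \\ b_v & 1 & 0 \\ (0,\ldots,0,1) & 0 & 1 \\ (0,\ldots,0,0) & 1 & 0 \end{pmatrix},
\]
where $b_v$ denotes the $v$-row of $B_0$ and $B_0^{(1)}$ comprises the remaining $m-1$ rows. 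The outer blocks $A_0, Y_0, C_0$ coincide between $\Bsf^{\bullet}_E$ and $\Bsf^{\bullet}_{E_{v,-}}$, and the block-diagonal conjugation leaves them fixed.

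The determinant claims are immediate: both $U$ and $V_1$ are elementary shear matrices with determinant $1$, and $V_2$ is a transposition of two columns with determinant $-1$, so $\det V = -1$.

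For the main identity, it suffices to check the three middle blocks. Right-multiplication of $(X_-)_0$ by $V$ only permutes and combines its two trailing zero columns, giving $X_0''$. Left-multiplication of $(Z_-)_0$ by $U$ acts as the identity since the last row of $(Z_-)_0$ is zero, giving $Z_0''$. For $(B_-)_0$: multiplication by $V_1$ subtracts column $n+2$ from column $n$, replacing the $1$ at $(m+1, n)$ by a $0$ while leaving the other entries of these columns alone; multiplication by $V_2$ then swaps the last two columns, producing a matrix that agrees with $\begin{smallpmatrix} B_0 & 0 \\ 0 & I_2 \end{smallpmatrix}$ everywhere except for a stray $1$ at position $(m, n+2)$ inherited from the original $(v, u_1)$ entry of the splice; finally, left-multiplication by $U$ subtracts row $m+2$, which at this stage equals $(0,\ldots,0,0,1)$, from row $m$ and removes that stray $1$. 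This gives $B_0''$ precisely. Combining the three block computations yields the stated identity; the only subtle bookkeeping is to keep track of the position of the $v$-row relative to the appended $u_1, u_2$ rows and columns.
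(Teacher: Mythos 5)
Your proof is correct and carries out precisely the block-matrix verification that the paper leaves to the reader, including the key point (which the paper flags) that the row operation in $U$ only involves rows of $v$ and $u_2$, both regular vertices, so it takes place inside $\Bsf^{\bullet}_{E_{v,-}}$. The only minor imprecision is the phrase ``after reordering if necessary'': no reordering is ever needed, since $v$ is the $n$'th vertex of the $B$-block and forming $\Bsf^{\bullet}$ only deletes rows of singular vertices while preserving the order of the rest, so $v$ is automatically the $m$'th row of $B_0$.
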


\begin{proof}
The proof of the lemma is just a simple matrix computation using that the row operations to get $U$ only involve regular vertices of $E_{v,-}$, and is left for the reader.
\end{proof}

The next proposition will be our first step in going from a \GLPEe to an \SLPEe. 
The idea is to alter our graphs in such a way that their $\Bsf^{\bullet}$ matrices are \GLPE, say by $(U,V)$, but where all the diagonal blocks of $U$ have determinant one. 
Thus moving all our problems to $V$.

\begin{proposition}\label{p:GLtoSL1}
Let $E_{1}$ and $E_{2}$ be graphs with finitely many vertices such that $(E_{1} , E_{2} )$ is in standard form.
Suppose $\ftn{ (U_{1},V_{1}) }{ \Bsf^{\bullet}_{E_{1}} }{ \Bsf^{\bullet}_{E_{2}} }$ is a \GLPEe, where $U_{1} \in \GLPZ[\mathbf{m}]$, $V_1 \in \GLPZ[\mathbf{n}]$, $\mathbf{m} = ( m_{1} , \dots, m_{N} )$ and $\mathbf{n} = ( n_{1} , \dots, n_{N} )$. 
If, for some $i$, $m_i\neq 0$, $\det ( U_{1} \{ i\} ) = -1$, then there exist graphs $F_{1}$ and $F_{2}$ with finitely many vertices and there exist $U_{2} \in \GLPZ[\mathbf{m}'], V_2 \in \GLPZ[\mathbf{n}']$, where $\mathbf{m}' = (m_{1} , \dots, m_{i-1} , m_{i}+2 , m_{i+1} , \dots, m_{N} )$ and $\mathbf{n}' = (n_{1} , \dots, n_{i-1} , n_{i}+2 , n_{i+1} , \dots, n_{N} )$, such that
\begin{itemize}
 \item $E_{k} \Meq F_{k}$, $k=1,2$;
 \item $(F_{1} , F_{2} )$ is in standard form;
 \item $U_{2}\Bsf^{\bullet}_{F_{1}} V_{2} = \Bsf^{\bullet}_{F_{2}}$;
 \item $\det ( U_{2} \{ i\}  )= 1$, $\det ( V_{2} \{ i\} ) = - \det( V_{1} \{ i \} )$; and
 \item $\det( U_{2} \{ j\} ) = \det( U_{1} \{ j\} )$ and $\det( V_{2} \{ j\} ) = \det( V_{1} \{ j\} )$ for all $j \neq i$.
\end{itemize}
\end{proposition}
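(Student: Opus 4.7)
The plan is to expand both $E_1$ and $E_2$ via Lemma~\ref{lem:SLEqexpansion} in the $i$-th diagonal block, and then to exploit the $\GL(2,\Z)$-freedom in how $(U_1,V_1)$ extends across such a trivial expansion: taking the extension on the new $I_2$-corner to have determinant $-1$ will flip the sign of $\det U\{i\}$ from $-1$ to $+1$, at the cost of flipping $\det V\{i\}$.

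First, a preparatory step. Since $(E_1,E_2)$ is in standard form and $m_i\neq 0$, canonical-form property (\ref{thm:canonical-item-size3}) yields $m_i\geq 3$ with at least one cycle, and properties (\ref{thm:canonical-item-paths})--(\ref{thm:canonical-item-positive}) give plentiful positive entries reachable by paths through regular vertices. Using Propositions~\ref{prop:columnAdd} and~\ref{prop:rowAdd} we may pre-arrange (via $\Meq$-moves that correspond to $\SL$-multiplications and therefore do not alter any block determinants) so that the hypotheses of Lemma~\ref{lem:SLEqexpansion} are met in block $i$ of both $E_1$ and $E_2$. Applying Lemma~\ref{lem:SLEqexpansion} to each then produces graphs $F_1,F_2$ with $E_k\Meq F_k$, multiindices $\mathbf{m}',\mathbf{n}'$ as in the statement, and matrices $V_{E,k}\in\SLZ[n_i+2]$ with
\[
\Bsf^\bullet_{F_k}\cdot\operatorname{diag}(I,V_{E,k},I)=\widetilde{\Bsf}^\bullet_{E_k},
\]
where $\widetilde{\Bsf}^\bullet_{E_k}$ is the \emph{trivially expanded} matrix, namely $\Bsf^\bullet_{E_k}$ with an extra $I_2$ placed in a new sub-block of block $i$ and zeroes filling out the other new entries.

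Next, the sign-flipping extension. Define $(\tilde U_1,\tilde V_1)$ on the expanded matrices by keeping $(U_1,V_1)$ on the old indices, extending off-diagonal blocks involving $i$ by zeros in the two new rows and columns, and choosing the new-to-new corner of block $i$ as
\[
\tilde U_1\{i\}=\begin{pmatrix} U_1\{i\} & 0 \\ 0 & D \end{pmatrix},\qquad
\tilde V_1\{i\}=\begin{pmatrix} V_1\{i\} & 0 \\ 0 & D^{-1} \end{pmatrix}
\]
with $D=\operatorname{diag}(-1,1)\in\GL(2,\Z)$. The constraint on the $I_2$-corner is exactly $D\cdot I_2\cdot D^{-1}=I_2$, which is automatic; a block computation --- using $U_1\Bsf^\bullet_{E_1}V_1=\Bsf^\bullet_{E_2}$ together with the zero-extensions along $\calP$-upper-triangular paths $i\preceq j\preceq l\preceq k$ --- shows that $\tilde U_1\widetilde{\Bsf}^\bullet_{E_1}\tilde V_1=\widetilde{\Bsf}^\bullet_{E_2}$ and that $(\tilde U_1,\tilde V_1)\in\GLPZ[\mathbf{m}']\times\GLPZ[\mathbf{n}']$. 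By construction $\det\tilde U_1\{i\}=(-1)(-1)=1$ and $\det\tilde V_1\{i\}=\det V_1\{i\}\cdot\det(D^{-1})=-\det V_1\{i\}$, while all other diagonal blocks' determinants coincide with those of $U_1,V_1$. Setting $U_2=\tilde U_1$ and
\[
V_2=\operatorname{diag}(I,V_{E,1},I)\cdot\tilde V_1\cdot\operatorname{diag}(I,V_{E,2}^{-1},I)
\]
gives $U_2\Bsf^\bullet_{F_1}V_2=\Bsf^\bullet_{F_2}$ by composing the relations above, and $V_{E,k}\in\SL$ ensures the block-$i$ determinants agree with those of $(\tilde U_1,\tilde V_1)$.

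Finally, $F_1,F_2$ resulting from Lemma~\ref{lem:SLEqexpansion} might fail canonical-form condition (\ref{thm:canonical-item-positive}) at the two new vertices, since those vertices emit only to their own block. Remark~\ref{rmk:  canonical form and SL equivalence} lets us repair this by further $\Meq$-moves corresponding to $\SL$-multiplications on either side, which do not affect any block determinants; so the final pair is in standard form and $(U_2,V_2)$ retains the prescribed determinants. The main obstacle is the middle paragraph: checking that the extension $(\tilde U_1,\tilde V_1)$ really is a $\GLPE$ of the trivially expanded matrices requires careful bookkeeping of the off-diagonal contributions from indices intermediate to $i$ and $k$ in $\calP$, ensuring that the zero-extensions on the new rows/columns cause no interference elsewhere.
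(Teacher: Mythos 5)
Your approach mirrors the paper's proof: expand both graphs with Lemma~\ref{lem:SLEqexpansion}, extend $(U_1,V_1)$ across the two new indices of block $i$ with a determinant-$(-1)$ involution, verify the determinants, and repair canonical form via Remark~\ref{rmk:  canonical form and SL equivalence} (which uses $\SL_\calP$-multiplications, hence leaves block determinants unchanged). The paper inserts the swap $\begin{smallpmatrix}0&1\\1&0\end{smallpmatrix}$ rather than your $\operatorname{diag}(-1,1)$ --- an immaterial choice, since both are involutions of determinant $-1$.

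The ``main obstacle'' you flag at the end is not actually one. Because the two new rows and columns carry zeros in every off-diagonal position of $\tilde U_1$, $\tilde V_1$ and of each $\widetilde{\Bsf}^\bullet_{E_k}$, the triple product decouples: on the old coordinates it is the given identity $U_1\Bsf^\bullet_{E_1}V_1=\Bsf^\bullet_{E_2}$, and on the new $2\times2$ corner it is $D\cdot I_2\cdot D^{-1}=I_2$. No contributions arise from intermediate indices in $\calP$, so the check is immediate. One small point of hygiene: after the canonical-form repair you should replace $U_2$ and $V_2$ by their compositions with the resulting $\SL_\calP$-equivalences (the paper's $W_k$, $Z_k$); as you note, this does not change any block determinant, but it is needed for $U_2\Bsf^{\bullet}_{F_1}V_2=\Bsf^{\bullet}_{F_2}$ to hold literally for the repaired $F_k$. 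Also, your preparatory step is superfluous: in standard form $m_i\neq 0$ already forces $m_i\geq 3$ (each regular vertex supports a loop, so there is a cycle), and positivity of $\Bsf^\bullet_{E_k}\{i\}$ gives the two positive regular rows that Lemma~\ref{lem:SLEqexpansion} needs.
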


\begin{proof}
Write $\Bsf^{\bullet}_{E_{1}}$ as $\begin{pmatrix} A_{1} & X_{1} & Y_{1} \\ 0 & B_{1} & Z_{1} \\ 0 & 0 & C_{1} \end{pmatrix}$ and write $\Bsf^{\bullet}_{E_{2}}$ as $\begin{pmatrix} A_{2} & X_{2} & Y_{2} \\ 0 & B_{2} & Z_{2} \\ 0 & 0 & C_{2} \end{pmatrix}$, where $B_{k} = \Bsf^{\bullet}_{E_{k}} \{ i \}$.  
Apply Lemma~\ref{lem:SLEqexpansion} to both $E_{k}$'s to yield graphs $E_{k}'$ and matrices $V_{k}'$.
Define $\widetilde{U} \in \GLPZ[\mathbf{m}]$ by 
\begin{align*}
\widetilde{ U }\{ r, s \} = 
\begin{cases}
\begin{pmatrix}
U_{1}\{ i\} & 0 & 0\\
 0				& 0 & 1 \\
 0				& 1 & 0
\end{pmatrix} &\text{if $(r,s) = ( i, i )$} \\
\\
\begin{pmatrix}
U_{1} \{ r,i \} & 0 & 0 
\end{pmatrix} &\text{if $( r, s) = ( r, i), r\neq i$} \\
\\
\begin{pmatrix}
U_{1} \{ i, s \}  \\
0 \\
0
\end{pmatrix} &\text{if $(r,s) = ( i, s ),s\neq i$}  \\
\\
U_{1}\{ r,s \} &\text{otherwise}  
\end{cases}
\end{align*}
and set $\widetilde{V} =  \begin{pmatrix} I & 0 & 0 \\ 0 & V_{1}' & 0 \\ 0 & 0 & I \end{pmatrix} \overline{V} \begin{pmatrix} I & 0 & 0 \\ 0 & (V_{2}')^{-1} & 0 \\ 0 & 0 & I \end{pmatrix} $ where $\overline{V}$ is the matrix defined by 
\begin{align*}
\overline{ V } \{ r, s \} = 
\begin{cases}
\begin{pmatrix}
V_{1}\{ i\} & 0 & 0\\
 0				& 0 & 1 \\
 0				& 1 & 0
\end{pmatrix} &\text{if $(r,s) = ( i, i )$} \\
\\
\begin{pmatrix}
V_{1} \{ r,i \} & 0 & 0 
\end{pmatrix} &\text{if $( r, s) = ( r, i),r\neq i$} \\
\\
\begin{pmatrix}
V_{1} \{ i, s \}  \\
0 \\
0
\end{pmatrix} &\text{if $(r,s) = ( i, s ),s\neq i$}  \\
\\
V_{1}\{ r,s \} &\text{otherwise}  
\end{cases}
\end{align*}
Note that $\widetilde{V} \in \GLPZ[\mathbf{n}]$. 

By Lemma~\ref{lem:SLEqexpansion}, we have that 
\[
\widetilde{U} \Bsf^{\bullet}_{E_{1}'} \widetilde{V} = \Bsf^{\bullet}_{E_{2}'}.
\]
Note that $\det ( \widetilde{V} \{ i \} ) = - \det ( V_{1} \{ i \} )$, $\det ( \widetilde{U}\{i\} ) = 1$, and $\det ( \widetilde{U} \{ j \} ) = \det ( U_{1} \{ j \} )$ and $\det( \widetilde{V} \{ j \} ) = \det ( V_{1} \{ j \} )$ for all $j \neq i$. 
Since $E_{k}$ is in canonical form, we have that for every vertex $v, w \in (E_{k})^{0}_{\mathrm{reg}}$ with $v \geq w$, there exists a path in $E_{k}$ from $v$ to $w$ through regular vertices in $E_{k}$.
Hence, by Lemma~\ref{lem:SLEqexpansion}, for every $v , w \in (E_{k}')^{0}_{\mathrm{reg}}$ with $v \geq w$, there exists a path in $E_{k}'$ from $v$ to $w$ through regular vertices in $E_{i}'$.  
Since $E_{k}$ is in canonical form and by definition of the outsplitting graph, $E_{k}'$ satisfies (\ref{thm:canonical-item-loops-inf}), (\ref{thm:canonical-item-paths}), and (\ref{thm:canonical-item-size3}) of Theorem~\ref{thm: canonical form}. 
Furthermore, the fact that $E_{k}$ is in canonical form implies that all the diagonal blocks of $\Bsf^{\bullet}_{E_k}$ have Smith normal form with at least two 1's, so it follows from the constructions of Lemma \ref{lem:SLEqexpansion} that the diagonal blocks of $\Bsf^{\bullet}_{E_{k}'}$ also have this property. Therefore $E_{k}'$ also satisfies (\ref{thm:canonical-item-rowrank}) of Theorem~\ref{thm: canonical form}. 
By Remark~\ref{rmk:  canonical form and SL equivalence} we get a graph $F_{i}$ in canonical form such that $\mathbf{m}_{F_{i}} = \mathbf{m}_{E_{i}'} = \mathbf{m}'$, $\mathbf{n}_{F_{i}} = \mathbf{n}_{ E_{i}'} = \mathbf{n}'$, and $E_{i}' \sim_{M'} F_{i}$.
Also, we get an \SLPEe $\ftn{ ( W_{i} , Z_{i} ) }{ \Bsf^{\bullet}_{F_{i}} }{ \Bsf^{\bullet}_{E_{i}'} }$.  

Set $U_{2} = W_{2}^{-1} \widetilde{U} W_{1}$ and $V_{2} = Z_{1} \widetilde{V} Z_{2}^{-1}$.  Since $W_{2}^{-1} , W_{1} \in \SLPZ[\mathbf{m}']$ and since $Z_{1} , Z_{2}^{-1} \in \SLPZ[\mathbf{n}']$, we have that $\det ( U_{2} \{ i \} ) = 1$, $\det ( V_{2}\{i\} ) = - \det ( V_{1} \{ i \} )$, and $\det ( U_{2} \{ j \} ) = \det ( U_{1} \{ j \} )$ and $\det( V_{2}\{ j \} ) = \det ( V_{1} \{ j \} )$ for all $j \neq i$.  By construction, the pair $(F_{1} , F_{2} )$ is in standard form with $\Bsf^{\bullet}_{F_{i}} \in \MPZ[\mathbf{m}' \times \mathbf{n}']$, $U_{2} \Bsf^{\bullet}_{F_{1}} V_{2} = \Bsf^{\bullet}_{F_{2}}$, and $F_{i} \sim_{M} E_{i}$.
\end{proof}

We now use the Cuntz splice to fix potential sign problems on $V$.

\begin{proposition}\label{p:GLtoSL2}
Let $E_{1}$ and $E_{2}$ be graphs with finitely many vertices such that $(E_{1} , E_{2} )$ is in standard form.
Suppose $\ftn{ (U_{1},V_{1}) }{ \Bsf^{\bullet}_{E_{1}} }{ \Bsf^{\bullet}_{E_{2}} }$ is a \GLPEe, where $U_{1} \in \GLPZ[\mathbf{m}]$ and $V_{1} \in \GLPZ[\mathbf{n}]$,
and $\mathbf{m} = ( m_{1} , \dots, m_{N} )$ and $\mathbf{n} = ( n_{1} , \dots, n_{N} )$.
If, for some $i$, $m_i\neq 0$, $\det ( U_{1} \{ i\} ) = 1$ and $\det( V_{1} \{i\} ) = -1$, there exist graphs $F_{1}$ and $F_{2}$ with finitely many vertices and there exist $U_{2} \in \GLPZ[\mathbf{m}']$ and $V_{2} \in \GLPZ[\mathbf{n}']$, where $\mathbf{m}' = (m_{1} , \dots, m_{i-1} , m_{i}+2 , m_{i+1} , \dots, m_{N} )$ and $\mathbf{n}' = (n_{1} , \dots, n_{i-1} , n_{i}+2 , n_{i+1} , \dots, n_{N} )$ such that
\begin{itemize}
 \item $E_{k} \MCeq F_{k}$, for $k=1,2$;
 \item $(F_{1} , F_{2} )$ is in standard form;
 \item $U_{2}\Bsf^{\bullet}_{F_{1}} V_{2} = \Bsf^{\bullet}_{F_{2}}$;
 \item $\det ( U_{2} \{ i \} )  = \det ( V_{2} \{ i \} ) = 1$; and
 \item $\det( U_{2} \{ j\} ) = \det( U_{1} \{ j\} )$ and $\det( V_{2} \{ j\} ) = \det( V_{1} \{ j\} )$ for all $j \neq i$.
\end{itemize}
\end{proposition}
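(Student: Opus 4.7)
The plan mirrors Proposition~\ref{p:GLtoSL1}, with the role of the outsplit replaced by a Cuntz splice on $E_1$.  The crucial input is Lemma~\ref{lem:GLeqCS}, which packages a single Cuntz splice at a vertex of block $i$ as a \GLPEe whose $U$-component has determinant $+1$ and $V$-component has determinant $-1$ in block $i$; composing it with a padded version of $(U_1,V_1)$ will produce a new equivalence with both block-$i$ determinants equal to $+1$, since $(-1)\cdot(-1)=+1$.

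Since $E_1$ is in canonical form and $m_i\neq 0$, conditions~(\ref{thm:canonical-item-loops-inf}), (\ref{thm:canonical-item-size3}) and (\ref{thm:canonical-item-positive}) force block $i$ to contain at least three regular vertices, each with a loop and with direct edges to and from every other regular vertex in the block.  Any regular vertex $v$ in block $i$ therefore supports at least two distinct return paths (its loop and a length-two return $v\to w\to v$), so Proposition~\ref{prop:cuntzspliceinvariant} gives $E_1\MCeq (E_1)_{v,-}$.  After relabeling so that $v$ corresponds to the last row of $B=\Bsf_{E_1}\{i\}$, Lemma~\ref{lem:GLeqCS} supplies $U_{CS}$ and $V_{CS}$ (each the identity outside block $i$) with the declared determinants and $U_{CS}\,\Bsf^\bullet_{(E_1)_{v,-}}\,V_{CS}=\widehat{\Bsf}^\bullet_{E_1}$, where $\widehat{\Bsf}^\bullet_{E_1}$ denotes $\Bsf^\bullet_{E_1}$ padded by an $I_2$-block at the end of block $i$.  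In parallel, Lemma~\ref{lem:SLEqexpansion} produces $E_2'$ with $E_2\Meq E_2'$ and an $\SLP$-matrix $V_{SL}$ (identity outside block $i$) satisfying $\Bsf^\bullet_{E_2'}\,V_{SL}=\widehat{\Bsf}^\bullet_{E_2}$.  Let $\widehat U_1,\widehat V_1$ denote the trivial $I_2$-paddings of $U_1,V_1$ at the end of block $i$; they preserve all diagonal-block determinants and satisfy $\widehat U_1\,\widehat{\Bsf}^\bullet_{E_1}\,\widehat V_1=\widehat{\Bsf}^\bullet_{E_2}$.  Setting $\widetilde U=\widehat U_1\,U_{CS}$ and $\widetilde V=V_{CS}\,\widehat V_1\,V_{SL}^{-1}$ yields $\widetilde U\,\Bsf^\bullet_{(E_1)_{v,-}}\,\widetilde V=\Bsf^\bullet_{E_2'}$; multiplicativity of determinants on the diagonal blocks of upper block-triangular matrices then gives $\det\widetilde U\{i\}=1$, $\det\widetilde V\{i\}=(-1)(-1)(1)=1$, and preserves all other diagonal-block determinants.

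The intermediate graphs $(E_1)_{v,-}$ and $E_2'$ satisfy properties~(\ref{thm:canonical-item-loops-inf}), (\ref{thm:canonical-item-paths}), (\ref{thm:canonical-item-size3}), and (\ref{thm:canonical-item-rowrank}) of canonical form; the last holds because Smith normal form is preserved by $\GL$-equivalence and the padded matrix $\widehat{\Bsf}^\bullet_{E_k}\{i\}$ contains an $I_2$-block.  Only property~(\ref{thm:canonical-item-positive}) can fail inside block $i$ (e.g., the new Cuntz splice vertices $u_1,u_2$ in $(E_1)_{v,-}$ are sparsely connected to the rest), and Remark~\ref{rmk:  canonical form and SL equivalence} then produces graphs $F_1,F_2$ in canonical form with $F_1\Meq (E_1)_{v,-}$, $F_2\Meq E_2'$, the same block structure, and $\Bsf^\bullet_{F_k}$ $\SLP$-equivalent to $\Bsf^\bullet_{(E_1)_{v,-}}$ and $\Bsf^\bullet_{E_2'}$ respectively.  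Composing $\widetilde U,\widetilde V$ with these $\SLP$-equivalences yields the required $(U_2,V_2)$ without altering any diagonal-block determinant.  The main subtlety lies in Lemma~\ref{lem:GLeqCS} itself, which must produce exactly this sign pattern $(+1,-1)$ on $(U_{CS},V_{CS})$; verifying its hypotheses --- in particular that $v$ supports two return paths --- requires the full strength of canonical form.
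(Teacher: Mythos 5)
Your proof is correct and follows essentially the same approach as the paper: apply Lemma~\ref{lem:GLeqCS} to $E_1$ (Cuntz splice) and Lemma~\ref{lem:SLEqexpansion} to $E_2$ (outsplit), compose with paddings of $(U_1,V_1)$, use multiplicativity of determinants on block $i$, and finish with Remark~\ref{rmk:  canonical form and SL equivalence} to restore the positivity condition. Your $\widehat U_1,\widehat V_1,U_{CS},V_{CS},V_{SL}$ play exactly the roles of the paper's $\overline U,\overline V,U_-,V_-,V_2'$.

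One small citation slip: the step $E_1\MCeq (E_1)_{v,-}$ does not follow from Proposition~\ref{prop:cuntzspliceinvariant} (which only gives stable isomorphism of the $C^*$-algebras). It follows directly from the definition of $\MCeq$, since move \CC is a generating move and you have verified that $v$ is a regular vertex supporting at least two return paths. Proposition~\ref{prop:cuntzspliceinvariant} is needed only for the implication $\eqref{thm:main-item-1}\Rightarrow\eqref{thm:main-item-2}$ of the main theorem, not here. This does not affect the validity of the argument.
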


\begin{proof}
Write $\Bsf^{\bullet}_{E_{1}}$ as $\begin{pmatrix} A_{1} & X_{1} & Y_{1} \\ 0 & B_{1} & Z_{1} \\ 0 & 0 & C_{1} \end{pmatrix}$ and write $\Bsf^{\bullet}_{E_{2}}$ as $\begin{pmatrix} A_{2} & X_{2} & Y_{2} \\ 0 & B_{2} & Z_{2} \\ 0 & 0 & C_{2} \end{pmatrix}$, where $B_{k} = \Bsf^{\bullet}_{E_{k}} \{ i \}$ and the entries of the last two rows of $B_{k}$ are positive integers, and the corresponding vertices of $E_k$ are regular. 
Apply Lemma~\ref{lem:SLEqexpansion} to $E_{2}$ to yield a graph $E_{2}'$ and a matrix $V_{2}'$.
Let $U_{-}$ and $V_{-}$ be the matrices that one obtain when applying Lemma~\ref{lem:GLeqCS} to the graph $E_{1}$. 

Set $\widetilde{U} = \overline{U} \begin{pmatrix} I & 0 & 0 \\ 0 & U_{-} & 0 \\ 0 & 0 & I \end{pmatrix}$ where $\overline{U}$ is the matrix defined by 
\begin{align*}
\overline{ U } \{ r, s \} = 
\begin{cases}
\begin{pmatrix}
U_{1}\{ i\} & 0 & 0\\
 0				& 1 & 0 \\
 0				& 0 & 1
\end{pmatrix} &\text{if $(r,s) = ( i, i )$} \\
\\
\begin{pmatrix}
U_{1} \{ r,i \} & 0 & 0 
\end{pmatrix} &\text{if $( r, s) = ( r, i),r\neq i$} \\
\\
\begin{pmatrix}
U_{1} \{ i, s \}  \\
0 \\
0
\end{pmatrix} &\text{if $(r,s) = ( i, s ),s\neq i$}  \\
\\
U_{1}\{ r,s \} &\text{otherwise}  
\end{cases}
\end{align*}
Note that $\widetilde{U} \in \GLPZ[\mathbf{m}']$.

Set $\widetilde{V}= \begin{pmatrix} I & 0 & 0 \\ 0 & V_{-} & 0 \\ 0 & 0 & I \end{pmatrix} \overline{V} \begin{pmatrix} I & 0 & 0 \\ 0 & ( V_{2}' )^{-1} & 0 \\ 0 & 0 & I \end{pmatrix}$ where $\overline{V}$ is the matrix defined by 
\begin{align*}
\overline{ V } \{ r, s \} = 
\begin{cases}
\begin{pmatrix}
V_{1}\{ i\} & 0 & 0\\
 0				& 1 & 0 \\
 0				& 0 & 1
\end{pmatrix} &\text{if $(r,s) = ( i, i )$} \\
\\
\begin{pmatrix}
V_{1} \{ r,i \} & 0 & 0 
\end{pmatrix} &\text{if $( r, s) = ( r, i),r\neq i$} \\
\\
\begin{pmatrix}
V_{1} \{ i, s \}  \\
0 \\
0
\end{pmatrix} &\text{if $(r,s) = ( i, s ),s\neq i$}  \\
\\
V_{1}\{ r,s \} &\text{otherwise}  
\end{cases}
\end{align*}
Note that $\widetilde{V} \in \GLPZ[\mathbf{n}']$.

By Lemma~\ref{lem:SLEqexpansion} and Lemma~\ref{lem:GLeqCS}, we have that 
\[
\widetilde{U} \Bsf^{\bullet}_{(E_{1})_{-}} \widetilde{V} = \Bsf^{\bullet}_{E_{2}'}.
\]
Note that $\det( \widetilde{U} \{ i \} ) = 1$ and $\det( \widetilde{V} \{ i \} ) = 1$; moreover, $\det( \widetilde{U}\{j\} ) = \det ( U_{1} \{ j \} )$ and $\det( \widetilde{V}\{j\}) = \det( V_{1} \{ j \} )$, for all $j \neq i$. 
Since $E_{2}$ is in canonical form, by Lemma~\ref{lem:SLEqexpansion}, $E_{2}'$ has the property that for every vertex $v, w \in (E_{2}')^{0}_{\mathrm{reg}}$ with $v \geq w$, there exists a path in $E_{2}'$ from $v$ to $w$ through regular vertices in $E_{2}'$. 
It is clear from the construction of $(E_{1})_{-}$ that for all regular vertices $v, w$ of $(E_{1})_{-}$ satisfying $v \geq w$, we have that there exists a path in $(E_{1})_{-}$ from $v$ to $w$ through regular vertices of $( E_{1})_{-}$ (since $E_{1}$ has this property).  

Since $E_{2}$ is in canonical form and by the definition of the outsplitting graph, we have that $E_{2}'$ satisfies (\ref{thm:canonical-item-loops-inf}), (\ref{thm:canonical-item-paths}) and (\ref{thm:canonical-item-size3}) of Theorem~\ref{thm:  canonical form}.
Similarly, $(E_{1})_{-}$ will satisfying the same properties since $E_{1}$ is in canonical form. 
Furthermore, the canonical form of $E_{k}$ and the construction of Lemma \ref{lem:SLEqexpansion} implies that the diagonal blocks of $\Bsf^{\bullet}_{E_{k}'}$ have a Smith normal form with at least two 1's, so $E_{k}'$ also satisfies (\ref{thm:canonical-item-rowrank}) of Theorem~\ref{thm: canonical form}. 
By Remark~\ref{rmk:  canonical form and SL equivalence}, there exist graphs $F_{1}$, $F_{2}$ in canonical form such that $\mathbf{m}_{ F_{1} } = \mathbf{m}_{ ( E_{1} )_{-} } = \mathbf{m}'$, $\mathbf{n}_{ F_{1} } = \mathbf{n}_{ ( E_{1} )_{-} } = \mathbf{n}'$, $F_{1} \sim_{M} (E_{1})_{-}$, and $F_{2} \sim_{M} E_{2}'$.
Moreover, there exist \SLPEe{s} $\ftn{ ( W_{1} , Z_{1} ) }{ \Bsf^{\bullet}_{F_{1}} }{ \Bsf^{\bullet}_{(E_{1})_{-}} }$ and $\ftn{ ( W_{2} , Z_{2} ) }{ \Bsf^{\bullet}_{F_{2}} }{ \Bsf^{\bullet}_{E_{2}'} }$.

Set $U_{2} = W_{2}^{-1} \widetilde{U} W_{1}$ and $V_{2} = Z_{1} \widetilde{V} Z_{2}^{-1}$.  Since $W_{2}^{-1} , W_{1} \in \SLPZ[\mathbf{m}']$ and since $Z_{1} , Z_{2}^{-1} \in \SLPZ[\mathbf{n}']$, we have that $\det ( U_{2} \{ i \} ) = \det ( V_{2}\{i\} ) = 1$, and $\det ( U_{2} \{ j \} ) = \det ( U_{1} \{ j \} )$ and $\det( V_{2}\{ j \} ) = \det ( V_{1} \{ j \} )$ for all $j \neq i$.  By construction, the pair $(F_{1} , F_{2} )$ is in standard form with $\Bsf^{\bullet}_{F_{k}} \in \MPZ[\mathbf{m}' \times \mathbf{n}']$, $U_{2} \Bsf^{\bullet}_{F_{1}} V_{2} = \Bsf^{\bullet}_{F_{2}}$, and $F_{k} \sim_{M'} E_{k}$.
\end{proof}

We now have all we need to modify a \GLPEe to an \SLPEe. 

\begin{theorem}\label{thm:GLtoSL}
Let $E_{1}$ and $E_{2}$ be graphs with finitely many vertices such that the pair $(E_{1} , E_{2} )$ is in standard form.  
Suppose $(U,V)$ is a \GLPEe from $\Bsf^{\bullet}_{E_{1}}$ to $\Bsf^{\bullet}_{E_{2}}$ satisfying that $V\{i\}=1$ whenever $n_i=1$.
Then there exist graphs $F_{1}$ and $F_{2}$ such that $E_{i} \sim_{M'} F_{i}$, the pair $(F_{1}, F_{2} )$ is in standard form, and $\Bsf^{\bullet}_{F_{1}}$ is \SLPE to $\Bsf^{\bullet}_{F_{2}}$.
\end{theorem}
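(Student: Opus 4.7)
The plan is to correct the determinants of the diagonal blocks of $U$ and $V$ to $+1$ one index at a time, by induction on the number of $i \in \calP$ for which $\det(U\{i\}) = -1$ or $\det(V\{i\}) = -1$. Propositions~\ref{p:GLtoSL1} and \ref{p:GLtoSL2} are tailored precisely for this: the first forces $\det(U\{i\}) = 1$ at the cost of flipping the sign of $\det(V\{i\})$, while the second corrects $\det(V\{i\})$ to $+1$ provided $\det(U\{i\}) = 1$ already. Crucially, both operations preserve the determinants of the diagonal blocks at all indices $j \neq i$, so a single pass through a bad index (Prop.~\ref{p:GLtoSL1} first if needed, then Prop.~\ref{p:GLtoSL2} if needed) strictly decreases the bad-index count. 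Since each proposition also preserves the standard form and the \MCeq-class, iterating until no bad indices remain will deliver the pair $(F_1, F_2)$ required by the theorem.

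Before starting the induction, I would dispose of the degenerate case $m_i = 0$, since neither proposition applies there. In that case $U\{i\}$ is an automorphism of $\Z^0$, so $\det(U\{i\}) = 1$ trivially. Moreover, any simple subquotient with at least two vertices is strongly connected and therefore contains a cycle, so condition (\ref{thm:canonical-item-size3}) of canonical form forces $m_i \geq 3$ whenever $n_i \geq 2$. Hence $m_i = 0$ implies $n_i \leq 1$: if $n_i = 0$ then $V\{i\}$ is trivial, and if $n_i = 1$ the hypothesis $V\{i\} = 1$ gives $\det(V\{i\}) = 1$. In particular, every bad index automatically has $m_i \neq 0$, so the inductive step is always legal.

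The main technical point to watch is that the auxiliary hypothesis ``$V\{j\} = 1$ whenever $n_j = 1$'' must be preserved across iterations, so that the dichotomy above remains available at indices not yet processed. The dimensions $n_j$ for $j \neq i$ are unchanged by either proposition. For such a $j$ with $n_j = 1$, the new diagonal block $V'\{j\}$ is $1 \times 1$ and, by the determinant-preservation clause, satisfies $V'\{j\} = \det(V'\{j\}) = \det(V\{j\}) = V\{j\} = 1$. This observation is the only subtlety beyond a straightforward iteration of Propositions~\ref{p:GLtoSL1} and \ref{p:GLtoSL2}, and with it in hand the induction terminates with $\Bsf^\bullet_{F_1}$ and $\Bsf^\bullet_{F_2}$ \SLPE, as required.
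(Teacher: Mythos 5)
Your proof is correct and follows the same strategy as the paper's (brief) proof: iterating Propositions~\ref{p:GLtoSL1} and~\ref{p:GLtoSL2} to repair the determinants of the diagonal blocks of $U$ and then $V$, using that each application preserves standard form, preserves $\sim_{M'}$-class, and leaves all other diagonal-block determinants unchanged. You spell out two points the paper's sketch leaves implicit --- that bad indices necessarily have $m_i \neq 0$ (via $m_i = 0 \Rightarrow n_i \le 1$, which is where the extra hypothesis $V\{i\}=1$ is used), and that the auxiliary hypothesis is preserved through the iteration --- and these additions are accurate, though the intermediate assertion that a simple subquotient on $\ge 2$ vertices is strongly connected is only true here because canonical form forces every regular vertex to support a loop and rules out multiple singular vertices in a single block; the cleanest route is simply to observe that (\ref{thm:canonical-item-loops-inf}) and (\ref{thm:canonical-item-size3}) exclude $1 \le m_i \le 2$ and exclude $m_i = 0$ with $n_i \ge 2$.
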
 

\begin{proof}
The theorem follows from an argument similar to the argument in \cite[Theorem~6.8]{MR2270572} with Propositions~\ref{p:GLtoSL1} and~\ref{p:GLtoSL2} in place of \cite[Lemma~6.7]{MR2270572}. 

Briefly, the idea is that we are given a \GLPEe, say $(U,V)$. 
We go down the diagonal blocks and for each of them use Proposition~\ref{p:GLtoSL1} if necessary to make sure the $U$ has positive determinant. 
Then we go down the diagonal blocks again this time using Proposition~\ref{p:GLtoSL2} to fix the determinant of the diagonal blocks of $V$ when necessary. 
\end{proof}


\section{Generalization of Boyle's positive factorization method}
\label{sec:boyle}

In \cite{MR1907894}, Boyle proved several factorization theorems for square matrices.  These theorems are the key components to go from \SLPEe to flow equivalence.  In this section, we prove similar factorization theorems for rectangular matrices.  This is our key technical result to go from \SLPEe to move equivalence.  Although the assumptions might seem restrictive, every unital graph \ca is move equivalent to another unital graph \cas whose adjacency matrix satisfy the assumptions of the factorization theorem.  The proof for rectangular matrices will closely follow the proof in \cite{MR1907894} for square matrices.  

First we introduce a new equivalence called ``positive equivalence'' of two matrices in \MPplusZ (see Definition~\ref{def: positive matrices}) and show that if $n_{i} \neq 0$ for all $i$, then two matrices in \MPplusZ that are \SLPE are positive equivalent.

\begin{definition}\label{def: positive matrices}
Define \MPplusZ to be the set of all $B \in \MPZ$ satisfying the following:
\begin{enumerate}[(i)]
\item If $i \preceq j$ and $B \{ i, j \}$ is not the empty matrix, then $B \{ i , j \} > 0$.

\item If $B \{ i \}$ is not the empty matrix, then $B \{ i \} > 0$, the Smith normal form of  $B \{ i \}$ has at least two 1's, and $n_{i} , m_{i} \geq 3$.
\end{enumerate}

Note that condition (ii) implies that the row rank of every non-empty diagonal block  is at least 2. In most of what follows, this will suffice for our purposes, but the stronger condition is needed to apply Theorem \ref{thm:BoyleTheorem5.1} below.

Let $B, B' \in \MPplusZ$.  An \SLPEe $\ftn{ (U,V) }{B}{B'}$ is said to be a \emph{positive equivalence} if $U$ has a factorization of basic elementary matrices in \SLPZ[\mathbf{m}] and $V$ has a factorization of basic elementary matrices in \SLPZ such that when applying these basic elementary matrices at each step we get matrices in \MPplusZ (recall from \cite{MR1907894} that a basic elementary matrix is a matrix that is equal to the identity matrix except for on one offdiagonal entry, where it is either $1$ or $-1$).  We denote a positive equivalence by $\xymatrix{B \ar[r]^-{(U,V)}_-{+} & B'}$.
\end{definition}

Note that every element $U \in \SLPZ$ has a factorization of basic elementary matrices in \SLPZ.
Therefore, a positive equivalence $\ftn{ (U,V) }{B}{B'}$ is an \SL-equivalence that allows one to stay in 
\MPplusZ for some factorization of $U$ and $V$.  

\subsection{Factorization: Positive case}

In this section, we prove a factorization theorem similar to that of \cite[Theorem~5.1]{MR1907894} for positive rectangular matrices.  The proof is imitating the proof in \cite{MR1907894} for square matrices.   

\begin{definition}
By a \emph{signed transposition matrix}, we mean a matrix which is the matrix of a transposition, but with one of the offdiagonal $1$'s replaced by $-1$.  By a \emph{signed permutation matrix} we mean a product of signed transposition matrices.
\end{definition}

Note that for $K > 1$, any $K \times K$ permutation matrix with determinant 1 is a signed permutation matrix.  A $K \times K$ matrix $S$ is a signed permutation matrix if and only if $\det (S) = 1$ and the matrix $| S |$ is a permutation matrix (where $| S | (i,j) := | S(i,j) |$).

For $B, B' \in \Mplus[m\times n]$, we say an equivalence $\ftn{ (U,V) }{ B }{ B' }$ is a \emph{positive equivalence} through \Mplus[m\times n] if it can be given as a chain of positive elementary equivalences 
\begin{align*}
B = B_{0} \to B_{1} \to B_{2} \to \cdots \to B_{k} = B'
\end{align*}
in which every $B_{i}$ is in \Mplus[m\times n] (recall from \cite{MR1907894} that an equivalence $(U,V)$ is an elementary equivalence if one of $U$ and $V$ is a basic elementary matrix and the other is the identity matrix).

Investigating the proof of \cite[Lemma~5.3 and Lemma~5.4]{MR1907894} one can see the proofs also hold for rectangular matrices.  Thus, we have the following lemmas.

\begin{lemma}[{\cf \cite[Lemma~5.3]{MR1907894}}]\label{lem:BoyleLemma5.3}
Suppose $B \in \Mplus[m\times n]$, $E$ is a basic elementary matrix with nonzero offdiagonal entry $E(i,j)$, and the $i$th row of $EB$ is not the zero row.  Then there exists $Q \in \SLZ[n]$ that is a product of nonnegative basic elementary matrices and there exists a signed permutation matrix $S \in \SLZ[m]$ such that $\ftn{ (SE, Q ) }{ B }{ SE B Q }$ is a positive equivalence through \Mplus[m\times n].
\end{lemma}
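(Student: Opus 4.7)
The plan is to mimic Boyle's proof of the corresponding square case in \cite{MR1907894} and to verify that rectangularity poses no essential obstacle, since the entire argument acts only on two rows and at most a handful of columns. Write $\epsilon = E(i,j) \in \{+1,-1\}$ and let $r_\ell$ denote the $\ell$-th row of $B$. If $\epsilon = +1$, then row $i$ of $EB$ equals $r_i + r_j > 0$ while every other row of $EB$ coincides with the corresponding row of $B$; hence $EB \in \Mplus[m\times n]$ and $(E, I)$ is already the desired positive equivalence.

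Assume henceforth $\epsilon = -1$ and set $r := r_i - r_j$; by hypothesis $r \neq 0$. If $r > 0$ componentwise, again $S = I$ and $Q = I$ suffice. Otherwise, we use column additions to reduce to a strict-sign situation. Since $r \neq 0$, some entry of $r$ is strictly positive or strictly negative; pick a column index $k_0$ for which $r(k_0)$ has some strict sign $\sigma \in \{+,-\}$. For every column $k \neq k_0$ where the current value of $r(k)$ fails to have sign $\sigma$, right-multiply by sufficiently many copies of the basic elementary matrix that adds column $k_0$ to column $k$, until the updated value of $r(k)$ acquires sign $\sigma$. The $k_0$-th column is never modified during this process, so $r(k_0)$ is fixed throughout; and since each such basic elementary adds one strictly positive column to another strictly positive column, every partial product remains in $\Mplus[m\times n]$. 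Let $Q$ be the total product; then $BQ \in \Mplus[m\times n]$ and the updated vector $r$ has constant strict sign $\sigma$.

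If $\sigma = +$, then $E(BQ) \in \Mplus[m\times n]$ and we take $S = I$. If $\sigma = -$, let $S$ be the signed transposition on rows $i,j$ with $\det(S) = 1$, i.e., the matrix agreeing with $I$ outside the $\{i,j\}\times\{i,j\}$ block and equal to $\bigl(\begin{smallmatrix} 0 & 1 \\ -1 & 0 \end{smallmatrix}\bigr)$ on it. A direct computation gives a factorization of $SE$ into two basic elementary matrices: the right factor subtracts row $i$ from row $j$, and the left factor then adds row $j$ into row $i$. Applied in this order to $BQ$: the first step replaces row $j$ by $r_j - r_i = -r > 0$ and leaves every other (positive) row untouched, so the intermediate matrix lies in $\Mplus[m\times n]$; the second step replaces row $i$ by $r_i + (-r) = r_j > 0$, again landing in $\Mplus[m\times n]$. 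The composite is precisely $SEBQ$, so $\ftn{(SE,Q)}{B}{SEBQ}$ is a positive equivalence as required.

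The only genuinely content-bearing step is the choice and verification of the two-factor decomposition of $SE$, together with the observation that the partial products stay in $\Mplus[m\times n]$; every other ingredient reduces to the fact that adding a strictly positive row or column to another strictly positive one preserves strict positivity. Rectangularity causes no difficulty because every operation in the argument is supported on at most two rows and two columns, and Boyle's original argument is already of this local character.
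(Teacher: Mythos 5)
Your proof is correct and follows exactly the route the paper invokes by reference, namely Boyle's argument for the square case carried over verbatim to rectangular matrices (the paper itself does not write this out but merely asserts "investigating the proof... one can see the proofs also hold for rectangular matrices"). The column-addition reduction to a constant-sign row $r = r_i - r_j$, the factorization $SE = F_2F_1$ into two basic elementaries when $\sigma = -$, and the step-by-step positivity check are all correct and are precisely the details the paper elides.
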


\begin{lemma}[\cf\ {\cite[Lemma~5.4]{MR1907894}}]\label{lem:BoyleLemma5.4}
Let $B$ be an element of $\MZ[K_1\times K_2]$ for $K_{1} ,K_{2} \geq 3$ such that the row rank of $B$ is at least 2.  Suppose $U \in \SLZ[K_{1}]$ such that no row of $B$ and $U B$ is the zero row.  Then $U$ is the product of elementary matrices $U = E_{k} \cdots E_{1}$ such that for $1 \leq j \leq k$ the matrix $E_{j} E_{j-1} \cdots E_{1} B$ has no zero rows. 
\end{lemma}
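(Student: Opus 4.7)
The plan is to adapt Boyle's proof of \cite[Lemma~5.4]{MR1907894} to the rectangular setting. The essential observation is that elementary row operations only act on the rows of $B$, so the argument is intrinsically about row relations and does not depend on the number of columns; in particular the row rank of $B$ is preserved through the factorization, and every intermediate product $F_j \cdots F_1 B$ has row rank at least two.

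I would begin by choosing any factorization $U = F_N F_{N-1} \cdots F_1$ of $U$ into basic elementary matrices in $\SLZ[K_1]$, which is possible since $\SLZ[K_1]$ is generated by such matrices for $K_1 \geq 2$. If every intermediate product $F_j \cdots F_1 B$ has no zero rows, we are done; otherwise let $j_0$ be the smallest index at which a zero row appears, and set $C = F_{j_0 - 1} \cdots F_1 B$, which by minimality has no zero rows. If $F_{j_0}$ implements the operation $R_q \mapsto R_q + \varepsilon R_p$ with $\varepsilon \in \{\pm 1\}$, the vanishing of row $q$ of $F_{j_0} C$ forces $C_q = -\varepsilon C_p$; in particular rows $p$ and $q$ of $C$ are proportional. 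Since the row rank of $C$ equals that of $B$ and is hence at least two, one may pick a row index $r \notin \{p,q\}$ with $C_r$ not in the linear span of $C_p$.

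The modification step is to rewrite a local segment of the product using Steinberg-type commutation identities such as $E_{ab}(x) E_{bc}(y) = E_{bc}(y) E_{ab}(x) E_{ac}(xy)$, together with the obvious commutations between elementary matrices whose row-column index sets are disjoint, in order to replace $F_{j_0}$ by an equal product of basic elementary matrices whose first factor is an operation on row $q$ (or row $p$) involving the safe row $r$ rather than $p$ alone. Because $C_r$ is not proportional to $C_p$, the new leading operation does not annihilate any row, and so the failure index strictly increases. Iterating, one eventually obtains a factorization in which no intermediate product has a zero row.

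The main obstacle, and the technical heart of Boyle's argument, is the bookkeeping required to ensure both that this reshuffling process terminates and that the insertions performed to fix the defect at stage $j_0$ do not reintroduce zero rows at earlier stages. This portion of Boyle's proof transfers verbatim to the rectangular case, since it only manipulates row-index data; the hypotheses $K_1, K_2 \geq 3$ and row rank at least two play exactly the role of the analogous conditions which were implicit in the square, invertible setting of \cite{MR1907894}, guaranteeing throughout the procedure the existence of a safe row $r \notin \{p,q\}$ with $C_r$ not proportional to $C_p$.
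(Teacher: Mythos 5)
Your proposal takes the same route as the paper: the paper itself gives no argument for this lemma, asserting only that ``investigating the proof of \cite[Lemma~5.3 and Lemma~5.4]{MR1907894} one can see the proofs also hold for rectangular matrices,'' and your final paragraph makes precisely that observation. The justification you offer --- the argument manipulates only row-index data and the left action of elementary matrices, so the column count is irrelevant, while the row-rank hypothesis (preserved under left multiplication by invertible integer matrices) guarantees the existence of a safe row $r$ at every stage --- is the correct reason the transfer works, and it matches the paper's implicit reasoning.

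One caveat on the sketch you interleave of Boyle's square-matrix argument. You propose fixing the defect at the first bad index $j_0$ by replacing $F_{j_0}$ (or a local segment ending in $F_{j_0}$) by an \emph{equal} product of basic elementary matrices whose first factor involves the safe row. If the replacement block genuinely equals the original block as a matrix, then the partial product at the \emph{end} of the block is unchanged --- in particular it is still $F_{j_0}C$, which has a zero row --- so the failure is merely deferred to the last factor of the new block rather than eliminated, and the claim that ``the failure index strictly increases'' would not follow. Boyle's actual repair scheme is more delicate than this suggests. Since neither your write-up nor the paper itself carries out that bookkeeping, this does not make your proposal diverge from the paper's treatment; it only means the interior sketch should not be read as a faithful outline of Boyle's proof, as opposed to a heuristic pointing at it.
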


The following lemma is inspired by the reduction step in the proof of \cite[Lemma~5.5]{MR1907894}.  We give the entire proof for the convenience of the reader. 

\begin{lemma}\label{lem:BoyleLemma5.5a}
Let $B \in \Mplus[K_{1}\times K_{2}]$ with $K_{1} ,K_{2} \geq 3$.  Suppose the row rank of $B$ is at least 2 and there exists $U \in \SLZ[K_{1}]$ such that $U B > 0$.  Then the equivalence $\ftn{ (U , I_{K_{2}} ) }{ B }{ UB }$ is a positive equivalence through \Mplus[K_{1}\times K_{2}]. 
\end{lemma}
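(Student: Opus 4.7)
I adapt the argument of \cite[Lemma~5.5]{MR1907894} to the rectangular setting, using Lemmas~\ref{lem:BoyleLemma5.3} and~\ref{lem:BoyleLemma5.4} as the technical core.

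Since $B > 0$ and $UB > 0$, neither matrix has a zero row, and by hypothesis $B$ has row rank at least two. Lemma~\ref{lem:BoyleLemma5.4} therefore produces a factorization $U = E_k E_{k-1} \cdots E_1$ of $U$ into basic elementary matrices in $\SLZ[K_1]$ such that every partial product $B_j := E_j E_{j-1} \cdots E_1 B$ has no zero rows. If each $B_j$ is already in $\Mplus[K_1 \times K_2]$, then this factorization itself witnesses $(U, I_{K_2})$ as the desired positive equivalence, and we are done.

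Otherwise I proceed by induction on the number of indices $j$ at which $B_j$ fails to be strictly positive. Let $j_0$ be the smallest offending index, so $B_0, \ldots, B_{j_0-1} \in \Mplus[K_1 \times K_2]$ while $B_{j_0}$ has no zero row but does have some negative entry. Lemma~\ref{lem:BoyleLemma5.3}, applied to $B_{j_0-1}$ and the basic elementary matrix $E_{j_0}$, supplies a signed permutation $S \in \SLZ[K_1]$ and a product $Q \in \SLZ[K_2]$ of nonnegative basic elementary matrices such that $(SE_{j_0}, Q)$ realises a positive equivalence $B_{j_0-1} \to SE_{j_0} B_{j_0-1} Q$ through $\Mplus[K_1 \times K_2]$. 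This repairs the first bad step, at the cost of inserting a signed permutation on the left and a nonnegative column operation on the right.

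The main obstacle, and the combinatorial heart of the argument, is then to propagate $S$ and $Q$ through the remaining factors $E_{j_0+1}, \ldots, E_k$, iterate the repair procedure on any further offending indices, and verify that the accumulated auxiliary signed permutations and positive column operations eventually cancel so that the overall equivalence reduces to exactly $(U, I_{K_2})$. This is precisely the bookkeeping performed in \cite[Proof of Lemma~5.5]{MR1907894}; because Lemmas~\ref{lem:BoyleLemma5.3} and~\ref{lem:BoyleLemma5.4} hold verbatim for rectangular matrices once the row rank hypothesis replaces invertibility, no square-matrix-specific feature of Boyle's argument needs to be revisited, and the same combinatorial pattern delivers the result.
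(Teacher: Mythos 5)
Your reduction to Lemmas~\ref{lem:BoyleLemma5.3} and~\ref{lem:BoyleLemma5.4} is the right opening move, and your overall strategy (follow Boyle's Lemma~5.5) matches the paper. But the proof as written stops exactly where the real content begins. You describe the accumulation of the auxiliary signed permutations $S = S_k\cdots S_1$ and nonnegative column operations $Q = Q_1\cdots Q_k$ into a positive equivalence $(SU,Q)\colon B \to SUBQ$, and then assert that these ``eventually cancel so that the overall equivalence reduces to exactly $(U, I_{K_2})$.'' That cancellation is not automatic bookkeeping; it is the crux of the lemma, and it rests on three facts you never state: (i)~because both $UBQ$ and $SUBQ$ are strictly positive and $S$ is a signed permutation, $S$ must in fact be an honest permutation matrix (no sign changes can survive conjugating a strictly positive matrix to a strictly positive matrix); (ii)~since $\det S = 1$ and $K_1 \geq 3$, any such $S \neq I$ is a product of $3$-cycles, and a $3$-cycle admits an explicit factorization $C = C_0C_1\cdots C_5$ into basic elementary matrices each of whose partial products is nonnegative with no zero row, so $(S^{-1},I)\colon SUBQ \to UBQ$ is a positive equivalence; and (iii)~$(I,Q^{-1})\colon UBQ \to UB$ is a positive equivalence because $Q$ is a product of nonnegative basic elementary matrices and $UB > 0$. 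Step~(ii) is precisely where the hypothesis $K_1 \geq 3$ enters; without it the argument fails (a $2$-cycle has determinant $-1$ and cannot appear, and there is no room for $3$-cycles). Your proposal never invokes $K_1 \geq 3$ at all, which is a sign that the hard part has been elided rather than carried out.

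A secondary issue: your framing as an ``induction on the number of indices $j$ at which $B_j$ fails to be strictly positive'' does not match how the repair procedure actually works. Lemma~\ref{lem:BoyleLemma5.3} is applied at \emph{every} step $j=1,\dots,k$ (each application needs a strictly positive input, which the previous step guarantees), not merely at offending steps, and each application conjugates all later $E_j$ by the signed permutation produced at that step. Phrasing it as a repair of bad steps obscures this and makes it hard to see why a single global $S$ and $Q$ emerge cleanly at the end. I would recommend rewriting the argument to march through all $k$ steps exactly as in Boyle's proof, and then supplying the three points above to close it.
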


\begin{proof}
By Lemma~\ref{lem:BoyleLemma5.4}, we can write $U$ as a product of basic elementary matrices $U = E_{k} E_{k-1} \cdots E_{1}$, such that for $1 \leq j \leq k$, the matrix $E_{j} \cdots E_{1}B$ has no zero row.  By Lemma~\ref{lem:BoyleLemma5.3}, given the pair $( E_{1} , B)$, there is a nonnegative $Q_{1}$ which is a product of nonnegative basic elementary matrices and a signed permutation $S_{1}$ such that $\ftn{ (S_1E_{1} , Q_{1} ) }{ B }{ S_{1} E_{1} B Q_{1} }$ is a positive equivalence through \Mplus[K_{1}\times K_{2}].  Note that 
\begin{align*}
UBQ_{1} = S_{1}^{-1} [ S_{1} E_{k} S_{1}^{-1} ] \cdots [ S_{1} E_{2} S_{1}^{-1} ] [ S_{1} E_{1} ] B Q_{1}.
\end{align*}  
Now, for $2 \leq j \leq k$, the matrix $S_{1} E_{j} S_{1}^{-1}$ is again a basic elementary matrix $E_{j}'$.  Since $E_{j}' \cdots E_{2}' ( S_{1} E_{1} B Q_{1} ) = S_{1} E_{j} \cdots E_{2} E_{1} B Q_{1}$ for $2 \leq j \leq k$  and since $E_{j} \cdots E_{2} E_{1} B Q_{1}$ has no zero rows, and $S_{1}$ is a signed permutation, we have that $E_{j}' \cdots E_{2}' ( S_{1} E_{1} B Q_{1} )$ has no zero rows for all $2 \leq j \leq k$.  

Using Lemma~\ref{lem:BoyleLemma5.3}, for the pair $( S_{1} E_{2} S_{1}^{-1} , S_{1} E_{1} B Q_{1} )$, we get a signed permutation matrix $S_{2}$ and a nonnegative $Q_{2}$ which is a product of nonnegative basic elementary matrices such that 
\begin{align*}
\ftn{ ( S_{2} [ S_{1} E_{2} S_{1}^{-1} ] , Q_{2} ) }{ S _{1} E_{1} B Q_{1} }{ S_{2} [ S_{1} E_{2} S_{1} ]^{-1} S_{1} E_{1} B Q_{1} Q_{2} }
\end{align*}
is a positive equivalence through \Mplus[K_{1}\times K_{2}].  Thus, we get a positive equivalence through \Mplus[K_{1}\times K_{2}] 
\begin{align*}
\ftn{ ( [S_{2} S_{1} E_{2} S_{1}^{-1} ] [ S_{1} E_{1} ] , Q_{1}Q_{2} ) }{ B }{ S_{2} S_{1} E_{2}  E_{1} B Q_{1} Q_{2} }
\end{align*}
and we observe that 
\begin{align*}
U B Q_{1} Q_{2} &= S_{1}^{-1} S_{2}^{-1} [ S_{2} S_{1} E_{k} S_{1}^{-1} S_{2}^{-1} ] \cdots \\ 
& \qquad\cdots [ S_{2} S_{1} E_{3} S_{1}^{-1} S_{2}^{-1} ][ S_{2} S_{1} E_{2} S_{1}^{-1} ] [ S_{1} E_{1} ] B Q_{1} Q_{2}.
\end{align*}
Continue this, to obtain a signed permutation matrix $S = S_{k} \cdots S_{1}$ and a nonnegative matrix $Q = Q_{1} Q_{2} \cdots Q_{k}$ that is a product of nonnegative basic elementary matrices such that 
\begin{align*}
U B Q = S^{-1} [ S_{k} \cdots S_{1} E_{k} S_{1}^{-1} \cdots S_{k-1}^{-1} ] \cdots [ S_{2} S_{1} E_{2} S_{1}^{-1} ][ S_{1} E_{1} ] BQ = S^{-1} ( SU B Q )
\end{align*}
and $\ftn{ ( SU, Q ) }{ B }{ SU B Q }$ is a positive equivalence through \Mplus[K_{1}\times K_{2}].  

We claim that the equivalence $\ftn{ ( S, I_{K_{2}} ) }{ U B Q }{ SU B Q }$ is a positive equivalence through \Mplus[K_{1}\times K_{2}].  Since $S$ is a product of signed transposition matrices, it may be described as a permutation matrix in which some rows have been multiplied by $-1$.  Since $UBQ$ and $SUBQ$ are strictly positive, it must be that $S$ is a permutation matrix.  Also, $\det( S ) = 1$, so if $S \neq I_{K_{1}}$, then $S$ is a permutation matrix which is a product of $3$-cycles.  So it is enough to realize the positive equivalence through \Mplus[K_{1}\times K_{2}] in the case that $S$ is the matrix of a $3$-cycle.  For this we write the matrix
\begin{align*}
C = \begin{pmatrix} 0 & 1 & 0 \\ 0 & 0 & 1 \\ 1 & 0 & 0 \end{pmatrix}
\end{align*}
as the following product $C_{0} C_{1} C_{2} C_{3} C_{4} C_{5}$:
\begin{align*}
\begin{pmatrix}
1 & 0 & 0 \\
0 & 1 & 0 \\
0 & -1 & 1
\end{pmatrix}
\begin{pmatrix}
1 & 0 & 0 \\
-1 & 1 & 0 \\
0 & 0 & 1
\end{pmatrix}
\begin{pmatrix}
1 & 0 & -1 \\
0 & 1 & 0 \\
0 & 0 & 1
\end{pmatrix}
\begin{pmatrix}
1 & 1 & 0 \\
0 & 1 & 0 \\
0 & 0 & 1
\end{pmatrix}
\begin{pmatrix}
1 & 0 & 0 \\
0 & 1 & 0 \\
1 & 0 & 1
\end{pmatrix}
\begin{pmatrix}
1 & 0 & 0 \\
0 & 1 & 1 \\
0 & 0 & 1
\end{pmatrix}.
\end{align*}
For $0 \leq i \leq 5$, the matrix $C_{i} C_{i+1} \cdots C_{5}$ is nonnegative and has no zero row. Therefore, the equivalence $\ftn{ ( C, I ) }{ D }{ CD }$ is a positive equivalence through \Mplus[K_{1}\times K_{2}] whenever $D \in \Mplus[K_{1}\times K_{2}]$.  Therefore, $\ftn{ ( S, I_{K_{2}} ) }{ U B Q }{ SU B Q }$ is a positive equivalence through \Mplus[K_{1}\times K_{2}] proving the claim.  Therefore, $\ftn{ ( S^{-1} , I_{K_{2} } ) }{ SU BQ }{ UBQ }$ is a positive equivalence through \Mplus[K_{1}\times K_{2}].  Since $Q$ is the product of nonnegative basic elementary matrices and $UB \in \Mplus[K_{1}\times K_{2}]$, the equivalence $\ftn{ ( I_{K_{1}} , Q ) }{ UB }{ UBQ }$ is a positive equivalence through \Mplus[K_{1}\times K_{2}].  Thus, $\ftn{ ( I_{K_{1}} , Q^{-1} ) }{ UBQ }{ UB }$ is a positive equivalence through \Mplus[K_{1}\times K_{2}].  Now the composition of positive equivalences through \Mplus[K_{1}\times K_{2}]
\begin{align*}
\xymatrix{
B \ar[rr]^-{ ( SU, Q ) }_-{+}  & & SU BQ \ar[rr]^-{ ( S^{-1} , I_{K_{2} } ) }_-{+} & & UBQ \ar[rr]^-{ ( I_{K_{1}} , Q^{-1} ) }_-{+} & & UB
}
\end{align*}
is positive equivalence through \Mplus[K_{1}\times K_{2}] but the composition of these equivalences is equal to the equivalence $\ftn{ ( U, I_{K_{2} } ) }{ B }{ UB }$.  Hence, the equivalence $\ftn{ ( U, I_{K_{2} } ) }{ B }{ UB }$ is a positive equivalence through \Mplus[K_{1}\times K_{2}].
\end{proof}

The proof of the next lemma is similar to the proof of \cite[Lemma~5.5]{MR1907894}.  Since there are some differences between the two proofs we provide the entire argument.  

\begin{lemma}[\cf\ {\cite[Lemma~5.5]{MR1907894}}]\label{lem:BoyleLemma5.5b}
Let $B$ and $B'$ be elements of \Mplus[K_{1}\times K_{2}] with $K_{1} ,K_{2} \geq 3$, and the rank of $B$ and $B'$ at least 2.  Suppose $U \in \SLZ[K_{1}]$ and $W \in \SLZ[K_{2}]$ such that $U B$ has at least one strictly positive entry and $UB = B' W$.  Then the equivalence $\ftn{ (U , W^{-1} ) }{ B }{ B' }$ is a positive equivalence through \Mplus[K_{1}\times K_{2}]. 
\end{lemma}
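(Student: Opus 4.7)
The plan is to reduce to Lemma~\ref{lem:BoyleLemma5.5a} (together with a right-multiplication analogue obtained by transposition) by first producing a matrix $P\in\SLZ[K_{2}]$, expressed as a product of nonnegative basic elementary matrices, such that $UBP>0$. With such a $P$ in hand I would build the chain
\[
B\xrightarrow{(I_{K_{1}},\,P)} BP \xrightarrow{(U,\,I_{K_{2}})} UBP=B'WP \xrightarrow{(I_{K_{1}},\,(WP)^{-1})} B',
\]
whose composite data is precisely $(U,\,P\cdot(WP)^{-1})=(U,W^{-1})$, exactly the equivalence we must realise.

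Showing that each arrow is a positive equivalence through $\Mplus[K_{1}\times K_{2}]$ is then routine: the first because $B>0$ and adding a nonnegative column to a strictly positive column keeps everything strictly positive; the second by direct application of Lemma~\ref{lem:BoyleLemma5.5a}, since $BP$ still has row rank $\geq 2$ (as $P$ is invertible) and $UBP>0$ by construction; the third by reversing the equivalence $\ftn{(I_{K_{1}},\,WP)}{B'}{B'WP}$, which is positive by the right-multiplication analogue of Lemma~\ref{lem:BoyleLemma5.5a}---proved by transposing every step of its proof, so that row operations become column operations---and reversing a positive equivalence is again a positive equivalence since the inverse of each basic elementary matrix is again a basic elementary matrix. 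Concatenating the three sub-equivalences then delivers the desired $\ftn{(U,W^{-1})}{B}{B'}$.

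The main obstacle is the construction of $P$. The ingredients available are: $UB=B'W$ has no zero row, since $B'>0$ and $W$ is invertible (a zero row of $B'W$ would force a zero row of $B'$); the hypothesis that $UB$ carries at least one strictly positive entry; and the observation that every row of $UB$ is a strictly positive integer combination of rows of $W$, because $B'>0$. From these, I would adapt the column-propagation technique from the proof of Lemma~\ref{lem:BoyleLemma5.3}, iteratively adding nonnegative multiples of the column carrying a strictly positive entry and repairing the remaining columns by further nonnegative basic elementary column operations, until the whole product $UBP$ is strictly positive. This is the rectangular analogue of the technical heart of Boyle's proof of the square case in \cite[Lemma~5.5]{MR1907894}; the standing hypotheses $K_{1},K_{2}\geq 3$ and rank $\geq 2$ provide precisely the slack needed for this combinatorial procedure to terminate while remaining in $\Mplus[K_{1}\times K_{2}]$.
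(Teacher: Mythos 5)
The construction of your $P$ is where the argument breaks, and not merely for want of detail: such a $P$ need not exist. A product of nonnegative basic elementary matrices is entrywise nonnegative, so if some row of $UB$ has all entries $\leq 0$, the corresponding row of $UBP$ is a nonnegative combination of nonpositive numbers and stays $\leq 0$ for every choice of $P$. The hypotheses do not forbid this, since only one entry of $UB$ is required to be positive. For instance, take $B'=\begin{smallpmatrix}1&1&2\\1&1&1\\2&1&1\end{smallpmatrix}$ and $W=\begin{smallpmatrix}0&1&0\\1&0&0\\-1&-1&-1\end{smallpmatrix}\in\SLZ[3]$; then $UB=B'W=\begin{smallpmatrix}-1&-1&-2\\0&0&-1\\0&1&-1\end{smallpmatrix}$ has a positive entry at $(3,2)$ but a strictly negative first row, and one may take $U^{-1}=\begin{smallpmatrix}-2&1&0\\-1&0&0\\-3&0&1\end{smallpmatrix}\in\SLZ[3]$, which gives $B=U^{-1}B'W=\begin{smallpmatrix}2&2&3\\1&1&2\\3&4&5\end{smallpmatrix}>0$ of full rank. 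Every hypothesis of the lemma is met, yet no admissible $P$ makes $UBP>0$.

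The reduction to the case $UB>0$ must therefore also involve row operations on the left, which is exactly what the paper's proof does. Starting from the positive entry $(UB)(i,j)>0$, it first adds column $j$ into the remaining columns to make all of row $i$ of $UBQ$ strictly positive (a nonnegative $Q$ on the right, keeping $B\to BQ$ a positive equivalence since $B>0$), and then uses that positive row as a pivot, adding it into the other rows to make the whole matrix $(PU)(BQ)=(PB')(WQ)$ strictly positive (a nonnegative $P$ on the left, keeping $B'\to PB'$ positive since $B'>0$). Replacing the data $(U,B,B',W)$ by $(PU,BQ,PB',WQ)$ then yields the positive product, after which the argument runs just as in your second and third steps: apply Lemma~\ref{lem:BoyleLemma5.5a} to $UB$, apply its transposed form to $(B')^{\mathsf{T}}$ and $W^{\mathsf{T}}$ to realise $(I,W)$ positively, and reverse. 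Once you insert this two-sided preliminary reduction, the remainder of your chain is correct and coincides with the paper's proof.
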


\begin{proof}
We will first reduce to the case that $UB > 0$.  By assumption $(UB)(i,j) > 0$ for some $(i,j)$.  We can repeatedly add column $j$ to other columns of until row $i$ of $UB$ has all entries strictly positive.  This corresponds to multiplying from the right by a nonnegative matrix $Q$ in \SLZ[K_{2}], where $Q$ is the product of nonnegative basic elementary matrices, giving $UB Q = B' W Q$.  Then we can repeatedly add row $i$ of $UBQ$ to other rows until all entries are positive.  This corresponds to multiplying from the left by a nonnegative matrix $P$ in \SLZ[K_{1}], where $P$ is the product of nonnegative basic elementary matrices, giving $(PU)(BQ) = (PB')(WQ) > 0$.  We also have positive equivalences through \Mplus[K_{1}\times K_{2}] given by 
\begin{align*}
\ftn{ ( I , Q ) }{ B }{ BQ } \quad \text{and} \quad \ftn{ (P,I) }{ B' }{ PB' }.
\end{align*}

Note that the equivalence $\ftn{ (U, W^{-1}) }{ B }{ B' }$ is the composition of equivalences, $\ftn{( I, Q ) }{ B }{ BQ }$ followed by $\ftn{ (PU, (WQ)^{-1} ) }{ BQ }{ PB' }$ followed by $\ftn{ ( P^{-1} , I ) }{ PB' }{ B' }$.  Since $\ftn{( I, Q ) }{ B }{ BQ }$ and $\ftn{ ( P^{-1} , I ) }{ PB' }{ B' }$ are positive equivalences through \Mplus[K_{1}\times K_{2}], it is enough to show that the equivalence $\ftn{ (PU, (WQ)^{-1} ) }{ BQ }{ PB' }$ is a positive equivalence through \Mplus[K_{1}\times K_{2}].  Therefore, after replacing $(U, B, B' ,W)$ with $(PU, BQ, PB', WQ)$, we may assume without loss of generality that $UB > 0$.

By Lemma~\ref{lem:BoyleLemma5.5a}, the equivalence $\ftn{ ( U , I_{K_{2}} ) }{ B }{ UB }$ is a positive equivalence through \Mplus[K_{1}\times K_{2}]. Therefore, by Lemma~\ref{lem:BoyleLemma5.5a}, $\ftn{ ( ( W)^{T} , I_{K_{1}} ) }{ ( B')^{T} }{ W^{T} ( B')^{T} }$ is a positive equivalence through \Mplus[K_{2}\times K_{1}] which implies the equivalence $\ftn{ ( I_{K_{1}} , W )}{ B' }{ B' W }$ is a positive equivalence through \Mplus[K_{1}\times K_{2}].  Thus, the equivalence $\ftn{ ( I_{K_{1}} , W^{-1} ) }{ B' W }{ B' }$ is a positive equivalence through \Mplus[K_{1}\times K_{2}].  Since the equivalence $\ftn{ (U , W^{-1} ) }{ B }{ B' }$ is the composition of positive equivalences: $\ftn{ ( U, I_{K_{2}} ) }{ B }{ UB }$ followed by $\ftn{ (I_{K_{1}} , W^{-1} ) }{ B' W }{ B' }$, the equivalence $\ftn{ (U , W^{-1} ) }{ B }{ B' }$ a positive equivalence through \Mplus[K_{1}\times K_{2}].  
\end{proof}

\begin{theorem}[{\cf\ \cite[Theorem~5.1]{MR1907894}}]\label{thm:BoyleTheorem5.1}
Let $K_{1} ,K_{2} \geq 3$ and let $B \in \Mplus[K_{1}\times K_{2}]$.  Suppose $U \in \SLZ[K_{1}]$ and $V \in \SLZ[K_{2}]$ such that $U B V \in \Mplus[K_{1}\times K_{2}]$ and suppose that $X \in \SLZ[K_{1}]$ and $Y \in \SLZ[K_{2}]$ such that 
\begin{align*}
X B Y = 
\begin{pmatrix}
\begin{matrix} 1 & 0  \\
0 & 1 
\end{matrix} & 0 \\ 
0 & F
\end{pmatrix}.
\end{align*}
Then the equivalence $\ftn{ ( U , V ) }{ B }{ UB V }$ is a positive equivalence through \Mplus[K_{1}\times K_{2}].
\end{theorem}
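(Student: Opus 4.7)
The plan is to reduce the theorem to Lemma~\ref{lem:BoyleLemma5.5b}. Setting $W = V^{-1}$ and $B' = UBV$, the identity $UB = B'W$ is automatic; both $B$ and $B'$ lie in $\Mplus[K_1\times K_2]$ by hypothesis, and since $XBY$ has two $1$'s on its diagonal, both $B$ and $B'$ have rank at least $2$. The hypotheses of Lemma~\ref{lem:BoyleLemma5.5b} are therefore satisfied \emph{except possibly for} the requirement that $UB$ contain at least one strictly positive entry. Supplying this last piece is the crux of the proof --- a priori $UB$ need not have any positive entry, as is easy to see from small examples with $UBV$ positive but $UB$ entirely nonpositive.

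To handle this, I will mimic Boyle's proof of Theorem~5.1 in \cite{MR1907894}. The strategy is to factor $U = U_2 U_1$ in $\SLZ[K_1]$ and $V = V_1 V_2$ in $\SLZ[K_2]$ so that the intermediate matrix $C := U_1 B V_1$ lies in $\Mplus[K_1 \times K_2]$, with in addition $U_1 B > 0$ and $U_2 C$ possessing at least one strictly positive entry. The two $1$'s in the Smith normal form of $B$ provide the rank needed for Lemma~\ref{lem:BoyleLemma5.4} to be invoked, ensuring that the elementary factorizations of $U$ can be chosen with no intermediate zero row, and Lemma~\ref{lem:BoyleLemma5.3} then supplies the signed-permutation corrections that absorb sign flips while keeping every partial product in $\Mplus[K_1\times K_2]$.

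Once such a factorization is in hand, the conclusion assembles cleanly. Lemma~\ref{lem:BoyleLemma5.5a} (used first for $(U_1, I)\colon B \to U_1 B$, then by transposition for $(I, V_1)\colon U_1 B \to C$) gives that $(U_1, V_1)\colon B \to C$ is a positive equivalence through $\Mplus[K_1\times K_2]$; and Lemma~\ref{lem:BoyleLemma5.5b}, applied with source $C$, target $UBV$, left factor $U_2$ and right factor $V_2^{-1}$ (which are compatible because $U_2 C = U_2 U_1 B V_1 = UBV_1 = (UBV)V_2^{-1}$), gives that $(U_2, V_2)\colon C \to UBV$ is a positive equivalence. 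Concatenation produces the desired positive equivalence $(U, V)\colon B \to UBV$ through $\Mplus[K_1\times K_2]$.

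The main obstacle is the combinatorial bookkeeping required to produce the factorization $(U_1, V_1, U_2, V_2)$ in the first place: one must interlace the elementary operations coming from $U$ and $V$ in such a way that every partial product lies in $\Mplus[K_1\times K_2]$, all determinants remain $1$, and the final composite equals $(U,V)$. This mirrors almost verbatim Boyle's square-case argument, the novelty being the care needed about rectangular shapes when invoking Lemma~\ref{lem:BoyleLemma5.4}; the Smith-normal-form hypothesis with two $1$'s plays the role that the nonsingularity of $B$ plays in Boyle's setting, guaranteeing that the rectangular generalizations of his intermediate lemmas remain applicable throughout the induction.
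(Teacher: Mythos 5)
You have correctly diagnosed the crux: both $B$ and $B'=UBV$ lie in $\Mplus[K_1\times K_2]$, the ranks are handled, and the only missing hypothesis for Lemma~\ref{lem:BoyleLemma5.5b} is that the intermediate product $UB$ contain a strictly positive entry. However, your proposal stops short of a proof at exactly the point where one is needed, and the factorization scheme you describe is not quite the one that works.

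Two specific problems. First, the construction of the factorization $U=U_2U_1$, $V=V_1V_2$ is only asserted, not produced; you write that ``the main obstacle is the combinatorial bookkeeping required to produce the factorization,'' but this is not bookkeeping --- it is the entire content of the theorem. The hypothesis that $XBY$ has a $2\times 2$ identity in its upper-left corner is what one must actually \emph{use} here, and your proposal only invokes it to guarantee ranks. The paper's proof exploits it much more aggressively: for any $H\in\SLZ[2]$ one gets a self-equivalence $(X^{-1}G_{H,1}X,\,YG_{H,2}^{-1}Y^{-1})$ of $B$, and by choosing $H=H_m=\begin{smallpmatrix}m&-1\\1&0\end{smallpmatrix}H'$ with $H'$ making the first row of $H'[(XB)(12;*)]$ change sign and $m$ large, one forces both $X^{-1}G_{H_m,1}XB$ and $UX^{-1}G_{H_m,1}XB$ to acquire a positive entry (the sign analysis using the first column $c$ of $X^{-1}$ and the row $r$ is essential). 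This allows two applications of Lemma~\ref{lem:BoyleLemma5.5b} whose composition, after inverting the first, is $(U,V)$. None of this construction --- the real argument --- appears in your proposal.

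Second, the factorization you propose demands $U_1B>0$ so that Lemma~\ref{lem:BoyleLemma5.5a} applies directly; this is strictly stronger than what is achievable or needed. The paper never arranges the intermediate product of a left factor with $B$ to be entirely positive; it arranges only a single positive entry in $X^{-1}G_{H_m,1}XB$ and passes through Lemma~\ref{lem:BoyleLemma5.5b}, whose proof contains the $P,Q$-massaging that your $U_1B>0$ requirement is implicitly trying to externalize. As stated, it is not clear your stronger scheme can always be realized simultaneously with $C=U_1BV_1>0$, so even the reduction is unjustified. In short: the plan correctly locates the obstruction and the target lemma, but the genuinely new idea of this theorem --- the $H_m$-perturbed self-equivalence tuned via the Smith normal form hypothesis --- is absent, and the substitute factorization scheme is unproven and likely too strong.
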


\begin{proof}
Note that for any $H \in \SLZ[2]$, the $K_{1} \times K_{1}$ matrix $G_{H, 1}$ and $K_{2} \times K_{2}$ matrix $G_{H, 2}$, given by 
$$G_{H, 1} = \begin{pmatrix} H & 0 \\ 0 & I_{K_{1}-2}  \end{pmatrix}\quad\text{and}\quad G_{H, 2} = \begin{pmatrix} H & 0 \\ 0 & I_{K_{2}-2}  \end{pmatrix},$$ 
give a self-equivalence $\ftn{ ( X^{-1} G_{H,1} X , Y G_{H,2}^{-1} Y^{-1} ) }{ B }{ B}$.

For a matrix $Q$, we let $Q( 12; * )$ denote the submatrix consisting of the first two rows.  Since $( XBY)( 12; * )$ has rank $2$ and $Y$ is invertible, we have that $(XB) ( 12; * )$ has rank two.  Therefore, there exists $H' \in \SLZ[2]$ such that the first row $r = \begin{pmatrix} r_{1}, \dots, r_{K_{2}} \end{pmatrix}$ of $H' [ ( XB ) ( 12; * ) ]$ has both a positive entry and a negative entry.   

Let $c = \begin{pmatrix} c_{1} \\ \vdots \\ c_{K_{1}} \end{pmatrix}$ denote the first column of $X^{-1}$, and note that it is nonzero.  Since $cr$ is the $K_{1} \times K_{2}$ matrix with $(i,j)$ entry equal to $c_{i} r_{j}$, we have that $cr$ has a positive and a negative entry.  For each $m \in \N$, set $H_{m} = \begin{pmatrix} m & -1 \\ 1 & 0 \end{pmatrix} H'$.  Choose $m$ large enough such that the entries of the two matrices $X^{-1} G_{H_{m}, 1} X B$ and $mc r$ will have the same sign wherever the entries of $mcr$ are nonzero.  In particular, $X^{-1} G_{H_{m} , 1 } X B$ will have a positive entry.  By Lemma~\ref{lem:BoyleLemma5.5b}, $\ftn{ ( X^{-1} G_{ H_{m} , 1} X ,   Y G_{ H_{m} , 2}^{-1} Y^{-1} ) }{ B }{ B }$ gives a positive equivalence through \Mplus[K_{1}\times K_{2}].      

Similarly for large enough $m$, the entries of $U X^{-1} G_{H_{m} , 1} X B$ will agree in sign with the entries $Ucr$ whenever the entries of the latter matrix are nonzero.  Since $U$ is invertible, the matrix $Ucr$ is nonzero, and thus contains positive and negative entries, because $r$ does.  Therefore, $U X^{-1} G_{H_{m} , 1} X B$ contains a positive entry.  By Lemma~\ref{lem:BoyleLemma5.5b}, 
\begin{align*}
\ftn{ ( U X^{-1} G_{ H_{m} , 1} X ,   Y G_{ H_{m} , 2}^{-1} Y^{-1} V) }{ B }{ B' }
\end{align*}
gives a positive equivalence through \Mplus[K_{1}\times K_{2}] with $B' = UBV$.  Hence, the equivalence $\ftn{ ( U,V) }{ B }{ B' }$ is a positive equivalence through \Mplus[K_{1}\times K_{2}] since it is the composition of positive equivalences through \Mplus[K_{1}\times K_{2}]: 
\begin{align*}
\ftn{ ( X^{-1} G_{ H_{m} , 1}^{-1} X  , Y G_{ H_{m} , 2} Y^{-1} ) }{ B }{ B }
\end{align*}
followed by 
\begin{equation*}
\ftn{ ( U X^{-1} G_{ H_{m} , 1} X ,   Y G_{ H_{m} , 2}^{-1} Y^{-1} V) }{ B }{ B' } \qedhere
\end{equation*}
\end{proof}

\subsection{Factorization:  General case}

We now use the results of the previous section to prove a factorization for general $B, B' \in \MPplusZ$ with $n_{i}\neq 0$ that are \SLPE.  Again, many of the arguments follow the arguments of Boyle in \cite{MR1907894}.

\begin{lemma}[{\cf\ \cite[Lemma~4.6]{MR1907894}}]\label{lem:BoyleLemma4.6}
Let $B, B' \in\MPplusZ$ with $n_{i} \neq 0$ for all $i$.  If $\ftn{ (U, V) }{ B }{ B' }$ is an \SLPEe such that $U\{ i \}$ and $V \{ j\}$ are the identity matrices of the appropriate size whenever they are not the empty matrix, then $\ftn{ (U,V) }{ B }{ B' }$ is a positive equivalence.
\end{lemma}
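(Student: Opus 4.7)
The plan is to adapt Boyle's proof of \cite[Lemma~4.6]{MR1907894} to the rectangular block setting. Since each $U\{i\}$ and $V\{j\}$ is either empty or the identity, both $U$ and $V$ are block upper-triangular (in the $\preceq$-sense) with identity diagonal blocks. The equivalence $(U,V)\colon B \to B'$ factors as
\[
(U, V) = (U, I) \circ (I, V),
\]
so it suffices to show separately that $(I, V)\colon B \to BV$ and $(U, I)\colon BV \to B'$ are positive equivalences. By transposition the two cases are symmetric, and so I focus on $(I, V)$.

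First I would factor $V$ as a product of basic elementary matrices $E$ whose unique $\pm 1$ off-diagonal entry sits at a position $(p,q)$ with $p$ in some block $i$, $q$ in a block $j$, and $i \prec j$. Right-multiplying $B$ by $E$ adds or subtracts the $p$-th column of $B$ to the $q$-th column, altering only entries in row blocks $k$ with $k \preceq i$. For such $k$, both columns already have strictly positive entries by the definition of \MPplusZ (using $B\{k,i\} > 0$ for $k \prec i$ and $B\{i\} > 0$ for $k = i$). Hence any \emph{positive} basic elementary column operation automatically preserves membership in \MPplusZ. Moreover, since the operation is off-diagonal (in the block sense), no diagonal block of $B$ is affected, so the Smith-form and size conditions built into \MPplusZ are automatically maintained.

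The obstruction is therefore limited to \emph{negative} basic elementaries, where a subtraction could drop some entry to zero or below. The remedy, directly inherited from Boyle's argument, is to pre-insert compensating pairs of positive elementaries on an auxiliary column $p'$ chosen in the same source block $i$ (available because $n_i \geq 3$ when $B\{i\}$ is nonempty, and from $n_i \neq 0$ combined with positivity of neighbouring off-diagonal blocks in the general case): one first performs $N$ additions of column $p'$ into column $q$ to inflate column $q$, then safely carries out the subtraction, and finally schedules $N$ compensating subtractions of column $p'$ later in the factorization. By a careful ordering of these compensations --- inducting on the number of "unsafe" subtractions remaining --- every intermediate matrix is kept inside \MPplusZ.

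The main obstacle is the combinatorial bookkeeping: one must verify that the inserted compensations are themselves basic elementaries in $\SLPZ$ (so they respect the required block structure) and that the chain of positive-equivalence steps closes to give exactly $B \to BV$ as net effect. This is precisely the content of Boyle's inductive argument. The passage to rectangular diagonal blocks introduces no new difficulty, because all compensating operations occur within a fixed pair $(i,j)$ of column blocks and depend only on strict positivity of $B\{k,l\}$ for $k \preceq l$ together with the size conditions of \MPplusZ, none of which require the diagonal blocks to be square. Once $(I, V)$ is established, the symmetric treatment of $(U, I)$ (via transposition and row operations) yields the result.
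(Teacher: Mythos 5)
Your decomposition $(U,V)=(U,I)\circ(I,V)$ does not work, because the intermediate matrix $BV$ need not lie in \MPplusZ. With a two-block poset and $U=\begin{smallpmatrix}I & X\\ 0 & I\end{smallpmatrix}$, $V=\begin{smallpmatrix}I & Y\\ 0 & I\end{smallpmatrix}$, $B=\begin{smallpmatrix}B_1 & C\\ 0 & B_2\end{smallpmatrix}$, one has $BV=\begin{smallpmatrix}B_1 & B_1Y+C\\ 0 & B_2\end{smallpmatrix}$, and $B_1Y+C$ can have negative entries even though $B'\{1,2\}=B_1Y+C+XB_2>0$. Since a positive equivalence requires both endpoints to lie in \MPplusZ, the two sides cannot be disentangled. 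The paper instead realizes $(U,Q)\colon B\to UBQ$ with an \emph{auxiliary} nonnegative $Q\in\SLPZ$ on the right, and $(P,V^{-1})\colon B'\to PB'V^{-1}$ with an auxiliary nonnegative $P\in\SLPZ[\mathbf{m}]$ on the left, then splices these together; the compensation for a sign-dropping row operation is borrowed from a column addition and vice versa, not from a scheduled sequence of column additions followed by deferred column subtractions on the same side, which would simply return to the (possibly non-positive) value $BE$.

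The second gap is the claim that ``the passage to rectangular diagonal blocks introduces no new difficulty.'' That is precisely where the new difficulty lives. Here $n_i\neq 0$ for all $i$, but it may well happen that $m_i=0$ for some $i$, so a negative column operation landing in block $i$ cannot be protected by adding rows within block $i$ --- there are no such rows. (This also shows your transposition symmetry fails: the transposed hypothesis would be $m_i\neq 0$ for all $i$, which is not assumed.) The paper's treatment of the $(P,V^{-1})$ half is devoted to exactly this: it introduces the set $I=\{i_0,\dots,i_t\}$ of $\preceq$-maximal predecessors of $n+1$ with $m_{i_s}\neq 0$, and uses $U\{i_s\}=I$ together with $P_n\cdots P_2 UB=P_n\cdots P_2 B'V^{-1}$ to compute $(B_n'V_{n+1})\{i_s,n+1\}=B\{i_s,n+1\}>0$, which is what makes compensating row additions from block $i_s$ available. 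A sketch that keeps all compensations inside a single column-block pair $(i,j)$ fails exactly when $m_j=0$.
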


\begin{proof}
We will first find $Q$ in \SLPZ which is a product of nonnegative basic elementary matrices such that $\ftn{ ( U, Q ) }{ B }{ U B Q }$ is a positive equivalence. 
We may assume that $U$ is not the identity matrix. 
Factor $U = U_{n} \cdots U_{1}$ where for each $U_{t}$ there is an associated pair $( i_{t}, j_{t} )$ such that the following hold
\begin{itemize}
\item  $U_{t} = I$ except in the block $U_{t} \{ i_{t} , j_{t} \}$, where it is nonzero
\item if $s \neq t$, then $( i_{s} , j_{s} ) \neq ( i_{t} , j_{t} )$.
\end{itemize} 
Factor $U_{1} = U_{1}^{-} U_{1}^{+}$, where $U_{1}^{-}$ and $U_{1}^{+}$ are equal to $I$ outside the block $\{ i_{1} , j_{1} \}$, $U_{1}^{-} \{ i_{1} , j_{1} \}$ is the nonpositive part of $U_{1} \{ i_{1} , j_{1} \}$ and $U_{1}^{+} \{ i_{1} , j_{1} \}$ is the nonnegative part of $U_{1} \{ i_{1} , j_{1} \}$. Note that $U_{1}^{+}$ is a product of nonnegative basic elementary matrices in \SLPZ[\mathbf{m}] and $U_{1}^{-}$ is a product of nonpositive basic elementary matrices in \SLPZ[\mathbf{m}]. It is now clear that $\ftn{ ( U_{1}^{+} , I ) }{ B }{ U_{1}^{+}B }$ is a positive equivalence.  

Now, note that $U_{1}^{-} U_{1}^{+} B = U_{1}^{+} B$ outside the blocks $\{ i_{1} , k \}$ such that $i_{1} \prec j_{1} \preceq k$. Also note that $m_{i_{1}} \neq 0$ (since $U_1\neq I$). Since $n_{i} \neq 0$ for all $i$, we have that $B \{ i_{1} \}$ is not the empty matrix. Therefore, $B\{i_{1}\} > 0$ since $B\in\MPplusZ$. Hence, $(U_{1}^{+} B) \{ i_{1} \} > 0$ since $\ftn{ ( U_{1}^{+} , I ) }{ B }{ U_{1}^{+}B }$ is a positive equivalence. We can now add columns of  $(U_{1}^{+}B )\{ i_{1} \}$ to columns of $(U_{1}^{+}B) \{ i_{1} , k\}$ for all $i_{1} \prec j_{1} \preceq k$ enough times to obtain a $Q_{1}$ which is a product of nonnegative basic elementary matrices in \SLPZ such that $\ftn{ ( U_{1}^{-} , Q_{1} ) }{ U_{1}^{+} B }{ U_{1}^{-} U_{1}^{+} B Q_{1} }$ is a positive equivalence. Since $\ftn{ (U_{1} , Q_{1} ) }{ B }{ U_{1} B }$ is the composition of positive equivalences
\begin{align*}
\xymatrix{
B \ar[rr]^-{ ( U_{1}^{+} , I ) }_-{+} & & U_{1}^{+}B \ar[rr]^-{ ( U_{1}^{-} , Q_{1} ) }_-{+} & & U_{1} B Q_{1}
}
\end{align*}
we get that the equivalence $\ftn{ (U_{1} , Q_{1} ) }{ B }{ U_{1} B Q_{1} }$ is a positive equivalence.  

Repeat the process for the matrices $U_{1} B Q_{1}$ and $U_{2} U_{1} B Q_{1}$, we get $Q_{2}$ which is the product of nonnegative elementary matrices in \SLPZ such that the equivalence $\ftn{ ( U_{2} , Q_{2} ) }{ U_{1} B Q_{1} }{ U_{2} U_{1} B Q_{1} Q_{2} }$ is a positive equivalence. We continue this process to get $Q_{i}$ that is the product of nonnegative elementary matrices in \SLPZ for $1 \leq i \leq n$ such that $\ftn{ ( U, Q ) }{ B }{ U B Q }$ is a positive equivalence, where $Q = Q_{1}\cdots Q_{n}$. 

We now show that there exists $P$ that is a product of nonnegative basic elementary matrices in \SLPZ[\mathbf{m}] such that $\ftn{ ( P , V^{-1} ) }{ B' }{P B' V^{-1} }$ is a positive equivalence. 
Throughout the rest of the proof, if $M \in \MPplusZ$, then $M \{ \{1, 2, \dots, i \} \}$ will denote the block matrix whose $\{ s, r \}$ block is $M \{ s, r \}$ for all $1 \leq s, r \leq i$.  First note that there are matrices $V_2, \dots, V_N$ in \SLPZ such that $V^{-1} = V_2 V_3 \cdots V_N$, each $V_i$ is the identity matrix except for the blocks $V_i \{ l, i \}$, and 
\[
V_2 \cdots V_i = 
\begin{pmatrix}
V^{-1} \{ \{ 1, \dots, i \} \} & 0 \\
0 & I
\end{pmatrix}.
\]
Let $V_i^-$ be the matrix in \SLPZ that is the identity matrix except for the blocks $V_i \{ l, i \}$ and $V_i^- \{ l, i \}$ is the nonpositive part of $V_i\{ l , i\}$ and  let $V_i^+$ be the matrix in \SLPZ that is the identity matrix except for the blocks $V_i \{ l, i \}$ and $V_i^+ \{ l, i \}$ is the nonnegative part of $V_i\{ l , i\}$.  Note that $V_i^+ V_i^-$ is equal to the identity matrix except for the blocks $V_i \{ l, i \}$ and $(V_i^+ V_i^-)\{ l,i\} = V_i^+\{l, i\} + V_i^-\{l, i \} = V_i \{ l , i \}$.  Therefore, $V_i = V_i^+ V_i^-$

We will inductively construct matrices $P_2 , P_3, \dots, P_N$ in \SLPZ[\mathbf{m}] such that each $P_i$ is the product of nonnegative basic matrices such that each $P_i$ is the identity outside of the blocks $\{l, i \}$ for $l \prec i$ and for each $2 \leq i \leq N$, we have that $\ftn{ ( P_i  ,  V_i ) }{ P_{i-1} \cdots P_2 B' V_2 \cdots V_{i-1} }{  P_i \cdots P_2 B' V_2 \dots V_i }$ is a positive equivalence.  Note that if we have constructed $P_i$, then the composition of these positive equivalences gives a positive equivalence $\ftn{ ( P , V^{-1}) }{ B' }{ P B' V^{-1} }$, where $P = P_k \cdots P_2$.  Thus, the lemma holds.

We now prove the claim.  We first construct $P_2$.  Note that if $1$ is not a predecessor of $2$, then $V_2^+ = V_2^- = I$.  Therefore, $\ftn{ ( I , V_2 ) }{ B' }{ B' V_2 }$ is a positive equivalence.  Suppose $1 \preceq 2$.  Suppose $m_1 = 0$.  Then $B' V_2^+ V_2^- = B' V_2^- = B'$ which implies that $\ftn{ ( I, V_2^- ) }{ B' V_2^+ }{ B' V_2 }$ is a positive equivalence.  So, $\ftn{ ( I, V_2 ) }{ B' }{ B'V_2 }$ is a positive equivalence since it is the composition of the positive equivalences $( I, V_2^+ )$ and $( I, V_2^-)$.  Suppose $m_1 \neq 0$.  In this situation, we have two cases, $m_2 \neq 0$ and $m_2 = 0$.  

Suppose $m_2 \neq 0$.  Note that $B' V_2^+ V_2^-$ is equal to $B'$ except for the $\{1,2\}$ block. We have that $B' V_2^+ > 0$ since $B' > 0$ and $\ftn{ ( I , V_2^+ ) }{ B' }{ B'V_2^+ }$ is a positive equivalence.  Hence, we may add rows of $(B'V_2^+) \{2\}$ to rows of $(B'V_2^+)\{1,2\}$ to get a matrix $P_2$ in \SLPZ[\mathbf{m}] that is the product of nonnegative basic matrices and is the identity outside of the block $\{1,2\}$ such that $\ftn{ (P_2, V_2^- ) }{ B'V_2^+ }{ B' V_2 }$ is a positive equivalence.  Composing the positive equivalences $(I , V_2^+)$ and $( P_2 , V_2^- )$, we get a positive equivalence $\ftn{ ( P_2 , V_2 ) }{ B' }{ P_2 B V_2 }$.  

Suppose $m_2 = 0$.  Then 
\begin{align*}
(B' V_2) \{ 1, 2 \} &= B' \{1\}V_2\{1,2\} + B'\{1,2\} V_2 \{2\} \\
			&= B' \{1\} V^{-1} \{1,2\} + B'\{1,2\}  \\
			&= ( B' V^{-1} ) \{1,2\}, 
\end{align*}
since $V_2 \{ \{1,2\} \} = V^{-1} \{ \{1,2\} \}$ and $V_2 \{ 2 \} = V^{-1}  \{ 2 \} = I$.
Therefore, 
\begin{align*}
(B' V_2^+) V_2^-) \{1,2\} &= (B' V^{-1} ) \{1,2\} = ( UB ) \{ 1, 2 \} \\ 
&= U \{1 \} B \{1,2\} + U \{1, 2 \} B \{2 \} \\ 
&= B \{1,2\} > 0,
\end{align*} 
since $U \{1, 2\}$ is the empty matrix and $U\{1\} = I$.  Therefore $\ftn{ ( I , V_2^- ) }{ B' V_2^+ }{ B' V_2 }$ is a positive equivalence and by composing the positive equivalences $( I, V_2^+ )$ and $(I , V_2^-)$, we get a positive equivalence $\ftn{ (I, V_2 ) }{ B' }{ B' V_2 }$.

So, in all cases, we have found a matrix $P_2$ in \SLPZ[\mathbf{m}] that is the product of nonnegative basic elementary matrices and is the identity outside of the block $\{1,2\}$ such that $\ftn{ ( P_2 , V_2 ) }{ B' }{ P_2 B' V_2 }$ is a positive equivalence.  

Let $2 \leq n \leq N-1$ and suppose we have constructed $P_2 , P_3, \dots, P_n$ in \SLPZ[\mathbf{m}] such that each $P_i$ is the product of nonnegative basic matrices and $P_i$ is the identity outside of the blocks $\{l, i \}$ with $l \prec i$ and for each $2 \leq i \leq n$, we have that $\ftn{ ( P_i  ,V_i ) }{ P_{i-1} \cdots P_2 B' V_2 \cdots V_{i-1} }{  P_i \cdots P_2 B' V_2 \dots V_i }$ is a positive equivalence.  

To simplify the notation, we set $B_i' = P_i \cdots P_2 B' V_2 \dots V_i$.  Since $B_n' > 0$, we get a positive equivalence $\ftn{ ( I, V_{n+1}^+ ) }{ B_n' }{ B_n'V_{n+1}^+ }$.  Note that $B_n'V_{n+1}^+V_{n+1}^-$ is equal to $B_n'$ except for the blocks $\{ i , n+1 \}$ with $i \preceq n+1$.  

Suppose $m_{n+1} \neq 0$.  Then$(B_n' V_{n+1}^+) \{ n+1 \}  > 0$.  Hence, we may add rows of $( B_n' V_{n+1}^+) \{ n+1 \}$ to rows of $(B_n' V_{n+1}^+)\{i, n+1 \}$ for all $i \prec n+1$, to obtain a matrix $P_{n+1}$ in \SLPZ[\mathbf{m}] which is the product of nonnegative basic matrices and is the identity outside of the blocks $\{ i, n+1\}$ for $i \prec n+1$ such that $\ftn{ ( P_{n+1}, V_{n+1}^- )}{B_n'V_{n+1}^+ }{ P_{n+1}B_n'V_{n+1}}$ is a positive equivalence.  Composing the positive equivalences $( I , V_{n+1}^+ )$ and $( P_{n+1} , V_{n+1}^-)$, we get that $\ftn{ ( P_{n+1}, V_{n+1} ) }{ B_n'}{  P_{n+1} B_n' V_{n+1} }$ is a positive equivalence.  

Suppose $m_{n+1} = 0$.  Let $I = \{ i_0, \dots, i_t \}$ be the set of elements $i_s\in\mathcal{P}$ that satisfy $i_s \preceq n+1$, $m_{i_s} \neq 0$, and if $i_s \prec l \preceq n+1$, then $m_l = 0$.
Note that for all distinct $i_s,i_r\in I$, $i_s$ is not a predecessor of $i_r$.
Note that if $I = \emptyset$, then $B_n' V_{n+1}^+ V_{n+1}^- = B_n' V_{n+1}^- = B_n'$.  This would imply that $\ftn{ ( I, V_{n+1}^- )}{ B_n' V_{n+1}^+ }{ B_n' V_{n+1}}$ is a positive equivalence and hence $\ftn{ ( I , V_{n+1} ) }{ B_n' }{ B_n' V_{n+1} }$ is a positive equivalence. 

Suppose $I \neq \emptyset$.  Note that for each $i_s \in I$, 
\begin{align*}
(B_n'V_{n+1} )\{ i_s, n+1 \} &= \sum_{ i_s \preceq l \preceq n+1 }  (P_n \cdots P_2 B' ) \{ i_s , l \}  (V_2 \dots V_nV_{n+1})\{ l, n+1 \}
 \\  				&=\sum_{ i_s \preceq l \preceq n+1 }  (P_n \cdots P_2 B' ) \{ i_s , l \}  V^{-1}\{ l, n+1 \} \\ \\
 				&= ( ( P_n \cdots P_2 B' ) V^{-1} ) \{ i_s , n+1 \}
\end{align*}
 since $(V_2 \dots V_nV_{n+1})\{\{1, \dots n+1\}\} = V^{-1} \{ \{1, \dots, n+1 \} \}$.  Since $P_n \cdots P_2 U B = P_n \cdots P_2 B' V^{-1}$,
\begin{align*}
 ( ( P_n \cdots P_2 B' ) V^{-1} ) \{ i_s , n+1 \} &= ((P_n \cdots P_2 U) B) \{i_s, n+1 \} \\
 				&= \sum_{i_s \preceq l \preceq n+1 } (P_n \cdots P_2 U ) \{ i_s , l \}  B\{ l, n+1 \}. 
\end{align*}
Using the fact that $m_l = 0$ for all $i_s \prec l \preceq n+1$ and $(P_n \cdots P_2 U ) \{ i_s  \} = I$, we get that 
\begin{align*}
(B_n'V_{n+1} )\{ i_s, n+1 \} =  B \{ i_s, n+1 \}.
\end{align*}
Moreover, $(B_n'V_{n+1} )\{ i_s, n+1 \} = B \{ i_s, n+1 \} > 0$ because $m_{i_s} \neq 0$ and $B \in \MPplusZ$.

For each $l\prec n+1$, there exists an $s$ such that $l\preceq i_s$. 
Recall that $(B_n' V_{n+1}^+ )\{ i_s, n+1 \} > 0$, so if $l\prec i_s$, we may add rows of $(B_n'V_{n+1}^+ )\{ i_s, n+1 \}$ to rows of $( B_n'V_{n+1}^+)\{l , n+1 \}$, to get a matrix $P_{n+1}^l$ in \SLPZ[\mathbf{m}] that is the product of nonnegative basic elementary matrices and is the identity outside of the block $\{l,n+1\}$ such that $( P_{n+1}^l B_n' V_{n+1})\{ l, n+1 \} > 0$.  
Doing this for all $l \prec n+1$, we get a matrix $P_{n+1}$ in \SLPZ[\mathbf{m}] that is the product of nonnegative basic elementary matrices and is the identity outside of the blocks $\{l,n+1\}$ for $l \prec n+1$ such that $\ftn{  ( P_{n+1} , V_{n+1}^- )}{ B_n' V_{n+1}^+}{ P_{n+1} B_n' V_{n+1} }$ is a positive equivalence.  Composing the positive equivalences $( I, V_{n+1}^+ )$ and $( P_{n+1} , V_{n+1}^- )$, we get that $\ftn{  ( P_{n+1} , V_{n+1})}{ B_n' }{ P_{n+1} B_n' V_{n+1} }$ is a positive equivalence. 

In all cases, we get a matrix $P_{n+1}$ in \SLPZ[\mathbf{m}] that is the product of nonnegative basic elementary matrices and is the identity outside of the blocks $\{l,n+1\}$ for $l \prec n+1$ such that $\ftn{  ( P_{n+1} , V_{n+1})}{ B_n' }{ P_{n+1} B_n' V_{n+1} }$ is a positive equivalence.  The claim now follows by induction.
\end{proof}

The next lemma allows us to reduce the general case to the case that the diagonal blocks $U\{ i \}$ and $V\{j\}$ are the identity matrices of the appropriate sizes when they are not the empty matrices.  This will allow us to use Lemma~\ref{lem:BoyleLemma4.6} to get the desired positive equivalence.

\begin{lemma}[{\cite[Lemma~4.9]{MR1907894}}]\label{lem:BoyleLemma4.9}
Let $B, B' \in \MPplusZ$ with $n_{l} \neq 0$ for all $l$.  Fix $i$ with $m_{i} \neq 0$.    
\begin{itemize}
\item[(1)]  Suppose $E$ is a basic elementary matrix in \SLPZ[\mathbf{m}] such that $E \{ j, k \} = I \{ j, k \}$ when $(j,k) \neq (i, i)$ and 
\begin{align*}
\ftn{ ( E\{ i \} , I ) }{ B \{ i  \} }{ B' \{  i \}}
\end{align*}
is a positive equivalence.  Then there exists $V \in \SLPZ$ which is the product of nonnegative basic elementary matrices in \SLPZ such that $V \{  k \} = I$ for all $k$ and 
\begin{align*}
\ftn{ ( E, V ) }{ B }{ E B V }
\end{align*}
is a positive equivalence.  

\item[(2)]  Suppose $E$ is a basic elementary matrix in \SLPZ such that $E \{ j, k \} = I \{ j, k \}$ when $(j,k) \neq (i, i)$ and 
\begin{align*}
\ftn{ (  I, E\{ i \} ) }{ B \{  i \} }{ B' \{  i \}}
\end{align*}
is a positive equivalence.
Then there exists $U \in \SLPZ$ which is the product of nonnegative basic elementary matrices in \SLPZ such that $U \{ k \} = I$ for all $k$ and 
\begin{align*}
\ftn{ ( U, E ) }{ B }{ U B' E }
\end{align*}
is a positive equivalence.
\end{itemize}
\end{lemma}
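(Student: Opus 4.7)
I shall prove Part (1) directly and then observe that Part (2) follows by the completely symmetric construction obtained by exchanging the roles of rows and columns. For Part (1), write the unique nontrivial entry of $E\{i\}$ as $\epsilon$ at position $(a,b)$ with $\epsilon\in\{+1,-1\}$; applying $E$ on the left adds $\epsilon$ times row $b$ to row $a$ within block $i$. If $\epsilon=+1$ the row operation is nonnegative and $EB$ already lies in $\MPplusZ$, so I may take $V=I$. The substance of the argument concerns $\epsilon=-1$, in which case $(EB)\{i\}=B'\{i\}>0$ by hypothesis, but row $a$ of each off-diagonal block $(EB)\{i,k\}$ (for $i\prec k$ with $B\{i,k\}$ nonempty) may contain negative entries.

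The idea is to precompensate by column additions that exploit the strict positivity of $B\{i\}$. Concretely, for each pair $(k,d)$ with $i\prec k$, $B\{i,k\}$ nonempty, and $d$ a column index of block $k$, I fix any column $c$ of block $i$ and choose a positive integer $M_{k,d}$ large enough that
\[
M_{k,d}\cdot B'\{i\}(a,c)\,>\,B\{i,k\}(b,d)-B\{i,k\}(a,d);
\]
such an $M_{k,d}$ exists because $B'\{i\}(a,c)>0$. Let $V\in\SLPZ[\mathbf{n}]$ be the ordered product, over all such pairs $(k,d)$, of $M_{k,d}$ copies of the basic elementary matrix that places a single $+1$ at the position of block $(i,k)$ corresponding to adding column $c$ of block $i$ into column $d$ of block $k$. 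By construction $V$ is a product of nonnegative basic elementary matrices in $\SLPZ[\mathbf{n}]$ with $V\{l\}=I$ for every $l$.

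To check that $\ftn{(E,V)}{B}{EBV}$ is a positive equivalence, I examine the chain
\[
B\longrightarrow BV_1\longrightarrow BV_1V_2\longrightarrow\cdots\longrightarrow BV\longrightarrow EBV
\]
block-by-block. Each column-addition step replaces column $d$ of block $(l,k)$ (for every $l\preceq i$ with $m_l\neq 0$) by itself plus the corresponding nonnegative column of block $(l,i)$; this keeps every nonempty off-diagonal block strictly positive, leaves the diagonal blocks untouched, and hence keeps every intermediate matrix in $\MPplusZ$. For the final $E$-step, one checks that $(EBV)\{i\}=E\{i\}B\{i\}=B'\{i\}>0$; the blocks $(EBV)\{l,k\}$ with $l\neq i$ are unchanged from $BV$ and hence positive; on block $(i,k)$ the rows $r\neq a$ are inherited from $BV$ and remain positive, while
\[
(EBV)\{i,k\}(a,d)=B\{i,k\}(a,d)-B\{i,k\}(b,d)+M_{k,d}\,B'\{i\}(a,c)>0
\]
by the choice of $M_{k,d}$. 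Thus $EBV\in\MPplusZ$ and the chain is a positive equivalence.

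Part (2) follows by dualizing: after $E$ acts on the right, the potentially negative entries sit in column $y$ (where $y$ is the column of the nontrivial entry of $E\{i\}$) of the blocks $(BE)\{l,i\}$ for $l\prec i$ with $m_l\neq 0$. I construct $U$ as a product of nonnegative basic elementary matrices in $\SLPZ[\mathbf{m}]$, each adding (sufficiently many copies of) a row of $B'\{i\}$ into the offending row of $(BE)\{l,i\}$; the same block-by-block verification shows $\ftn{(U,E)}{B}{UBE}$ is a positive equivalence. The main obstacle in both parts is purely bookkeeping: confirming that a column (resp. row) addition in block $(i,k)$ (resp. $(l,i)$) does not disturb positivity elsewhere. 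This reduces to the fact that all cross-blocks $B\{l,i\}$ for $l\preceq i$ and $B\{i,k\}$ for $i\preceq k$ are either empty or strictly positive, as $B\in\MPplusZ$.
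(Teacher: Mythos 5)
Your argument is correct and follows essentially the same strategy as the paper's: when the elementary row subtraction would create negative entries in the off-diagonal blocks $\{i,k\}$ for $i\prec k$, precompensate by repeatedly adding a column of the strictly positive diagonal block $\{i\}$ into those columns, using the fact that $B'\{i\}=E\{i\}B\{i\}>0$ guarantees $B(a,c)>B(b,c)$, so a sufficiently large number of such additions dominates the potential negativity. The paper chooses a single uniform multiplier $M$ where you choose one $M_{k,d}$ per column, and in your sketch of part (2) the phrase ``column $y$'' should read the column index $z$ of the nontrivial entry of $E\{i\}$ rather than its row index, but these are immaterial; the decomposition of the chain $B\to BV\to EBV$ and the block-by-block positivity check match the paper's proof.
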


\begin{proof}
We prove (1).  The proof of (2) is similar.  Let $E(s,t)$ be the nonzero offdiagonal entry of $E$.  If $E(s,t) =1$, then set $V = I$.  Suppose $E( s, t ) = -1$.  So, $E$ acts from the left to subtract row $t$ from row $s$.  Since $m_{i} \neq 0$ and 
\begin{align*}
\ftn{ ( E\{ i \} , I ) }{ B \{ i  \} }{ B' \{ i\}},
\end{align*}
is a positive equivalence, we have that $(EB) \{ i \} > 0$.  Thus, there exists $r$ an index for a column through the $\{ i, i \}$ block such that $B'( s, r ) > B'( t, r )$.  Let $V$ be the matrix in \SLPZ which acts from the right to add column $r$ to column $q$, $M$ times, for every $q$ indexing a column through an $\{ i , j \}$ block for which $i \prec j$.  Choosing $M$ large enough, we have that $\ftn{ (E, I ) }{ B' V }{ E B' V }$ is a positive equivalence.  Therefore, $\ftn{ ( E, V ) }{ B }{ E B V  }$ is a positive equivalence since it is the composition of two positive equivalences: $\ftn{ ( I, V ) }{ B }{ B V }$ followed by $\ftn{ ( E , I ) }{ B V }{ E B V }$.
\end{proof}

We are now ready to prove the main result of this section.  This result will be used to show that if the adjacency matrices of $E$ and $F$ are \SLPE, then $E$ is move equivalent to $F$.  Consequently, $C^{*} (E)$ is Morita equivalent to $C^{*} (F)$.

\begin{theorem}[{\cite[Theorem~4.4]{MR1907894}}]\label{thm:BoyleTheorem4.4}
Let $B, B' \in \MPplusZ$ with $n_{i} \neq 0$ for all $i$.  Suppose there exist $U \in \SLPZ[\mathbf{m}]$ and $V \in \SLPZ$ such that $U B V = B'$.  Then $\ftn{ ( U, V ) }{ B }{ B' }$ is a positive equivalence. 
\end{theorem}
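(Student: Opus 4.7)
The plan is to mimic Boyle's proof of \cite[Theorem~4.4]{MR1907894}, factoring the \SLPEe $(U,V)\colon B\to B'$ as a composition of two positive equivalences: a first stage realizing the diagonal blocks of $U$ and $V$ (at the cost of nonnegative off-diagonal corrections), and a second stage whose restrictions to the diagonal blocks are identity matrices, handled by Lemma~\ref{lem:BoyleLemma4.6}.

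For the first stage, fix $i\in\calP$ with $m_i\neq 0$. The restriction of $(U,V)$ to the $\{i\}$-block gives an \SL-equivalence $U\{i\} B\{i\} V\{i\} = B'\{i\}$, and since $B,B'\in\MPplusZ$ we have $B\{i\},B'\{i\} > 0$, $m_i,n_i\geq 3$, and the Smith normal form of $B\{i\}$ has at least two $1$'s. The last condition produces $X\in\SLZ[m_i]$ and $Y\in\SLZ[n_i]$ with $XB\{i\}Y$ equal to $\begin{smallpmatrix}I_2 & 0 \\ 0 & F\end{smallpmatrix}$, so the hypotheses of Theorem~\ref{thm:BoyleTheorem5.1} are met and we obtain factorizations $U\{i\}=E_{r_i}\cdots E_1$ and $V\{i\}=F_1\cdots F_{s_i}$ into basic elementary matrices, with every intermediate product remaining in $\Mplus[m_i\times n_i]$. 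Extending each such basic elementary matrix by the identity outside the $\{i,i\}$-block produces a basic elementary matrix in $\SLPZ[\mathbf{m}]$ or $\SLPZ$, which Lemma~\ref{lem:BoyleLemma4.9} lifts to a positive equivalence in $\MPplusZ$ by multiplying by a compensating product of nonnegative basic elementary matrices with identity diagonal blocks. Composing these lifted steps across all $i$ with $m_i\neq 0$ produces a positive equivalence $(U_1,V_1)\colon B\to B_1$ in $\MPplusZ$ satisfying $U_1\{i\}=U\{i\}$ and $V_1\{i\}=V\{i\}$ for every $i$ with nonempty diagonal block.

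For the second stage, set $B_1 := U_1 B V_1 \in \MPplusZ$ and consider the residual pair $(UU_1^{-1},\,V_1^{-1}V)\colon B_1\to B'$. A direct block computation using the $\SLP$-upper-triangular structure gives $(UU_1^{-1})\{i\}=U\{i\}(U_1\{i\})^{-1}=I$ for every $i$ with $m_i\neq 0$ and similarly $(V_1^{-1}V)\{i\}=I$ for every $i$ (since $n_i\neq 0$ by hypothesis), so the hypothesis of Lemma~\ref{lem:BoyleLemma4.6} is satisfied and the residual pair is a positive equivalence. Composing it with $(U_1,V_1)$ yields $(U,V)\colon B\to B'$ as a positive equivalence, as claimed.

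The principal technical point is the bookkeeping of the compensating matrices from Lemma~\ref{lem:BoyleLemma4.9} as one cycles through different diagonal blocks. Since each compensation has identity diagonal blocks and contributes only nonnegatively to off-diagonal blocks, the diagonal blocks of the accumulated intermediate matrices evolve exactly according to the factorizations from Theorem~\ref{thm:BoyleTheorem5.1} (so that they lie in $\Mplus$ at every step), while the off-diagonal blocks stay strictly positive because they were strictly positive in $B$ and only receive further nonnegative additions. This inductive verification that every intermediate matrix remains in $\MPplusZ$ is where the argument needs the most care, but it proceeds step by step without any new ingredient beyond Lemmas~\ref{lem:BoyleLemma4.6} and~\ref{lem:BoyleLemma4.9} and Theorem~\ref{thm:BoyleTheorem5.1}.
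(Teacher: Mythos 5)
Your proposal is correct and follows exactly the same two-stage strategy as the paper's proof: first factor each diagonal-block equivalence $(U\{i\},V\{i\})$ into a chain of elementary positive equivalences via Theorem~\ref{thm:BoyleTheorem5.1}, lift each elementary step to a positive equivalence of the full matrix with identity diagonal blocks in the compensating factors via Lemma~\ref{lem:BoyleLemma4.9}, then handle the residual pair (which has identity diagonal blocks) via Lemma~\ref{lem:BoyleLemma4.6}. One small point of care that your write-up glosses slightly: when you assert that $(V_1^{-1}V)\{i\}=I$ ``for every $i$ (since $n_i\neq 0$)'', this follows from $V_1\{i\}=V\{i\}$, which your first stage only guarantees for $i$ with $m_i\neq 0$ — not from the hypothesis $n_i\neq 0$; the paper's own proof phrases the analogous step in the same brief way, so this is a matter of wording rather than a divergence from the paper's argument.
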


\begin{proof}
By Theorem~\ref{thm:BoyleTheorem5.1}, for each $i$ with $m_{i} \neq 0$, we have that $\ftn{ ( U\{ i \} , V \{ i \} ) }{ B \{ i \} }{B' \{ i \} }$ is a positive equivalence since by (ii) of Definition \ref{def: positive matrices}, a $2\times2$-submatrix can be extracted as stipulated.  So, we may find a string of elementary equivalences say $( E_{1}, F_{1} ), \dots, ( E_{t} , F_{t} )$, with every $E_{t} \{ i, j \} = I$, $F_{t} \{ i, j \} = I$ unless $i = j$ with $m_{i} \neq 0$, which accomplishes the elementary positive equivalences decomposition inside the diagonal blocks.  By Lemma~\ref{lem:BoyleLemma4.9}, we may find $( U_{1} , V_{1} ), \dots, ( U_{t} , V_{t} )$ such that $U_{s} \in \SLPZ[\mathbf{m}]$, $V_{s} \in \SLPZ$, $U_{s} \{ k \} = I$, $V_{s} \{ k  \} = I$, and such that we have the following positive equivalences 
\begin{align*}
\xymatrix{
B \ar[r]_-{+}^-{( U_{1} , F_{1} )} & \cdot \ar[r]_-{+}^-{( E_{1} , V_{1} )} & \cdots \ar[r]_-{+}^-{( U_{t} , F_{t} )} & \cdot \ar[r]_-{+}^-{( E_{t} , V_{t} )} & B''.
}
\end{align*}
Let $X = E_{t} U_{t} \cdots E_{2} U_{2} E_{1} U_{1}$ and $Y = F_{1} V_{1} F_{2} V_{2} \cdots F_{t} V_{t}$.  Then for all $i$, we have that $X\{ i \} = U \{ i \}$ and $Y \{ i \} = V \{ i \}$.  Therefore, $( UX^{-1} ) \{ i \} = I$ and $( Y^{-1} V ) \{ i\} = I$ for all $i$.  Then by Lemma~\ref{lem:BoyleLemma4.6},
\begin{align*}
\xymatrix{
B'' \ar[rr]_-{+}^-{( U X^{-1} , Y^{-1} V ) } & & B'
}
\end{align*}
is a positive equivalence.  Thus, $\ftn{ ( U, V ) }{ B }{ B' }$ is a positive equivalence since it is the composition of two positive equivalences
\begin{equation*}
\xymatrix{
B \ar[r]_-{+}^-{ ( X , Y ) } & B'' \ar[rr]^-{( U X^{-1} , Y^{-1} V ) }_-{+} & & B'.
} \qedhere
\end{equation*}
\end{proof}


\section{Putting it all together/Proof of main theorem}
\label{sec:proof}

\begin{theorem}\label{thm:putting-it-all-together}
Let $E_1$ and $E_2$ be graphs with finitely many vertices satisfying Condition~(K) and assume that $\FKRplus(C^*(E_1))\cong \FKRplus(C^*(E_2))$. 
Let $F_1$ and $F_2$ be chosen according to Proposition~\ref{prop:  standard form plus gcd}. 
Then there exists a \GLPEe $(U,V)$ from $\Bsf_{F_1}^\bullet$ to $\Bsf_{F_2}^\bullet$ that satisfies that $V\{i\}$ is the identity matrix whenever $n_i=1$. 
\end{theorem}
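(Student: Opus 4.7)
The plan is to apply Theorem~\ref{thm:mainBH} to the $K$-web isomorphism induced by the given $\FKRplus$-isomorphism, carefully choosing the prescribed kernel automorphisms on minimal blocks and using the order structure to handle the non-minimal singleton blocks.

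First I will convert $\FKRplus(C^*(F_1))\cong\FKRplus(C^*(F_2))$ into a $K$-web isomorphism $\kappa\colon K(\Bsf_{F_1}^\bullet)\to K(\Bsf_{F_2}^\bullet)$ via the canonical correspondence between filtered $K$-theory of graph $C^*$-algebras and the $K$-web of their block-structured $\Bsf^\bullet$-matrices; standard form guarantees that the primitive ideal spaces have been identified and that the block decompositions match. Proposition~\ref{prop:  standard form plus gcd} moreover ensures that $\Bsf_{F_j}^\bullet\in\MPZ[\mathbf{m}\times\mathbf{n}]$ with common multi-indices and $\gcd\Bsf_{F_j}^\bullet\{i\}=1$ for every $i$ with $m_i,n_i\ne 0$, so the hypotheses of Theorem~\ref{thm:mainBH} are in force.

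A key observation is that $n_i=1$ forces $m_i=0$ in canonical form: condition (\ref{thm:canonical-item-loops-inf}) of Theorem~\ref{thm: canonical form} requires a regular vertex to support a loop, which would constitute a cycle within block $i$ and thereby demand $m_i\ge 3$ by condition (\ref{thm:canonical-item-size3}); with $n_i=1$ this is impossible, so the sole vertex in block $i$ must be singular. Thus $\Bsf_{F_j}^\bullet\{i\}$ is the empty $0\times 1$ matrix, $\ker\Bsf_{F_j}^\bullet\{i\}=\Z$, and the $1\times 1$ block $V\{i\}$ lies in $\{\pm 1\}$. For $i\in\calP_{\min}$ with $n_i=1$ I will invoke the supplementary clause of Theorem~\ref{thm:mainBH} to prescribe $\psi_i=\id_{\Z}$, which forces $V\{i\}^{-1}=I$ and hence $V\{i\}=I$. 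For $i\notin\calP_{\min}$ with $n_i=1$ the isomorphism on $\ker\Bsf_{F_1}^\bullet\{i\}$ is already part of the data of $\kappa$ (since $r_i\ne\emptyset$, the kernel lies in $I_1^\calP$); here I will use the ordered hypothesis: the subquotient $C^*(F_j)(\{i\})$ is isomorphic to $\C$, so $K_0(C^*(F_j)(\{i\}))=\Z$ carries the positive cone $\N_0$ with distinguished positive generator $[p_{v_i}]$, and the order-preserving $\FKRplus$-isomorphism must act as the identity on this group. Under the canonical identification this translates to $\kappa$ being the identity on $\ker\Bsf_{F_1}^\bullet\{i\}$, so the lifted $V$ again satisfies $V\{i\}=I$.

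The main technical obstacle is making precise the canonical correspondence between $\FKRplus$-isomorphisms and $K$-web isomorphisms --- in particular, verifying the order compatibility that forces $\kappa$ to induce the identity on $\ker\Bsf^\bullet\{i\}$ when $n_i=1$ and $i\notin\calP_{\min}$, including the identification of the chosen generator of $\ker\Bsf^\bullet\{i\}$ with the positive class $[p_{v_i}]$ and the commutativity of the relevant ladders. Once this is carefully set up, the conclusion follows directly from Theorem~\ref{thm:mainBH}.
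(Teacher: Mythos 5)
The key observation you make correctly is that $n_i=1$ forces $m_i=0$ in canonical form (a regular vertex would support a loop, forcing $m_i\ge 3$ by condition (\ref{thm:canonical-item-size3}), which is impossible with $n_i=1$), and your high-level plan --- reduce to Theorem~\ref{thm:mainBH}, control the offending $1\times 1$ blocks of $V$, use the order structure --- is the right one. However, the step you flag as ``the main technical obstacle'' is precisely where the proof breaks down, and it is not a routine verification but the crux of the argument.

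The problem is that there is no ``canonical correspondence'' between $\FKRplus(C^*(F))$ and the $K$-web of $\Bsf^\bullet_F$ with the given order $\preceq$: the two structures are transposes of each other. Concretely, for a block with $m_i=0$, $n_i=1$ the subquotient $C^*(F)(\{i\})\cong\C$ has $K_0=\Z$ and $K_1=0$, while $\ker\Bsf^\bullet_F\{i\}=\Z$ and $\cok\Bsf^\bullet_F\{i\}=0$. In the $K$-web of $\Bsf^\bullet_F$, the group $\ker\Bsf^\bullet_F\{i\}$ sits in $I_1^\calP$ (the ``$K_1$-slot''), yet it has to be matched with $K_0$ of the subquotient; the mismatch persists more severely for general blocks, where $\cok((\Bsf^\bullet_F\{i\})^T)$ (the actual $K_0$) can have torsion while $\ker\Bsf^\bullet_F\{i\}$ is free. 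Moreover, ideals of $C^*(F)$ correspond to \emph{upward}-closed subsets of $\calP$, while the $K$-web sequences are built from \emph{downward}-closed sets $r_i,s_i$. Because of both mismatches, the $\FKRplus$-isomorphism does \emph{not} induce a $K$-web isomorphism $K(\Bsf^\bullet_{F_1})\to K(\Bsf^\bullet_{F_2})$ in any direct way. The paper resolves this by passing to the transposed matrices $J_n(\Bsf^\bullet_{F_j})^{\mathsf T}J_m$ over the opposite poset $\calP^{\mathsf T}$, where the correspondence with filtered $K$-theory holds cleanly (following \cite[Proposition~8.3]{MR2270572} and \cite[Theorem~4.1, Remark~4.2]{MR2922394}), applies Theorem~\ref{thm:mainBH} there, and then converts back via $(U,V)\mapsto(J_mV^{\mathsf T}J_m,\,J_nU^{\mathsf T}J_n)$, as in \cite[Remark~8.2]{MR2270572}.

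A secondary difference worth noting: because after transposing the $n_i=1$ block appears as a $1\times 0$ block whose cokernel $\Z$ is always part of the $K$-web data ($\{i\}\in I_0^{\calP^{\mathsf T}}$ for every $i$), the paper's positivity argument applies uniformly and does not need to invoke the supplementary $\psi_i$-clause of Theorem~\ref{thm:mainBH}. Your split into $i\in\calP_{\min}$ (using the supplementary clause) versus $i\notin\calP_{\min}$ (using positivity) is a workaround for having the constraint land on kernels rather than cokernels; with the transposition done first, this dichotomy disappears.
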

\begin{proof}
As usual, we define $\calP$, $\mathbf{m}$ and $\mathbf{n}$ according to the matrices $\Bsf_{F_1}^\bullet$ to $\Bsf_{F_2}^\bullet$ so that it reflects the ideal structure of the associated \cas. 
Here, $\Bsf_{F_1}^\bullet,\Bsf_{F_2}^\bullet\in\MPZ$. 
We let $\calP^\mathsf{T}$ denote the set $\calP$ with the opposite order defined by $i\preceq j$ in $\calP^\mathsf{T}$ if and only if $N+1-j\preceq N+1-i$ in $\calP$, for $i=1,2,\ldots,N$. 
Moreover, we let  $\mathbf{m}^\mathsf{T}=(m_N,\ldots,m_2,m_1)$ and $\mathbf{n}^\mathsf{T}=(n_N,\ldots,n_2,n_1)$, we let $m=m_1+m_2+\cdots+m_N$ and $n=n_1+n_2+\cdots+n_N$, and we let $J_m$ and $J_n$ be the $m\times m$ respectively $n\times n$ permutation matrix that reverses the order.
Then $J_n\left(\Bsf_{F_1}^\bullet\right)^\mathsf{T}J_m, J_n\left(\Bsf_{F_2}^\bullet\right)^\mathsf{T}J_m \in\mathfrak{M}_{\calP^\mathsf{T}}(\mathbf{n}^\mathsf{T}\times\mathbf{m}^\mathsf{T},\Z)$.
So in a similar way as in the proof of \cite[Proposition~8.3]{MR2270572}, where we use \cite[Theorem~4.1 and Remark~4.2]{MR2922394} in the place of \cite[Proposition~3.4]{MR2270572}, we see that this ordered filtered $K$-theory isomorphism induces a $K$-web isomorphism from $K\left(J_n(\Bsf_{F_1}^\bullet)^\mathsf{T}J_m\right)$ to $K\left(J_n(\Bsf_{F_2}^\bullet)^\mathsf{T}J_m\right)$. 
When $n_i=1$, positivity implies that the isomorphism from $\cok\left(J_n(\Bsf_{F_1}^\bullet)^\mathsf{T}J_m\right)$ to $\cok\left(J_n(\Bsf_{F_2}^\bullet)^\mathsf{T}J_m\right)$ is the identity map. 

Now we use Theorem~\ref{thm:mainBH} to get a $\operatorname{GL}_{\calP^\mathsf{T}}$-equivalence $(U,V)$ from $J_n(\Bsf_{F_1}^\bullet)^\mathsf{T}J_m$ to $J_n(\Bsf_{F_2}^\bullet)^\mathsf{T}J_m$ that induces exactly this $K$-web isomorphism. 
Note that $U\{i\}$ is the identity matrix whenever $n_{N+1-i}=1$. 
As in \cite[Remark~8.2]{MR2270572}, we see that $\left(J_mV^\mathsf{T}J_m,J_nU^\mathsf{T}J_n\right)$ is a \GLPEe from $\Bsf_{F_1}^\bullet$ to $\Bsf_{F_2}^\bullet$ that satisfies that $\left(J_nU^\mathsf{T}J_n\right)\{i\}$ is the identity matrix whenever $n_i=1$. 
\end{proof}

\begin{proof}[Proof of Theorem~\ref{thm:main}]
$(\ref{thm:main-item-1}) \implies (\ref{thm:main-item-2})$: It follows from Theorem~\ref{thm:moveimpliesstableisomorphism} that the moves \OO, \II, \RR, \SSS preserve stable isomorphism. 
By Proposition~\ref{prop:cuntzspliceinvariant} \CC also preserves stable isomorphism so $\MCeq$ preserves stable isomorphism. 

$(\ref{thm:main-item-2}) \implies (\ref{thm:main-item-3})$: Holds in general. 

$(\ref{thm:main-item-3}) \implies (\ref{thm:main-item-1})$: Suppose we are given graphs $E_1,E_2$ that satisfy Condition~(K) and have finitely many vertices. 
By Theorem~\ref{thm:putting-it-all-together} we can find graphs $F_1$ and $F_2$ with finally many vertices such that $E_1 \Meq F_1$, $E_2 \Meq F_2$ and $(F_1,F_2)$ are in standard form and there exists a \GLPEe $(U,V)$ from $\Bsf_{F_1}^\bullet$ to $\Bsf_{F_2}^\bullet$ that satisfies that $V\{i\}$ is the identity matrix whenever $n_i=1$. 
Theorem~\ref{thm:GLtoSL} lets us find graphs $G_1,G_2$ in standard form such that $G_1 \MCeq F_1, G_2 \MCeq F_2$ and $\Bsf^{\bullet}_{G_1}$ and $\Bsf^{\bullet}_{G_2}$ are \SLPE. 
By Theorem~\ref{thm:BoyleTheorem4.4} this equivalence is a positive equivalence and so by Corollary~\ref{cor:rc-in-bullet} $G_1 \Meq G_2$. 
Thus we have 
\[
 E_1 \Meq F_1 \MCeq G_1 \Meq G_2 \MCeq F_2 \Meq E_2. \qedhere
\]
\end{proof}



\section*{Acknowledgements}

This work was partially supported by the Danish National Research Foundation through the Centre for Symmetry and Deformation (DNRF92), by VILLUM FONDEN through the network for Experimental Mathematics in Number Theory, Operator Algebras, and Topology, by a grant from the Simons Foundation (\# 279369 to Efren Ruiz), and by the Danish Council for Independent Research | Natural Sciences. 

The third and fourth named authors would also like to thank the School of Mathematics and Applied Statistics at the University of Wollongong for hospitality during their visit where part of this work was carried out. 
The authors would also like to thank Mike Boyle for many fruitful discussions. 



\providecommand{\bysame}{\leavevmode\hbox to3em{\hrulefill}\thinspace}
\providecommand{\MR}{\relax\ifhmode\unskip\space\fi MR }
\providecommand{\MRhref}[2]{%
  \href{http://www.ams.org/mathscinet-getitem?mr=#1}{#2}
}
\providecommand{\href}[2]{#2}

\end{document}